\newcommand{\bk}{\Bbbk}
\newcommand{\C}{\mathbb{C}}
\newcommand{\N}{\mathbb{N}}
\newcommand{\Z}{\mathbb{Z}}
\newcommand{\Qlb}{\bar{\mathbb{Q}}_\ell}
\newcommand{\D}{\mathrm{D}}
\newcommand{\Db}{\D^{\mathrm{b}}}
\newcommand{\Kb}{\mathrm{K}^{\mathrm{b}}}
\newcommand{\mix}{{\mathrm{mix}}}
\newcommand{\Dmix}{\D^\mix}
\newcommand{\sA}{\mathscr{A}}
\newcommand{\pil}{\Pi^{\mathrm{L}}}
\newcommand{\pir}{\Pi^{\mathrm{R}}}
\newcommand{\scS}{\mathscr{S}}
\newcommand{\scT}{\mathscr{T}}
\newcommand{\Gv}{{G^\vee}}
\newcommand{\bBv}{\bar{B}^\vee}
\newcommand{\Bv}{B^\vee}
\newcommand{\Tv}{T^\vee}
\newcommand{\cBv}{{\mathscr{B}^\vee}}
\newcommand{\sph}{{\mathrm{sph}}}
\newcommand{\cW}{\mathcal{W}}
\newcommand{\ocW}{\overline{\mathcal{W}}}
\newcommand{\cR}{\mathcal{R}}
\newcommand{\Gk}{{G_{\mathbf{K}}}}
\newcommand{\Go}{{G_{\mathbf{O}}}}
\newcommand{\schub}{I\mathord{\cdot}}
\newcommand{\Fl}{{\mathcal{F}\ell}}
\newcommand{\Gr}{{\mathcal{G}r}}
\newcommand{\bX}{\mathbf{X}}
\newcommand{\bXp}{\mathbf{X}^+}
\newcommand{\aff}{{\mathrm{aff}}}
\newcommand{\ubk}{\underline{\bk}}
\newcommand{\Perv}{\mathrm{Perv}}
\newcommand{\Adv}{\mathrm{Adv}}
\newcommand{\p}{{}^p\!}
\newcommand{\ad}{{}^a\!}
\newcommand{\Tilt}{\mathrm{Tilt}}
\newcommand{\Parity}{\mathrm{Parity}}
\newcommand{\DmixI}{\Dmix_{(I)}}
\newcommand{\DmixGo}{\Dmix_{(\Go)}}
\newcommand{\Sat}{\mathcal{S}}
\newcommand{\sky}{\boldsymbol{1}_{\Gr}}
\newcommand{\IC}{\mathcal{IC}}
\newcommand{\cE}{\mathcal{E}}
\newcommand{\cIstd}{\mathcal{I}_!}
\newcommand{\cIcos}{\mathcal{I}_*}
\newcommand{\cJstd}{\mathcal{J}_!}
\newcommand{\cJcos}{\mathcal{J}_*}
\newcommand{\cT}{\mathcal{T}}
\newcommand{\uistd}{\underline{i}_!}
\newcommand{\uicos}{\underline{i}_*}
\newcommand{\pH}{{}^p\!\mathcal{H}}
\newcommand{\bc}{\mathbf{c}}
\newcommand{\bs}{\mathbf{s}}
\newcommand{\cN}{{\mathcal{N}}}
\newcommand{\tcN}{{\tilde{\mathcal{N}}}}
\newcommand{\cO}{\mathcal{O}}
\newcommand{\cON}{\mathcal{O}_\cN}
\newcommand{\fuv}{\mathfrak{u}^\vee}
\newcommand{\perf}{{\mathrm{perf}}}
\newcommand{\ft}{{\mathrm{ft}}}
\newcommand{\bGamma}{\boldsymbol{\Gamma}}
\newcommand{\lmod}{\textrm{-}\mathrm{mod}}
\newcommand{\lgmod}{\textrm{-}\mathrm{gmod}}
\newcommand{\scF}{\mathscr{F}}
\newcommand{\irr}{\mathrm{L}}
\newcommand{\til}{\mathrm{T}}
\newcommand{\wey}{\mathrm{M}}
\newcommand{\cow}{\mathrm{N}}
\newcommand{\Coh}{\mathrm{Coh}}
\newcommand{\Gm}{{\mathbb{G}_{\mathrm{m}}}}
\newcommand{\CohGm}{\Coh^{\Gv \times \Gm}}
\newcommand{\CohN}{\Coh(\cN)}
\newcommand{\Rep}{\mathrm{Rep}}
\newcommand{\SGD}{\mathbb{D}}
\newcommand{\PCohN}{\mathrm{P}\CohN}
\newcommand{\pcstd}{\Delta}
\newcommand{\pccos}{\nabla}
\newcommand{\pcpstd}{\bar{\pcstd}}
\newcommand{\pcpcos}{\bar{\pccos}}
\newcommand{\ExCoh}{\mathrm{ExCoh}}
\newcommand{\exstd}{\rotatebox[origin=c]{180}{$\mathsf{V}$}}
\newcommand{\excos}{\mathsf{V}}
\newcommand{\cH}{\mathcal{H}} 
\newcommand{\cF}{\mathcal{F}}
\newcommand{\cG}{\mathcal{G}}
\newcommand{\cK}{\mathcal{K}}
\newcommand{\simto}{\overset{\sim}{\to}}
\newcommand{\la}{\langle}
\newcommand{\ra}{\rangle}
\newcommand{\lla}{\langle\!\langle}
\newcommand{\rra}{\rangle\!\rangle}
\newcommand{\id}{\mathrm{id}}
\newcommand{\pt}{\mathrm{pt}}
\newcommand{\For}{\mathsf{For}}
\newcommand{\mult}{\mathbf{m}}
\newcommand{\spult}{\mathbf{p}}
\newcommand{\stult}{\mathbf{t}}
\DeclareMathOperator{\chr}{char}
\DeclareMathOperator{\End}{End}
\DeclareMathOperator{\Hom}{Hom}
\DeclareMathOperator{\Mor}{Mor}
\DeclareMathOperator{\RHom}{\mathit{R}Hom}
\DeclareMathOperator{\cRHom}{\mathit{R}\mathscr{H}\!\mathit{om}}
\DeclareMathOperator{\Ext}{Ext}
\DeclareMathOperator{\Tor}{Tor}
\DeclareMathOperator{\cok}{cok}
\newcommand{\uHom}{\underline{\mathrm{Hom}}}
\newcommand{\uRHom}{R\underline{\mathrm{Hom}}}
\newcommand{\sH}{\mathsf{H}}
\newcommand{\uEnd}{\underline{\mathrm{End}}}
\newcommand{\uExt}{\underline{\mathrm{Ext}}}
\newcommand{\gHom}{{\mathbb{H}\mathsf{om}}}
\newcommand{\gRHom}{\mathit{R}\gHom}
\newcommand{\gEnd}{{\mathbb{E}\mathsf{nd}}}
\newcommand{\tboxtimes}{\mathbin{\tilde{\mathord{\boxtimes}}}}
\DeclareMathOperator{\Spec}{Spec}
\DeclareMathOperator{\Proj}{Proj}
\DeclareMathOperator{\bSpec}{\mathbf{Spec}}
\DeclareMathOperator{\supp}{supp}
\numberwithin{equation}{section}
\newtheorem{thm}{Theorem}[section]
\newtheorem*{thm*}{Theorem}
\newtheorem{lem}[thm]{Lemma}
\newtheorem{prop}[thm]{Proposition}
\newtheorem{cor}[thm]{Corollary}
\theoremstyle{definition}
\theoremstyle{remark}
\newtheorem{rmk}[thm]{Remark}
\title[The affine Grassmannian and the Springer resolution]{The affine Grassmannian and the Springer resolution in positive characteristic}
\author{Pramod N. Achar}
\address{Department of Mathematics\\
  Louisiana State University\\
  Baton Rouge, LA 70803\\
  U.S.A.}
\email{pramod@math.lsu.edu}
\author{Laura Rider}
\address{Department of Mathematics\\
  Massachusetts Institute of Technology\\
  Cambridge, MA 02139\\
  U.S.A.}
\email{laurajoy@mit.edu}
\let\@wraptoccontribs\wraptoccontribs
\subjclass[2010]{Primary 22E57; Secondary 14F05}
\thanks{P.A. was supported by NSF Grant No.~DMS-1001594. L.R. was supported by an NSF Postdoctoral Research Fellowship.}
\begin{document}

\begin{abstract}
An important result of Arkhipov--Bezrukavnikov--Ginzburg relates constructible sheaves on the affine Grassmannian to coherent sheaves on the dual Springer resolution.  In this paper, we prove a positive-characteristic analogue of this statement, using the framework of ``mixed modular sheaves'' recently developed by the first author and Riche.  As an application, we deduce a relationship between parity sheaves on the affine Grassmannian and Bezrukavnikov's ``exotic t-structure'' on the Springer resolution.
\end{abstract}

\maketitle

\section{Introduction}
\label{sect:intro}

\subsection{Main result}
\label{ss:intro-main}
Let $G$ be a connected reductive complex algebraic group, and let $\Gv$ be the Langlands dual group over an algebraically closed field $\bk$. Recall that the \emph{geometric Satake equivalence} is an equivalence of tensor abelian categories
\begin{equation}\label{eqn:satake}
\Sat: \Rep(\Gv) \overset{\sim}{\longrightarrow} \Perv_\Go(\Gr,\bk),
\end{equation}
where $\Rep(\Gv)$ is the category of finite-dimensional rational representations of $\Gv$, and $\Perv_\Go(\Gr,\bk)$ is the category of spherical perverse $\bk$-sheaves on the affine Grassmannian $\Gr$.  When $\bk = \C$, there is an extensive body of work (see~\cite{ab,abg,bez:psaf,bf}, among others) exhibiting various ways of extending $\Sat$ to an equivalence of derived or triangulated categories.  In particular, an important theorem due to Arkhipov--Bezrukavnikov--Ginzburg~\cite{abg} relates Iwahori-constructible sheaves on $\Gr$ to coherent sheaves on the Springer resolution $\tcN$ for $\Gv$.

In this paper, we begin the project of studying derived versions of~\eqref{eqn:satake} in positive characteristic.  We work in the framework of ``mixed modular derived categories'' recently developed by the first author and S.~Riche~\cite{arc:f2,arc:f3}.  The main result of the paper is the following modular analogue of the result of~\cite{abg}.

\begin{thm}\label{thm:intro-main}
Assume that the characteristic of $\bk$ is a JMW prime for $\Gv$, and that $\Gv$ satisfies~\eqref{eqn:reasonable} below.  Then there is an equivalence of triangulated categories
\[
P: \DmixI(\Gr,\bk) \overset{\sim}{\longrightarrow} \Db\CohGm(\tcN)
\]
satisfying $P(\cF\la 1\ra) \cong P(\cF)\la -1\ra[1]$.  Moreover, this equivalence is compatible with the geometric Satake equivalence:  for any $\cF \in \DmixI(\Gr,\bk)$ and $V \in \Rep(\Gv)$, there is a natural isomorphism $P(\cF \star \Sat(V)) \cong P(\cF) \otimes V$.
\end{thm}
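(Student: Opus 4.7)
The plan is to realize both sides of the equivalence as bounded homotopy categories of suitable additive subcategories, and to construct $P$ at that level. On the source, the mixed modular formalism of \cite{arc:f2,arc:f3} provides an identification
\[
\DmixI(\Gr,\bk) \simeq \Kb(\Parity_I(\Gr,\bk)),
\]
under which the Tate twist $\la 1\ra$ corresponds to the internal grading shift on parity complexes. On the target, I would establish an analogous description $\Db\CohGm(\tcN) \simeq \Kb(\mathrm{ExTilt}(\tcN))$ via an integral version of Bezrukavnikov's exotic $t$-structure and its tilting objects; this is likely where hypothesis~\eqref{eqn:reasonable} enters the picture. Once both presentations are in hand, constructing $P$ reduces to constructing an equivalence of additive $\bk$-linear categories
\[
\Parity_I(\Gr,\bk) \simeq \mathrm{ExTilt}(\tcN).
\]

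To build this additive equivalence, I would lean heavily on the JMW prime hypothesis: it guarantees that $\Sat$ sends tilting $\Gv$-modules to spherical parity sheaves on $\Gr$, and that convolution with such sheaves preserves $\Parity_I(\Gr,\bk)$. Indecomposable objects on both sides are naturally indexed by the weight lattice $\bX$ (with appropriate dominance/sign conventions), and the plan is to match them via this labeling and then verify agreement of Hom spaces. The most robust way to compare Homs is to present both categories as modules over a common graded algebra: on the $\Gr$ side through hyperbolic localization and equivariant cohomology of parity complexes, and on the coherent side through $\Gv \times \Gm$-equivariant global sections of objects on $\tcN$. Classical results identifying $\cO(\tcN)$ with a character ring for $\Gv$ furnish the bridge between the two computations.

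The Satake compatibility $P(\cF \star \Sat(V)) \cong P(\cF) \otimes V$ would then follow essentially for free, provided that the additive equivalence of the previous paragraph is constructed to be equivariant for the convolution action of spherical parity sheaves. Indeed, by Satake plus JMW, this action corresponds to tensoring with tilting modules; after passing to $\Kb$, this extends in a unique compatible way to tensoring with arbitrary $V \in \Rep(\Gv)$.

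The main obstacle is the Hom-matching in the additive-level equivalence. An inductive argument along the closure order of Iwahori orbits requires delicate control of standard/costandard filtrations and, more importantly, of positive-characteristic multiplicity data---where, in contrast to the characteristic-zero setting of \cite{abg}, the relations among simple, tilting, and parity objects are genuinely subtle and can only be tracked using the full mixed structure. A secondary, still nontrivial, difficulty is establishing the modular analogue of Bezrukavnikov's exotic-tilting generation theorem used in the target-side presentation, which must be adapted carefully to the present setting under hypothesis~\eqref{eqn:reasonable}.
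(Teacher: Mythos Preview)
Your proposal takes a genuinely different route from the paper, and in fact inverts its logical order. You want to first establish the exotic $t$-structure on $\Db\CohGm(\tcN)$ in positive characteristic, prove that exotic tilting objects generate the derived category, build an additive equivalence $\Parity_{(I)}(\Gr)\simeq\Tilt(\ExCoh(\tcN))$, and then pass to homotopy categories. The paper does the opposite: it constructs the triangulated functor $P$ directly, proves it is an equivalence, and only \emph{afterwards} (Section~\ref{sect:exotic}) defines the exotic $t$-structure and deduces the parity--tilting equivalence (Theorem~\ref{thm:intro-exotic}) as a corollary, by transporting the \emph{adverse} $t$-structure on $\DmixI(\Gr)$ across $P$. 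So the target-side description you want to rely on is, in this paper, a consequence rather than an input.

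The paper's actual construction of $P$ uses ingredients absent from your sketch. The functor is built from the assignment
\[
\cF \;\longmapsto\; \bigoplus_{\lambda\in\bXp}\gHom(\ocW_{-\lambda},\cF\star\cR),
\]
where the $\ocW_\lambda$ are mixed modular Wakimoto sheaves (Section~\ref{sect:wakimoto}) and $\cR$ is the ind-perverse sheaf corresponding to the regular representation $\bk[\Gv]$. The key computation (Theorem~\ref{thm:coh-tcn}) identifies the Ext-algebra $\bigoplus_\lambda\gHom(\ocW_{-\lambda},\cR)$ with the multihomogeneous coordinate ring $\bGamma[\tcN]$, so that the display above lands in $\bGamma[\tcN]$-modules; applying the $\Proj$-type sheafification $\scF$ then produces objects of $\Coh(\tcN)$. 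Fully faithfulness is checked on Wakimoto sheaves $\ocW_\lambda$ (which go to line bundles $\cO_\tcN(\lambda)$), not by matching parity and exotic tilting Homs directly.

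Your strategy is closer in spirit to the independent approach of Mautner--Riche~\cite{mr} mentioned in \S\ref{ss:mr}. It is not wrong as a program, but the step you flag as the ``main obstacle''---matching Homs via ``hyperbolic localization and equivariant cohomology'' against ``equivariant global sections''---is not a method the paper uses, and the step you flag as ``secondary'' (an independent modular exotic-tilting generation theorem) is in fact the crux: without the equivalence $P$ already in hand, one has no direct access to the positive-characteristic properties of $\ExCoh(\tcN)$ that your plan presupposes.
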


Recall that a \emph{JMW prime} for $\Gv$ is a good prime such that the main result of~\cite{jmw2} holds in that characteristic: that is, $\Sat$ sends tilting $\Gv$-modules to spherical parity sheaves.  (Recently, Mautner and Riche have shown that every good prime is a JMW prime; see~\S\ref{ss:mr} below).  When the characteristic of $\bk$ is a JMW prime, the Mirkovi\'c--Vilonen conjecture holds~\cite{arid}. The additional condition we impose on $\Gv$ is this:
\begin{equation}\label{eqn:reasonable}
\begin{minipage}{3.6in}
The derived group of $\Gv$ is simply connected, and its Lie algebra admits a nondegenerate $\Gv$-invariant bilinear form.
\end{minipage}
\end{equation}
Finally, $\DmixI(\Gr,\bk)$ is the mixed modular derived category of complexes that are constructible with respect to the stratification of $\Gr$ by orbits of an Iwahori subgroup $I \subset \Go$. (For full details on notation and terminology, see Section~\ref{sect:notation}.)

\subsection{Comparison with the work of Arkhipov--Bezrukavnikov--Ginzburg}

Readers who are familiar with~\cite{abg} will recognize a number of familiar ingredients in this paper, including
Wakimoto sheaves; the ind-perverse sheaf corresponding to the regular representation; and realizations of the coordinate rings of $\cN$ and $\tcN$ as $\Ext$-algebras on~$\Gr$. However, the behavior of these objects is often more complicated than in~\cite{abg}, both because of the nonsemisimplicity of the representation theory of $\Gv$, and because mixed modular sheaves are harder to work with than mixed $\Qlb$-sheaves.

One salient difficulty with mixed modular sheaves is that it is not known whether there is a well-behaved ``forgetful'' functor $\DmixI(\Gr,\bk) \to \Db_{(I)}(\Gr,\bk)$ (see~\cite[\S2.2]{arc:f2}), so we cannot compare mixed and ordinary perverse $\bk$-sheaves.  As a consequence, a key construction of~\cite{abg}, giving a dg-model for $\DmixI(\Gr,\Qlb)$ in terms of projective pro-perverse sheaves, cannot be carried out in positive characteristic.  Instead, we use a dg-model for $\DmixI(\Gr,\bk)$ based on parity sheaves. (Indeed, perverse sheaves are almost absent from this paper.)  The lack of a forgetful functor also means that unlike in~\cite{abg}, we do not know how to deduce a non-mixed analogue of Theorem~\ref{thm:intro-main}, describing the ordinary derived category $\Db_{(I)}(\Gr,\bk)$.

In~\cite{abg}, the result we have been discussing is used as a step in the proof that $\Perv_{(I)}(\Gr,\C)$ is equivalent to the principal block of the quantum group $U_q(\mathfrak{g}^\vee)$ at a root of unity.  We expect Theorem~\ref{thm:intro-main} (or its conjectural non-mixed analogue) to likewise play a role in the proof of the Finkelberg--Mirkovi\'c conjecture~\cite{fm}, which asserts that $\Perv_{(I)}(\Gr,\bk)$ is equivalent to the principal block of $\Rep(\Gv)$.

\subsection{Koszul-type duality and the exotic t-structure}

One of the main results of~\cite{arc:f2} gives an equivalence of categories between parity sheaves on a 
flag variety and mixed tilting sheaves on the Langlands dual flag variety.  Separately, according to~\cite[Proposition~5.7]{arid}, there is an equivalence of categories
\[
\Parity_{(\Go)}(\Gr,\bk) \simto \Tilt(\PCohN),
\]
where $\PCohN$ is the category of perverse-coherent sheaves on the nilpotent cone for $\Gv$ (see~\S\ref{ss:pcoh}).  These results raise the question of whether $\Parity_{(I)}(\Gr,\bk)$ participates in a ``parity--tilting'' equivalence.

When $\bk = \C$, this question has a positive answer~\cite{bez:ctm}. The other side of the equivalence involves the \emph{exotic t-structure} on $\Db\CohGm(\tcN)$, and the equivalence itself is understood as an instance of Koszul duality. (See~\cite[\S1.2]{bez:ctm} for the Koszul duality perspective, and~\cite{bez:ctm,bm} for applications of the exotic t-structure.) 

In this paper, we prove that this holds in positive characteristic as well.

\begin{thm}\label{thm:intro-exotic}
Under the assumptions of Theorem~\ref{thm:intro-main}, there is an equivalence of additive categories
$
P: \Parity_{(I)}(\Gr,\bk) \simto \Tilt(\ExCoh(\tcN))
$.
\end{thm}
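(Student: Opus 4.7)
The strategy is to invoke the triangulated equivalence $P$ of Theorem~\ref{thm:intro-main} and to show that it restricts, on the full subcategory of parity objects, to an equivalence with exotic tilting objects. Since $\Parity_{(I)}(\Gr,\bk)$ embeds fully faithfully into $\DmixI(\Gr,\bk)$ via the mixed modular framework of~\cite{arc:f2}, the task reduces to identifying the essential image of the parity sheaves inside $\Db\CohGm(\tcN)$.

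The first step is to recast both categories as ``doubly filtered'' subcategories. On the constructible side, an object of $\DmixI(\Gr,\bk)$ is a parity sheaf iff it is indecomposable and admits both a filtration by standards $\cIstd_w\la n\ra$ and a filtration by costandards $\cIcos_w\la n\ra$ satisfying the appropriate parity constraint on $n$. On the coherent side, $\Tilt(\ExCoh(\tcN))$ consists by definition of indecomposable objects in the exotic heart admitting both an $\exstd_\lambda$-filtration and an $\excos_\lambda$-filtration. Therefore it is enough to show that $P$ carries the distinguished standard and costandard objects on one side to those on the other.

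The second, crucial step is to identify $P(\cIstd_w)$ and $P(\cIcos_w)$. Iwahori orbits in $\Gr$ are indexed by the weight lattice $\bX$ of $\Gv$; for each $\lambda \in \bX$ one expects $P(\cIstd_\lambda) \cong \exstd_\lambda$ and $P(\cIcos_\lambda) \cong \excos_\lambda$, up to a grading shift absorbed into the relation $P(\cF\la 1\ra) \cong P(\cF)\la -1\ra[1]$. Following the strategy of~\cite{abg}, such an identification should be derived from the behavior of $P$ on Wakimoto sheaves (which under $P$ correspond to the pullbacks of line bundles from $\Gv/\Bv$), combined with the Satake-compatibility of $P$ supplied by Theorem~\ref{thm:intro-main}.

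The main obstacle lies in this last step: extracting from the proof of Theorem~\ref{thm:intro-main} enough explicit information about $P$ to pin down the images of standards and costandards, and then matching the Bruhat combinatorics of Iwahori orbits with the partial order on $\bX$ governing the exotic $t$-structure. Once this matching is in place, the conclusion follows formally, since an equivalence that sends standards to standards and costandards to costandards automatically preserves the subcategory of doubly filtered indecomposables, i.e.\ sends $\Parity_{(I)}(\Gr,\bk)$ onto $\Tilt(\ExCoh(\tcN))$.
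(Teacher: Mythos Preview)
Your overall strategy is correct: one shows that $P$ matches standards to standards and costandards to costandards for the relevant highest-weight structures, and then deduces that it matches tilting objects to tilting objects. But there is a real gap in your first step, and filling it is precisely where the paper's main new idea enters.

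You assert that an object of $\DmixI(\Gr,\bk)$ is a parity sheaf if and only if it is indecomposable and admits both a standard and a costandard filtration ``satisfying the appropriate parity constraint on $n$.'' But with respect to which $t$-structure? If you mean the \emph{perverse} $t$-structure, this is false: perverse tilting sheaves on $\Gr$ are not parity sheaves in general, and no parity condition on the twists will repair this. (Your notation $\cIstd_w$, $\cIcos_w$ suggests you may be conflating the Iwahori-constructible objects $\uistd(\lambda)$, $\uicos(\lambda)$ with the spherical objects $\cIstd(\lambda)$, $\cIcos(\lambda)$, which compounds the confusion.) The correct statement is that parity sheaves are exactly the tilting objects for the \emph{adverse} $t$-structure on $\DmixI(\Gr)$, a second $t$-structure introduced in the appendix (Proposition~\ref{prop:adverse-t-struc}) whose standards and costandards are the $\uistd(\lambda)\{n\}$ and $\uicos(\lambda)\{n\}$. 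This identification $\Parity_{(I)}(\Gr)=\Tilt(\Adv_{(I)}(\Gr))$ is what makes your ``doubly filtered'' characterization valid, and it is not available from~\cite{arc:f2} alone.

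Once the adverse $t$-structure is in hand, the paper's argument is also more streamlined than yours. Rather than directly verifying $P(\uistd(\lambda))\cong\exstd(\lambda)$ and $P(\uicos(\lambda))\cong\excos(\lambda)$ by explicit computation, the paper observes that both the Wakimoto sheaves $\{\ocW_\lambda\}$ and the line bundles $\{\cO_\tcN(\lambda)\}$ form full graded exceptional collections (Lemmas~\ref{lem:waki-po-hom} and~\ref{lem:line-po-hom}), and that $P(\ocW_\lambda)\cong\cO_\tcN(\lambda)$ (Proposition~\ref{prop:waki-line-bdl}). Since the $\{\uicos(\lambda)\}$ and the $\{\excos(\lambda)\}$ are obtained from these by the \emph{same} mutation procedure (Lemma~\ref{lem:waki-mut} and the definition of the exotic $t$-structure), and mutation is unique, $P$ automatically matches them---and hence matches the adverse $t$-structure to the exotic one (Theorem~\ref{thm:adv-exotic}). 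The equivalence of tilting subcategories is then immediate. Your plan to extract the identification of standards and costandards from Wakimoto sheaves is the right instinct; the mutation formalism is what makes it effortless.
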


This result ends up being quite an easy corollary of Theorem~\ref{thm:intro-main}, because the entire proof of Theorem~\ref{thm:intro-main} is structured in a way that anticipates this application.  As noted earlier, the perverse t-structure on $\DmixI(\Gr,\bk)$ does not play much of a role in this paper---but a different t-structure, the \emph{adverse t-structure}, appears quite prominently.  Ultimately, the adverse t-structure turns out to be the transport of the exotic t-structure across the equivalence of Theorem~\ref{thm:intro-main}. 

\subsection{Relationship to the work of Mautner--Riche}
\label{ss:mr}

While this work was underway, the authors learned that C.~Mautner and S.~Riche~\cite{mr} were independently pursuing a rather different approach to Theorem~\ref{thm:intro-exotic}, not relying on the geometric Satake equivalence or the Mirkovi\'c--Vilonen conjecture.  Their proof requires the characteristic of $\bk$ to be very good for $\Gv$, but a priori not necessarily a JMW prime.  In fact, their work implies that every good prime is a JMW prime, improving on the bounds established~\cite[Theorem~1.8]{jmw2}.  As a consequence, the main result of~\cite{arid} and Theorem~\ref{thm:intro-main} of the present paper both hold in good characteristic.

Nevertheless, we maintain the distinction between good primes and JMW primes in the body of this paper, so as to preserve its logical independence from~\cite{mr}. 

\subsection{Contents of the paper}

Section~\ref{sect:notation} introduces notation and recalls basic facts about the various varieties and categories we will work with.  In Section~\ref{sect:spherical}, we revisit the main results of~\cite{arid} and translate them to the mixed modular setting.  In Section~\ref{sect:regular}, we carry out some computations related to the regular representation of $\Gv$ and the corresponding ind-perverse sheaf.  Section~\ref{sect:wakimoto} develops the theory of mixed modular Wakimoto sheaves, which serve as constructible counterparts to line bundles on $\tcN$.  They are a key tool in Section~\ref{sect:extalg}, which realizes the coordinate ring of $\tcN$ as an $\Ext$-algebra on $\Gr$.  Theorem~\ref{thm:intro-main} is proved in Section~\ref{sect:main}. Finally, in Section~\ref{sect:exotic}, we discuss the exotic t-structure and prove Theorem~\ref{thm:intro-exotic}.

The language of mixed modular derived categories is ubiquitous in this paper.  For general background on these categories, see~\cite{arc:f2,arc:f3}.  Appendix~\ref{sect:mixed}, written jointly with S.~Riche, is a companion to those papers. It contains general results on mixed modular derived categories that were not included in~\cite{arc:f2,arc:f3}, and it can be read independently of the main body of the paper.

\subsection{Acknowledgments}

We are grateful to Carl Mautner and Simon Riche for discussing their work-in-progress with us.

\section{Notation and preliminaries}
\label{sect:notation}

\subsection{Graded vector spaces and graded Hom-groups}
\label{ss:grvec}

For a graded $\bk$-vector space $V = \bigoplus V_n$, or, more generally, a graded module over a graded $\bk$-algebra, we define the shift-of-grading functor $V \mapsto V\la m\ra$ by
\[
(V\la m\ra)_n = V_{m+n}.
\]
If $V$ and $W$ are two graded vector spaces, we define $\uHom(V,W)$ to be the graded vector space given by
\[
\uHom(V,W)_n = \Hom(V,W\la n\ra).
\]
More generally, if $\sA$ is any additive category equipped with an automorphism $\la 1\ra: \sA \to \sA$, we define $\uHom(A,B)$ for $A, B \in \sA$ as above.  We clearly have $\uHom(V\la n\ra, W\la m\ra) = \uHom(V,W)\la m-n\ra$.  Note that these conventions are consistent with those of~\cite{arid}, but opposite to those of~\cite{a}.

In the setting of mixed modular derived categories, it is often convenient to work with the automorphism $\{1\} = \la -1\ra[1]$.  As in~\S\ref{ss:ghom}, if $\cF$ and $\cG$ are two objects in a mixed modular derived category, we define a graded vector space $\gHom(\cF,\cG)$ by
\[
\gHom(\cF,\cG)_n = \Hom(\cF,\cG\{n\}).
\]
This satisfies $\gHom(\cF\{ n\}, \cG\{ m\}) = \gHom(\cF,\cG)\la m-n\ra$.

Finally, if $A$ and $B$ are objects in some triangulated category, we may write $\Hom^i(A,B)$ for $\Hom(A,B[i])$, and likewise for $\uHom^i({-},{-})$ and $\gHom^i({-},{-})$.

\subsection{Reductive groups and representations}

As in~\S\ref{ss:intro-main}, $G$ will always denote a fixed connected complex reductive group, and $\Gv$ will denote the Langlands dual group to $G$ over an algebraically closed field $\bk$.  In addition, the following assumptions will be in effect throughout the paper, except in Section~\ref{sect:wakimoto}:
\begin{itemize}
\item The characteristic of $\bk$ is a JMW prime for $G$.
\item The group $\Gv$ satisfies~\eqref{eqn:reasonable}.
\end{itemize}
The latter can be weakened slightly.  For instance, if $\Gv$ satisfies~\eqref{eqn:reasonable} and there is a separable central isogeny $\Gv \twoheadrightarrow H^\vee$, then the main results hold for $H^\vee$ as well.  However, to simplify the exposition, we assume~\eqref{eqn:reasonable} throughout.

Fix a Borel subgroup $B \subset G$ and a maximal torus $T \subset B$, along with corresponding subgroups $\Tv \subset \bBv \subset \Gv$.  Let $\Bv \subset \Gv$ be the opposite Borel subgroup to $\bBv$.  We regard $B$ as a ``positive'' Borel subgroup and $\Bv$ as a ``negative'' one.  That is, we call a character of $\Tv$ \emph{dominant} if its pairing with any root of $B$ is nonnegative, or equivalently, if its pairing with any coroot of $\Bv$ is nonpositive.  Let $\bX$ denote the character lattice of $\Tv$, identified with the cocharacter lattice of $T$, and let $\bXp \subset \bX$ be the set of dominant weights.  The set $\bX$ carries two natural partial orders, which we denote as follows:
\begin{align*}
\lambda &\preceq \mu &&\text{if $\mu - \lambda$ is a sum of positive roots;} \\
\lambda &\le \mu &&\text{if $\schub \lambda \subset \overline{\schub \mu}$ (see~\S\ref{ss:affine-grassmannian} below).}
\end{align*}
These two orders coincide on $\bXp$.

For $\lambda \in \bXp$, let $\irr(\lambda)$, $\wey(\lambda)$, $\cow(\lambda)$, and $\til(\lambda)$ denote the irreducible, Weyl, dual Weyl, and indecomposable tilting $\Gv$-modules, respectively, of highest weight $\lambda$.

Let $W$ denote the Weyl group of $G$ or $\Gv$, and let $w_0$ denote the longest element of $W$.  For any $\lambda \in \bX$, we put
\[
\delta_\lambda = \text{length of the shortest $w \in W$ such that $w\lambda$ is dominant.}
\]
This is consistent with~\cite[\S1.4.1]{bez:ctm}.  The notation ``$\delta_\lambda$'' also appears in~\cite{a,arid,minnthuaye} with a slightly different meaning: in those papers, only dominant weights occur, and the integer they call ``$\delta_\lambda$'' is called $\delta_{w_0\lambda}$ in the present paper.

\subsection{The affine Grassmannian}
\label{ss:affine-grassmannian}

Let $\Gr = \Gk/\Go$, where $\mathbf{K} = \C((t))$ is the field of Laurent series in an indeterminate $t$, and $\mathbf{O} = \C[[t]]$ is its subring of power series. Let $I \subset \Go$ be the Iwahori subgroup corresponding to $B \subset G$.  Recall that the $I$-orbits on $\Gr$ are naturally parametrized by $\bX$.  For $\lambda \in \bX$, the corresponding $I$-orbit is denoted simply by $\schub \lambda$, and the inclusion map by
\[
i_\lambda: \schub \lambda \hookrightarrow \Gr.
\]
The $\Go$-orbits are parametrized instead by $\bXp$.  Recall that these are sometimes called \emph{spherical} orbits, and that sheaves on $\Gr$ smooth along the $\Go$-orbits are sometimes called \emph{spherical sheaves}. For $\lambda \in \bXp$, the corresponding $\Go$-orbit is denoted by $\Gr_\lambda$, and the inclusion map by
\[
i^\sph_\lambda: \Gr_\lambda \hookrightarrow \Gr.
\]

\subsection{Constructible sheaves}

All constructible sheaves will be assumed to have coefficients in $\bk$.  From now on, we will omit the coefficients from the notation for categories of constructible complexes.

Let $\Perv_\Go(\Gr)$ be the category of $\Go$-equivariant perverse $\bk$-sheaves on $\Gr$. For $\lambda \in \bXp$, the objects in $\Perv_\Go(\Gr)$ arising from various $\Gv$-representations of highest weight $\lambda$ via the geometric Satake equivalence~\eqref{eqn:satake} are denoted as follows:
\[
\IC(\lambda) = \Sat(\irr(\lambda)),
\quad
\cIstd(\lambda) = \Sat(\wey(\lambda)),
\quad
\cIcos(\lambda) = \Sat(\cow(\lambda)),
\quad
\cT(\lambda) = \Sat(\til(\lambda)).
\]

Let $\Parity_{(I)}(\Gr)$ denote the additive category of parity complexes on $\Gr$ that are constructible with respect to the stratification by $I$-orbits, and let $\DmixI(\Gr)$ denote the corresponding mixed derived category.  More generally, if $X \subset \Gr$ is any locally closed $I$-stable subset, then $\DmixI(X)$ and related notations are defined similarly.  If $X$ is smooth, we denote by $\ubk_X$, or simply $\ubk$, the constant sheaf on $X$ with value $\ubk$, regarded as an object of $\Parity_{(I)}(X)$ or $\DmixI(X)$.

Let $\Perv^\mix_{(I)}(\Gr) \subset \DmixI(\Gr)$ denote the abelian category of mixed perverse sheaves.  This is a graded quasihereditary category.  Given $\lambda \in \bX$, the corresponding standard and costandard objects will be denoted by
\[
\uistd(\lambda) = i_{\lambda!}\ubk_{\schub \lambda}\{\dim \schub\lambda\}
\qquad\text{and}\qquad
\uicos(\lambda) = i_{\lambda*}\ubk_{\schub \lambda}\{\dim \schub\lambda\},
\]
respectively. The image of the canonical morphism $\uistd(\lambda) \to \uicos(\lambda)$ is denoted $\IC(\lambda)$. (Lemma~\ref{lem:sph-embed} below will resolve the apparent conflict with the notation for $\Sat(\irr(\lambda))$.) Lastly, let $\cE(\lambda)$ denote the unique indecomposable parity sheaf supported on $\overline{\schub \lambda}$ and whose restriction to $\schub \lambda$ is $\ubk\{\dim \schub \lambda\}$.  When $\lambda \in \bXp$, \cite{jmw2} tells us that $\cE(\lambda) = \cT(\lambda)$.

We will also work with the spherical categories $\Parity_{(\Go)}(\Gr)$, $\DmixGo(\Gr)$, and $\Perv^\mix_{(\Go)}(\Gr)$, and occasionally with the equivariant versions $\Dmix_I(\Gr)$, $\Dmix_\Go(\Gr)$, etc.  The spherical case is not explicitly covered by the papers~\cite{arc:f2,arc:f3}, which required the variety to be stratified by affine spaces.  See~\S\ref{ss:non-affine} for a discussion of this case.  For $\lambda \in \bXp$, we put
\[
\cJstd(\lambda) = (i^\sph_\lambda)_!\ubk_{\Gr_\lambda}\{\dim \Gr_\lambda\}
\qquad\text{and}\qquad
\cJcos(\lambda) = (i^\sph_\lambda)_*\ubk_{\Gr_\lambda}\{\dim \Gr_\lambda\}.
\]

The following lemma lets us identify $\Perv_\Go(\Gr)$ with a full subcategory of $\Perv^\mix_\Go(\Gr)$.  Via this identification, we will henceforth regard $\Sat$ as taking values in $\Perv^\mix_\Go(\Gr)$.  In particular, the objects $\cIstd(\lambda)$, $\cT(\lambda)$, etc., defined above will henceforth be regarded as objects of $\Perv^\mix_\Go(\Gr)$.

\begin{lem}\label{lem:sph-embed}
There is a t-exact fully faithful functor $\Db\Perv_\Go(\Gr) \to \Dmix_\Go(\Gr)$ which, for each $\lambda \in \bXp$, sends $\IC(\lambda) \in \Perv_\Go(\Gr)$ to $\IC(\lambda) \in \Perv^\mix_\Go(\Gr)$, and sends $\cT(\lambda)$ to $\cE(\lambda)$.
\end{lem}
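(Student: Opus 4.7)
The plan is to bridge the two sides through the common ``tilting/parity'' subcategory picked out by the JMW hypothesis. Let $\Parity^0_{(\Go)}(\Gr) \subset \Parity_{(\Go)}(\Gr)$ denote the full additive subcategory whose objects are direct sums of $\cE(\lambda)$'s with no grading shift. Combining JMW with geometric Satake, one obtains an equivalence of additive categories
\[
\Phi\colon \Tilt(\Perv_\Go(\Gr)) \simto \Parity^0_{(\Go)}(\Gr), \qquad \cT(\lambda) = \Sat(\til(\lambda)) \longmapsto \cE(\lambda),
\]
using that $\Sat(\til(\lambda))$ must be an indecomposable parity complex supported on $\overline{\Gr_\lambda}$ whose restriction to $\Gr_\lambda$ is $\ubk\{\dim\Gr_\lambda\}$, and $\cE(\lambda)$ is the unique such object up to isomorphism.

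Next, I would realize both target categories as homotopy categories built from these additive pieces. On the classical side, since $\Perv_\Go(\Gr) \cong \Rep(\Gv)$ has enough tilting objects and finite cohomological dimension within each finite highest-weight truncation, the standard realization functor yields an equivalence $K^b(\Tilt(\Perv_\Go(\Gr))) \simto \Db\Perv_\Go(\Gr)$. Set-theoretic issues arising from $\bXp$ being infinite are handled by passing through the direct system of truncations indexed by finite lower subsets of $\bXp$. On the mixed side, the inclusion $\Parity^0_{(\Go)}(\Gr) \hookrightarrow \Parity_{(\Go)}(\Gr)$ is fully faithful, and hence so is the induced functor $K^b(\Parity^0_{(\Go)}(\Gr)) \to K^b(\Parity_{(\Go)}(\Gr)) = \Dmix_\Go(\Gr)$, since morphisms in $K^b$ are computed componentwise. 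The desired functor is then the composition
\[
\Db\Perv_\Go(\Gr) \;\simeq\; K^b(\Tilt(\Perv_\Go(\Gr))) \;\xrightarrow{K^b(\Phi)}\; K^b(\Parity^0_{(\Go)}(\Gr)) \;\hookrightarrow\; \Dmix_\Go(\Gr).
\]

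Finally, I would verify t-exactness and track the $\IC$ objects. By construction the functor sends $\cT(\lambda) \in \Perv_\Go(\Gr)$ to $\cE(\lambda)$, which lies in the heart $\Perv^\mix_\Go(\Gr)$ as a tilting object for the mixed perverse t-structure of \cite{arc:f2}. A standard tilting-resolution argument then shows that every object of $\Perv_\Go(\Gr)$ is carried into $\Perv^\mix_\Go(\Gr)$, whence t-exactness. The image of $\IC(\lambda)$ is then forced, because in both categories $\IC(\lambda)$ is characterized as the image of the canonical map $\cIstd(\lambda) \to \cIcos(\lambda)$, and the functor preserves standards and costandards since these admit two-term tilting (co)resolutions. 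The hardest step, I expect, is verifying t-exactness cleanly: the mixed perverse t-structure is built from parity sheaves rather than from a classical recollement, so one must confirm that the highest-weight datum transported by $\Phi$ matches the one defining the mixed heart, and this ultimately rests on the modular Satake/Mirkovi\'c--Vilonen package invoked in the introduction.
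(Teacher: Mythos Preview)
Your approach is essentially the same as the paper's: both pass through the equivalence $\Db\Perv_\Go(\Gr)\cong\Kb\Tilt(\Perv_\Go(\Gr))$ and then use the JMW hypothesis to identify $\Tilt(\Perv_\Go(\Gr))$ with a full subcategory of $\Parity_\Go(\Gr)$, inducing the desired fully faithful functor on homotopy categories. Your write-up is in fact more thorough than the paper's, which leaves the t-exactness and the identification of $\IC(\lambda)$ entirely implicit; one small slip is the phrase ``morphisms in $K^b$ are computed componentwise,'' which is not literally how $\Hom_{K^b}$ works, though the conclusion (full faithfulness of $K^b$ applied to a full additive embedding) is of course correct.
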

Note that the domain of this functor is \emph{not} $\Db_\Go(\Gr)$; rather, it is the derived category of the heart.  It is equivalent $\Db\Rep(\Gv)$.
\begin{proof}
Note that $\Db\Perv_\Go(\Gr) \cong \Kb\Tilt(\Perv_\Go(\Gr))$, as usual for a quasihereditary category.  Since $\chr \bk$ is a JMW prime for $G$, we have
\[
\Tilt(\Perv_\Go(\Gr)) = \Parity_\Go(\Gr) \cap \Perv_\Go(\Gr).
\]
The desired functor is induced by the fully faithful embedding $\Tilt(\Perv_\Go(\Gr)) \hookrightarrow \Parity_\Go(\Gr)$.
\end{proof}

Via Lemma~\ref{lem:sph-embed}, we will henceforth identify $\Perv_\Go(\Gr)$ with a full subcategory of $\Dmix_\Go(\Gr)$.  In particular, for any $\cF \in \DmixI(\Gr)$ and any $V \in \Rep(\Gv)$, it makes sense to form the convolution product
\[
\cF \star \Sat(V).
\]

\subsection{The Springer resolution and the nilpotent cone}

Let $\cBv = \Gv/\Bv$ be the flag variety for $\Gv$.  Let $\fuv$ be the Lie algebra of the unipotent radical of $\Bv$, and let $\tcN = \Gv \times^{\Bv} \fuv$ be the Springer resolution.  Finally, let $\cN$ be the nilpotent cone in the Lie algebra of $\Gv$; and let $\pi: \tcN \to \cN$ be the obvious map.

We equip $\cN$ with an action of the multiplicative group $\Gm$ by setting $z \cdot x = z^{-2}x$, where $z \in \Gm$ and $x \in \cN$. We likewise make $\Gm$ act on $\tcN$ by having $z \in \Gm$ scale the fibers of $\tcN \to \Gv/\Bv$ by $z^{-2}$. In both cases, this $\Gm$-action commutes with the natural $\Gv$-action. Moreover, the map $\pi$ is $(\Gv \times \Gm)$-equivariant. The induced action of $\Gm$ on the coordinate ring $\bk[\cN]$ has even nonnegative weights. In other words, $\bk[\cN]$ becomes a graded ring concentrated in even nonnegative degrees. 

In this paper, coherent sheaves on $\cN$ or $\tcN$ will always be $(\Gv \times \Gm)$-equivariant.  For brevity, we write $\CohN$ instead of $\CohGm(\cN)$ for the category of $(\Gv \times \Gm)$-equivariant coherent sheaves on $\cN$, and likewise for $\Coh(\tcN)$.  The notation $\pi_*$ should always be understood as a derived functor $\Db\Coh(\tcN) \to \Db\CohN$.

Let $\cON$ and $\cO_\tcN$ denote the structure sheaves of $\cN$ and $\tcN$, respectively.  Given $m \in \Z$, let $\cON\la m\ra$ denote the coherent sheaf that corresponds to the graded $\bk[\cN]$-module $\bk[\cN]\la m\ra$, where the latter is defined as in~\S\ref{ss:grvec}.  We also put $\cO_\tcN\la m\ra = \pi^*\cON\la m\ra$.  More generally, for any $\cF \in \Db\CohN$, we let $\cF\la m\ra = \cF \otimes \cON\la m\ra$, and likewise in $\Db\Coh(\tcN)$.

Any weight $\lambda \in \bX$ determines a line bundle $\cO_\tcN(\lambda)$ on $\tcN$.  The push-forwards $\pi_*\cO_\tcN(\lambda)$ will be discussed in~\S\ref{ss:pcoh} below.
In the special case where $\lambda = 0$, it is known (see~\cite[Theorem~5.3.2]{bk}) that
\begin{equation}\label{eqn:rational-resoln}
\pi_*\cO_\tcN \cong \cON.
\end{equation}
Separately, by~\cite[Lemmas~3.4.2 and~5.1.1]{bk}, one has
\begin{equation}\label{eqn:springer-dualizing}
\pi^! \cON \cong \cO_\tcN.
\end{equation}

It will sometimes be more convenient to work in the language of ``$\Gv$-equivariant graded finitely generated $\bk[\cN]$-modules'' rather than in that of ``$(\Gv \times \Gm)$-equi\-var\-i\-ant coherent sheaves on $\cN$,'' and we will pass freely between the two.  We identify the space of global sections $\Gamma(\tcN,\cO_\tcN)$ with the ring $\bk[\cN]$ via~\eqref{eqn:rational-resoln}, and given $\cF \in \Coh(\tcN)$, we think of $\Gamma(\tcN,\cF)$ as a $\Gv$-equivariant graded finitely generated $\bk[\cN]$-module.  For instance, the cohomology-vanishing result of~\cite[Theorem~2]{klt} says that for $\lambda \in \bXp$, $\pi_*(\cO_\tcN(\lambda))$ is a coherent sheaf, so
\begin{equation}\label{eqn:dom-line-sec}
\pi_*\cO_\tcN(\lambda) = \Gamma(\tcN, \cO_\tcN(\lambda))
\qquad\text{for $\lambda \in \bXp$.}
\end{equation}

\subsection{Perverse-coherent sheaves}
\label{ss:pcoh}

The category $\Db\CohN$ admits a t-structure whose heart is known as the category of \emph{perverse-coherent sheaves}, and is denoted by $\PCohN$.  For general background on this category, see~\cite{bez:qes,a}.  Some key features of this category are as follows:
\begin{itemize}
\item It is stable under $\cF \mapsto \cF\la 1\ra$.
\item Every object has finite length.  Up to grading shift, the isomorphism classes of simple objects are in bijection with $\bXp$.
\item It is a \emph{properly stratified category}.
\end{itemize}
For background on properly stratified categories, see~\cite[\S2]{arid}. In a properly stratified category---a notion that generalizes that of a quasihereditary category---there are four important classes of indecomposable objects, called \emph{standard}, \emph{proper standard}, \emph{costandard}, and \emph{proper costandard} objects.  In $\PCohN$, we denote these objects by
\[
\pcstd(\lambda), 
\qquad
\pcpstd(\lambda),
\qquad
\pccos(\lambda),
\qquad
\pcpcos(\lambda),
\]
respectively, where $\lambda \in \bXp$.  The proper ones are given by
\[
\pcpstd(\lambda) = \pi_*\cO_\tcN(-w_0\lambda)\la\delta_{w_0\lambda}\ra,
\qquad
\pcpcos(\lambda) = \pi_*\cO_\tcN(\lambda)\la -\delta_{w_0\lambda}\ra.
\]
Revisiting~\eqref{eqn:dom-line-sec}, we find that the proper costandard objects satisfy
\begin{equation}\label{eqn:aj}
\pcpcos(\lambda) \in \CohN
\qquad\text{for all $\lambda \in \bXp$.}
\end{equation}
More generally, any object of $\PCohN$ with a proper costandard filtration is actually a coherent sheaf.  (Proper standard objects, in constrast, are generally not coherent sheaves.)  For descriptions of $\pcstd(\lambda)$ and $\pccos(\lambda)$, see~\cite[Definition~4.2]{minnthuaye}.

Lastly, let $\SGD = \cRHom({-},\cO_\cN)$ be the Serre--Grothendieck duality functor on $\Db\Coh(\cN)$.  The category $\PCohN$ is stable under $\SGD$, and we have
\[
\SGD(\pcpcos(\lambda)) \cong \pcpstd(-w_0\lambda)
\qquad\text{and}\qquad
\SGD(\pccos(\lambda)) \cong \pcstd(-w_0\lambda).
\]

\section{The Mirkovi\'c--Vilonen conjecture for mixed sheaves}
\label{sect:spherical}

In this section, we recast the main results of~\cite{arid} in the setting of mixed modular derived categories, obtaining a mixed version of the Mirkovi\'c--Vilonen conjecture.  The main idea is to compare spherical parity sheaves on $\Gr$ with perverse-coherent sheaves on $\cN$.  Along the way, we carry out various auxiliary computations in $\PCohN$ that will be useful in the sequel.

\subsection{Derived equivalences for spherical sheaves}

Let $\Gamma \subset \bXp$ be a finite order ideal, i.e., a finite subset such that if $\gamma \in \Gamma$ and $\mu < \gamma$, then $\mu \in \Gamma$.  Let $\Gr_\Gamma = \bigcup_{\gamma \in \Gamma} \Gr_\gamma$ be the corresponding closed subset of $\Gr$, and let
\[
U_\Gamma = \Gr \smallsetminus \Gr_\Gamma.
\]
This is an open $\Go$-stable subset of $\Gr$.  Let $j_\Gamma: U_\Gamma \hookrightarrow \Gr$ be the inclusion map.  

Recall that $\PCohN$ is equipped with a recollement structure (see~\cite[Proposition~2.2]{arid}). Let $\PCohN_\Gamma \subset \PCohN$ denote the Serre subcategory generated by $\pcpcos(\gamma)\la m\ra$ with $\gamma \in \Gamma$, and let $\Pi_\Gamma: \PCohN \to \PCohN/\PCohN_\Gamma$ be the Serre quotient functor.  We will denote its derived version by the same symbol:
\[
\Pi_\Gamma: \Db\CohN \to \Db(\PCohN/\PCohN_\Gamma).
\]
Here, we are using the main result of~\cite{a} to identify
\begin{equation}\label{eqn:pcoh-dereq}
\Db\PCohN \cong \Db\CohN.
\end{equation}
Next, let
\[
\Db_\ft(\PCohN/\PCohN_\Gamma) \subset \Db(\PCohN/\PCohN_\Gamma)
\]
be the full triangulated subcategory generated by tilting objects.  (The subscript ``$\ft$'' refers to the fact that this category consists of ``finite tilting complexes.'')  Note that the natural functor
\begin{equation}\label{eqn:pcoh-tilt-dereq}
\Kb\Tilt(\PCohN/\PCohN_\Gamma) \simto \Db_\ft(\PCohN/\PCohN_\Gamma)
\end{equation}
is an equivalence of categories: both sides are generated by tilting objects, so it suffices to compare $\Hom^i(\cF,\cG)$ on each sides for $\cF, \cG \in \Tilt(\PCohN/\PCohN_\Gamma)$.  When $i = 0$, these groups agree, and when $i \ne 0$, $\Hom^i(\cF,\cG)$ vanishes on both sides.  (See~\cite[Proposition~1.5]{bbm} or~\cite[Theorem~3.17]{minnthuaye}.)

In the special case where $\Gamma = \varnothing$, the equivalence~\eqref{eqn:pcoh-dereq} restricts to an equivalence
\[
\Db_\ft\PCohN \cong \Db_\perf\CohN,
\]
where the right-hand side is the category of \emph{perfect complexes} on $\cN$, i.e., those with a finite resolution whose terms are direct sums of objects of the form $\cO_\cN \otimes V\la n\ra$ with $V \in \Rep(\Gv)$.

\begin{prop}\label{prop:sph-main}
There is an equivalence of triangulated categories
\[
P_\sph: \DmixGo(\Gr) \simto \Db_\perf\CohN
\]
satisfying $P_\sph(\cF\{ 1\}) \cong P_\sph(\cF)\la 1\ra$. Moreover, this equivalence is compatible with the geometric Satake equivalence: for any $\cF \in \DmixGo(\Gr)$ and $V \in \Rep(\Gv)$, there is a natural isomorphism
$
P_\sph(\cF \star \Sat(V)) \cong P_\sph(\cF) \otimes V
$.
\end{prop}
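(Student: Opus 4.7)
The plan is to realize $P_\sph$ as the composition of three equivalences. First, the formalism of~\cite{arc:f2,arc:f3}, extended to the non-affine stratification of $\Gr$ by $\Go$-orbits as explained in~\S\ref{ss:non-affine} and the appendix, tautologically identifies $\DmixGo(\Gr)$ with $\Kb\Parity_{(\Go)}(\Gr)$ in a way that matches $\{1\}$ with the appropriate shift on parity complexes. Second, the equivalence $\Parity_{(\Go)}(\Gr) \simto \Tilt(\PCohN)$ of~\cite[Proposition~5.7]{arid}, which intertwines $\{1\}$ with $\la 1\ra$, induces on bounded homotopy categories an equivalence $\Kb\Parity_{(\Go)}(\Gr) \simto \Kb\Tilt(\PCohN)$. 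Third, the special case $\Gamma = \varnothing$ of~\eqref{eqn:pcoh-tilt-dereq}, combined with~\eqref{eqn:pcoh-dereq} and the remark immediately following~\eqref{eqn:pcoh-tilt-dereq}, identifies $\Kb\Tilt(\PCohN)$ with $\Db_\ft\PCohN$, and hence with $\Db_\perf\CohN$, in a way that preserves $\la 1\ra$. Splicing these three steps produces $P_\sph$ together with the desired grading compatibility $P_\sph(\cF\{1\}) \cong P_\sph(\cF)\la 1\ra$.

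For the Satake-module compatibility, the key point is that convolution with $\Sat(V)$ preserves the additive subcategory $\Parity_{(\Go)}(\Gr)$: under the JMW hypothesis each $\Sat(V)$ is itself a parity sheaf, and the convolution of two $\Go$-equivariant parity sheaves on $\Gr$ is again parity. On the other side, tensoring with $V$ manifestly preserves $\Tilt(\PCohN)$. Both operations therefore lift to endofunctors of the respective bounded homotopy categories. The natural isomorphism $P_\sph(\cF \star \Sat(V)) \cong P_\sph(\cF) \otimes V$ is then inherited from the tensor compatibility that is part of the statement of~\cite[Proposition~5.7]{arid}, first on indecomposable parity generators $\cT(\mu)\{m\}$ and then on arbitrary $\cF \in \DmixGo(\Gr)$ by a standard triangulated extension argument using cones of morphisms between parity complexes.

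The main obstacle I anticipate is not the construction of $P_\sph$, which is essentially formal once the three ingredients are in place, but rather the bifunctoriality of the final natural isomorphism. One must verify that the module structure recorded in~\cite{arid} is natural in both variables, with sufficient coherence to yield a bifunctorial isomorphism of triangulated functors on the left-hand side, rather than a merely pointwise matching on the generating tilting objects; any inattention at this step would leave an unwanted ambiguity in how $\otimes V$ is transported across $P_\sph$.
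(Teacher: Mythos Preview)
Your construction of $P_\sph$ is correct and is exactly what the paper does: it simply cites~\cite[Proposition~5.7]{arid} as a ``restatement,'' which amounts to your three-step splice.

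The gap is in the Satake compatibility. Your claim that ``under the JMW hypothesis each $\Sat(V)$ is itself a parity sheaf'' is false: the JMW result says only that $\Sat(V)$ is parity when $V$ is \emph{tilting}. For a general $V$ (for instance $V = \irr(\lambda)$), the object $\Sat(V) = \IC(\lambda)$ need not be parity in positive characteristic, so convolution with $\Sat(V)$ does not preserve $\Parity_{(\Go)}(\Gr)$, and there is no endofunctor of $\Kb\Parity_{(\Go)}(\Gr)$ to compare with $({-})\otimes V$. Likewise, ``tensoring with $V$ manifestly preserves $\Tilt(\PCohN)$'' is only clear for tilting $V$; for arbitrary $V$ it is not manifest and in fact need not hold. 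Your argument therefore establishes the natural isomorphism $P_\sph(\cF \star \Sat(V)) \cong P_\sph(\cF) \otimes V$ only for tilting $V$, which is precisely what~\cite[Proposition~5.7]{arid} already gives.

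The paper closes this gap by a two-stage bootstrap on $V$: first extend from tilting $V$ to $V$ with a Weyl filtration by induction on the \emph{tilting dimension} of $V$ (see~\cite[Definition~2.10]{arid}), using cones of maps between tilting modules; then extend to arbitrary $V$ using the fact that every $\Gv$-module has a finite resolution by modules with a Weyl filtration. Your anticipated obstacle (bifunctorial coherence) is real but secondary; the primary missing idea is this induction on $V$.
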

\begin{proof}
The existence of the equivalence is just a restatement of~\cite[Proposition~5.7]{arid}.  That result also gives us compatibility with geometric Satake when $V$ is a tilting $\Gv$-module.  One can then extend that to, say, any $V$ with a Weyl filtration, by induction on the ``tilting dimension'' (see~\cite[Definition~2.10]{arid}) of $V$.  Finally, every $\Gv$-module admits a finite resolution by modules with a Weyl filtration.  By induction on the length of such a resolution, one obtains the full result.
\end{proof}

\begin{prop}
Let $\Gamma \subset \bXp$ be a finite order ideal.  There is an equivalence of triangulated categories
\[
P_{\sph,\Gamma}: \DmixGo(U_\Gamma) \simto \Db_\ft(\PCohN/\PCohN_\Gamma)
\]
such that the following diagram commutes up to isomorphism:
\[
\xymatrix{
\DmixGo(\Gr) \ar[d]_{j_\Gamma^*} \ar[r]^-{P_\sph}_-{\sim} &
  \Db_\perf\CohN \ar[d]^{\Pi_\Gamma} \\
\DmixGo(U_\Gamma) \ar[r]^-{P_{\sph,\Gamma}}_-{\sim} &
  \Db_\ft(\PCohN/\PCohN_\Gamma)}
\]
\end{prop}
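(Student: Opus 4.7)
The plan is to exhibit $P_{\sph,\Gamma}$ as the functor induced by $P_\sph$ on Verdier quotients, after showing that $P_\sph$ matches the kernels of $j_\Gamma^*$ and $\Pi_\Gamma$.

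On the constructible side, $j_\Gamma^*: \DmixGo(\Gr) \to \DmixGo(U_\Gamma)$ is a Verdier localization whose kernel is the full triangulated subcategory of $\DmixGo(\Gr)$ consisting of objects supported on the closed subset $\Gr_\Gamma$. This is the standard recollement behavior for mixed modular derived categories attached to a closed/open decomposition, proved as in~\cite{arc:f2,arc:f3} and extended to the spherical (non-affine-stratified) setting in~\S\ref{ss:non-affine} together with Appendix~\ref{sect:mixed}. This kernel is generated, as a thick triangulated subcategory, by the shifted parity sheaves $\cE(\gamma)\{n\}$ with $\gamma \in \Gamma$ and $n \in \Z$, since those are precisely the indecomposable parity generators of $\DmixGo(\Gr)$ that live on $\Gr_\Gamma$.

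On the coherent side, combining~\eqref{eqn:pcoh-dereq} and~\eqref{eqn:pcoh-tilt-dereq} (and their analogues for $\PCohN/\PCohN_\Gamma$), the functor $\Pi_\Gamma$ restricts to a Verdier quotient $\Db_\ft\PCohN \to \Db_\ft(\PCohN/\PCohN_\Gamma)$ whose kernel is the thick triangulated subcategory of $\Db_\ft\PCohN$ generated by the indecomposable tilting objects of $\PCohN$ labeled by elements of $\Gamma$. The nontrivial inclusion uses that in a properly stratified category a tilting object lies in the kernel of the Serre quotient by $\PCohN_\Gamma$ if and only if all of the labels appearing in its proper (co)standard filtration lie in $\Gamma$. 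Since Proposition~\ref{prop:sph-main} is obtained by deriving the additive equivalence $\Parity_{(\Go)}(\Gr) \simto \Tilt(\PCohN)$ of~\cite[Proposition~5.7]{arid}, which sends $\cE(\gamma)$ to the indecomposable tilting perverse-coherent sheaf indexed by $\gamma$, the functor $P_\sph$ carries the above generating set for $\ker(j_\Gamma^*)$ exactly onto a generating set for $\ker(\Pi_\Gamma)$, and so matches the two kernels.

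The universal property of Verdier localizations then produces the desired equivalence $P_{\sph,\Gamma}$ and makes the square commute up to natural isomorphism. The main obstacle I anticipate is the coherent-side kernel computation: one must correctly understand how the tilting objects of the properly stratified category $\PCohN$ interact with the Serre quotient by $\PCohN_\Gamma$, which requires careful bookkeeping involving proper (co)standard filtrations that has no direct counterpart in the more familiar quasihereditary case.
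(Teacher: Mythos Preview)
Your approach is correct in spirit but differs from the paper's and leaves one step underdeveloped. The paper's proof is a single line: it invokes \cite[Corollary~5.8]{arid}, which gives an additive equivalence $\Parity_{(\Go)}(U_\Gamma)\simto\Tilt(\PCohN/\PCohN_\Gamma)$ compatible with restriction and quotient, and then applies $\Kb$ together with~\eqref{eqn:pcoh-tilt-dereq}. In other words, the paper never passes through Verdier quotients at all; it stays at the level of additive generators on both sides and simply derives.

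Your route via matching kernels of Verdier localizations is a legitimate alternative, but the assertion that $\Pi_\Gamma$ restricts to a Verdier quotient $\Db_\ft\PCohN\to\Db_\ft(\PCohN/\PCohN_\Gamma)$ is not justified by citing~\eqref{eqn:pcoh-dereq} and~\eqref{eqn:pcoh-tilt-dereq} alone. What you actually need is that the induced functor from the abstract Verdier quotient $\Db_\ft\PCohN/\ker(\Pi_\Gamma)$ to $\Db_\ft(\PCohN/\PCohN_\Gamma)$ is an equivalence, not merely a functor with trivial kernel; this is exactly the tilting bookkeeping you flag at the end. The cleanest way to verify it is precisely to show that $\Pi_\Gamma$ on tiltings is full, essentially surjective, and kills exactly the morphisms factoring through tiltings labeled by $\Gamma$---which is the content of \cite[Corollary~5.8]{arid}. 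So your argument ultimately rests on the same lemma the paper cites, and the additive-level formulation buys a shorter path by avoiding the abstract Verdier machinery entirely.
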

\begin{proof}
This is an immediate consequence of~\cite[Corollary~5.8]{arid}, using the equivalence~\eqref{eqn:pcoh-tilt-dereq}.
\end{proof}

The functor $j_\Gamma^*$ has left and right adjoints $j_{\Gamma!}, j_{\Gamma*}: \DmixGo(U_\Gamma) \to \DmixGo(\Gr)$.  On the other hand, $\Pi_\Gamma$ has left and right adjoints $\pil_\Gamma, \pir_\Gamma$ that are a priori defined as functors $\Db(\PCohN/\PCohN_\Gamma) \to \Db\CohN$, but according to~\cite[Proposition~5.4]{minnthuaye}, they actually take objects in $\Db_\ft(\PCohN/\PCohN_\Gamma)$ to $\Db_\perf\CohN$.  From these observations, we obtain the following consequence of the preceding proposition.

\begin{cor}
Let $\Gamma \subset \bXp$ be a finite order ideal.  The following diagrams commute up to isomorphism:
{\tiny\[
\xymatrix{
\DmixGo(\Gr) \ar[r]^-{P_\sph}_-{\sim} &
  \Db_\perf\CohN \\
\DmixGo(U_\Gamma) \ar[r]^-{P_{\sph,\Gamma}}_-{\sim} \ar[u]_{j_{\Gamma!}}&
  \Db_\ft(\PCohN/\PCohN_\Gamma) \ar[u]_{\pil_\Gamma}}
\quad
\xymatrix{
\DmixGo(\Gr) \ar[r]^-{P_\sph}_-{\sim} &
  \Db_\perf\CohN \\
\DmixGo(U_\Gamma) \ar[r]^-{P_{\sph,\Gamma}}_-{\sim} \ar[u]_{j_{\Gamma*}}&
  \Db_\ft(\PCohN/\PCohN_\Gamma) \ar[u]_{\pir_\Gamma}}
\]}%
\end{cor}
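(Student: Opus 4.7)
The statement is essentially a formal consequence of uniqueness of adjoints, combined with the previous proposition and a check that the adjoints on the coherent side preserve the relevant subcategories.

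The plan is as follows. The previous proposition supplies an isomorphism of functors $P_{\sph,\Gamma} \circ j_\Gamma^* \cong \Pi_\Gamma \circ P_\sph$ between the triangulated categories $\DmixGo(\Gr)$ and $\Db_\ft(\PCohN/\PCohN_\Gamma)$. Both $j_\Gamma^*$ and $\Pi_\Gamma$ admit left and right adjoints. On the topological side, $j_{\Gamma!}$ and $j_{\Gamma*}$ are the standard adjoints on mixed modular derived categories. On the coherent side, $\pil_\Gamma$ and $\pir_\Gamma$ are a priori defined as functors $\Db(\PCohN/\PCohN_\Gamma) \to \Db\CohN$, so to use them here I first need to invoke~\cite[Proposition~5.4]{minnthuaye} to observe that they send objects of $\Db_\ft(\PCohN/\PCohN_\Gamma)$ into the subcategory $\Db_\perf\CohN$; hence they restrict to honest adjoints of $\Pi_\Gamma: \Db_\perf\CohN \to \Db_\ft(\PCohN/\PCohN_\Gamma)$.

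Next, since $P_\sph$ and $P_{\sph,\Gamma}$ are equivalences, the isomorphism from the previous proposition can be rewritten as $j_\Gamma^* \cong P_{\sph,\Gamma}^{-1} \circ \Pi_\Gamma \circ P_\sph$. Passing to left adjoints (which are unique up to unique isomorphism when they exist) yields
\[
j_{\Gamma!} \cong P_\sph^{-1} \circ \pil_\Gamma \circ P_{\sph,\Gamma},
\]
which rearranges to $P_\sph \circ j_{\Gamma!} \cong \pil_\Gamma \circ P_{\sph,\Gamma}$, the commutativity of the first diagram. Passing instead to right adjoints gives $P_\sph \circ j_{\Gamma*} \cong \pir_\Gamma \circ P_{\sph,\Gamma}$, the commutativity of the second.

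The only real content beyond formal adjunction is the point that $\pil_\Gamma$ and $\pir_\Gamma$ preserve the perfect subcategory, for which the cited result of \cite{minnthuaye} is used as a black box; once that is in hand there is no obstacle. Even this is not so much an obstacle as a bookkeeping step, since both sides of each asserted isomorphism manifestly take values in the appropriate subcategories, so the adjoint isomorphism automatically restricts.
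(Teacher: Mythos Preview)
Your argument is correct and matches the paper's own reasoning exactly: the paper records precisely the same observations (existence of the adjoints on both sides, the citation of \cite[Proposition~5.4]{minnthuaye} for the fact that $\pil_\Gamma$ and $\pir_\Gamma$ land in $\Db_\perf\CohN$) in the paragraph preceding the corollary and then states the result without further proof. There is nothing to add.
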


\begin{cor}\label{cor:psph-std}
We have
\[
P_\sph(\cJcos(\lambda)) \cong \pccos(\lambda)\la -\delta_{w_0\lambda}\ra
\qquad\text{and}\qquad
P_\sph(\cJstd(\lambda)) \cong \pcstd(\lambda)\la \delta_{w_0\lambda}\ra.
\]
\end{cor}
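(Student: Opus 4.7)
The plan is to reduce to a computation in the Serre quotient $\PCohN/\PCohN_\Gamma$ where the stratum $\lambda$ has become minimal, via the adjoint-compatibility squares of the preceding corollary. Specifically, set $\Gamma := \{\mu \in \bXp : \mu < \lambda\}$, so that $\Gr_\lambda$ is closed in $U_\Gamma$. Let $i : \Gr_\lambda \hookrightarrow U_\Gamma$ denote the closed inclusion, and put $\bar\cJ(\lambda) := i_*\ubk_{\Gr_\lambda}\{\dim \Gr_\lambda\} \in \DmixGo(U_\Gamma)$. Since $i$ is closed, one has $\cJstd(\lambda) \cong j_{\Gamma!}\bar\cJ(\lambda)$ and $\cJcos(\lambda) \cong j_{\Gamma*}\bar\cJ(\lambda)$, so the preceding corollary yields
\[
P_\sph(\cJstd(\lambda)) \cong \pil_\Gamma\bigl(P_{\sph,\Gamma}(\bar\cJ(\lambda))\bigr), \quad P_\sph(\cJcos(\lambda)) \cong \pir_\Gamma\bigl(P_{\sph,\Gamma}(\bar\cJ(\lambda))\bigr).
\]
The task is therefore to identify $P_{\sph,\Gamma}(\bar\cJ(\lambda))$ in $\Db_\ft(\PCohN/\PCohN_\Gamma)$ and then to compute $\pil_\Gamma$ and $\pir_\Gamma$ of it.

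The key step is to show that $P_{\sph,\Gamma}(\bar\cJ(\lambda))$ is (up to grading shift) the image of the simple at the minimal stratum $\lambda$ of $\PCohN/\PCohN_\Gamma$. Since $\Gr_\lambda$ is the unique minimal orbit in $U_\Gamma$, $\bar\cJ(\lambda)$ is the unique simple object in the appropriate heart of $\DmixGo(U_\Gamma)$ supported on $\Gr_\lambda$, up to a grading shift. The matching simple on the coherent side is characterized analogously, and the two must agree modulo a grading shift by uniqueness. The precise shift $\la \pm \delta_{w_0\lambda}\ra$ is pinned down by combining the explicit formulas
\[
\pcpcos(\lambda) = \pi_*\cO_\tcN(\lambda)\la -\delta_{w_0\lambda}\ra, \quad \pcpstd(\lambda) = \pi_*\cO_\tcN(-w_0\lambda)\la \delta_{w_0\lambda}\ra
\]
from~\S\ref{ss:pcoh} with the Satake-compatibility $P_\sph(\cF \star \Sat(V)) \cong P_\sph(\cF) \otimes V$. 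Once the identification is in place, the conclusion follows from the standard description of $\pcstd(\lambda)$ and $\pccos(\lambda)$ as the $\pil_\Gamma$- and $\pir_\Gamma$-extensions from the quotient of the simple at $\lambda$ (see~\cite[\S2]{arid}), which preserves grading shifts.

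The main obstacle is this grading-shift bookkeeping in the identification step. A natural approach is inductive: the base case $\lambda = 0$ is immediate ($\delta_0 = 0$, $\bar\cJ(0) = \ubk_\pt$, and $P_\sph(\ubk_\pt) \cong \cON = \pcstd(0) = \pccos(0)$, since a monoidal equivalence sends the convolution unit to the tensor unit and the proper (co)standards at the minimum coincide with the (co)standards), and the Satake-compatibility combined with induction on the length of a resolution by Weyl-filtered $\Gv$-modules---precisely as in the proof of Proposition~\ref{prop:sph-main}---propagates the result to all $\lambda \in \bXp$.
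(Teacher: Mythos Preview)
Your reduction via the $(j_{\Gamma!}, \pil_\Gamma)$ and $(j_{\Gamma*}, \pir_\Gamma)$ compatibility squares is exactly the paper's strategy. The gap is in the identification of $P_{\sph,\Gamma}(\bar\cJ(\lambda))$.

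You assert that $P_{\sph,\Gamma}(\bar\cJ(\lambda))$ is, up to shift, the \emph{simple} at $\lambda$ in $\PCohN/\PCohN_\Gamma$, and then that $\pcstd(\lambda)$ and $\pccos(\lambda)$ are recovered as $\pil_\Gamma$ and $\pir_\Gamma$ of that simple. Both claims are false. In a properly stratified category, $\pil_\Gamma$ of the simple at a minimal label yields the \emph{proper} standard $\pcpstd(\lambda)$, not $\pcstd(\lambda)$; the object $\pcstd(\lambda)$ is $\pil_\Gamma$ of the \emph{standard} in the quotient (which, at the minimal label, coincides with the costandard and the tilting, but not with the simple). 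So even if your identification held, you would end up proving $P_\sph(\cJstd(\lambda)) \cong \pcpstd(\lambda)\la\cdots\ra$, which is wrong. In fact the identification itself fails: $\gEnd(\bar\cJ(\lambda)) \cong \sH^\bullet(\Gr_\lambda)$, which matches $\uEnd$ of the (co)standard at $\lambda$ (Lemma~\ref{lem:uend-nabla}), not of the simple (whose $\uEnd$ is $\bk$). There is also no a priori $t$-exactness of $P_{\sph,\Gamma}$ available here that would let a ``unique simple on each side'' argument go through.

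The paper's route bypasses all of this and needs no induction. Since $\bar\cJ(\lambda) = j_\Gamma^*\cIstd(\lambda)$ and the Satake compatibility of Proposition~\ref{prop:sph-main} gives $P_\sph(\cIstd(\lambda)) = P_\sph(\Sat(\wey(\lambda))) \cong \cON \otimes \wey(\lambda)$ in a single step, one obtains $P_{\sph,\Gamma}(\bar\cJ(\lambda)) \cong \Pi_\Gamma(\cON \otimes \wey(\lambda))$ directly. The definition $\pcstd(\lambda) = \pil_\Gamma\Pi_\Gamma(\cON \otimes \wey(\lambda))\la -\delta_{w_0\lambda}\ra$ from \cite[Definition~4.2]{minnthuaye} then pins down the shift immediately; the costandard case is dual.
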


\begin{proof}
Let $\Gamma = \{\mu \in \bXp \mid \mu < \lambda\}.$ Note that $P_\sph(\cIstd(\lambda)) \cong \cON \otimes \wey(\lambda)$. The corollary follows from the observations that $\pcstd(\lambda) = \pil_\Gamma \Pi_\Gamma(\cON \otimes \wey(\lambda))\la -\delta_{w_0\lambda}\ra$ \cite[Definition 4.2]{minnthuaye} and $\cJstd(\lambda) \cong j_{\Gamma !}j_\Gamma^* \cIstd(\lambda).$
\end{proof}

\subsection{Further study of perverse-coherent sheaves}

In this subsection, we collect a number of results about $\uHom$-groups, quotients, and subobjects in $\PCohN$.

\begin{lem}\label{lem:uend-nabla}
Let $\lambda \in \bXp$. There are isomorphisms of graded rings $\uEnd(\pccos(\lambda)) \cong \uEnd(\pcstd(\lambda)) \cong \sH^\bullet(\Gr_\lambda)$.
\end{lem}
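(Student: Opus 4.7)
My plan is to transport the computation to the constructible side via the spherical equivalence $P_\sph$ of Proposition~\ref{prop:sph-main}. Because $P_\sph$ intertwines $\la 1\ra$ on the coherent side with $\{1\}$ on the constructible side, it converts $\uHom$ groups in $\Db_\perf\CohN$ into $\gHom$ groups in $\DmixGo(\Gr)$, compatibly with composition. Combining this with the identifications of Corollary~\ref{cor:psph-std} and noting that an overall shift $\la -\delta_{w_0\lambda}\ra$ drops out of $\uEnd$, I obtain graded-ring isomorphisms
\[
\uEnd(\pccos(\lambda)) \cong \gEnd(\cJcos(\lambda)) \qquad\text{and}\qquad \uEnd(\pcstd(\lambda)) \cong \gEnd(\cJstd(\lambda)).
\]
It therefore suffices to show that both right-hand sides are isomorphic to $\sH^\bullet(\Gr_\lambda)$ as graded rings.

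For the costandard case, I would apply the adjunction $((i^\sph_\lambda)^*, (i^\sph_\lambda)_*)$ to the definition $\cJcos(\lambda) = (i^\sph_\lambda)_*\ubk_{\Gr_\lambda}\{\dim \Gr_\lambda\}$ to reduce to
\[
\gEnd(\cJcos(\lambda)) \cong \gEnd(\ubk_{\Gr_\lambda}),
\]
with the right-hand side computed in $\DmixGo(\Gr_\lambda)$. The analogous collapse for $\cJstd(\lambda)$ uses the adjunction $((i^\sph_\lambda)_!, (i^\sph_\lambda)^!)$ and the fact that $(i^\sph_\lambda)^!\cJstd(\lambda) = \ubk_{\Gr_\lambda}\{\dim \Gr_\lambda\}$. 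Since these adjunction isomorphisms are natural and preserve composition, they yield isomorphisms of graded rings, not merely of graded vector spaces.

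The final ingredient is the identification $\gEnd(\ubk_{\Gr_\lambda}) \cong \sH^\bullet(\Gr_\lambda)$. Here $\Gr_\lambda$ is smooth and paved by affine spaces (via its Iwahori stratification), and in particular has only even ordinary cohomology, so $\ubk_{\Gr_\lambda}$ is a parity complex in $\DmixGo(\Gr_\lambda)$. The standard mixed-modular identification of graded endomorphisms of the constant sheaf on such a variety---a special case of the parity/cohomology identification; see Appendix~\ref{sect:mixed} and the discussion in~\S\ref{ss:non-affine}---then yields the desired graded-ring isomorphism, with the $\{n\}$-grading on the left matching the cohomological grading on the right. The one place where care is required is this last step, since the spherical stratification is not directly covered in~\cite{arc:f2,arc:f3}; the treatment of non-affine-paved strata in~\S\ref{ss:non-affine} is precisely what is needed. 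Everything else is a routine application of the adjunction formalism together with the already-established compatibility of $P_\sph$ with grading shifts.
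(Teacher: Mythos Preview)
Your proof is correct, and the underlying idea---reducing to the graded endomorphism ring of $\ubk_{\Gr_\lambda}$ on the single stratum, which is $\sH^\bullet(\Gr_\lambda)$---is the same as the paper's. The organization differs: the paper stays on the coherent side and invokes \cite[Theorem~5.9]{arid}, which packages the identification $\sH^\bullet(\Gr_\lambda) \cong \uEnd(\Pi_\Gamma(\cON \otimes \til(\lambda)))$, and then uses full faithfulness of $\pir_\Gamma$ to get $\uEnd(\pccos(\lambda))$. You instead transport to the constructible side via Corollary~\ref{cor:psph-std} and use the full faithfulness of $(i^\sph_\lambda)_*$ and $(i^\sph_\lambda)_!$ (equivalently, $(i^\sph_\lambda)^*(i^\sph_\lambda)_* \cong \id$ and $(i^\sph_\lambda)^!(i^\sph_\lambda)_! \cong \id$) to collapse $\gEnd(\cJcos(\lambda))$ and $\gEnd(\cJstd(\lambda))$ to $\gEnd(\ubk_{\Gr_\lambda})$. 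Your route is slightly more self-contained, avoiding the external citation to \cite{arid}; the paper's is terser. Both handle $\pcstd(\lambda)$ and $\pccos(\lambda)$ symmetrically, though the paper spells out only the costandard case.
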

\begin{proof}
This is a consequence of~\cite[Theorem~5.9]{arid}.  Specifically, let $\Gamma = \{ \mu \in \bXp \mid \mu < \lambda \}$.  Consider the tilting module $\til(\lambda)$, which corresponds under the geometric Satake equivalence to the parity sheaf $\cE(\lambda)$.  Note that $\cE(\lambda)|_{U_\Gamma}$ is just the shifted constant sheaf $\ubk\{\dim \Gr_\lambda\}$ on $\Gr_\lambda$.  Thus,~\cite[Theorem~5.9]{arid} gives us the first isomorphism below:
\[
\sH^\bullet(\Gr_\lambda) \cong \uEnd(\Pi_\Gamma(\cON \otimes \til(\lambda))) \cong \uEnd(\pir_\Gamma \Pi_\Gamma(\cON \otimes \til(\lambda))).
\]
The second isomorphism holds because $\pir_\Gamma$ is fully faithful. Finally, from \cite[Definition~4.2]{minnthuaye}, we see that $\pir_\Gamma\Pi_\Gamma(\cON \otimes \til(\lambda)) \cong \pccos(\lambda)\la -\delta_{w_0\lambda}\ra$.
\end{proof}

The preceding lemma lets us regard the coherent sheaf $\pccos(\lambda)$ as a graded $\sH^\bullet(\Gr_\lambda)$-module.  We can of course also regard $\bk$ (thought of as a graded vector space concentrated in degree $0$) as a $\sH^\bullet(\Gr_\lambda)$-module in the obvious way.

\begin{prop}\label{prop:aj-compute}
There is an isomorphism of $\Gv$-equivariant graded $\bk[\cN]$-modules
\[
\bk \otimes_{\sH^\bullet(\Gr_\lambda)} \pccos(\lambda) \cong \pcpcos(\lambda)\la 2\delta_{w_0\lambda}\ra.
\]
\end{prop}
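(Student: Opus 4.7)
The plan is to exploit the properly-stratified structure on $\PCohN$ together with the explicit model $\pccos(\lambda) \cong \pir_\Gamma \Pi_\Gamma(\cON \otimes \til(\lambda))\la \delta_{w_0\lambda}\ra$ established in the proof of Lemma~\ref{lem:uend-nabla}. The numerical coincidence driving the shift $\la 2\delta_{w_0\lambda}\ra$ is that $\sH^\bullet(\Gr_\lambda) \simeq \sH^\bullet(\Gv/P_\lambda)$ has top cohomological degree $2\delta_{w_0\lambda}$, since $\Gv/P_\lambda$ has complex dimension $\delta_{w_0\lambda}$ (where $P_\lambda \subset \Gv$ is the parabolic stabilizer of $\lambda$).

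First I would invoke the general theory of properly stratified categories to produce a $\pcpcos$-filtration of $\pccos(\lambda)$ whose graded multiplicities match $\sH^\bullet(\Gr_\lambda)$: concretely, the multiplicity of $\pcpcos(\lambda)\la n\ra$ as a subquotient equals $\dim \sH^n(\Gr_\lambda) = \dim \uEnd(\pccos(\lambda))_n$ (see \cite[\S2]{arid}). In particular the extremal shift $\la 2\delta_{w_0\lambda}\ra$ appears exactly once, corresponding to the fundamental class of $\Gv/P_\lambda$.

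Next I would verify that positive-degree endomorphisms of $\pccos(\lambda)$, corresponding under Lemma~\ref{lem:uend-nabla} to positive-degree classes in $\sH^\bullet(\Gr_\lambda)$, strictly lower this filtration. Granted this compatibility, $\bk \otimes_{\sH^\bullet(\Gr_\lambda)} \pccos(\lambda)$ collapses onto the unique extremal subquotient, which is precisely $\pcpcos(\lambda)\la 2\delta_{w_0\lambda}\ra$. This proves the proposition.

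The principal obstacle is establishing the strict-lowering property compatibly with the $\pcpcos$-filtration: the existence and graded multiplicities of the filtration are formal consequences of proper stratification, but the interaction with the $\uEnd$-action is genuinely geometric. A natural approach is to transport the problem across $P_{\sph,\Gamma}$ to the affine Grassmannian, where the endomorphism ring $\sH^\bullet(\Gr_\lambda)$ acts on the restriction $\cE(\lambda)|_{U_\Gamma} \cong (i_\lambda^\sph)_*\ubk_{\Gr_\lambda}\{\dim \Gr_\lambda\}$ as the cohomology ring acts on the pushforward of the constant sheaf. Positive-degree cup product strictly raises cohomological degree there, and via the dictionary $\{1\} = \la -1\ra[1]$ this translates precisely into the strict-lowering property on the $\pcpcos$-filtration in $\PCohN$.
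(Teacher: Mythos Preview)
Your overall strategy is the paper's---exploit the proper costandard filtration of $\pccos(\lambda)$ together with $\uEnd(\pccos(\lambda))\cong\sH^\bullet(\Gr_\lambda)$---but two essential steps are missing, and each is where the real content lies.

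First, ``positive-degree endomorphisms strictly lower the filtration'' does not by itself force $\bk\otimes_{\sH^\bullet}\pccos(\lambda)$ to collapse to a single subquotient. Strict lowering only gives the inclusion
\[
\sH^{>0}\cdot\pccos(\lambda)\ \subset\ \ker\bigl(\pccos(\lambda)\twoheadrightarrow\pcpcos(\lambda)\la 2\delta_{w_0\lambda}\ra\bigr);
\]
you need equality. (Toy counterexample: for $R=\bk[x]/x^2$ acting trivially on $M=\bk\oplus\bk\la 2\ra$, the graded multiplicities match $R$ and $x$ strictly lowers any filtration, yet $M/xM=M$.) The paper obtains equality by passing to the Serre quotient $\PCohN_\Gamma/\PCohN_\Upsilon$, where the image $I$ of $\pccos(\lambda)$ is not only the injective envelope of $S$ but also the \emph{projective cover} of $S\la 2\delta_{w_0\lambda}\ra$; since the twists of $I$ exhaust the indecomposable projectives there, one gets $\mathrm{rad}(I)=\sH^{>0}\cdot I$ exactly. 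Your cup-product transport to $\Gr$ only re-establishes the inclusion, not the equality.

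Second, you never say in which abelian category the cokernel is being taken. The tensor product $\bk\otimes_{\sH^\bullet(\Gr_\lambda)}\pccos(\lambda)$ is by definition $\cok_{\CohN} f$ for the map $f=\sum f_i\colon\bigoplus_i\pccos(\lambda)\la -d_i\ra\to\pccos(\lambda)$, whereas the radical argument above computes $\cok_{\PCohN} f$. A priori these differ, since $\CohN$ and $\PCohN$ are hearts of different $t$-structures on $\Db\CohN$. The paper closes this gap by showing that both $\ker_{\PCohN} f$ and $\cok_{\PCohN} f$ admit proper costandard filtrations and hence lie in $\CohN$ by~\eqref{eqn:aj}; the uniqueness of truncation triangles \cite[Proposition~1.3.3(ii)]{bbd} then forces $\cok_{\CohN} f\cong\cok_{\PCohN} f$. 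Without this comparison the argument is incomplete even after the first gap is repaired.
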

\begin{proof}
Let $\uEnd(\pccos(\lambda))^+ \subset \uEnd(\pccos(\lambda))$ denote the subspace spanned by homogeneous elements of strictly positive degree.  Let $\{f_1, \ldots, f_n\}$ be a basis of homogeneous elements for $\uEnd(\pccos(\lambda))^+$, and let $d_i$ denote the degree of $f_i$. In other words, we may regard each $f_i$ as a map $\pccos(\lambda)\la -d_i\ra \to \pccos(\lambda)$. Form their sum
\[
\bigoplus_{i = 1}^n \pccos(\lambda)\la -d_i\ra \xrightarrow{f = \sum f_i} \pccos(\lambda).
\]
This is a morphism in both $\CohN$ and $\PCohN$.  We will study its kernel and cokernel in both categories.  First, via the isomorphism of Lemma~\ref{lem:uend-nabla}, we have
\begin{equation}\label{eqn:aj-coh-cok}
\bk \otimes_{\sH^\bullet(\Gr_\lambda)} \pccos(\lambda) \cong \uEnd(\pccos(\lambda))/\uEnd(\pccos(\lambda))^+ \otimes_{\uEnd(\pccos(\lambda))} \pccos(\lambda) \cong \cok_{\CohN} f.
\end{equation}

We now turn our attention to $\PCohN$.
Let $\Gamma = \{ \mu \in \bXp \mid \mu \le \lambda\}$, and let $\Upsilon = \Gamma \smallsetminus \{\lambda\}$. Consider the quotient functor
\[
\Pi_{\Gamma,\Upsilon}: \PCohN_\Gamma \to \PCohN_\Gamma/\PCohN_\Upsilon,
\]
and let $\pir_{\Gamma,\Upsilon}$ be its right adjoint. Then $\PCohN_\Gamma/\PCohN_\Upsilon$ is a properly stratified category with a unique simple object up to Tate twist: namely, the object $S = \Pi_{\Gamma,\Upsilon}(\pcpcos(\lambda))$. This object has an injective envelope $I = \Pi_{\Gamma,\Upsilon}(\pccos(\lambda))$.  We have $\pcpcos(\lambda) \cong \pir_{\Gamma,\Upsilon}(S)$ and $\pccos(\lambda) \cong \pir_{\Gamma,\Upsilon}(I)$. Moreover, as in Lemma~\ref{lem:uend-nabla}, we have $\uEnd(I) \cong \sH^\bullet(\Gr_\lambda)$.  On the other hand, by~\cite[Lemma~2.7(1) and Theorem~2.15]{arid}, the object $I$ is also isomorphic to $\Pi_{\Gamma,\Upsilon}(\pcstd(\lambda)\la 2\delta_{w_0\lambda}\ra)$.  Thus, $I$ is the projective cover of $S\la2\delta_{w_0\lambda}\ra$.

Let $\tilde f_i: I\la -d_i\ra \to I$ be the map corresponding to $f_i$ under the isomorphism $\pir_{\Gamma,\Upsilon}: \uEnd(I) \simto \uEnd(\pccos(\lambda))$, and define $\tilde f$ in the same way as $f$ above.  Then the image of $\tilde f$ is the radical of the indecomposable projective object $I$, and so $\cok \tilde f \cong S\la 2\delta_{w_0\lambda}\ra$.  Also, trivially, $\ker \tilde f$ has a filtration whose subquotients are various $S\la k\ra$.  Applying $\pir_{\Gamma,\Upsilon}$, we obtain an exact sequence in $\PCohN$
\begin{equation}\label{eqn:aj-pcoh-ses}
0 \to \ker_{\PCohN} f \to \bigoplus_{i = 1}^n \pccos(\lambda)\la -d_i\ra \overset{f}{\to} \pccos(\lambda) \to \pcpcos(\lambda)\la 2\delta_{w_0\lambda}\ra \to 0,
\end{equation}
where $\ker_{\PCohN} f$ has a filtration whose subquotients are various $\pcpcos(\lambda)\la k\ra$.

Let $K$ be the cone of $f$ in $\Db\CohN$.  Then, considering both the natural and perverse-coherent t-structures on this category, we have two distinguished triangles
\begin{gather*}
(\ker_{\CohN} f)[1] \to K \to \cok_{\CohN} f \to,\\
(\ker_{\PCohN} f)[1] \to K \to \cok_{\PCohN} f \to.
\end{gather*}
But we saw in~\eqref{eqn:aj-pcoh-ses} that both $\ker_{\PCohN} f$ and $\cok_{\PCohN} f$ have proper costandard filtrations, and hence happen to lie in $\CohN$.  So by~\cite[Proposition~1.3.3(ii)]{bbd}, the two distinguished triangles above must be canonically isomorphic.  In particular, we have $\cok_{\CohN} f \cong \cok_{\PCohN} f$.  The result then follows by comparing~\eqref{eqn:aj-coh-cok} and~\eqref{eqn:aj-pcoh-ses}.
\end{proof}

The next lemma is a related fact involving standard objects rather than costandard ones.


\begin{lem}\label{lem:uend-delta-k}
There is an isomorphism $\uEnd(\pcstd(\lambda))$-modules
\[
\uHom(\pcpstd(\lambda)\la -2\delta_{w_0\lambda}\ra,\pcstd(\lambda)) \simto \bk.
\]
\end{lem}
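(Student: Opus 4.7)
The plan is to deduce this lemma from Proposition~\ref{prop:aj-compute} via Serre--Grothendieck duality, viewing the statement as the standard counterpart to the costandard statement proved there.

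First, I would apply the contravariant equivalence $\SGD$ to the $\uHom$ in question. Inverting the identities $\SGD(\pcpcos(\lambda)) \cong \pcpstd(-w_0\lambda)$ and $\SGD(\pccos(\lambda)) \cong \pcstd(-w_0\lambda)$ recorded in~\S\ref{ss:pcoh}, together with $\SGD(\cF\la n\ra) = \SGD(\cF)\la -n\ra$, converts the given $\uHom$ into
\[
\uHom(\pccos(-w_0\lambda),\, \pcpcos(-w_0\lambda)\la 2\delta_{w_0\lambda}\ra).
\]
A short combinatorial check (using $w \mapsto w^{-1}$ to identify the two relevant minimum-length coset representatives) gives $\delta_{w_0\cdot(-w_0\lambda)} = \delta_{-\lambda} = \delta_{w_0\lambda}$, so Proposition~\ref{prop:aj-compute}, applied with $-w_0\lambda$ in place of $\lambda$, identifies the second argument above with the quotient $\pccos(-w_0\lambda) / \sH^\bullet(\Gr_{-w_0\lambda})^+ \cdot \pccos(-w_0\lambda)$.

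Next, I would transfer the Hom computation to the local recollement quotient $\PCohN_\Gamma/\PCohN_\Upsilon$ used in the proof of Proposition~\ref{prop:aj-compute}, taking $\Gamma = \{\mu \in \bXp : \mu \le -w_0\lambda\}$ and $\Upsilon = \Gamma \smallsetminus \{-w_0\lambda\}$. Full faithfulness of $\pir_{\Gamma,\Upsilon}$ reduces the problem to computing $\uHom(I, S\la 2\delta_{w_0\lambda}\ra)$ in this local category, where $S$ is the unique simple (up to Tate twist) and $I$ is its injective envelope. The crucial observation, already isolated in the proof of Proposition~\ref{prop:aj-compute}, is that $I$ is simultaneously the projective cover of $S\la 2\delta_{w_0\lambda}\ra$. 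Because the top of a projective cover is the targeted simple with multiplicity one, this immediately yields a one-dimensional $\uHom$ concentrated in grading zero.

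Finally, the $\uEnd(\pcstd(\lambda))$-module structure of the claimed isomorphism is automatic. By Lemma~\ref{lem:uend-nabla}, $\uEnd(\pcstd(\lambda)) \cong \sH^\bullet(\Gr_\lambda)$ is nonnegatively graded with one-dimensional degree-zero part, so its augmentation ideal (living in strictly positive degrees) must annihilate a one-dimensional target that sits in degree zero; hence the action factors through the augmentation to $\bk$. The main obstacle in this plan is purely bookkeeping---verifying the $\delta$-identity and threading Tate twists carefully through the Serre duality and the recollement---but once these are in place, everything else is a mirror image of the argument already given for Proposition~\ref{prop:aj-compute}.
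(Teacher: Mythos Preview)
Your argument is correct, but it takes an unnecessary detour compared with the paper. The paper works directly at $\lambda$: in the same local quotient category $\PCohN_\Gamma/\PCohN_\Upsilon$ from the proof of Proposition~\ref{prop:aj-compute} (with $\Gamma = \{\mu \le \lambda\}$), it simply observes that $\uHom(S,I) \cong \bk$ because $I$ is the injective envelope of $S$, shifts by $\la -2\delta_{w_0\lambda}\ra$, and pushes through the fully faithful \emph{left} adjoint $\pil_{\Gamma,\Upsilon}$, which sends $S \mapsto \pcpstd(\lambda)$ and $I\la -2\delta_{w_0\lambda}\ra \mapsto \pcstd(\lambda)$. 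You instead first apply $\SGD$ to pass to $-w_0\lambda$ and costandard objects, then use the \emph{right} adjoint $\pir$ and the ``top of a projective cover'' description of $I$; this forces you to verify the combinatorial identity $\delta_{-\lambda} = \delta_{w_0\lambda}$. Both routes rest on exactly the same structural fact about the one-stratum quotient (that $I$ is simultaneously injective envelope of $S$ and projective cover of $S\la 2\delta_{w_0\lambda}\ra$), so the difference is purely one of packaging: the paper's path is shorter, while yours makes the duality between the standard and costandard pictures explicit.
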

\begin{proof}
Let $S, I \in \PCohN_\Gamma/\PCohN_\Upsilon$ be as in the preceding proof, and let $\pil_{\Gamma,\Upsilon}$ be the left adjoint to $\Pi_{\Gamma,\Upsilon}$.  Since $I$ is the injective envelope of $S$, we certainly have $\uHom(S\la -2\delta_{w_0\lambda}\ra, I\la -2\delta_{w_0\lambda}\ra) \cong \bk$.  Applying the fully faithful functor $\pil_{\Gamma,\Upsilon}$ yields the result.
\end{proof}

\begin{lem}\label{lem:pcstd-compare}
Let $M \in \PCohN$ be an object with a costandard filtration.  Then $\uHom(\pcstd(\lambda), M)$ is a free $\uEnd(\pcstd(\lambda))$-module.  Moreover, there is a natural isomorphism
\[
\bk \otimes_{\uEnd(\pcstd(\lambda))} \uHom(\pcstd(\lambda),M) \cong \uHom(\pcpstd(\lambda)\la -2\delta_{w_0\lambda}\ra, M).
\]
\end{lem}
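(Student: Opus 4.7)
The plan is to identify the desired natural map with precomposition by a generator of the one-dimensional space $\uHom(\pcpstd(\lambda)\la -2\delta_{w_0\lambda}\ra, \pcstd(\lambda))$ provided by Lemma~\ref{lem:uend-delta-k}, and then prove both assertions simultaneously by dévissage on a costandard filtration of $M$. First I would fix a nonzero morphism $\phi: \pcpstd(\lambda)\la -2\delta_{w_0\lambda}\ra \to \pcstd(\lambda)$ and check that $g \circ \phi = 0$ for every $g \in \uEnd(\pcstd(\lambda))$ of strictly positive degree: indeed, $g \circ \phi$ would represent a positive-degree element of $\uHom(\pcpstd(\lambda)\la -2\delta_{w_0\lambda}\ra, \pcstd(\lambda))$, which by Lemma~\ref{lem:uend-delta-k} is concentrated in degree zero. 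Hence precomposition by $\phi$ factors through $\bk \otimes_{\uEnd(\pcstd(\lambda))} \uHom(\pcstd(\lambda), M)$, producing the natural map $\bar\phi^*$.

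The standard vanishings $\uExt^1(\pcstd(\lambda), \pccos(\mu)) = 0 = \uExt^1(\pcpstd(\lambda), \pccos(\mu))$ from the properly-stratified axioms make both $\uHom$ functors exact on short exact sequences of costandard-filtered objects, so it suffices to treat $M = \pccos(\mu)$. For $\mu \ne \lambda$, both sides vanish by orthogonality of standards and costandards. For $\mu = \lambda$, I would work in the Serre quotient $\PCohN_\Gamma / \PCohN_\Upsilon$ used in the proof of Proposition~\ref{prop:aj-compute}, where $\pcstd(\lambda)$, $\pcpstd(\lambda)$, $\pccos(\lambda)$ correspond via $\pil_{\Gamma,\Upsilon}$ or $\pir_{\Gamma,\Upsilon}$ to the projective cover $P = I\la -2\delta_{w_0\lambda}\ra$, the simple object $S$, and the injective envelope $I$ respectively. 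The adjunctions then yield
\[
\uHom(\pcstd(\lambda), \pccos(\lambda)) \cong \uEnd(I)\la 2\delta_{w_0\lambda}\ra,
\]
free of rank one over $\uEnd(\pcstd(\lambda)) \cong \sH^\bullet(\Gr_\lambda)$ with generator in degree $-2\delta_{w_0\lambda}$, alongside $\uHom(\pcpstd(\lambda)\la -2\delta_{w_0\lambda}\ra, \pccos(\lambda)) \cong \bk\la -2\delta_{w_0\lambda}\ra$. Tracing the construction, $\bar\phi^*$ sends the generator of the tensor product on the left to the nonzero socle inclusion $S\la -2\delta_{w_0\lambda}\ra \hookrightarrow I$, hence is an isomorphism.

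To lift this to arbitrary $M$, I would induct on the length of the costandard filtration. Given a short exact sequence $0 \to N' \to N \to N'' \to 0$ with freeness and the isomorphism already established for $N'$ and $N''$, the induced short exact sequence of $\uHom(\pcstd(\lambda), -)$-modules splits over $\uEnd(\pcstd(\lambda))$ because $\uHom(\pcstd(\lambda), N'')$ is free, hence projective, as a graded $\uEnd(\pcstd(\lambda))$-module; this yields freeness of $\uHom(\pcstd(\lambda), N)$ and, after tensoring the split sequence with $\bk$ over $\uEnd(\pcstd(\lambda))$, a five-lemma argument delivers the isomorphism for $N$.

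The main obstacle I anticipate is bookkeeping the grading shifts---specifically, verifying that the chosen $\phi$ really corresponds to the socle inclusion (and not the zero morphism) under the Serre quotient functor, so that $\bar\phi^*$ on the base case is genuinely nontrivial. Once the shifts are pinned down, the rest is routine properly-stratified homological algebra.
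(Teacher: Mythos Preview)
Your proposal is correct and follows essentially the same route as the paper: identify the natural map with precomposition by the generator furnished by Lemma~\ref{lem:uend-delta-k}, then run a d\'evissage on the costandard filtration using the $\uExt^1$-vanishings for $\pcstd(\lambda)$ and $\pcpstd(\lambda)$. The only cosmetic differences are that the paper outsources the freeness assertion to (the dual of)~\cite[Lemma~2.12]{arid} rather than folding it into the same induction, and in the base case $M = \pccos(\lambda)$ the paper invokes an argument in the spirit of~\cite[Lemma~2.7(3)]{arid} to replace $\pccos(\lambda)$ by $\pcstd(\lambda)\la 2\delta_{w_0\lambda}\ra$, which is exactly what your passage to the Serre quotient $\PCohN_\Gamma/\PCohN_\Upsilon$ accomplishes.
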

\begin{proof}
The assertion that $\uHom(\pcstd(\lambda), M)$ is a free $\uEnd(\pcstd(\lambda))$-module is just a restatement of (the dual of)~\cite[Lemma~2.12]{arid}. Next, let us identify $\bk$ with $\uHom(\pcpstd(\lambda)\la -2\delta_{w_0\lambda}\ra,\pcstd(\lambda))$ by Lemma~\ref{lem:uend-delta-k}.  We wish to show that the natural map
\begin{equation}\label{eqn:pcstd-compare}
\uHom(\pcpstd(\lambda)\la -2\delta_{w_0\lambda}\ra,\pcstd(\lambda))
\mathop{\otimes}_{\uEnd(\pcstd(\lambda))} \uHom(\pcstd(\lambda), M) \to
\uHom(\pcpstd(\lambda)\la -2\delta_{w_0\lambda}\ra, M)
\end{equation}
is an isomorphism.  We proceed by induction on the number of steps in a costandard filtration of $M$.

Suppose first that $M = \pccos(\mu)\la n\ra$ for some $\mu \in \bXp$ and some $n \in \Z$.  If $\mu \ne \lambda$, then both sides of~\eqref{eqn:pcstd-compare} vanish, and there is nothing to prove.  If $\mu = \lambda$, then an argument like that in~\cite[Lemma~2.7(3)]{arid} shows that we can replace $M$ on both sides of~\eqref{eqn:pcstd-compare} by the standard object $\pcstd(\lambda)\la n+2\delta_{w_0\lambda}\ra$.  After this change,~\eqref{eqn:pcstd-compare} is obviously an isomorphism.

For general $M$, choose a short exact sequence $0 \to M' \to M \to M'' \to 0$ where both $M'$ and $M''$ have costandard filtrations with fewer steps than that of $M$.  We claim that both sides of~\eqref{eqn:pcstd-compare} take this sequence to a short exact sequence.  For the right-hand side, this holds simply because $\uExt^1(\pcpstd(\lambda)\la -2\delta_{w_0\lambda}\ra, M') = 0$.  For the left-hand side, we first note that $\uExt^1(\pcstd(\lambda), M') = 0$; then, the functor $\uHom(\pcstd(\lambda),{-})$ takes our sequence to a short exact sequence of free $\uEnd(\pcstd(\lambda))$-modules. The desired exactness follows. As a consequence, if~\eqref{eqn:pcstd-compare} is already known to be an isomorphism for $M'$ and $M''$, then it is for $M$ as well.
\end{proof}

\begin{lem}\label{lem:pcpcos-deg0}
Let $\lambda \in \bXp$.  The degree-$0$ component of $\Gamma(\tcN,\cO_\tcN(\lambda))$, regarded just as a graded $\Gv$-representation, can be identified with $\cow(\lambda)$.
\end{lem}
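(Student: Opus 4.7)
The plan is to exploit the fact that $\tcN$ is a vector bundle over the flag variety $\cBv$ and reduce the statement to a direct application of the Borel--Weil theorem. Let $p: \tcN = \Gv \times^{\Bv} \fuv \to \cBv$ denote the natural projection; it is $\Gv$-equivariant, and $\Gm$ acts fiberwise by scaling with weight $-2$. The line bundle $\cO_\tcN(\lambda)$ is by definition $p^*\cL_\lambda$, where $\cL_\lambda = \Gv \times^{\Bv} \bk_\lambda$ is the $\Gv$-equivariant line bundle on $\cBv$ associated to the character $\lambda$ of $\Bv$.

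First I would compute $p_*\cO_\tcN$ as a graded $\Gv$-equivariant sheaf on $\cBv$. Since $p$ is the vector bundle $\Gv \times^{\Bv} \fuv \to \Gv/\Bv$, one has $p_*\cO_\tcN \cong \mathrm{Sym}^\bullet((\fuv)^*)$, viewed as the sheaf $\Gv \times^{\Bv} \mathrm{Sym}^\bullet(\fuv^*)$ on $\cBv$. Under our conventions, $\Gm$ acts on $\fuv$ with weight $-2$, so $\mathrm{Sym}^k(\fuv^*)$ sits in degree $2k$; in particular, only the summand $k=0$ contributes in degree~$0$.

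Next, applying the projection formula gives
\[
p_*\cO_\tcN(\lambda) = p_*(p^*\cL_\lambda) \cong \cL_\lambda \otimes p_*\cO_\tcN \cong \cL_\lambda \otimes \mathrm{Sym}^\bullet(\fuv^*),
\]
and taking global sections (using $\Gamma(\tcN,{-}) = \Gamma(\cBv, p_*({-}))$) yields a decomposition of $\Gamma(\tcN,\cO_\tcN(\lambda))$ as a graded $\Gv$-representation whose degree-$0$ piece is $\Gamma(\cBv, \cL_\lambda \otimes \mathrm{Sym}^0(\fuv^*)) = \Gamma(\cBv, \cL_\lambda)$. Finally, since $\lambda$ is dominant (with our sign convention that $\Bv$ is the negative Borel, so $\cL_\lambda$ has global sections precisely when $\lambda \in \bXp$), the Borel--Weil theorem identifies $\Gamma(\cBv, \cL_\lambda) \cong \cow(\lambda)$, completing the identification.

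The only subtle point is bookkeeping: one must confirm that the sign conventions declared earlier (positive Borel $B$ vs.\ negative Borel $\Bv$) make the Borel--Weil side produce $\cow(\lambda)$ rather than its dual, and one must check that the $\Gm$-weight on the fibers is indeed concentrated in strictly positive even degrees away from $\mathrm{Sym}^0$. Neither is a real obstacle; both amount to matching the conventions already fixed in the preliminaries.
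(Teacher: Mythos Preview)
Your proof is correct and follows essentially the same route as the paper: both identify the $2i$-th graded component of $\Gamma(\tcN,\cO_\tcN(\lambda))$ with $\mathrm{ind}_{\Bv}^{\Gv}(\bk_\lambda \otimes \mathrm{Sym}^i(\fuv)^*)$ via the vector-bundle structure of $\tcN$ over $\cBv$, and then read off the $i=0$ case as $\cow(\lambda)$. The paper just states this identification directly, while you spell out the projection-formula step, but the content is the same.
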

\begin{proof}
It follows from the definition of the grading that the $2i$-th graded component of $\Gamma(\tcN, \cO_\tcN(\lambda))$ is isomorphic to the $\Gv$-representation $\mathrm{ind}_{\Bv}^\Gv (\bk_\lambda \otimes \mathrm{Sym}^i(\fuv)^*)$, where $\mathrm{Sym}^i(\fuv)^*$ is the $i$-th symmetric power of the dual vector space to $\fuv$.  In particular, when $i = 0$, this reduces to $\mathrm{ind}_{\Bv}^\Gv \bk_\lambda \cong \cow(\lambda)$.
\end{proof}

The preceding lemma and the following one together tell us that $\Gamma(\tcN, \cO_\tcN(\lambda))$ is generated as a $\bk[\cN]$-module by its graded component of degree $0$.

\begin{lem}\label{lem:cow-pcpcos-surj}
For any $\lambda \in \bXp$, the obvious map $\cON \otimes \cow(\lambda) \to \Gamma(\tcN, \cO_\tcN(\lambda))$ is surjective.
\end{lem}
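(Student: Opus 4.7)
The plan is to pass to the sheaf side on $\cN$, reduce to the $\Gm$-fixed point $0\in\cN$ by graded Nakayama, and conclude via cohomology and base change together with Kempf vanishing. Because $\lambda\in\bXp$, the identity~\eqref{eqn:dom-line-sec} says that $\Gamma(\tcN,\cO_\tcN(\lambda))$ is the module of global sections of the coherent sheaf $\pi_*\cO_\tcN(\lambda)\in\CohN$. On the affine variety $\cN$ this reduces the claim to surjectivity of the $(\Gv\times\Gm)$-equivariant morphism of sheaves $\cON\otimes\cow(\lambda)\to\pi_*\cO_\tcN(\lambda)$. Both source and target are finitely generated graded $\bk[\cN]$-modules; $\bk[\cN]$ is concentrated in nonnegative (even) degrees with $\bk[\cN]_0=\bk$; and the only closed $\Gm$-fixed point of $\cN$ is $0$. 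Graded Nakayama therefore reduces the problem to showing that the induced map of fibers at $0$ is surjective.

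The left-hand fiber is manifestly $\cow(\lambda)$. For the right-hand fiber, the map $\pi$ is proper, $\cO_\tcN(\lambda)$ is a line bundle, and $R^i\pi_*\cO_\tcN(\lambda)=0$ for $i>0$ by~\cite[Theorem~2]{klt}, so cohomology and base change supplies a natural isomorphism
\[
\bk\otimes_{\bk[\cN]}\pi_*\cO_\tcN(\lambda)\simto H^0\bigl(\pi^{-1}(0),\,\cO_\tcN(\lambda)|_{\pi^{-1}(0)}\bigr).
\]
A brief local computation in a trivialization $\tcN|_U\cong U\times\fuv$ over an open $U\subset\cBv$ (using that the scheme-theoretic fiber of $\fuv\hookrightarrow\cN$ over $0$ is just $\{0\}$) identifies $\pi^{-1}(0)$ with the reduced zero section $\cBv$, and $\cO_\tcN(\lambda)|_{\cBv}\cong\cO_{\cBv}(\lambda)$ with trivial $\Gm$-action. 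Kempf vanishing then gives $H^0(\cBv,\cO_{\cBv}(\lambda))\cong\cow(\lambda)$, concentrated in $\Gm$-degree $0$.

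Under these identifications the fiber map is the identity on $\cow(\lambda)$: by construction our map extends the degree-$0$ inclusion of Lemma~\ref{lem:pcpcos-deg0} by $\bk[\cN]$-linearity, so passing to fibers collapses it to that inclusion. Surjectivity at the origin is thus automatic, and graded Nakayama finishes the argument. The main step requiring care is the base-change computation, namely verifying that $\pi^{-1}(0)$ is reduced and that the fiber sits purely in $\Gm$-degree $0$; once those two facts are in hand, everything else is formal.
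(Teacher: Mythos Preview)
Your argument has a genuine gap at the base-change step. The standard cohomology-and-base-change theorem (Hartshorne III.12.11, EGA III~7.7) requires the sheaf to be flat over the base. Here $\cO_\tcN(\lambda)$ is a line bundle on $\tcN$, but $\pi:\tcN\to\cN$ is \emph{not} flat: the fiber over a regular nilpotent is a point, while $\pi^{-1}(0)=\cBv$ has dimension $\dim\fuv>0$. The vanishing $R^i\pi_*\cO_\tcN(\lambda)=0$ for $i>0$ alone does not yield the isomorphism
\[
\bk\otimes_{\bk[\cN]}\pi_*\cO_\tcN(\lambda)\;\simto\;H^0(\cBv,\cO_\cBv(\lambda))
\]
without flatness. (Derived base change does not help either: the square with $\{0\}\hookrightarrow\cN$ and $\pi$ is not Tor-independent, precisely because of the excess fiber dimension.) Worse, the isomorphism you are asserting is \emph{equivalent} to the lemma: writing $M=\Gamma(\tcN,\cO_\tcN(\lambda))$ and $R=\bk[\cN]$, the claim $M/R_{>0}M\cong\cow(\lambda)$ says exactly that $M$ is generated in degree $0$ by $M_0=\cow(\lambda)$, which is the surjectivity you set out to prove. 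So the argument as written is circular.

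The paper's proof proceeds quite differently and avoids this issue. It begins on $\tcN$ rather than on $\cN$: the surjection $\cow(\lambda)\twoheadrightarrow\bk_\lambda$ of $\Bv$-modules induces a surjection of vector bundles $\cO_\tcN\otimes\cow(\lambda)\twoheadrightarrow\cO_\tcN(\lambda)$ on $\tcN$. Pushing forward by $\pi_*$ gives the map $h$ in question, and one must show its cocone $K$ lies in $\CohN$. This is done via the perverse-coherent t-structure: $h$ is surjective in $\PCohN$ (so $K\in\PCohN$), and since $\cON\otimes\cow(\lambda)$ has a proper costandard filtration while the target $\pcpcos(\lambda)\la\delta_{w_0\lambda}\ra$ is itself proper costandard, the kernel $K$ inherits a proper costandard filtration and therefore lies in $\CohN$ by~\eqref{eqn:aj}. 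The key input is thus structural information about $\PCohN$, not a base-change computation.
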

\begin{proof}
There is a surjective map of $\Bv$-representations $\cow(\lambda) \to \bk_\lambda$, where $\bk_\lambda$ denotes the $1$-dimensional $\Bv$-representation with weight $\lambda$. From this, we obtain a surjective map of vector bundles $\cO_\tcN \otimes \cow(\lambda) \to \cO_\tcN(\lambda)$ on $\tcN$. Applying $\pi_*$ and using~\eqref{eqn:rational-resoln}, we obtain a map $h: \cON \otimes \cow(\lambda) \to \pi_*\cO_\tcN(\lambda) \cong \pcpcos(\lambda)\la \delta_{w_0\lambda}\ra$.  Let $K$ be the cocone of $h$, so that we have a distinguished triangle
\[
K \to \cON \otimes \cow(\lambda) \overset{h}{\to} \pcpcos(\lambda)\la \delta_{w_0\lambda}\ra \to.
\]
To prove that $h$ is surjective, we must show that $K$ lies in $\CohN$.  The proof of~\cite[Lemma~5.4]{a} yields a slightly different fact: that $h$ is surjective as a morphism in $\PCohN$, and hence that $K \in \PCohN$.  (The statement of~\cite[Lemma~5.4]{a} involves $\wey(\lambda)$ instead of $\cow(\lambda)$, but its proof goes through for any $\Gv$-representation with highest weight $\lambda$.) On the other hand, by~\cite[Theorem~2.15(3)]{arid}, $\cON \otimes \cow(\lambda)$ has a costandard filtration, and hence a proper costandard filtration.  It follows that $K$, which is the kernel of $h$ in $\PCohN$, also has a proper costandard filtration, so it lies in $\CohN$, as desired.
\end{proof}

\subsection{The Mirkovi\'c--Vilonen conjecture for mixed sheaves}

We are now ready adapt the arguments in~\cite[\S6]{arid} to the mixed modular setting.

\begin{lem}\label{lem:mixed-sph-orbit}
Let $\lambda \in \bXp$.  The following conditions on an object $\cF \in \DmixGo(\Gr_\lambda)$ are equivalent:
\begin{enumerate}
\item $\cF$ is pure of weight~$0$.
\item $\gHom(\cF,\ubk)$ is a free graded $\sH^\bullet(\Gr_\lambda)$-module, and $\gHom(\bk,\cF[k]) = 0$ if $k \ne 0$.
\end{enumerate}
\end{lem}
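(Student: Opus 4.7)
The plan is to exploit the fact that $\Gr_\lambda$ is a single $\Go$-orbit, so every parity complex in $\Parity_{(\Go)}(\Gr_\lambda)$ is a direct sum of Tate twists of $\ubk$, and $\gEnd(\ubk)$ is naturally identified with $\sH^\bullet(\Gr_\lambda)$ (via the same mechanism underlying Lemma~\ref{lem:uend-nabla}). The category $\DmixGo(\Gr_\lambda)$ is thus controlled by this graded ring, and the lemma should amount to a homological characterization of free graded modules.

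For the direction $(1) \Rightarrow (2)$, I would write $\cF \cong \bigoplus_i \ubk\{n_i\}$ and compute directly: $\gHom(\cF, \ubk) \cong \bigoplus_i \sH^\bullet(\Gr_\lambda)\la n_i\ra$, which is manifestly free. For the vanishing of $\gHom(\ubk, \cF[k])$ in $k \ne 0$, it suffices to treat $\cF = \ubk$; in the mixed modular framework built from parity sheaves, the total cohomology of $\Gr_\lambda$ lives entirely ``on the diagonal'' (all contributions to $\Hom(\ubk, \ubk[k]\la n\ra)$ cluster in bidegrees with $k+n$ fixed by the $\{1\}$-grading), so $\Hom(\ubk, \ubk[k]\{n\}) = 0$ for $k \ne 0$.

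For the harder direction $(2) \Rightarrow (1)$, pick a homogeneous $\sH^\bullet(\Gr_\lambda)$-basis $\{f_i\}$ of the free module $\gHom(\cF, \ubk)$, with $f_i$ of degree $n_i$, and assemble them into a morphism $\phi \colon \cF \to P$, where $P := \bigoplus_i \ubk\{n_i\}$. By construction the induced map $\phi^* \colon \gHom(P, \ubk) \to \gHom(\cF, \ubk)$ is an isomorphism of free $\sH^\bullet(\Gr_\lambda)$-modules. The remaining task is to promote this into the statement that $\phi$ itself is an isomorphism in $\DmixGo(\Gr_\lambda)$. For this I would argue that, because both $\cF$ (by hypothesis) and $P$ (by the $(1) \Rightarrow (2)$ direction already proved) satisfy the vanishing in (2), the groups $\gHom(\ubk, \cF)$ and $\gHom(\ubk, P)$ are concentrated in cohomological degree~$0$; a Verdier/Grothendieck duality pairing between $\gHom(\ubk, -)$ and $\gHom(-, \ubk)$ on $\Gr_\lambda$ then converts the known isomorphism $\phi^*$ into an isomorphism $\gHom(\ubk, \cF) \simto \gHom(\ubk, P)$, whence $\phi$ is an isomorphism by conservativity.

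The main obstacle will be justifying this conservativity/duality argument rigorously within the mixed modular setup, since $\Gr_\lambda$ is not stratified by affine spaces and so falls outside the framework of~\cite{arc:f2,arc:f3} proper. I expect to invoke a general result from Appendix~\ref{sect:mixed} realizing $\DmixGo(\Gr_\lambda)$ as (a subcategory of) the derived category of graded $\sH^\bullet(\Gr_\lambda)$-modules, under which the functor $\gHom(\ubk,-)$ is fully faithful on the relevant subcategory and pure objects of weight~$0$ correspond precisely to free modules concentrated in a single cohomological degree. Given such a translation, conditions (1) and (2) both unwind to the same assertion about modules, and the equivalence becomes automatic.
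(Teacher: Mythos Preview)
Your overall strategy is correct and matches what the paper has in mind (its proof just cites \cite[Lemma~6.1]{arid}): since $\Gr_\lambda$ is a single orbit, $\Parity_{(\Go)}(\Gr_\lambda)$ is equivalent to the category of finitely generated free graded $R$-modules for $R=\sH^\bullet(\Gr_\lambda)$, so $\DmixGo(\Gr_\lambda)\simeq\Kb(\text{free graded }R\text{-mod})$; purity of weight~$0$ becomes ``isomorphic to a free module placed in degree~$0$'', $\gHom^k(\ubk,\cF)$ becomes $H^k(M^\bullet)$, and $\gHom(\cF,\ubk)$ becomes $\uHom_R(H^0(M^\bullet),R)$ once the cohomology is concentrated. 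Your $(1)\Rightarrow(2)$ is fine.

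The gap is in $(2)\Rightarrow(1)$. Your map $\phi\colon\cF\to P$ has $\phi^*$ an isomorphism on $\gHom^0(-,\ubk)$, but to invoke conservativity you need it on all $\gHom^k(-,\ubk)$, hence you need $\gHom^k(\cF,\ubk)=0$ for $k\ne 0$. Condition~(2b) only gives $\gHom^k(\ubk,\cF)=0$, and the ``Verdier/Grothendieck duality pairing'' you invoke does not convert one into the other for a general graded ring: over $R=\bk[x]$ with $M^\bullet=(R\xrightarrow{x}R)$, condition~(2) holds ($\gHom(\cF,\ubk)=0$ is trivially free, and $H^k(M^\bullet)=0$ for $k\ne 0$) yet $\cF$ is not pure. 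What actually makes the lemma true---and what your ``duality'' is groping toward---is Poincar\'e duality on the smooth projective variety $\Gr_\lambda$: the ring $R=\sH^\bullet(\Gr_\lambda)$ is a graded Frobenius algebra, hence self-injective. Over such $R$ every finitely generated module of finite projective dimension is already free, so~(2b) alone forces $M^\bullet\simeq H^0(M^\bullet)$ with $H^0(M^\bullet)$ free, giving~(1) directly; equivalently, self-injectivity gives $\Ext_R^{>0}(-,R)=0$, which plugs the hole in your $\phi$-argument. You should name this property explicitly rather than deferring to a black box in Appendix~\ref{sect:mixed} (no such result is there).
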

\begin{proof}
Essentially identical to~\cite[Lemma~6.1]{arid}.
\end{proof}

\begin{thm}\label{thm:mixed-mv}
Let $\lambda \in \bXp$.  Then $\cIstd(\lambda)$ is $*$-pure, and $\cIcos(\lambda)$ is $!$-pure.
\end{thm}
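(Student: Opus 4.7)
The plan is to reduce to a freeness-and-vanishing computation via Lemma~\ref{lem:mixed-sph-orbit}, and then translate each condition to the coherent side using $P_\sph$. I would focus on showing $!$-purity of $\cIcos(\lambda)$; the $*$-purity of $\cIstd(\lambda)$ should then follow by Verdier duality, since $\SGD\, \cIstd(\lambda) \cong \cIcos(-w_0\lambda)$ (reflecting $\wey(\lambda)^* \cong \cow(-w_0\lambda)$ on the representation side) and $\SGD$ exchanges $*$-restrictions with $!$-restrictions.

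For fixed $\mu \in \bXp$ with $\mu \le \lambda$, I would apply Lemma~\ref{lem:mixed-sph-orbit} (with $\lambda$ replaced by $\mu$) to $\cF = (i^\sph_\mu)^! \cIcos(\lambda)$. The second condition, $\gHom(\ubk_{\Gr_\mu}, \cF[k]) = 0$ for $k \ne 0$, rewrites by the adjunction $((i^\sph_\mu)_!,(i^\sph_\mu)^!)$ and the identity $(i^\sph_\mu)_! \ubk_{\Gr_\mu} = \cJstd(\mu)\{-\dim\Gr_\mu\}$ as the vanishing of $\gHom(\cJstd(\mu)\{-\dim\Gr_\mu\}, \cIcos(\lambda)[k])$. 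The first condition, that $\gHom(\cF,\ubk_{\Gr_\mu})$ be a free graded $\sH^\bullet(\Gr_\mu)$-module, has no direct adjunction available, but it can be handled either by a Verdier-duality rewrite (trading it for a $\gHom$ involving $(i^\sph_\mu)^*\cIstd(-w_0\lambda)$, accessible through the adjunction $((i^\sph_\mu)^*,(i^\sph_\mu)_*)$) or by working with the distinguished triangle $(i^\sph_\mu)_!(i^\sph_\mu)^!\cIcos(\lambda)\to\cIcos(\lambda)\to\cdots$.

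Applying $P_\sph$ with Corollary~\ref{cor:psph-std} and the compatibility with geometric Satake, noting that $P_\sph(\sky)\cong\cON$ so that $P_\sph(\cIcos(\lambda)) \cong \cON\otimes\cow(\lambda)$ and $P_\sph(\cJstd(\mu)) \cong \pcstd(\mu)\la\delta_{w_0\mu}\ra$, both conditions become assertions about $\uHom$ and $\uExt$ in $\Db\CohN$ between $\pcstd(\mu)$ and $\cON\otimes\cow(\lambda)$ (up to grading shifts). The latter object has a costandard filtration in $\PCohN$ by \cite[Theorem~2.15(3)]{arid} (the same input invoked in the proof of Lemma~\ref{lem:cow-pcpcos-surj}). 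In a properly stratified category $\uExt^{>0}(\pcstd(\mu),M)=0$ whenever $M$ has a costandard filtration, which takes care of the vanishing. For freeness, Lemma~\ref{lem:pcstd-compare} says exactly that $\uHom(\pcstd(\mu),M)$ is a free $\uEnd(\pcstd(\mu))$-module, and this endomorphism ring is identified with $\sH^\bullet(\Gr_\mu)$ by Lemma~\ref{lem:uend-nabla}.

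The main obstacle is the translation of the freeness condition in Lemma~\ref{lem:mixed-sph-orbit}, since the pattern ``$(i^\sph_\mu)^!$ on the left of a $\gHom$'' does not fit any standard adjunction. I expect this to be handled uniformly by invoking Verdier duality at the outset, simultaneously swapping $\cIcos\leftrightarrow\cIstd$ and $(i^\sph_\mu)^!\leftrightarrow(i^\sph_\mu)^*$, after which both requirements of Lemma~\ref{lem:mixed-sph-orbit} reduce to the $\pcstd$-versus-costandard-filtration dictionary already established in the previous subsection.
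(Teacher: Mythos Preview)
Your approach is correct and essentially the same as the paper's, modulo which of the two dual statements is proved directly. The paper goes straight to $*$-purity of $\cIstd(\lambda)$: for each $\mu\preceq\lambda$ it computes $\gHom((i^\sph_\mu)^*\cIstd(\lambda),\ubk[k])$ via the adjunction $((i^\sph_\mu)^*,(i^\sph_\mu)_*)$, obtaining $\gHom(\cIstd(\lambda),\cJcos(\mu)\{-\dim\Gr_\mu\}[k])$ and then, under $P_\sph$, $\uHom(\cON\otimes\wey(\lambda),\pccos(\mu)\la\cdots\ra[k])$. Since $\cON\otimes\wey(\lambda)$ has a \emph{standard} filtration in $\PCohN$, this vanishes for $k\ne 0$; for $k=0$ it is free over $\uEnd(\pccos(\mu))\cong\sH^\bullet(\Gr_\mu)$ by~\cite[Lemma~2.12]{arid}. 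Lemma~\ref{lem:mixed-sph-orbit} then gives purity.

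One bookkeeping point: after your Verdier-duality switch to $\cIstd$ and $(i^\sph_\mu)^*$, the adjunction lands on $\gHom(\cIstd,\cJcos)$, whose coherent translation is $\uHom(\text{standard-filtered object},\pccos)$---the Serre--Grothendieck dual of the $\pcstd$-versus-costandard-filtration pairing you name. The latter (Lemma~\ref{lem:pcstd-compare}) matches your \emph{pre}-switch computation $\gHom(\cJstd(\mu),\cIcos(\lambda))$. Either route works, but the paper's choice sidesteps your ``obstacle'' entirely: condition~(2) of Lemma~\ref{lem:mixed-sph-orbit} is verified wholly through the single family $\gHom(\cF,\ubk[k])$, so only the one adjunction is ever needed.
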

One can also show that the stalks of $\cIstd(\lambda)$ and the costalks of $\cIcos(\lambda)$ obey certain parity-vanishing conditions, by using the decomposition of $\DmixI(\Gr)$ into ``even'' and ``odd'' objects as explained in~\cite[\S2.1]{arc:f2}, 
\begin{proof}
Let $\mu$ be a dominant weight such that $\mu \preceq \lambda$. Using adjunction and the equivalence $P_\sph$, we obtain:
\begin{multline*}
\gHom((i^\sph_\mu)^*\cIstd(\lambda), \ubk[k]) \cong
\gHom(\cIstd(\lambda), \cJcos(\mu)\{-\dim \Gr_\mu\}[k]) \\
\cong
\uHom(\cON \otimes \wey(\lambda), \pccos(\mu)\la -\delta_{w_0\mu}-\dim \Gr_\mu\ra[k]).
\end{multline*}
Recall that $\cON \otimes \wey(\lambda)$ has a standard filtration as an object of $\PCohN$.  It follows that the last $\uHom$-group above vanishes for $k \ne 0$.  On the other hand, for $k = 0$, it is a free module over $\uEnd(\pccos(\mu))$, by~\cite[Lemma~2.12]{arid}.

Using Lemma~\ref{lem:uend-nabla}, we see that $\gHom((i^\sph_\mu)^*\cIstd(\lambda), \ubk[k])$ obeys the second condition in Lemma~\ref{lem:mixed-sph-orbit}.  By that lemma, $(i^\sph_\mu)^*\cIstd(\lambda)$ is pure of weight $0$, as desired.
\end{proof}

\section{The regular representation and the regular perverse sheaf}
\label{sect:regular}

In this section, we review a number of basic facts about the regular representation $\bk[\Gv]$ of $\Gv$, and then we translate them into geometric statements about $\Gr$. 

\subsection{The regular representation}


Regard $\bk[\Gv]$ as a $(\Gv \times \Gv)$-module in the usual way: given $f \in \bk[\Gv]$ and $g,h \in \Gv$, we put $((g,h)\cdot f)(x) = f(g^{-1}xh)$.  The results below are elementary and very close to those in, say,~\cite[\S I.3.7]{jantzen}.  We include proofs because we will require slightly finer information about the right $\Gv$-action than is given in~{\it loc.~cit}.

If $V$ and $V'$ are two $\Gv$-representations, we write $V \boxtimes V'$ for their tensor product regarded as a $(\Gv \times \Gv)$-representation.  In an abuse of notation, we sometimes identify $V$ with $V \boxtimes \bk$; i.e., we regard a $\Gv$-representation as a $(\Gv \times \Gv)$-representation by making the second copy of $\Gv$ act trivially.  (To make the first copy act trivially instead, we explicity write $\bk \boxtimes V$.)

\begin{lem}\label{lem:gvgv-swap}
For any $\Gv$-module $V$, there is a natural isomorphism of $(\Gv \times \Gv)$-modules $V \otimes \bk[\Gv] \cong (\bk \boxtimes V) \otimes \bk[\Gv]$.
\end{lem}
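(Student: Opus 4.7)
The plan is to exhibit an explicit natural isomorphism and verify by direct computation that it intertwines the two $(\Gv \times \Gv)$-structures. First, I would identify $V \otimes \bk[\Gv]$, viewed purely as a $\bk$-vector space, with the space $\Mor(\Gv, V)$ of regular morphisms from $\Gv$ to $V$ via the assignment $v \otimes f \mapsto (x \mapsto f(x) v)$. For $V$ finite-dimensional this is the standard identification, and since every rational $\Gv$-module is a union of finite-dimensional subrepresentations, no issue arises in general.

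Next, I would translate both $(\Gv \times \Gv)$-actions into this picture. Combining the recipe $((g_1,g_2)\cdot f)(x) = f(g_1^{-1} x g_2)$ for the bi-regular action on $\bk[\Gv]$ with either the $V \boxtimes \bk$ or the $\bk \boxtimes V$ action on $V$, the two structures on $\Mor(\Gv, V)$ become
\begin{align*}
((g_1,g_2)\cdot_{V \boxtimes \bk} F)(x) &= g_1 \cdot F(g_1^{-1} x g_2), \\
((g_1,g_2)\cdot_{\bk \boxtimes V} F)(x) &= g_2 \cdot F(g_1^{-1} x g_2).
\end{align*}

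Then I would define $\phi : \Mor(\Gv, V) \to \Mor(\Gv, V)$ by $\phi(F)(x) = x^{-1} \cdot F(x)$, with the evident inverse $\phi^{-1}(F)(x) = x \cdot F(x)$. The intertwining property is a one-line check: on the one hand
\[
\phi\bigl((g_1,g_2)\cdot_{V \boxtimes \bk} F\bigr)(x) = x^{-1} g_1 \cdot F(g_1^{-1} x g_2),
\]
while on the other hand
\[
\bigl((g_1,g_2)\cdot_{\bk \boxtimes V} \phi(F)\bigr)(x) = g_2 \cdot \phi(F)(g_1^{-1} x g_2) = g_2 (g_1^{-1} x g_2)^{-1} \cdot F(g_1^{-1} x g_2) = x^{-1} g_1 \cdot F(g_1^{-1} x g_2),
\]
and the two expressions agree. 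Naturality in $V$ is immediate from the formula for $\phi$, since it depends only on the $\Gv$-action on $V$, not on any chosen basis.

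This is essentially a bookkeeping lemma, so no step is really an obstacle; the only thing to be careful about is the left/right convention for the action on $\bk[\Gv]$ fixed just above the statement, which dictates that the correct twist is by $x^{-1}$ rather than by $x$.
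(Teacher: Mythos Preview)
Your proof is correct and follows essentially the same approach as the paper: both identify the underlying vector space with $\Mor(\Gv,V)$, write out the two $(\Gv\times\Gv)$-actions in those terms, and use the automorphism $\phi(F)(x)=x^{-1}\cdot F(x)$ to intertwine them. Your write-up is slightly more detailed in verifying the intertwining identity and in addressing naturality, but the argument is the same.
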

\begin{proof}
Identify the underlying vector space of both sides with the space $\Mor(\Gv,V)$ of morphisms $\Gv \to V$.  The two $(\Gv \times \Gv)$-actions above correspond to the following two actions on $\Mor(\Gv,V)$:
\[
((g,h) \cdot_1 f)(x) = gf(g^{-1}xh)
\qquad\text{and}\qquad
((g,h) \cdot_2 f)(x) = hf(g^{-1}xh).
\]
Let $\phi: \Mor(\Gv,V) \to \Mor(\Gv,V)$ be the bijective map given by $\phi(f)(x) = x^{-1}f(x)$. Then $\phi$ intertwines the two actions: $\phi((g,h) \cdot_1 f) = (g,h) \cdot_2 \phi(f)$.
\end{proof}

In the next few statements, given a $(\Gv\times \Gv)$-module $M$, we let
\[
M^{\Gv \times 1} = \{ m \in M \mid \text{$(g,1) \cdot m = m$ for all $g \in \Gv$} \}.
\]
Of course, the second copy of $\Gv$ still acts on $M^{\Gv \times 1}$.  That is, we can regard $M^{\Gv \times 1}$ in a natural way as a $\Gv$-module.

\begin{lem}\label{lem:gv-invt}
For any $\Gv$-module $V$, there is a natural isomorphism of $\Gv$-modules $\theta: V \simto (V \otimes \bk[\Gv])^{\Gv \times 1}$.
\end{lem}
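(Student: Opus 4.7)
The plan is to write down the map $\theta$ explicitly as an orbit map construction and then verify its properties by direct computation. First I would translate, as in the proof of Lemma~\ref{lem:gvgv-swap}, between $V \otimes \bk[\Gv]$ and the space $\Mor(\Gv, V)$ of morphisms of schemes $\Gv \to V$ (passing to a colimit over finite-dimensional subrepresentations in the case of infinite-dimensional $V$). Under this identification, the $(\Gv \times 1)$-action becomes $((g_0,1)\cdot F)(x) = g_0 \cdot F(g_0^{-1}x)$, while the residual $(1 \times \Gv)$-action is $((1,h)\cdot F)(x) = F(xh)$.

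Next, I would define $\theta(v) = F_v$, where $F_v \colon \Gv \to V$ is the orbit map $g \mapsto g \cdot v$. Both required properties then fall out by inspection. Invariance under $\Gv \times 1$ is the computation $((g_0, 1) \cdot F_v)(x) = g_0 \cdot (g_0^{-1} x \cdot v) = x \cdot v = F_v(x)$, and $\Gv$-equivariance with respect to the second factor follows from $((1,h) \cdot F_v)(x) = F_v(xh) = xh \cdot v = F_{hv}(x)$.

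For bijectivity, observe that if $F \in \Mor(\Gv, V)$ is $(\Gv \times 1)$-invariant, then $F(x) = g_0 \cdot F(g_0^{-1}x)$ for every $g_0$; specializing to $g_0 = x$ gives $F(x) = x \cdot F(1)$, so $F = F_{F(1)}$. Hence $\theta$ admits the inverse $F \mapsto F(1)$. There is no real obstacle here; at worst one has to handle the identification $V \otimes \bk[\Gv] \cong \Mor(\Gv, V)$ in the infinite-dimensional case, which is routine. As an even cleaner route, one could deduce the lemma from Lemma~\ref{lem:gvgv-swap}: after the swap, $\Gv \times 1$ acts trivially on $\bk \boxtimes V$, so $(V \otimes \bk[\Gv])^{\Gv \times 1} \cong V \otimes \bk[\Gv]^{\Gv \times 1} = V \otimes \bk = V$, using the elementary fact that the left-invariant regular functions on the connected group $\Gv$ are the constants, with the remaining right $\Gv$-action on $\bk[\Gv]^{\Gv \times 1}$ automatically trivial.
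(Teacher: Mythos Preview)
Your main argument is essentially identical to the paper's: define $\theta(v)$ as the orbit map $x \mapsto x\cdot v$, verify $(\Gv\times 1)$-invariance by the same one-line computation, and exhibit evaluation at $1$ as the inverse. The paper omits the explicit check of equivariance for the second $\Gv$-factor that you include, and it does not mention your alternative route via Lemma~\ref{lem:gvgv-swap}, but otherwise the proofs coincide.
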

\begin{proof}
Given $v \in V$, let $\theta(v) \in \Mor(\Gv,V)$ be given by $\theta(v)(x) = xv$.  Then, in the notation from the proof of Lemma~\ref{lem:gvgv-swap}, we have
\[
((g,1) \cdot_1 \theta(v))(x) = g \theta(v)(g^{-1}x) = gg^{-1}xv = xv = \theta(v)(x).
\]
That is, $\theta(v) \in \Mor(\Gv,V)^{\Gv \times 1}$.  To see that $\theta$ is an isomorphism, we observe that the map sending $f \in \Mor(\Gv,V)^{\Gv \times 1}$ to $f(1) \in V$ is its inverse.
\end{proof}

\begin{lem}\label{lem:gv-invt-tensor}
Let $M$ be a $\Gv$-equivariant graded $\bk[\cN]$-module. Let $a: M \otimes \bk[\cN] \to M$ be the action map, and let $m: (M \otimes \bk[\Gv])^{\Gv \times 1} \otimes (\bk[\cN] \otimes \bk[\Gv])^{\Gv \times 1} \to (M \otimes \bk[\Gv])^{\Gv \times 1}$ be the map induced by $a$ and by the multiplication map $\bk[\Gv] \otimes \bk[\Gv] \to \bk[\Gv]$.  Then the following diagram commutes:
\[
\xymatrix{
M \otimes \bk[\cN] \ar[d]_a \ar[r]^-{\theta \otimes \theta}_-{\sim} &
  (M \otimes \bk[\Gv])^{\Gv \times 1} \otimes (\bk[\cN] \otimes \bk[\Gv])^{\Gv \times 1} \ar[d]^m \\
M \ar[r]^-{\theta}_-{\sim} & (M \otimes \bk[\Gv])^{\Gv \times 1}}
\]
\end{lem}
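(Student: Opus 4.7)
The plan is to verify the identity by unwinding both compositions on elementary tensors, working in the morphism picture already used in the proofs of Lemmas~\ref{lem:gvgv-swap} and~\ref{lem:gv-invt}. For any $\Gv$-module $V$, I identify $V \otimes \bk[\Gv]$ with $\Mor(\Gv, V)$ by sending $v \otimes h$ to the function $x \mapsto h(x) \cdot v$; under this identification, $\theta \colon V \to (V \otimes \bk[\Gv])^{\Gv \times 1}$ becomes the map $v \mapsto (x \mapsto x v)$.

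The first and only nontrivial step is an explicit description of $m$ in the morphism picture. Because the multiplication map $\bk[\Gv] \otimes \bk[\Gv] \to \bk[\Gv]$ corresponds to pointwise multiplication of functions on $\Gv$, a direct check on elementary tensors shows that if $\phi_1 \in M \otimes \bk[\Gv]$ corresponds to $F_1 \colon \Gv \to M$ and $\phi_2 \in \bk[\cN] \otimes \bk[\Gv]$ corresponds to $F_2 \colon \Gv \to \bk[\cN]$, then $m(\phi_1 \otimes \phi_2)$ corresponds to the morphism $x \mapsto a(F_1(x) \otimes F_2(x))$.

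With this in hand, commutativity of the diagram becomes a direct calculation on an elementary tensor $f \otimes p \in M \otimes \bk[\cN]$. Going down then right yields the morphism $x \mapsto \theta(a(f \otimes p))(x) = x \cdot (p \cdot f)$. Going right then down yields $x \mapsto a(\theta(f)(x) \otimes \theta(p)(x)) = a((x f) \otimes (x p)) = (x p) \cdot (x f)$. These agree because the action of $\bk[\cN]$ on $M$ is $\Gv$-equivariant, i.e., $x \cdot (p \cdot f) = (x \cdot p) \cdot (x \cdot f)$. The only point requiring any care is the explicit description of $m$ in the morphism picture; once that is settled, the rest is bookkeeping and a single appeal to $\Gv$-equivariance of the module structure on $M$.
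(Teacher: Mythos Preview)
Your proof is correct and is precisely the ``tracing through the definition of $\theta$'' that the paper invokes without detail. The key observation you make explicit---that commutativity reduces to the $\Gv$-equivariance identity $x\cdot(p\cdot f)=(x\cdot p)\cdot(x\cdot f)$---is exactly what the paper leaves to the reader.
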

\begin{proof}
This is easily seen by tracing through the definition of $\theta$.
\end{proof}

In the special case where $M = \bk[\cN]$, the map $m$ on the right-hand side of the diagram above makes $(\bk[\cN] \otimes \bk[\Gv])^{\Gv \times 1}$ into a commutative ring, equipped with a grading inherited from that on $\bk[\cN]$.  Then, for any $\Gv$-equivariant graded $\bk[\cN]$-module $N$, the space $(N \otimes \bk[\Gv])^{\Gv \times 1}$ is naturally a $\Gv$-equivariant graded $(\bk[\cN] \otimes \bk[\Gv])^{\Gv \times 1}$-module.  The following proposition is immediate consequence of Lemma~\ref{lem:gv-invt-tensor}.

\begin{prop}\label{prop:gv-invt-ring}
There is an isomorphism of $\Gv$-equivariant graded rings $\bk[\cN] \cong (\bk[\cN] \otimes \bk[\Gv])^{\Gv \times 1}$.
If we identify these rings, then for any $\Gv$-equivariant graded $\bk[\cN]$-module $M$, there is a natural isomorphism $M \cong (M \otimes \bk[\Gv])^{\Gv \times 1}$.
\end{prop}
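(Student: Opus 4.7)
The plan is to deduce everything by assembling Lemma~\ref{lem:gv-invt} (which supplies the underlying $\Gv$-module isomorphisms) with Lemma~\ref{lem:gv-invt-tensor} (which promotes them to the ring and module level).

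First, apply Lemma~\ref{lem:gv-invt} with $V = \bk[\cN]$ and with $V = M$ to obtain natural $\Gv$-equivariant isomorphisms
\[
\theta: \bk[\cN] \simto (\bk[\cN] \otimes \bk[\Gv])^{\Gv \times 1}, \qquad \theta: M \simto (M \otimes \bk[\Gv])^{\Gv \times 1}.
\]
These are isomorphisms of graded $\bk$-vector spaces because the $\Gm$-action defining the grading on $\bk[\cN]$ or $M$ is trivially carried over to the right-hand side (the $\Gv$-actions and the $\Gm$-action commute, and $\bk[\Gv]$ is concentrated in degree $0$), and the construction of $\theta$ in the proof of Lemma~\ref{lem:gv-invt} is manifestly $\Gm$-equivariant.

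Next, to see that $\theta: \bk[\cN] \to (\bk[\cN] \otimes \bk[\Gv])^{\Gv \times 1}$ is a ring homomorphism, apply Lemma~\ref{lem:gv-invt-tensor} with $M = \bk[\cN]$ and $a: \bk[\cN] \otimes \bk[\cN] \to \bk[\cN]$ the multiplication map. The commuting square produced by the lemma says precisely that $\theta$ intertwines the multiplication on $\bk[\cN]$ with the multiplication $m$ on the invariants of $\bk[\cN] \otimes \bk[\Gv]$. Unitality is clear from $\theta(1)(x) = x \cdot 1 = 1$. Hence $\theta$ is an isomorphism of $\Gv$-equivariant graded rings, establishing the first assertion.

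Finally, identifying these two rings via $\theta$, apply Lemma~\ref{lem:gv-invt-tensor} again with the given $M$ and $a$ its $\bk[\cN]$-action map. The resulting commutative diagram now says that $\theta: M \to (M \otimes \bk[\Gv])^{\Gv \times 1}$ intertwines the $\bk[\cN]$-module structure on $M$ with the $(\bk[\cN] \otimes \bk[\Gv])^{\Gv \times 1}$-module structure on the right, and is therefore an isomorphism of $\Gv$-equivariant graded $\bk[\cN]$-modules. Naturality in $M$ is inherited from the naturality of $\theta$ in Lemma~\ref{lem:gv-invt}.

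There is no real obstacle here: the entire proposition is a direct unpackaging of the two preceding lemmas, and the only minor point worth checking explicitly is that the gradings (i.e., $\Gm$-actions) are compatible, which follows automatically from the fact that $\bk[\Gv]$ sits in degree $0$ and the $\Gm$- and $\Gv$-actions commute throughout.
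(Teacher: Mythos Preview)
Your proof is correct and is exactly the unpacking the paper has in mind: the paper states the proposition as an ``immediate consequence of Lemma~\ref{lem:gv-invt-tensor}'' without further argument, and you have simply written out what that means, invoking Lemma~\ref{lem:gv-invt} for the underlying $\Gv$-module isomorphisms and Lemma~\ref{lem:gv-invt-tensor} to upgrade them to ring and module maps.
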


\begin{prop}\label{prop:gv-invt-dual}
For any finitely generated $\Gv$-equivariant graded $\bk[\cN]$-module $M$, there is a natural isomorphism of $\Gv$-equivariant graded $\bk[\cN]$-modules
\[
M \cong \uHom_{\Gv,\bk[\cN]}(\SGD(M), \bk[\cN] \otimes \bk[\Gv]).
\]
\end{prop}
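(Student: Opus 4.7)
The plan is to derive the stated isomorphism by combining Proposition~\ref{prop:gv-invt-ring} with Serre--Grothendieck double duality and a tensor--Hom adjunction, with Proposition~\ref{prop:gv-invt-ring} playing the role of the ``closing move'' that converts an expression involving the regular representation into $M$ itself.

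The starting point is double duality: in $\Db\Coh(\cN)$ one has $M \cong \SGD(\SGD(M)) = \cRHom_{\bk[\cN]}(\SGD(M),\cO_\cN)$. This requires $\cO_\cN$ to serve as a dualizing complex for $\cN$, which follows from assumption~\eqref{eqn:reasonable} (the nondegenerate $\Gv$-invariant bilinear form on the Lie algebra makes $\cN$ Gorenstein with trivial dualizing sheaf). Next, I would tensor both sides with $\bk[\Gv]$ over $\bk$. Since $\bk[\Gv]$ is $\bk$-flat and $\SGD(M)$ is a bounded complex with coherent cohomology, the tensor--Hom adjunction produces an isomorphism
\[
M\otimes\bk[\Gv] \;\cong\; \cRHom_{\bk[\cN]}\bigl(\SGD(M),\,\bk[\cN]\otimes\bk[\Gv]\bigr)
\]
of $\Gv$-equivariant graded $\bk[\cN]$-modules, where the $\Gv$-action on the right-hand side comes from the diagonal of the actions on $\SGD(M)$ and on the first $\bk[\Gv]$-factor, while the second-factor right regular action on $\bk[\Gv]$ is preserved. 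Because the left-hand side is concentrated in cohomological degree~$0$, so is the right-hand side, and $\cRHom$ can be replaced by $\uHom$ throughout.

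The final step is to apply the $\Gv\times 1$-invariants functor. On the left, Proposition~\ref{prop:gv-invt-ring} identifies $(M\otimes\bk[\Gv])^{\Gv\times 1}$ with $M$. On the right, invariants under the diagonal action recover precisely $\uHom_{\Gv,\bk[\cN]}(\SGD(M),\bk[\cN]\otimes\bk[\Gv])$ by the very definition of $\Gv$-equivariant morphisms, and the residual second-factor $\Gv$-action on $\bk[\Gv]$ furnishes the $\Gv$-action on the result. Naturality in $M$ is inherited from naturality of each ingredient.

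The main obstacle I foresee is bookkeeping: there are two distinct $\Gv$-actions on $\bk[\cN]\otimes\bk[\Gv]$ (one diagonal, one via the right regular action on the second factor), together with the $\bk[\cN]$-module structure and the grading inherited from $\bk[\cN]$, and one must verify that each isomorphism in the chain respects all of these simultaneously. Beyond that, the argument is essentially formal once double duality and the dualizing-complex property of $\cO_\cN$ are in hand.
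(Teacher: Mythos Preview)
Your proposal is correct and follows essentially the same route as the paper: double duality $M \cong \SGD(\SGD(M))$, pulling the tensor with $\bk[\Gv]$ inside the $\uHom$, and then taking $(\Gv \times 1)$-invariants to land on the equivariant $\uHom$. The only cosmetic difference is that the paper cites Lemma~\ref{lem:gv-invt} directly (applied to the $\Gv$-module $\uHom_{\bk[\cN]}(\SGD(M),\bk[\cN])$) rather than Proposition~\ref{prop:gv-invt-ring}, and it orders the steps as ``apply invariants, then pull $\bk[\Gv]$ inside'' rather than ``tensor, then take invariants''---but these amount to the same computation.
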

\begin{proof}
Recall that the functor $\SGD({-})$ is defined as $\uRHom_{\bk[\cN]}({-},\bk[\cN])$.  That is, we compute $\uRHom$ in the category of graded $\bk[\cN]$-modules, ignoring the $\Gv$-action; the resulting complex of $\bk[\cN]$-modules is still acted on by $\Gv$.  Since $M \cong \SGD(\SGD(M))$, the complex $\uRHom_{\bk[\cN]}(\SGD(M), \bk[\cN])$ is concentrated in degree zero.  In other words, we have a natural isomorphism $M \cong \uHom_{\bk[\cN]}(\SGD(M), \bk[\cN])$.  Using Lemma~\ref{lem:gv-invt}, we obtain
\begin{multline*}
M \cong \uHom_{\bk[\cN]}(\SGD(M), \bk[\cN])
 \cong \big(\uHom_{\bk[\cN]}(\SGD(M), \bk[\cN]) \otimes \bk[\Gv]\big)^{\Gv \times 1} \\
 \cong \uHom_{\bk[\cN]}(\SGD(M), \bk[\cN] \otimes \bk[\Gv])^{\Gv \times 1}
 \cong \uHom_{\Gv,\bk[\cN]}(\SGD(M), \bk[\cN] \otimes \bk[\Gv]),
\end{multline*}
as desired.
\end{proof}

\subsection{The regular perverse sheaf}
\label{ss:regular-perv}

Let $\cR$ denote the ind-object of $\Perv_\Go(\Gr)$ corresponding to the (left) regular representation $\bk[\Gv]$.  The right action of $\Gv$ on $\bk[\Gv]$ gives rise to a $\Gv$-action on $\cR$.  The multiplication map $\mult: \bk[\Gv] \otimes \bk[\Gv] \to \bk[\Gv]$ is equivariant for the right $\Gv$-action, so it corresponds to a $\Gv$-equivariant morphism $\Sat(\mult): \cR \star \cR \to \cR$.  Consider the graded vector space
\[
\gHom(\sky, \cR).
\]
We make this into a ring in the following way: given $g \in \Hom(\sky, \cR\{n\})$ and $f \in \Hom(\sky, \cR\{m\})$, we define $gf \in \Hom(\sky, \cR\{n+m\})$ to be the composition
\[
\sky \xrightarrow{f} \cR\{m\} \simto \sky \star \cR\{m\} \xrightarrow{g \star \id} \cR\{n\} \star \cR\{m\} \xrightarrow{\Sat(\mult)\{n+m\}} \cR\{n+m\}
\]
Because $\Sat(\mult)$ is $\Gv$-equivariant, $\Gv$ acts on the ring $\gHom(\sky,\cR)$.

Given $\cF \in \DmixI(\Gr)$, a similar construction makes the graded vector space
\[
\gHom(\sky, \cF \star \cR)
\]
into a $\Gv$-equivariant graded right $\gHom(\sky,\cR)$-module. Specifically, given $m \in \Hom(\sky, (\cF \star \cR)\{n\})$ and $f \in \Hom(\sky, \cR\{m\})$, we define $mf \in \Hom(\sky, (\cF \star \cR)\{n+m\})$ to be the composition
\begin{multline*}
\sky \xrightarrow{f} \cR\{m\} \simto \sky \star \cR\{m\} \xrightarrow{m \star \id} (\cF \star \cR)\{n\} \star \cR\{m\} \\ 
\simto
(\cF \star \cR \star \cR)\{n+m\}\xrightarrow{(\id \star \Sat(\mult))\{n+m\}} (\cF \star \cR)\{n+m\}.
\end{multline*}

\begin{thm}\label{thm:coh-cn}
There is an isomorphism of $\Gv$-equivariant graded rings
\[
\gHom(\sky, \cR) \cong \bk[\cN].
\]
If we identify these rings, then for any $\cF \in \DmixGo(\Gr)$ such that $P_\sph(\cF) \in \CohN$, there is a natural isomorphism
\[
\gHom(\sky, \cF \star \cR) \cong P_\sph(\cF)
\]
of $(\Gv \times \Gm)$-equivariant coherent sheaves on $\cN$.
\end{thm}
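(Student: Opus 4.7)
The plan is to transport both assertions to the coherent side via $P_\sph$, where they become consequences of Proposition~\ref{prop:gv-invt-ring} and Lemma~\ref{lem:gv-invt-tensor}. I focus on the second assertion; the first is the special case $\cF = \sky$, using $P_\sph(\sky) \cong \cON$ together with the identification of $(\Gv \times \Gm)$-equivariant coherent sheaves on $\cN$ with $\Gv$-equivariant graded $\bk[\cN]$-modules.

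First, write $\bk[\Gv] = \varinjlim V_i$ as a filtered colimit of its finite-dimensional left $\Gv$-submodules, so that $\cR = \varinjlim \Sat(V_i)$ and $\cF \star \cR = \varinjlim(\cF \star \Sat(V_i))$. Since $\sky$ is a compact object of $\DmixGo(\Gr)$,
\[
\gHom(\sky, \cF \star \cR) \cong \varinjlim_i \gHom(\sky, \cF \star \Sat(V_i)).
\]
By Proposition~\ref{prop:sph-main} and its compatibility with Satake, each term on the right is isomorphic to $\uHom(\cON, P_\sph(\cF) \otimes V_i)$ computed in $\Db\CohN$. Under the hypothesis $P_\sph(\cF) \in \CohN$, this is concentrated in cohomological degree $0$, and a direct computation identifies it with $(P_\sph(\cF) \otimes V_i)^{\Gv \times 1}$, viewing $P_\sph(\cF)$ as a $\Gv$-equivariant graded $\bk[\cN]$-module and taking invariants under the diagonal Satake $\Gv$. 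Passing to the colimit gives
\[
\gHom(\sky, \cF \star \cR) \cong (P_\sph(\cF) \otimes \bk[\Gv])^{\Gv \times 1},
\]
and by functoriality of $P_\sph$ the residual $\Gv$-action (coming from right multiplication on $\bk[\Gv]$) coincides with the one on $\gHom(\sky, \cF \star \cR)$ induced by the right $\Gv$-action on $\cR$.

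Proposition~\ref{prop:gv-invt-ring}, applied to $M = P_\sph(\cF)$, then identifies the right-hand side with $P_\sph(\cF)$ as a $\Gv$-equivariant graded $\bk[\cN]$-module, yielding the second assertion. For the ring structure in the special case $\cF = \sky$, observe that $P_\sph$ intertwines convolution with tensor product of Satake images, so $\Sat(\mult)$ corresponds under $P_\sph$ to the map $\id_{\cON} \otimes \mult : \cON \otimes \bk[\Gv] \otimes \bk[\Gv] \to \cON \otimes \bk[\Gv]$. The product thereby induced on $(\bk[\cN] \otimes \bk[\Gv])^{\Gv \times 1}$ is inherited from the tensor-product-of-rings structure on $\bk[\cN] \otimes \bk[\Gv]$, and Proposition~\ref{prop:gv-invt-ring} transports this to the product on $\bk[\cN]$. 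The compatibility of the module structure on $\gHom(\sky, \cF \star \cR)$ with the natural $\bk[\cN]$-module structure on $P_\sph(\cF)$ is obtained in exactly the same way, now from Lemma~\ref{lem:gv-invt-tensor}.

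The main subtlety is to disentangle the two roles of $\Gv$ throughout: the Satake $\Gv$, which provides the equivariant structure on $P_\sph(\cF)$ and corresponds to left multiplication on $\bk[\Gv]$, against the external $\Gv$-action on the ind-object $\cR$, which corresponds to right multiplication on $\bk[\Gv]$ and survives through the equivalence to supply the equivariance in the final isomorphism. Once this bookkeeping is carried out and the filtered-colimit arguments are checked to respect the product and module structures, the theorem follows formally from Propositions~\ref{prop:sph-main} and~\ref{prop:gv-invt-ring} together with Lemma~\ref{lem:gv-invt-tensor}.
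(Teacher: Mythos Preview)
Your proposal is correct and follows essentially the same route as the paper: transport everything to the coherent side via $P_\sph$, identify $\gHom(\sky,\cF\star\cR)$ with $(P_\sph(\cF)\otimes\bk[\Gv])^{\Gv\times 1}$, and then invoke Proposition~\ref{prop:gv-invt-ring} and Lemma~\ref{lem:gv-invt-tensor}. The paper differs only in presentation: it does not spell out the filtered-colimit reduction, and it verifies the ring and module structures by explicitly factoring the product into three steps (unit, composition, $\Sat(\mult)$) and checking a commutative diagram square by square, whereas you sketch this verification more informally.
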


\begin{figure}
{\small\[
\xymatrix{
\gHom(\sky, \cF \star \cR) \otimes \gHom(\sky, \cR) \ar[d]_p \ar@{<->}[r]^-{\sim} &
  {\begin{array}{l} \uHom_\Gv(\bk, M \otimes \bk[\Gv]) \\
   \qquad\qquad\qquad {}\otimes \uHom_\Gv(\bk,\bk[\cN] \otimes \bk[\Gv]) \end{array}} 
   \ar[d]^{p'} \\
\gHom(\cR, \cF \star \cR \star \cR) \otimes \gHom(\sky, \cR) \ar[d]_q \ar@{<->}[r]^-{\sim} &
  {\begin{array}{l} \uHom_\Gv(\bk[\Gv], M \otimes \bk[\Gv] \otimes \bk[\Gv])  \\
   \qquad\qquad\qquad {}\otimes \uHom_\Gv(\bk,\bk[\cN] \otimes \bk[\Gv]) \end{array}} \ar[d]^{q'} \\
\gHom(\sky, \cF \star \cR \star \cR) \ar[d]_r \ar@{<->}[r]^-{\sim} &
  \uHom_\Gv(\bk, M \otimes \bk[\Gv] \otimes \bk[\Gv]) \ar[d]^{r'} \\
\gHom(\sky, \cF \star \cR) \ar@{<->}[r]^-{\sim} &
  \uHom_\Gv(\bk, M \otimes \bk[\Gv])}
\]}
\caption{Modules for $\gHom(\sky,\cR)$}\label{fig:mod1}
\end{figure}

\begin{proof}
We have the following sequence of isomorphisms of graded vector spaces, where the first step is implied by Proposition~\ref{prop:sph-main}, and the last by Proposition~\ref{prop:gv-invt-ring}:
\begin{multline*}
\gHom(\sky, \cR) \cong \uHom_{\CohN}(\cON, \cON \otimes \bk[\Gv]) \\
\cong \uHom_\Gv(\bk, \bk[\cN] \otimes \bk[\Gv]) \cong (\bk[\cN] \otimes \bk[\Gv])^{\Gv \times 1} \cong \bk[\cN].
\end{multline*}
In fact, this is an isomorphism of $\Gv$-modules, since the $\Gv$-action on $\cR$ is defined in terms of the right $\Gv$-action on $\bk[\Gv]$.  Next, for $\cF \in \DmixGo(\Gr)$, let $M = P_\sph(\cF)$, and assume that $M \in \CohN$.  The same reasoning as above gives us isomorphisms of graded $\Gv$-representations
\[
\gHom(\sky, \cF \star \cR) \cong \uHom_\Gv(\bk, M \otimes \bk[\Gv]) \cong M.
\]

To study the ring structure on $\gHom(\sky, \cR)$ as well as the module structure on $\gHom(\sky, \cF \star \cR)$, we refer to Figure~\ref{fig:mod1}. The horizontal arrows all arise from natural isomorphisms of the kind described above.  The arrow labeled $p$ is induced by convolution with the morphism of (ind-)perverse sheaves $\Sat(\eta): \sky \to \cR$, where $\eta: \bk \to \bk[\Gv]$ is the unit.  The map $p'$ is induced by $\eta$ itself.  Thus, the commutativity of the uppermost square in Figure~\ref{fig:mod1} follows from the compatibility with $\Sat$ in Proposition~\ref{prop:sph-main}.

Similar reasoning applies to the bottommost square.  There, $r'$ is induced by the multiplication map $\mult: \bk[\Gv] \otimes \bk[\Gv] \to \bk[\Gv]$, and $r$ by $\Sat(\mult): \cR \star \cR \to \cR$.  Finally, the arrows labeled $q$ and $q'$ are both given by composition of maps, so the commutativity of the middle square is obvious.  We conclude that the entire diagram in Figure~\ref{fig:mod1} commutes.

The composition $rqp$ defines the $\gHom(\sky,\cR)$-module structure on $\gHom(\sky, \cF \star \cR)$. On the other hand, we can identify the space $\uHom_\Gv(\bk, M \otimes \bk[\Gv])$ with $(M \otimes \bk[\Gv])^{\Gv \times 1}$, and likewise for the other $\uHom$-groups in the right-hand column of Figure~\ref{fig:mod1}. Under these identifications, the composition $r'q'p'$ coincides with the map that was denoted $m$ in Lemma~\ref{lem:gv-invt-tensor}.  Thus, by combining Figure~\ref{fig:mod1} with Lemma~\ref{lem:gv-invt-tensor}, we obtain the following commutative diagram:
\[
\xymatrix{
\gHom(\sky, \cF \star \cR) \otimes \gHom(\sky, \cR) \ar[d] \ar@{<->}[r]^-{\sim} &
  M \otimes \bk[\cN] \ar[d] \\
\gHom(\sky, \cF \star \cR) \ar@{<->}[r]^-{\sim} & M}
\]
In the special case where $\cF = \sky$ and $M = \bk[\cN]$, this diagram shows that the isomorphism of graded $\Gv$-modules $\gHom(\sky,\cR) \cong \bk[\cN]$ is actually a ring isomorphism.  Then, for general $\cF$, it identifies the $\gHom(\sky,\cR)$- and $\bk[\cN]$-module structures on $\gHom(\sky,\cF \star \cR) \cong M$, as desired.
\end{proof}

\subsection{Standard sheaves and the regular perverse sheaf}

We conclude this section with a study of certain $\gHom$-groups involving standard sheaves.

\begin{lem}\label{lem:costalk-compare}
Suppose $\cF \in \DmixGo(\Gr)$ has the property that $(i^\sph_\lambda)^!\cF$ is pure of weight $0$.  Then $\gHom(\cJstd(\lambda),\cF)$ is a free $\sH^\bullet(\Gr_\lambda)$-module.  Moreover, there is a natural isomorphism
\[
\bk \otimes_{\sH^\bullet(\Gr_\lambda)} \gHom(\cJstd(\lambda),\cF) \cong \gHom(\uistd(w_0\lambda)\{-\delta_{w_0\lambda}\},\cF).
\]
\end{lem}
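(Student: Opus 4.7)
The plan is to parallel the coherent-side proof of Lemma~\ref{lem:pcstd-compare}, working directly on the constructible side. Although one might hope to transport the statement through the equivalence $P_\sph$, this does not directly apply because $\uistd(w_0\lambda)$ lies in $\DmixI(\Gr)$ rather than $\DmixGo(\Gr)$. Instead I would reduce both Hom-groups to explicit computations on the single orbit $\Gr_\lambda$. Set $\cG := (i^\sph_\lambda)^!\cF$, pure of weight~$0$ by hypothesis, and factor $i_{w_0\lambda} = i^\sph_\lambda \circ k$ where $k: \schub{w_0\lambda} \hookrightarrow \Gr_\lambda$ is the inclusion of the closed $I$-orbit inside $\Gr_\lambda$. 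By the standard adjunctions for these locally closed (respectively closed) inclusions, the left-hand side of the claim becomes $\gHom(\ubk_{\Gr_\lambda}\{\dim\Gr_\lambda\}, \cG)$, computed on $\Gr_\lambda$; and the right-hand side becomes $\gHom(\ubk_{\schub{w_0\lambda}}\{\dim\schub{w_0\lambda}-\delta_{w_0\lambda}\}, k^!\cG)$, computed on $\schub{w_0\lambda}$.

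Since $\Gr_\lambda$ is a single $\Go$-orbit, every $\Go$-constructible parity sheaf on it is a direct sum of grading shifts of $\ubk_{\Gr_\lambda}$, so purity of $\cG$ yields a decomposition $\cG \cong \bigoplus_m V_m \otimes \ubk_{\Gr_\lambda}\{n_m\}$ for graded vector spaces $V_m$ and integers $n_m$. Using the graded ring isomorphism $\gHom(\ubk_{\Gr_\lambda}, \ubk_{\Gr_\lambda}) \cong \sH^\bullet(\Gr_\lambda)$ that underlies the proof of Lemma~\ref{lem:uend-nabla}, the left-hand Hom-group computes to a direct sum of shifted copies of $\sH^\bullet(\Gr_\lambda)$; this gives the freeness of $\gHom(\cJstd(\lambda),\cF)$ as a graded $\sH^\bullet(\Gr_\lambda)$-module.

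For the right-hand side, the orbit $\schub{w_0\lambda}$ is an affine cell, so $\gHom(\ubk_{\schub{w_0\lambda}}, \ubk_{\schub{w_0\lambda}}) \cong \bk$. Applying $k^!$ to the purity decomposition of $\cG$ (with the appropriate codimension shift) and computing the Hom into the result produces a direct sum of shifted copies of $\bk$, with the same multiplicities as on the left. The comparison map $\bk \otimes_{\sH^\bullet(\Gr_\lambda)}(-)$ is realized geometrically by the restriction $k^* \colon \sH^\bullet(\Gr_\lambda) \twoheadrightarrow \sH^\bullet(\schub{w_0\lambda}) \cong \bk$, which kills $\sH^{>0}(\Gr_\lambda)$. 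The main technical obstacle will be carefully tracking the various grading and cohomological shifts---from the perverse normalizations $\{\dim\Gr_\lambda\}$ and $\{\dim\schub{w_0\lambda}\}$, from the codimension twist in $k^!\ubk_{\Gr_\lambda}$, and from the $\{-\delta_{w_0\lambda}\}$ factor---to verify that they line up consistently on the two sides; this relies on the geometric relationships between $\dim\Gr_\lambda$, $\dim\schub{w_0\lambda}$, and $\delta_{w_0\lambda}$ encoded in the structure of the affine Weyl group.
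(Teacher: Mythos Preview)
Your approach is correct and essentially identical to the paper's: both use adjunction to reduce to a computation on the single orbit $\Gr_\lambda$, decompose the pure object $(i^\sph_\lambda)^!\cF$ as a direct sum of shifted constant sheaves, and then compute the two $\gHom$-groups explicitly as (sums of shifts of) $\sH^\bullet(\Gr_\lambda)$ and $\bk$ respectively, with the comparison map being the augmentation. The only cosmetic difference is that the paper immediately reduces to the single special case $\cF = \cJcos(\lambda)$ (i.e., $(i^\sph_\lambda)^!\cF = \ubk\{\dim\Gr_\lambda\}$), which makes the shift bookkeeping you flag as ``the main technical obstacle'' a one-line computation: one checks $i_{w_0\lambda}^!\cJcos(\lambda) \cong \ubk\{\dim\Gr_\lambda - 2\delta_{w_0\lambda}\}$ using that $\schub{w_0\lambda}$ has codimension $\delta_{w_0\lambda}$ in $\Gr_\lambda$, and then both sides are visibly $\sH^\bullet(\Gr_\lambda)$ and $\bk$ with no residual shift.
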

\begin{proof}
By adjunction, we see that $\gHom(\cJstd(\lambda),\cF) \cong \gHom(\cJstd(\lambda), (i^\sph_\lambda)_*(i^\sph_\lambda)^!\cF)$, and that $\gHom(\uistd(\lambda),\cF) \cong \gHom(\uistd(\lambda), (i^\sph_\lambda)_*(i^\sph_\lambda)^!\cF)$.  Thus, we may as well assume that $\cF \cong (i^\sph_\lambda)_*(i^\sph_\lambda)^!\cF$.  Since $(i^\sph_\lambda)^!\cF$ is assumed to be pure, it is a direct sum of objects of the form $\ubk\{n\}$.  It suffices, then, to prove the lemma in the special case where $(i^\sph_\lambda)^!\cF \cong \ubk\{\dim \Gr_\lambda\}$, and $\cF \cong \cJcos(\lambda)$.

In that case, we have $\gHom(\cJstd(\lambda),\cF) \cong \gHom(\ubk_{\Gr_\lambda}, \ubk_{\Gr_\lambda}) \cong \sH^\bullet(\Gr_\lambda)$.  On the other hand, by adjunction, we have
\begin{multline*}
\gHom(\uistd(w_0\lambda)\{-\delta_{w_0\lambda}\}, \cF) \cong \gHom(\ubk_{\schub{w_0\lambda}} \{\dim \Gr_\lambda - 2\delta_{w_0\lambda}\}, i_{w_0\lambda}^!\cF) \\
\cong \gHom(\ubk_{\schub{w_0\lambda}} \{\dim \Gr_\lambda - 2\delta_{w_0\lambda}\}, \ubk_{\schub{w_0\lambda}} \{\dim \Gr_\lambda - 2\delta_{w_0\lambda}\}) \cong \bk.
\end{multline*}
The adjunction map $\uistd(w_0\lambda)\{-\delta_{w_0\lambda}\} \to \cJstd(\lambda)$ induces a natural map
\[
\gHom(\cJstd(\lambda),\cF) \to \gHom(\uistd(w_0\lambda)\{-\delta_{w_0\lambda}\}, \cF)
\]
that can clearly be identified with the natural quotient map $\sH^\bullet(\Gr_\lambda) \to \bk$ of $\sH^\bullet(\Gr_\lambda)$-modules.  The result follows.
\end{proof}

\begin{cor}\label{cor:costalk-compare}
Suppose $\cF \in \Perv^\mix_{(\Go)}(\Gr)$ has the property that $(i^\sph_\lambda)^!\cF$ is pure of weight $0$.  Then the natural map $\Hom(\cJstd(\lambda),\cF) \to \Hom(\uistd(w_0\lambda)\{-\delta_{w_0\lambda}\},\cF)$ induced by the adjunction $\uistd(w_0\lambda)\{-\delta_{w_0\lambda}\} \to \cJstd(\lambda)$ is an isomorphism.
\end{cor}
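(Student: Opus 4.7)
The plan is to deduce this corollary directly from Lemma~\ref{lem:costalk-compare} by passing to degree-zero components, and then using perversity to see that no information is lost. Set $M = \gHom(\cJstd(\lambda),\cF)$, viewed as a graded $\sH^\bullet(\Gr_\lambda)$-module via Lemma~\ref{lem:uend-nabla}. Inspection of the proof of Lemma~\ref{lem:costalk-compare} shows that the adjunction-induced morphism
\[
M \longrightarrow \gHom(\uistd(w_0\lambda)\{-\delta_{w_0\lambda}\},\cF)
\]
factors as the canonical surjection $M \twoheadrightarrow \bk \otimes_{\sH^\bullet(\Gr_\lambda)} M$ followed by the natural isomorphism of that lemma. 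Taking the degree-zero part therefore identifies the map in the corollary with the degree-zero piece of $M \twoheadrightarrow \bk \otimes_{\sH^\bullet(\Gr_\lambda)} M$, so it suffices to show that $M_0 \to (\bk \otimes_{\sH^\bullet(\Gr_\lambda)} M)_0$ is injective.

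Since $\sH^\bullet(\Gr_\lambda)$ is nonnegatively graded with $\sH^0(\Gr_\lambda) = \bk$, the kernel of this map equals $\sum_{k > 0} \sH^k(\Gr_\lambda) \cdot M_{-k}$. It therefore suffices to verify $M_{-k} = 0$ for all $k > 0$. Unwinding the definition,
\[
M_{-k} = \Hom(\cJstd(\lambda),\cF\{-k\}) = \Hom(\cJstd(\lambda), \cF\la k\ra[-k]) = \Ext^{-k}(\cJstd(\lambda), \cF\la k\ra).
\]
Both $\cJstd(\lambda)$ and $\cF\la k\ra$ lie in $\Perv^\mix_{(I)}(\Gr)$ (the Tate twist $\la 1\ra$ is $t$-exact for the perverse $t$-structure), and under the equivalence $\DmixI(\Gr) \cong \Db\Perv^\mix_{(I)}(\Gr)$ this negative Ext group vanishes, as desired.

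The only conceptual step is identifying the adjunction map of the corollary with the degree-zero piece of the canonical surjection $M \to \bk \otimes_{\sH^\bullet(\Gr_\lambda)} M$; the remaining vanishing of negative Ext between perverse objects is standard, and I foresee no serious obstacle.
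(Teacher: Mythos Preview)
Your approach is essentially the same as the paper's: reduce to the degree-zero component of the graded isomorphism of Lemma~\ref{lem:costalk-compare}, and use perversity of $\cF$ to see that $M = \gHom(\cJstd(\lambda),\cF)$ vanishes in negative degrees.  There is, however, one factual slip.  You assert that $\cJstd(\lambda) \in \Perv^\mix_{(I)}(\Gr)$, but this is false in general: since $\Gr_\lambda$ is not an affine space, the $!$-extension $\cJstd(\lambda) = (i^\sph_\lambda)_!\ubk\{\dim\Gr_\lambda\}$ need not be perverse.  (Indeed, the paper later uses the nontrivial truncation $\cJstd(\lambda) \to \pH^0(\cJstd(\lambda)) = \cIstd(\lambda)$.)  What is true, and what the paper uses, is that $\cJstd(\lambda) \in {}^p\DmixGo(\Gr)^{\le 0}$, since $(i^\sph_\lambda)_!$ is right $t$-exact.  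That is already enough: for $k>0$ one has $\cF\la k\ra[-k] \in {}^p\Dmix^{\ge k}$, so $\Hom(\cJstd(\lambda),\cF\la k\ra[-k]) = 0$ by the $t$-structure axioms, without invoking the realization equivalence or $\Ext$ between perverse objects.  With that correction your argument goes through and matches the paper's proof.
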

\begin{proof}
Note that the $\Hom$-groups in this statement are the degree-$0$ components of the graded $\gHom$-groups in the preceding lemma.  Since $\cJstd(\lambda)$ lies in ${}^p\!\DmixGo(\Gr)^{\le 0}$, the assumption that $\cF$ is perverse implies that $\Hom(\cJstd(\lambda),\cF\{n\}) = 0$ for $n < 0$, or in other words, that $\gHom(\cJstd(\lambda), \cF)$ is concentrated in nonnegative degrees.  The result then follows from Lemma~\ref{lem:costalk-compare}.
\end{proof} 

\begin{prop}\label{prop:std-regular}
We have the following isomorphisms in $\CohN$:
\[
\gHom(\cJstd(-w_0\lambda), \cR) \cong \pccos(\lambda)\la -\delta_{w_0\lambda}\ra
\qquad\text{and}\qquad
\gHom(\uistd(-\lambda), \cR) \cong \pcpcos(\lambda).
\]
\end{prop}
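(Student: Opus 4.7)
The plan is to derive the first isomorphism directly from the Satake-compatibility of $P_\sph$ together with Proposition~\ref{prop:gv-invt-dual} applied to the coherent sheaf $\pccos(\lambda)$, and then to derive the second isomorphism from the first by a ``proper costandard'' comparison via Lemma~\ref{lem:costalk-compare} and Proposition~\ref{prop:aj-compute}.

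For the first isomorphism, I would mimic the strategy of Theorem~\ref{thm:coh-cn}. Using that $\cR$ is the ind-perverse sheaf corresponding to $\bk[\Gv]$ and the Satake-compatibility isomorphism $P_\sph(\cF \star \Sat(V)) \cong P_\sph(\cF) \otimes V$, one obtains
\[
\gHom(\cJstd(-w_0\lambda), \cR) \cong \uHom_{\CohN}\bigl(P_\sph(\cJstd(-w_0\lambda)),\, \cON \otimes \bk[\Gv]\bigr),
\]
the passage through the ind-object $\cR$ being harmless since $\cJstd(-w_0\lambda)$ is compact. By Corollary~\ref{cor:psph-std}, and using the elementary Weyl-group identity $\delta_{-\lambda} = \delta_{w_0\lambda} = \ell(w_0) - \ell(w_0^\lambda)$ (where $w_0^\lambda$ is the longest element of the stabilizer $W_\lambda$), this rewrites as $\uHom_{\CohN}(\pcstd(-w_0\lambda), \cON \otimes \bk[\Gv])\la -\delta_{w_0\lambda}\ra$. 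Next, Proposition~\ref{prop:gv-invt-dual} applied to $M = \pccos(\lambda)$ (which is a genuine coherent sheaf because it has a proper costandard filtration), together with the Serre--Grothendieck duality formula $\SGD(\pccos(\lambda)) \cong \pcstd(-w_0\lambda)$ recalled in~\S\ref{ss:pcoh}, identifies the unshifted factor with $\pccos(\lambda)$, yielding the first isomorphism.

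For the second isomorphism, I would apply Lemma~\ref{lem:costalk-compare} with $\mu = -w_0\lambda$ and $\cF = \cR$. The purity hypothesis $(i^\sph_{-w_0\lambda})^!\cR$ pure of weight $0$ follows from the good filtration of $\bk[\Gv]$ by dual Weyl modules $\cow(\nu)$, which translates under Satake into an exhaustive filtration of $\cR$ with subquotients built from the $\cIcos(\nu)$'s, each of which is $!$-pure by Theorem~\ref{thm:mixed-mv}. The lemma then produces
\[
\bk \otimes_{\sH^\bullet(\Gr_{-w_0\lambda})} \gHom(\cJstd(-w_0\lambda), \cR) \cong \gHom(\uistd(-\lambda), \cR)\la \delta_{w_0\lambda}\ra.
\]
Substituting the first isomorphism on the left and invoking Proposition~\ref{prop:aj-compute} computes that left-hand side as $\pcpcos(\lambda)\la 2\delta_{w_0\lambda}\ra\la -\delta_{w_0\lambda}\ra = \pcpcos(\lambda)\la \delta_{w_0\lambda}\ra$, and cancelling the common grading shift yields $\gHom(\uistd(-\lambda),\cR) \cong \pcpcos(\lambda)$.

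The principal technical obstacle I anticipate is the compatibility of module structures in the final step: the $\sH^\bullet(\Gr_{-w_0\lambda})$-action on $\gHom(\cJstd(-w_0\lambda), \cR)$ coming from composition in the first argument must be matched, under the first isomorphism, with the $\sH^\bullet(\Gr_\lambda) \cong \uEnd(\pccos(\lambda))$-action on $\pccos(\lambda)$ used in Proposition~\ref{prop:aj-compute}. This identification should follow from tracking the $\uEnd$-actions through $P_\sph$, which sends $\uEnd(\cJstd(-w_0\lambda)) \simto \uEnd(\pcstd(-w_0\lambda))$, combined with the canonical identification of both of these rings with $\sH^\bullet$ of the relevant $\Go$-orbit (furnished by Lemma~\ref{lem:uend-nabla} and an analogous calculation for $\pcstd$, or by the Serre--Grothendieck duality intertwining standards and costandards).
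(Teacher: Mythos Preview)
Your proposal is correct and follows essentially the same route as the paper: the first isomorphism via $P_\sph$, Corollary~\ref{cor:psph-std}, and Proposition~\ref{prop:gv-invt-dual} applied with $\SGD(\pccos(\lambda)) \cong \pcstd(-w_0\lambda)$, and the second via Theorem~\ref{thm:mixed-mv}, Lemma~\ref{lem:costalk-compare}, and Proposition~\ref{prop:aj-compute}. You are in fact more careful than the paper on two points it leaves implicit---the identity $\delta_{-\lambda} = \delta_{w_0\lambda}$ and the compatibility of the $\sH^\bullet(\Gr_{-w_0\lambda}) \cong \uEnd(\pcstd(-w_0\lambda))$-action with the $\uEnd(\pccos(\lambda))$-action through Serre--Grothendieck duality.
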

\begin{proof}
Via $P_\sph$ and Corollary~\ref{cor:psph-std}, we have
\[
\gHom(\cJstd(-w_0\lambda), \cR) \cong \uHom(\pcstd(-w_0\lambda)\la \delta_{w_0\lambda}\ra, \cON \otimes \bk[\Gv]).
\]
By Proposition~\ref{prop:gv-invt-dual}, the latter is naturally isomorphic to $\pccos(\lambda)\la -\delta_{w_0\lambda}\ra$.

Next, by Theorem~\ref{thm:mixed-mv}, the ind-perverse sheaf $\cR$ is $!$-pure of weight $0$, so Lemma~\ref{lem:costalk-compare} tells us that $\gHom(\uistd(-\lambda)\{-\delta_{w_0\lambda}\}, \cR) \cong \bk \otimes_{\sH^\bullet(\Gr_\lambda)} \gHom(\cJstd(-w_0\lambda), \cR)$.  Proposition~\ref{prop:aj-compute} then implies that $\gHom(\uistd(-\lambda)\{-\delta_{w_0\lambda}\}, \cR) \cong \pcpcos(\lambda)\la \delta_{w_0\lambda}\ra$, as desired.
\end{proof}

\section{Mixed modular Wakimoto sheaves}
\label{sect:wakimoto}

\emph{Wakimoto sheaves}, introduced by Mirkovi\'c, are certain sheaves on the affine flag variety or the affine Grassmannian that have favorable convolution and $\Ext$-vanishing properties.  In this section, we study the basic properties of Wakimoto sheaves in the mixed modular setting.  The results are closely modeled on those of~\cite[\S3.2]{ab} and~\cite[\S8]{abg}.

\subsection{Preliminaries on the affine flag variety}

Let $\Fl = \Gk/I$ denote the affine flag variety for $G$. Recall that $I$-orbits on $\Fl$ are labeled by the extended affine Weyl group $W_\aff$. Given $w \in W_\aff$, let $\Fl_w$ denote the corresponding orbit, and let $j_w: \Fl_w \hookrightarrow \Fl$ be the inclusion map.  For $w \in W_\aff$, let $\cE_w$ denote the unique indecomposable parity sheaf supported on $\overline{\Fl_w}$ and whose restriction to $\Fl_w$ is $\ubk\{\dim \Fl_w\}$.  We denote the standard and costandard perverse sheaves in $\Perv^\mix_I(\Fl)$ by
\[
\bs_w := j_{w!}\ubk\{\dim \Fl_w\}
\qquad\text{and}\qquad
\bc_w := j_{w*}\ubk\{\dim \Fl_w\}.
\]
The category $\Dmix_I(\Fl)$ is equipped with a convolution product, and there is a convolution action of $\Dmix_I(\Fl)$ on $\Dmix_I(\Gr)$.  For basic results on convolution in the mixed modular setting, see~\cite[\S4]{arc:f2}.  We will need the following slight refinement of~\cite[Proposition~4.4]{arc:f2}.

\begin{lem}\label{lem:cos-conv}
For $w_1, w_2\in W_\aff$ such that $\ell (w_1w_2) = \ell(w_1) +\ell(w_2)$, there is a canonical isomorphism
\begin{equation}\label{eqn:cos-conv}
\bc_{w_1w_2} \cong \bc_{w_1} \star \bc_{w_2}.
\end{equation}
Moreover, for $w_1, w_2, w_3 \in W_\aff$ with $\ell(w_1w_2w_3) = \ell(w_1) + \ell(w_2) + \ell(w_3)$, the two isomorphisms $\bc_{w_1w_2w_3} \cong \bc_{w_1} \star \bc_{w_2} \star \bc_{w_3}$ induced by~\eqref{eqn:cos-conv} coincide.

In addition, each $\bc_w$ is invertible: we have $\bc_w \star \bs_{w^{-1}} \cong \bs_{w^{-1}} \star \bc_w \cong \bc_e$.
\end{lem}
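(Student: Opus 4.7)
The plan is to pin down the isomorphism canonically via adjunction, deduce associativity from the calculation $\End(\bc_v) \cong \bk$, and finally reduce invertibility to the simple-reflection case.

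First I would construct the canonical morphism. Under the length-additivity assumption, a standard fact from affine Bruhat theory gives that the convolution map restricts to an isomorphism $\Fl_{w_1} \tilde{\times} \Fl_{w_2} \simto \Fl_{w_1w_2}$, with $\dim \Fl_{w_1} + \dim \Fl_{w_2} = \dim \Fl_{w_1w_2}$. By proper base change in the convolution diagram (see \cite[\S4]{arc:f2}), this yields a canonical identification $j_{w_1w_2}^*(\bc_{w_1} \star \bc_{w_2}) \cong \ubk\{\dim \Fl_{w_1w_2}\}$. Via the $(j_{w_1w_2}^*, j_{w_1w_2*})$-adjunction, this corresponds to a canonical morphism $\phi\colon \bc_{w_1}\star\bc_{w_2} \to \bc_{w_1w_2}$. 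To verify $\phi$ is an isomorphism, I would invoke \cite[Proposition~4.4]{arc:f2} for the existence of some abstract isomorphism $\psi$, so that $\End(\bc_{w_1}\star\bc_{w_2}) \cong \End(\bc_{w_1w_2}) \cong \bk$ (the last identification by the adjunction computation $\Hom(j_{w_1w_2*}\ubk\{\dim\}, j_{w_1w_2*}\ubk\{\dim\}) = \Hom(\ubk\{\dim\}, \ubk\{\dim\}) = \bk$). Thus $\psi^{-1} \circ \phi$ is a scalar, and since its restriction to $\Fl_{w_1w_2}$ is manifestly the identity (and hence nonzero), $\phi$ is itself an isomorphism.

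Associativity falls out of the same endomorphism calculation. The two bracketings yield morphisms $\bc_{w_1}\star\bc_{w_2}\star\bc_{w_3} \to \bc_{w_1w_2w_3}$ whose restrictions to the open stratum $\Fl_{w_1w_2w_3}$ arise via iterated proper base change. These restrictions coincide on the nose because multiplication on $\Gk$ is associative, and the endomorphism calculation shows that any morphism into $\bc_{w_1w_2w_3}$ is determined up to scalar by its restriction to $\Fl_{w_1w_2w_3}$. Hence the two iterated isomorphisms are equal.

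For invertibility, I would split $W_\aff$ into length-zero elements and simple reflections. A length-zero element $\omega$ has $\Fl_\omega$ a point, so $\bs_\omega = \bc_\omega$ is a delta-type sheaf that is evidently invertible, with inverse $\bs_{\omega^{-1}}$. For a simple reflection $s$, $\overline{\Fl_s} \cong \mathbb{P}^1$, and one obtains $\bc_s \star \bs_s \cong \bc_e$ and $\bs_s \star \bc_s \cong \bc_e$ from a direct computation on $\mathbb{P}^1$ modeled on the mixed-modular analogue of \cite[\S8]{abg}, using the open-closed distinguished triangle relating $\bs_e$, $\bs_s$, and $\bc_s$. For a general $w \in W_\aff$, fix a reduced expression $w = \omega s_1 \cdots s_k$; applying the first part of the lemma and its obvious analogue for $\bs$, the product $\bc_w \star \bs_{w^{-1}}$ decomposes as
\[
\bc_\omega \star \bc_{s_1} \star \cdots \star \bc_{s_k} \star \bs_{s_k} \star \cdots \star \bs_{s_1} \star \bs_{\omega^{-1}}
\]
and telescopes to $\bc_e$ using the simple-reflection cancellations. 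The main obstacle is the simple-reflection case of invertibility, which requires an explicit $\mathbb{P}^1$ computation rather than purely formal reasoning; everything else is a formal consequence of adjunction and the $\bk$-endomorphism identity.
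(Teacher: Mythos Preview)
Your overall strategy matches the paper's: pin down the isomorphism by its behavior on the open stratum $\Fl_{w_1w_2}$, then use $\End(\bc_{w_1w_2}) \cong \bk$ to force canonicity and associativity. However, there is a genuine technical gap in your first step. You write that ``by proper base change in the convolution diagram'' one obtains a canonical identification $j_{w_1w_2}^*(\bc_{w_1}\star\bc_{w_2}) \cong \ubk\{\dim\Fl_{w_1w_2}\}$. This is exactly the step the paper warns against in its remark preceding the proof: the argument works for $\Qlb$-sheaves, but in the mixed modular framework $\Dmix_I(\Fl) = \Kb\Parity_I(\Fl)$ the convolution is defined \emph{formally} on complexes of parity sheaves, and $\bc_{w_1}$, $\bc_{w_2}$ are such complexes rather than genuine constructible sheaves. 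There is no direct ``proper base change in the convolution diagram'' at this level that one can cite from \cite[\S4]{arc:f2}; the geometric base-change statement applies to the honest parity sheaves $\cE_{w_i}$, where convolution can be computed in the ordinary constructible category. The paper therefore first establishes the canonical isomorphism $(\cE_{w_1}\star\cE_{w_2})|_{\Fl_{w_1w_2}} \cong \ubk\{\ell(w_1w_2)\}$ on parity sheaves, chooses splitting maps $i,p$ compatible with it, and only then passes to $\bc_{w_1}\star\bc_{w_2}$ via the canonical surjections $\cE_{w_i}\to\bc_{w_i}$, showing that the induced maps $i_0,p_0$ are independent of the choice of $i,p$. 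Your adjunction argument would be fine once the restriction isomorphism is in hand, but obtaining that restriction isomorphism canonically in the mixed modular setting is precisely the content of the parity-sheaf detour.

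For invertibility you sketch a reduction to simple reflections and a $\mathbb{P}^1$ computation. This is reasonable, but the paper observes that the entire invertibility assertion is already \cite[Proposition~4.4(2)]{arc:f2} and simply cites it; no further argument is needed.
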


For $\Qlb$-sheaves (see~\cite[Lemma~8]{ab}), a shorter proof is possible: one can prove a property like~\eqref{eqn:parity-conv} below for standard sheaves directly. The definition of convolution in the mixed modular setting always involves parity sheaves as an intermediary; for this reason, the argument below must consider parity sheaves first.

\begin{proof}
We begin with the observation that if $\ell (w_1w_2) = \ell(w_1) +\ell(w_2)$, then there is a canonical isomorphism
\begin{equation}\label{eqn:parity-conv}
(\cE_{w_1} \star \cE_{w_2})|_{\Fl_{w_1w_2}} \cong \ubk\{\ell(w_1w_2)\}.
\end{equation}
Indeed, this follows from a study of the convolution diagram (see~\cite[\S4.1]{jmw}).  Now, $\cE_{w_1w_2}$ is a direct summand of the parity complex $\cE_{w_1} \star \cE_{w_2}$.  Choose maps
\[
\cE_{w_1w_2} \xrightarrow{i} \cE_{w_1} \star \cE_{w_2} \xrightarrow{p} \cE_{w_1w_2}
\]
such that $p \circ i = \id_{\cE_{w_1w_2}}$, $i \circ p$ is an idempotent, and both $i$ and $p$ are compatible with~\eqref{eqn:parity-conv}.  That is, the restriction to $\Fl_{w_1w_2}$ of each of $i$ and $p$ should coincide with the isomorphism~\eqref{eqn:parity-conv}.  The fact that the last condition can be satisfied follows from~\cite[Corollary~2.9]{jmw}.

Next, consider the canonical maps $\cE_{w_i} \to \bc_{w_i}$ and $\cE_{w_1w_2} \to \bc_{w_1w_2}$.  It is easy to see that there are unique maps $i_0$, $p_0$ making the following diagram commute:
\[
\xymatrix{
\cE_{w_1w_2} \ar[d] \ar[r]^{i} & \cE_{w_1} \star \cE_{w_2} \ar[d] \ar[r]^{p} & \cE_{w_1w_2} \ar[d]  \\
\bc_{w_1w_2} \ar[r]^{i_0} & \bc_{w_1} \star \bc_{w_2} \ar[r]^{p_0} & \bc_{w_1w_2} }
\]
In fact, the maps $i_0$ and $p_0$ are determined by the restrictions $i|_{\Fl_{w_1w_2}}$ and $p|_{\Fl_{w_1w_2}}$. In other words, they are determined by the canonical isomorphism~\eqref{eqn:parity-conv}, and are independent of the choice of $i$ and $p$.

We already know by~\cite[Proposition~4.6]{arc:f2} that $\bc_{w_1} \star \bc_{w_2}$ is abstractly isomorphic to $\bc_{w_1w_2}$.  Since $\End(\bc_{w_1w_2}) \cong \bk$, the maps $i_0$ and $p_0$ must both  be isomorphisms, inverse to one another.  These maps constitute the canonical isomorphism~\eqref{eqn:cos-conv}.

The associativity assertion follows from the fact that the two isomorphisms
\[
(\cE_{w_1} \star \cE_{w_2} \star \cE_{w_3})|_{\Fl_{w_1w_2w_3}} \cong \ubk\{\ell(w_1w_2w_3)\}
\]
induced by~\eqref{eqn:parity-conv} coincide.  Finally, the invertibility assertion is just a restatement of~\cite[Proposition~4.4(2)]{arc:f2}.
\end{proof}

\begin{lem}[cf.~{\cite[Proposition~8.2.4]{abg}}]\label{lem:waki-fl-support}
For any $y,w \in W_\aff$, the object $\bc_y \star \bs_w$ is a perverse sheaf.  It is supported on $\overline{\Fl_{yw}}$, and $(\bc_y \star \bs_w)|_{\Fl_{yw}} \cong \ubk\{\dim \Fl_{yw}\}$.  The same results hold for $\bs_y \star \bc_w$.
\end{lem}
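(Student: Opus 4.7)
The plan is to treat the statement about $\bc_y \star \bs_w$ first and to deduce the parallel statement about $\bs_y \star \bc_w$ either by a completely symmetric argument (swapping left and right throughout) or by Verdier duality (under which $\bs$ and $\bc$ interchange, up to conventions). I would prove the main statement by induction on $\ell(y)$: the base case $y = e$ is trivial because $\bc_e$ is the monoidal unit and so $\bc_e \star \bs_w = \bs_w$, which already has all the required properties. The inductive step will reduce to the case where the additional factor is a simple reflection.

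The key geometric input is the distinguished triangle
\[
\bs_s \to \bc_s \to \uicos(e)\{1\} \to
\]
in $\Dmix_I(\Fl)$, coming from the standard excision triangle on the smooth curve $\overline{\Fl_s} \cong \mathbb{P}^1$ (note that $\bc_s$ identifies with the shifted constant sheaf on $\mathbb{P}^1$, while $\bs_s$ is its $!$-extension from the open cell). Convolving this triangle on the right with $\bs_w$ yields $\bs_s \star \bs_w \to \bc_s \star \bs_w \to \bs_w\{1\}$. If $\ell(sw) = \ell(w) + 1$, a standard-sheaf analog of Lemma~\ref{lem:cos-conv} (proved by the same argument) identifies $\bs_s \star \bs_w \cong \bs_{sw}$, giving a triangle of perverse sheaves; the long exact sequence of perverse cohomology then shows that $\bc_s \star \bs_w$ is a perverse extension of $\bs_w\{1\}$ by $\bs_{sw}$, supported on $\overline{\Fl_{sw}}$ (since $\overline{\Fl_w} \subset \overline{\Fl_{sw}}$) and restricting to $\ubk\{\dim \Fl_{sw}\}$ on $\Fl_{sw}$ (since $\Fl_{sw} \cap \overline{\Fl_w} = \emptyset$). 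In the opposite case $\ell(sw) = \ell(w) - 1$, write $\bs_w \cong \bs_s \star \bs_{sw}$ and use the invertibility $\bc_s \star \bs_s \cong \bc_e$ provided by Lemma~\ref{lem:cos-conv} to conclude $\bc_s \star \bs_w \cong \bs_{sw}$ directly.

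For the inductive step, write $y = y' s$ with $\ell(y) = \ell(y') + 1$, so Lemma~\ref{lem:cos-conv} gives $\bc_y \cong \bc_{y'} \star \bc_s$, and hence $\bc_y \star \bs_w \cong \bc_{y'} \star (\bc_s \star \bs_w)$. Substituting the description of $\bc_s \star \bs_w$ from the simple-reflection case: in the descent case $\ell(sw) < \ell(w)$, one obtains $\bc_y \star \bs_w \cong \bc_{y'} \star \bs_{sw}$, and the inductive hypothesis applied to $(y', sw)$ concludes the argument (noting $y' \cdot sw = yw$); in the ascent case, applying $\bc_{y'} \star (-)$ to the short exact sequence yields a triangle $\bc_{y'} \star \bs_{sw} \to \bc_y \star \bs_w \to \bc_{y'} \star \bs_w\{1\}$ whose outer terms are perverse by induction, and a further long-exact-sequence argument produces perversity of the middle together with the desired description of its restriction to $\Fl_{yw}$. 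The main obstacle is the support assertion in this last case: one must verify $\overline{\Fl_{y'w}} \subset \overline{\Fl_{y'sw}} = \overline{\Fl_{yw}}$, which amounts to the Bruhat-order inequality $y'w \le y'sw$. This follows from a combinatorial argument in the affine Weyl group $W_\aff$ that exploits the two length hypotheses $\ell(sw) > \ell(w)$ and $\ell(y's) > \ell(y')$ via the exchange condition, exactly as in the proof of~\cite[Proposition~8.2.4]{abg}.
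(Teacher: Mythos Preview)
Your argument is correct and essentially reproduces the approach of~\cite[Proposition~8.2.4]{abg}, but it differs substantially from the paper's own proof. The paper does not induct on $\ell(y)$ via the triangle $\bs_s \to \bc_s \to \bc_e\{1\}$. Instead, it first cites~\cite[Proposition~4.6]{arc:f2} for perversity, then obtains the support statement and the restriction to $\Fl_{yw}$ \emph{up to an unknown Tate twist} $\la n\ra$ by a Grothendieck-group computation: in the affine Hecke algebra specialized at $q^{1/2}=1$ one has $\overline{[\bs_w]}=\overline{[\bc_w]}=(-1)^{\ell(w)}\bar T_w$, so $\overline{[\bc_y\star\bs_w]}=\overline{[\bc_{yw}]}$, which forces the composition factors of $\bc_y\star\bs_w$ to coincide (up to Tate twist) with those of $\bc_{yw}$. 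Only then does the paper run a separate induction, on $\ell(w)$ rather than $\ell(y)$, to pin down $n=0$ via adjunction maps. Your route handles perversity, support, and the precise restriction simultaneously and avoids any Hecke-algebra machinery, at the price of the Bruhat inequality $y'w<y'sw$; this is indeed true under your hypotheses (the two elements differ by the reflection in the positive root $y'(\alpha_s)$, and $(y'w)^{-1}(y'\alpha_s)=w^{-1}(\alpha_s)>0$ since $\ell(sw)>\ell(w)$), whereas the paper's Grothendieck-group trick sidesteps this combinatorics entirely. One notational slip: your $\uicos(e)\{1\}$ should be $\bc_e\{1\}$, as $\uicos$ is reserved for costandard objects on $\Gr$.
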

\begin{proof}
The fact that $\bc_y \star \bs_w$ is perverse is contained in~\cite[Proposition~4.6]{arc:f2}. 

Let $\cH$ be the Grothendieck group of $\Dmix_I(\Fl)$.  For an object $\cF \in \Dmix_I(\Fl)$, we denote its class in $\cH$ by $[\cF]$. Of course, the convolution product on $\Dmix_I(\Fl)$ makes $\cH$ into a ring. We also make it into a $\Z[q^{1/2},q^{-1/2}]$-algebra (where $q^{1/2}$ is an indeterminate) by setting $q^{1/2}[\cF] = [\cF\la 1\ra]$.  It is well known that $\cH$ is none other than the extended affine Hecke algebra associated to $W_\aff$.  Indeed, one can use~\cite[Proposition~4.4]{arc:f2} to check that the elements $T_w := [\bs_w \{-\ell(w)\}]$ satisfy the defining relations for the Hecke algebra.

Now, consider the ring $\bar\cH := \cH/(q^{1/2}-1)$, which can be identified with the group ring $\Z[W_\aff]$.  Let $\{ \bar T_w \mid w \in W_\aff \}$ be the standard basis for $\Z[W_\aff]$. For $\cF \in \Dmix_I(\Fl)$, let $\overline{[\cF]}$ denote its image in $\bar \cH$.  We then have $\overline{[\bs_w]} = (-1)^{\ell(w)} \bar T_w$.  On the other hand, we have $[\bc_w] = [\bs_{w^{-1}}]^{-1} = (-1)^{\ell(w)} q^{\ell(w)/2} T_{w^{-1}}^{-1}$.  It follows that
\[
\overline{[\bs_w]} = \overline{[\bc_w]} =(-1)^{\ell(w)} \bar T_w
\qquad\text{for all $w \in W_\aff$.}
\]
Since $\ell(y) + \ell(w) \equiv \ell(yw) \pmod 2$, we have
\begin{equation}\label{eqn:mirk-lem-groth}
\overline{[\bc_y \star \bs_w]} = (-1)^{\ell(y)+\ell(w)}\bar T_y \bar T_w = (-1)^{\ell(yw)}\bar T_{yw} = \overline{[\bc_{yw}]}.
\end{equation}

Recall that for two objects $\cF, \cG \in \Perv^\mix_I(\Fl)$, we have $[\cF] = [\cG]$ in $\cH$ if and only if $\cF$ and $\cG$ have the same composition factors (with multiplicities).  Similarly, we have $\overline{[\cF]} = \overline{[\cG]}$ if and only if $\cF$ and $\cG$ have the same composition factors up to Tate twist.  Thus,~\eqref{eqn:mirk-lem-groth} lets us compare the composition factors of $\bc_y \star \bs_w$ with those of $\bc_{yw}$.  Specifically, $\bc_y \star \bs_w$ must contain some $\IC_{yw}\la n\ra$ as a composition factor with multiplicity $1$, and all other composition factors must be supported on $\overline{\Fl_{yw}} \smallsetminus \Fl_{yw}$.  In particular, $\bc_y \star \bs_w$ is supported on $\overline{\Fl_{yw}}$, and $(\bc_y \star \bs_w)|_{\Fl_{yw}} \cong \ubk\{\dim \Fl_{yw}\}\la n\ra$ for some $n$.  

It remains to show that $n = 0$.  For this, we proceed by induction on the length of $w$. If $\ell(w) = 0$, then $\bs_w = \bc_w$, so we have $\bc_y \star \bs_w \cong \bc_y \star \bc_w \cong \bc_{yw}$, and the statement is clear.  Otherwise, write $w = w's$ where $s$ is a simple reflection, and $\ell(w') = \ell(w) - 1$.  By induction, we have $(\bc_y \star \bs_{w'})|_{\Fl_{yw'}} \cong \ubk\{ \dim \Fl_{yw'}\}$.

Suppose first that $yw' < yw$.  This implies that $\bs_{yw'} \star \bs_s \cong \bs_{yw}$.  There is a natural (nonzero) map $\bs_{yw'} \to \bc_y \star \bs_{w'}$. Since $\bs_s$ is an invertible object, applying $({-})\star \bs_s$ gives a nonzero map $\bs_{yw} \to \bc_y \star \bs_w$.  By adjunction, we obtain a nonzero map $\ubk\{\dim \Fl_{yw}\} \to (\bc_y \star \bs_w)|_{\Fl_{yw}}$.  Therefore, $n = 0$.

Similarly, if $yw' > yw$, we consider the natural (nonzero) map $\bc_y \star \bs_{w'} \to \bc_{yw'}$.  This time, we have $\bc_{yw'} \star \bs_s \cong \bc_{yw}$, so applying $({-})\star \bs_s$ gives a nonzero map $\bc_y \star \bs_w \to \bc_{yw}$.  Again, by adjunction, we obtain a nonzero map
$(\bc_y \star \bs_w)|_{\Fl_{yw}} \to \ubk\{\dim \Fl_{yw}\}$, and the result follows.
\end{proof}

\subsection{Projection to the affine Grassmannian}

Let $\varpi: \Fl \to \Gr$ be the obvious projection map.  This is a smooth, proper, stratified morphism.  The following elementary lemmas relating convolution and $\varpi$ are well known (at least in the non-mixed case), but we give their proofs for completeness.

\begin{lem}\label{lem:conv-flgr}
Let $\cF \in \DmixI(\Fl)$.
\begin{enumerate}
\item For $\cG \in \Dmix_I(\Fl)$, there is a natural isomorphism $\cF \star \varpi_*\cG \cong \varpi_*(\cF \star \cG)$.
\item For $\cG \in \Dmix_I(\Gr)$, there is a natural isomorphism $\cF \star \varpi^*\cG \cong \varpi^*(\cF \star \cG)$.
\end{enumerate}
\end{lem}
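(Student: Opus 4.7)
The plan is to reduce both statements to compatibilities at the level of parity complexes and then extend to $\DmixI$ by functoriality, using the identification $\DmixI(X) \cong \Kb(\Parity_{(I)}(X))$ that underlies the mixed modular framework of~\cite{arc:f2,arc:f3}.

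First, since $\varpi: \Fl \to \Gr$ is smooth and proper with fibers isomorphic to $G/B$, the functors $\varpi^*$ (with a shift by $\dim G/B$) and $\varpi_*$ send parity complexes to parity complexes. Thus they descend to well-defined triangulated functors $\DmixI(\Fl) \to \DmixI(\Gr)$ and $\DmixI(\Gr) \to \DmixI(\Fl)$. Likewise, convolution is first defined on parity complexes via the usual convolution diagram and then extended to $\DmixI$ (cf.~\cite[\S4]{arc:f2}).

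The key geometric input is the commutative square
\[
\xymatrix{
\Gk \times^I \Fl \ar[r]^-{m_\Fl} \ar[d]_{\tilde\varpi} & \Fl \ar[d]^\varpi \\
\Gk \times^I \Gr \ar[r]^-{m_\Gr} & \Gr,
}
\]
in which $\tilde\varpi$ is induced from $\varpi$ on the second factor, the horizontal maps are the action maps, and the square is Cartesian. For parity complexes $\cF \in \Parity_{(I)}(\Fl)$ and $\cG$ on $\Fl$ or $\Gr$, one has $\cF \star \cG = m_*(\cF \tboxtimes \cG)$, with $\cF \tboxtimes \cG$ living on the appropriate space above. For (1), combining the identity $\tilde\varpi_*(\cF \tboxtimes \cG) \cong \cF \tboxtimes \varpi_*\cG$ (which is immediate from the local description of $\tboxtimes$ and the fact that $\tilde\varpi$ is locally $\id \times \varpi$) with proper base change along the square yields
\[
\varpi_*(\cF \star \cG) = \varpi_* m_{\Fl,*}(\cF \tboxtimes \cG) \cong m_{\Gr,*}\tilde\varpi_*(\cF \tboxtimes \cG) \cong m_{\Gr,*}(\cF \tboxtimes \varpi_*\cG) = \cF \star \varpi_*\cG.
\]
For (2), the analogous chain with $\varpi^*$ in place of $\varpi_*$ works verbatim, replacing proper base change by smooth base change along the same square (valid since $\varpi$, hence $\tilde\varpi$, is smooth).

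Finally, since the isomorphisms just constructed on parity complexes are natural in both variables, they extend canonically to the bounded homotopy categories $\Kb(\Parity_{(I)}(-)) \cong \DmixI(-)$, giving the desired isomorphisms in the mixed modular setting. The only genuine obstacle is the bookkeeping of the Tate twists produced by $\varpi^*$ and by the smooth base-change isomorphism, together with checking that the natural isomorphisms are coherent enough to descend to the homotopy category; both are routine in the framework of~\cite{arc:f2}.
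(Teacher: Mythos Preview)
Your proof is correct and follows essentially the same route as the paper's: reduce to parity complexes, work in the ordinary derived category, and use the cartesian squares in the convolution diagram together with base change. The paper writes out the full three-square diagram
\[
\xymatrix{
\Fl \times \Fl \ar[d] & \Gk \times \Fl \ar[l] \ar[r] \ar[d] & \Gk \times^I \Fl \ar[d] \ar[r] & \Fl \ar[d] \\
\Fl \times \Gr & \Gk \times \Gr \ar[l] \ar[r] & \Gk \times^I \Gr \ar[r] & \Gr}
\]
whereas you isolate the rightmost square and absorb the left two into the claim $\tilde\varpi_*(\cF \tboxtimes \cG) \cong \cF \tboxtimes \varpi_*\cG$; these are the same argument. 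One small terminological quibble: in part~(1) the step $\varpi_* m_{\Fl,*} \cong m_{\Gr,*}\tilde\varpi_*$ is just functoriality of pushforward along the commuting square, not base change; the genuine base change occurs inside your identity $\tilde\varpi_*(\cF \tboxtimes \cG) \cong \cF \tboxtimes \varpi_*\cG$ (and, for part~(2), in the step $\varpi^* m_{\Gr,*} \cong m_{\Fl,*}\tilde\varpi^*$).
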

\begin{proof}
For both statements, it suffices to consider the case where $\cF$ and $\cG$ are both parity sheaves.  In this case, we can compute the convolution product in the ordinary (non-mixed) derived category instead.  Note that in the following diagram, every square is cartesian:
\[
\xymatrix{
\Fl \times \Fl \ar[d]_{\id \times \varpi} &
  \Gk \times \Fl \ar[l] \ar[r] \ar[d]_{\id \times \varpi} &
  \Gk \times^I \Fl \ar[d] \ar[r] &
  \Fl \ar[d]^{\varpi} \\
\Fl \times \Gr &
  \Gk \times \Gr \ar[l] \ar[r] &
  \Gk \times^I \Gr \ar[r] &
  \Gr}
\]
The results follow by tracing through the definition of convolution.
\end{proof}

\newcommand{\tboxtimesp}{\mathbin{\tilde{\mathord{\boxtimes}}{}'}}
\begin{lem}\label{lem:conv-fl-sph}
Let $\cF \in \DmixI(\Fl)$ and $\cG \in \Perv_\Go(\Gr)$.  There is a natural isomorphism $\cF \star \cG \cong \varpi_*\cF \star \cG$.
\end{lem}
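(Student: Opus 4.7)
The plan is to reduce to the ordinary (non-mixed) setting and then apply proper base change along $\varpi$. Since convolution in $\DmixI$ is defined via parity complexes (see~\cite{arc:f2}), we may assume $\cF$ is a parity complex and compute both sides of the desired isomorphism in the ordinary $I$-constructible derived category. The key geometric input is the cartesian square
\[
\xymatrix{
\Gk \times^I \Gr \ar[r]^{\pi_{I,\Go}} \ar[d]_{p_1} & \Gk \times^\Go \Gr \ar[d]^{\bar p_1} \\
\Fl \ar[r]^{\varpi} & \Gr
}
\]
in which the vertical maps are projection to the first factor (well-defined because the quotient relations act on the $\Gk$-coordinate from the right, leaving the left $\Gk$-coset unchanged). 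Here $\pi_{I,\Go}$ is a smooth proper $\Go/I \cong G/B$-bundle, and $\varpi$ is proper.

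By the convolution formalism, $\cF \star \cG \cong (m_I)_*(\cF \tboxtimes \cG)$, where $m_I : \Gk \times^I \Gr \to \Gr$ is the action map. Since $\cG$ is $\Go$-equivariant, not merely $I$-equivariant, the twisted external product $\cF \tboxtimes \cG$ on $\Gk \times^I \Gr$ admits a compatible descent $\cF \tboxtimes' \cG$ on $\Gk \times^\Go \Gr$; moreover $m_I = m_\Go \circ \pi_{I,\Go}$, so
\[
\cF \star \cG \cong (m_\Go)_* (\pi_{I,\Go})_* (\cF \tboxtimes \cG).
\]
On the other hand, by the same formalism applied on $\Gk \times^\Go \Gr$, we have $\varpi_*\cF \star \cG \cong (m_\Go)_* (\varpi_*\cF \tboxtimes' \cG)$. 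The lemma thus reduces to producing a natural isomorphism
\[
(\pi_{I,\Go})_* (\cF \tboxtimes \cG) \cong \varpi_*\cF \tboxtimes' \cG,
\]
which is precisely the proper base change statement for the cartesian square above: each side is determined, up to the same cohomological shift, by its pullback to $\Gk \times \Gr$, and those pullbacks coincide with $\cF \boxtimes \cG$ by construction.

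The main obstacle, more bookkeeping than conceptual, is to verify that the identification of twisted external products across the cartesian square respects all the equivariance and shift data correctly, and in particular that the $\Go$-equivariance on $\cG$ really does provide the descent datum needed to make $\cF \tboxtimes' \cG$ well-defined on $\Gk \times^\Go \Gr$. Once that setup is in place, applying $(m_\Go)_*$ to the base-change isomorphism yields the lemma.
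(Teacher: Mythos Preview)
Your overall strategy---reduce to parity complexes, then exploit the $\Go$-equivariance of $\cG$ and proper base change along the map $\Gk\times^I\Gr\to\Gk\times^\Go\Gr$---is the same as the paper's. But your final step contains a genuine gap. The isomorphism
\[
(\pi_{I,\Go})_*(\cF\tboxtimes\cG)\;\cong\;\varpi_*\cF\tboxtimes'\cG
\]
is \emph{not} ``precisely the proper base change statement'' for your square, and your justification that both sides pull back to $\cF\boxtimes\cG$ on $\Gk\times\Gr$ is false. The pullback of $\varpi_*\cF\tboxtimes'\cG$ along $q_\Go:\Gk\times\Gr\to\Gk\times^\Go\Gr$ is $p_\Go^*(\varpi_*\cF\boxtimes\cG)$, which is $\varpi_*\cF\boxtimes\cG$ (via $\Gk\to\Gr$ on the first factor), not $\cF\boxtimes\cG$. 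The same holds for $(\pi_{I,\Go})_*(\cF\tboxtimes\cG)$: pushforward and pullback along $q_\Go$ do not simply cancel.

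Your square \emph{can} be made to work, but you need one more ingredient: the projection formula. Concretely, one checks (by pulling back to $\Gk\times\Gr$) that
\[
\cF\tboxtimes\cG\;\cong\;p_1^*\cF\;\otimes\;\pi_{I,\Go}^*(\ubk_\Gr\tboxtimes'\cG)
\qquad\text{and}\qquad
\varpi_*\cF\tboxtimes'\cG\;\cong\;\bar p_1^*(\varpi_*\cF)\;\otimes\;(\ubk_\Gr\tboxtimes'\cG).
\]
Then the projection formula gives $(\pi_{I,\Go})_*(\cF\tboxtimes\cG)\cong(\pi_{I,\Go})_*p_1^*\cF\otimes(\ubk_\Gr\tboxtimes'\cG)$, and now proper base change for your square supplies $(\pi_{I,\Go})_*p_1^*\cF\cong\bar p_1^*\varpi_*\cF$, yielding the desired isomorphism. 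The paper avoids this factorization by instead passing through the auxiliary space $(\Gk\times^I\Go)\times\Gr$, which carries maps to both $\Fl\times\Gr$ and $\Gk\times^I\Gr$; this gives an alternative description of $\cF\tboxtimes\cG$ that plugs directly into a different cartesian square where base change alone suffices.
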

\begin{proof}
As above, assume that $\cF$ and $\cG$ are both parity sheaves.  We will give an alternative description of the object $\cF \tboxtimes \cG$ on $\Gk \times^I \Gr$, using the following commutative diagram.
\[
\xymatrix{
& \Gk \times \Gr \ar[dl]_p \ar[dr]^q \ar[d]_i \\
\Fl \times \Gr & (\Gk \times^I \Go) \times \Gr \ar[l]_-{\tilde p} \ar[r]^-{\tilde q} & \Gk \times^I \Gr}
\]
The maps are defined as follows:
\begin{align*}
p(g, x\Go) &= (gI,x\Go) & q(g,x\Go) &= (g,x\Go) & i(g,x\Go) &= (g,1,x\Go) \\
\tilde p(g,h,x\Go) &= (gI,x\Go) & \tilde q(g,h,x\Go) &= (g,hx\Go)
\end{align*}
Recall that $\cF \tboxtimes \cG$ is defined to be the unique object on $\Gk \times^I \Gr$ such that $q^*(\cF \tboxtimes \cG) \cong p^*(\cF \boxtimes \cG)$.  We claim that it is also the unique object 
satisfying
\[
\tilde q^*(\cF \tboxtimes \cG) \cong \tilde p^*(\cF \boxtimes \cG).
\]
To see this, observe first that because $\cG$ is $\Go$-equivariant, the object $\tilde p^*(\cF \boxtimes \cG)$ is $\Go$-equivariant for the ``diagonal'' $\Go$-action on $(\Gk \times^I \Go) \times \Gr$, in which $m \in \Go$ acts by $m \cdot (g,h,x\Go) = (g,hm^{-1}, mx\Go)$.  This action is free, and $\tilde q$ is the quotient by this action, so there exists a unique object on $\Gk \times^I \Gr$, say $\cF \tboxtimesp \cG$, such that $\tilde q^*(\cF \tboxtimesp \cG) \cong \tilde p^*(\cF \boxtimes \cG)$.  Applying $i^*$, we find that $q^*(\cF \tboxtimesp \cG) \cong p^*(\cF \boxtimes \cG)$, so we must have $\cF \tboxtimesp \cG = \cF \tboxtimes \cG$.

Now form the following commutative diagram, where $r(g,h,x\Go) = (gh,x\Go)$.
\[
\xymatrix{
\Fl \times \Gr \ar[d] &
  (\Gk \times^I \Go) \times \Gr \ar[d]_r \ar[l]_-{\tilde p} \ar[r]^-{\tilde q} &
  \Gk \times^I \Gr \ar[d]_s \ar[dr] \\
\Gr \times \Gr &
  \Gk \times \Gr \ar[l] \ar[r] &
  \Gk \times^\Go \Gr \ar[r] & \Gr}
\]
It is easy to check that both squares are cartesian. Then, the base change theorem implies that $s_*(\cF \tboxtimes \cG) \cong \varpi_*\cF \tboxtimes \cG$, and the result follows.
\end{proof}

\subsection{Wakimoto sheaves}
\label{ss:wakimoto}

Identify $\bX$ with a subset of $W_\aff$ as usual. Also, for $\lambda \in \bX$, let $\bk_\lambda$ denote the $\Tv$-representation of weight $\lambda$.

\begin{lem}[cf.~{\cite[Corollary~1(a)]{ab}}]
The assignment $\bk_\lambda \mapsto \bc_\lambda$ for $\lambda \in \bXp$ extends to a monoidal functor $\cW: \Rep(\Tv) \to \Dmix_I(\Fl)$.
\end{lem}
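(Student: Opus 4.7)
The plan is to first extend the assignment $\bk_\lambda \mapsto \bc_\lambda$ from $\lambda \in \bXp$ to all $\lambda \in \bX$ using the invertibility of $\bc_\mu$ for dominant $\mu$ (the last statement of Lemma~\ref{lem:cos-conv}), then extend to all of $\Rep(\Tv)$ by additivity using semisimplicity, and finally verify the monoidal coherences.

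Given $\lambda \in \bX$, choose a decomposition $\lambda = \mu - \nu$ with $\mu, \nu \in \bXp$, and set $\cW(\bk_\lambda) := \bc_\mu \star \bs_{-\nu}$. Note that $\bs_{-\nu}$ is the two-sided convolution inverse of $\bc_\nu$. For $\alpha, \beta \in \bXp$ both dominant, the canonical isomorphisms $\bc_\alpha \star \bc_\beta \cong \bc_{\alpha+\beta} \cong \bc_\beta \star \bc_\alpha$ from Lemma~\ref{lem:cos-conv} show that the family $\{\bc_\alpha\}_{\alpha \in \bXp}$ is pairwise commutative (via specified isomorphisms), and hence so is the enlarged family $\{\bc_\alpha, \bs_{-\alpha}\}_{\alpha \in \bXp}$. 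This commutativity is what makes the definition well-defined: if $\lambda = \mu - \nu = \mu' - \nu'$, then $\mu + \nu' = \mu' + \nu$ is dominant, and by sliding $\bc_{\nu'} \star \bs_{-\nu'} \cong \bc_e$ into the product one obtains
\[
\bc_\mu \star \bs_{-\nu} \cong \bc_\mu \star \bc_{\nu'} \star \bs_{-\nu'} \star \bs_{-\nu} \cong \bc_{\mu+\nu'} \star \bs_{-(\nu+\nu')},
\]
and the same final expression is produced starting from $\bc_{\mu'} \star \bs_{-\nu'}$, giving a canonical comparison isomorphism. By Lemma~\ref{lem:waki-fl-support} the object $\bc_\mu \star \bs_{-\nu}$ is perverse, supported on $\overline{\Fl_{\mu-\nu}}$ and restricting to the shifted constant sheaf on the open orbit; in particular $\End(\cW(\bk_\lambda)) \cong \bk$, so the functor extends uniquely over morphisms between simples in $\Rep(\Tv)$, and then additively to all of $\Rep(\Tv)$.

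For the monoidal structure, given $\lambda = a - b$ and $\mu = c - d$ with $a,b,c,d \in \bXp$, the same commutation moves yield
\[
\cW(\bk_\lambda) \star \cW(\bk_\mu) = \bc_a \star \bs_{-b} \star \bc_c \star \bs_{-d} \cong \bc_{a+c} \star \bs_{-(b+d)} = \cW(\bk_{\lambda+\mu}).
\]
The unit isomorphism $\cW(\bk_0) \cong \bc_e$ is immediate, and the pentagon and triangle axioms reduce, after similar rewriting, to the coherence assertion in Lemma~\ref{lem:cos-conv} that all associators between triple convolutions of dominant $\bc$'s agree.

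The main obstacle is the coherence verification: one must check that the family of isomorphisms constructed above satisfies the pentagon axiom independently of the auxiliary decompositions $\lambda = \mu - \nu$, $\mu = c - d$ chosen along the way. The second paragraph of Lemma~\ref{lem:cos-conv} is precisely designed for this, because any two ways of bracketing a product $\bc_{\alpha_1} \star \cdots \star \bc_{\alpha_k}$ with $\alpha_i \in \bXp$ are canonically identified with $\bc_{\alpha_1 + \cdots + \alpha_k}$; combined with the fact that $\bs_{-\nu}$ is defined as the categorical inverse of $\bc_\nu$, this forces all the resulting diagrams to commute. The rest of the verification is bookkeeping.
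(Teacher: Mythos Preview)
Your argument is correct and is essentially the same as the paper's, which simply observes that each $\bc_\lambda$ is invertible and then refers to~\cite[Corollary~1(a)]{ab} for the rest; you have written out that referenced argument in detail. One small remark: your invocation of Lemma~\ref{lem:waki-fl-support} to deduce $\End(\cW(\bk_\lambda)) \cong \bk$ is not quite justified (support and generic-restriction data alone do not pin down the endomorphism ring), but the conclusion follows immediately from invertibility, since $\End(\bc_\mu \star \bs_{-\nu}) \cong \End(\bc_e) \cong \bk$; in any case this fact is not actually needed to define the functor on morphisms.
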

\begin{proof}
Recall~\cite[Proposition~4.4(2)]{arc:f2} that each $\bc_w$ is an invertible object of $\Dmix_I(\Fl)$.  With this observation in hand, the proof of~\cite[Corollary~1(a)]{ab} can be repeated verbatim.
\end{proof}

As in~\cite{abg}, we will write $\cW_\lambda$ instead of $\cW(\bk_\lambda)$.  Objects of this form are called \emph{Wakimoto sheaves}.  The construction implies that for any $\lambda \in \bX$, we have
\begin{equation}\label{eqn:waki-defn}
\cW_\lambda \cong \bc_\mu \star \bs_{-\nu}\qquad
\text{if $\lambda = \mu - \nu$ and $\mu, \nu \in \bXp$.}
\end{equation}
In particular, for $\lambda\in\bXp$, we have $\cW_\lambda = \bc_\lambda$ and $\cW_{-\lambda} \cong \bs_{-\lambda}.$  By~\cite[Proposition 4.6]{arc:f2} and Proposition~\ref{prop:adv-conv}, $\cW_\lambda$ is both perverse and adverse. We also put
\[
\ocW_\lambda := \cW_\lambda \star 1_{\Gr} \cong \varpi_*\cW_\lambda.
\]
By Lemma~\ref{lem:adv-smooth}, the $\ocW_\lambda$ are again adverse.  (They are not perverse in general.)  The following fact about these objects is a consequence of Lemma~\ref{lem:waki-fl-support}.

\begin{lem}\label{lem:waki-support}
For any $\lambda \in \bX$, $\ocW_\lambda$ is supported on $\overline{\schub\lambda}$, and there is a canonical isomorphism $\ocW_\lambda|_{\schub\lambda} \cong \ubk\{\dim \schub\lambda - \delta_\lambda\}$.  In the special case where $\lambda \in \bXp$, we have
\begin{equation}\label{eqn:waki-support}
\ocW_\lambda \cong \uicos(\lambda)
\qquad\text{and}\qquad
\ocW_{-\lambda} \cong \uistd(-\lambda)\{-\delta_{-\lambda}\}.
\end{equation}
\end{lem}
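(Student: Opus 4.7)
The plan is to establish the three claims---support, restriction to the open stratum $\schub\lambda$, and the explicit identifications in the dominant and antidominant cases---in sequence, using Lemma~\ref{lem:waki-fl-support} as the main leverage on the $\Fl$ side and proper base change for the smooth proper morphism $\varpi$ throughout.

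For the support claim, Lemma~\ref{lem:waki-fl-support} (applied to any decomposition $\lambda = \mu - \nu$ with $\mu,\nu \in \bXp$) yields that $\cW_\lambda \cong \bc_\mu \star \bs_{-\nu}$ is supported on $\overline{\Fl_\lambda}$; properness of $\varpi$ together with $\varpi(\Fl_\lambda) = \schub\lambda$ then forces $\supp(\ocW_\lambda) \subset \overline{\schub\lambda}$.

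For the restriction claim and the two special cases, the key geometric input is that the restriction $\bar\varpi_\lambda : \Fl_\lambda \to \schub\lambda$ of $\varpi$ is an affine-space bundle of rank $\delta_\lambda$. In particular, $\dim\Fl_\lambda = \dim\schub\lambda + \delta_\lambda$, and $\bar\varpi_\lambda$ is an isomorphism exactly when $\lambda \in \bXp$. Granting this, Lemma~\ref{lem:waki-fl-support} gives $\cW_\lambda|_{\Fl_\lambda} \cong \ubk\{\dim\Fl_\lambda\}$, and combining proper base change with the mixed modular pushforward formula $(\bar\varpi_\lambda)_!\ubk_{\Fl_\lambda} \cong \ubk_{\schub\lambda}\{-2\delta_\lambda\}$ for an $\mathbb{A}^{\delta_\lambda}$-bundle yields
\[
\ocW_\lambda|_{\schub\lambda} \cong \ubk\{\dim\Fl_\lambda - 2\delta_\lambda\} = \ubk\{\dim\schub\lambda - \delta_\lambda\}.
\]
For $\lambda \in \bXp$, we have $\cW_\lambda = \bc_\lambda = j_{\lambda*}\ubk\{\dim\Fl_\lambda\}$, and the factorization $\varpi \circ j_\lambda = i_\lambda \circ \bar\varpi_\lambda$ together with the fact that $\bar\varpi_\lambda$ is an isomorphism gives
\[
\ocW_\lambda = i_{\lambda*}(\bar\varpi_\lambda)_*\ubk\{\dim\Fl_\lambda\} = i_{\lambda*}\ubk\{\dim\schub\lambda\} = \uicos(\lambda).
\]
The antidominant case $\lambda = -\nu$ with $\nu \in \bXp$ is analogous: using $\cW_{-\nu} = \bs_{-\nu} = j_{-\nu,!}\ubk\{\dim\Fl_{-\nu}\}$, the identity $\varpi_! = \varpi_*$ coming from properness, and the bundle pushforward formula, one obtains $\ocW_{-\nu} \cong i_{-\nu,!}\ubk\{\dim\schub{-\nu} - \delta_{-\nu}\} = \uistd(-\nu)\{-\delta_{-\nu}\}$.

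The main obstacle is establishing the affine-bundle structure of $\bar\varpi_\lambda$ together with the corresponding mixed modular pushforward identity. The bundle claim is a classical fact about affine flag varieties that follows from a careful Iwahori stabilizer analysis, exploiting that $\delta_\lambda$ measures exactly the contribution of the Weyl-group displacement to the length $\ell(\lambda)$; the pushforward identity is then a standard consequence of the mixed modular conventions of \cite{arc:f2}. Once both are in place, the rest of the proof is a formal manipulation of adjoint functors.
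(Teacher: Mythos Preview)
Your treatment of the support claim and of the two identifications in~\eqref{eqn:waki-support} is correct and is essentially what the paper intends via its citation of~\cite[Corollary~8.3.2]{abg}. For $\lambda$ dominant, $t_\lambda$ is minimal in $t_\lambda W$, so $\bar\varpi_\lambda$ is an isomorphism and the factorization $\varpi_* j_{\lambda*} = i_{\lambda*}(\bar\varpi_\lambda)_*$ gives $\ocW_\lambda = \uicos(\lambda)$ directly; for $\lambda$ antidominant the analogous factorization with $!$-pushforwards and the affine-bundle formula give $\ocW_\lambda = \uistd(\lambda)\{-\delta_\lambda\}$.

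There is, however, a genuine gap in your argument for the \emph{general} restriction $\ocW_\lambda|_{\schub\lambda}$. Proper base change yields
\[
i_\lambda^*\varpi_*\cW_\lambda \;\cong\; \tilde\varpi_*\bigl(\cW_\lambda|_{\varpi^{-1}(\schub\lambda)}\bigr),
\qquad \tilde\varpi:\varpi^{-1}(\schub\lambda)\to\schub\lambda,
\]
but $\varpi^{-1}(\schub\lambda)$ is the union of the $\Fl_{t_\lambda w}$ for $w\in W$, and when $\lambda\notin\bXp$ the element $t_\lambda$ is not minimal in its coset: there exist $w\ne e$ with $t_\lambda w < t_\lambda$ and hence $\Fl_{t_\lambda w}\subset\overline{\Fl_\lambda}$. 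Lemma~\ref{lem:waki-fl-support} gives no control over the stalks of $\cW_\lambda$ on these extra strata, so you cannot identify $\cW_\lambda|_{\varpi^{-1}(\schub\lambda)}$ with the $!$-extension of $\ubk\{\dim\Fl_\lambda\}$ from $\Fl_\lambda$ and thereby reduce to $(\bar\varpi_\lambda)_!$. (The antidominant case escapes this because $\bs_\lambda$ already has vanishing $*$-stalks off $\Fl_\lambda$; the dominant case escapes because no such $w$ exist.) To handle general $\lambda$ one needs additional input: one route is to observe that, since $\ocW_\lambda$ is adverse and $\schub\lambda$ is open in its support, $i_\lambda^*\ocW_\lambda = i_\lambda^!\ocW_\lambda$ is pure of weight~$0$, and then determine the single summand and its twist by a Grothendieck-group calculation in the spirit of the proof of Lemma~\ref{lem:waki-fl-support}.
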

\begin{proof}
Identical to~\cite[Corollary~8.3.2]{abg}.
\end{proof}

The next lemma is a variation on Proposition~\ref{prop:conv-twist}.

\begin{lem}\label{lem:conv-waki-twist}
Let $\lambda \in \bX$, and let $\cF, \cG \in \Dmix_I(\Gr)$.  The natural map
\[
\gHom(\cF,\cG) \to \gHom(\cW_\lambda \star \cF, \cW_\lambda \star \cG)
\]
is an isomorphism of $\sH^\bullet_I(\pt)$-modules.
\end{lem}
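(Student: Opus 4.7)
The natural plan is to exploit invertibility of $\cW_\lambda$ under convolution. Because $\cW: \Rep(\Tv) \to \Dmix_I(\Fl)$ is monoidal and $\bk_\lambda \otimes \bk_{-\lambda} \cong \bk_0$ in $\Rep(\Tv)$, we obtain a canonical isomorphism $\cW_\lambda \star \cW_{-\lambda} \cong \cW_0 = \bc_e$. Since $\bc_e$ is the unit for the convolution action of $\Dmix_I(\Fl)$ on $\Dmix_I(\Gr)$, it follows that $\cW_\lambda \star ({-})$ is an autoequivalence of $\Dmix_I(\Gr)$ with quasi-inverse $\cW_{-\lambda} \star ({-})$.

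Any autoequivalence induces a bijection on Hom-groups, and since convolution commutes with the shift $\{1\}$, the same holds at the graded level. Thus the map
\[
\gHom(\cF,\cG) \to \gHom(\cW_\lambda \star \cF, \cW_\lambda \star \cG)
\]
induced by $\cW_\lambda \star ({-})$ is a bijection. To identify this induced map with the one in the statement, I would note that both are given by the same formula $f \mapsto \id_{\cW_\lambda} \star f$ on the level of morphisms.

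For the $\sH^\bullet_I(\pt)$-linearity, the module structure on Hom-groups in $\Dmix_I(\Gr)$ is natural in both variables and is compatible with the convolution action in the sense that, for any $\cH \in \Dmix_I(\Fl)$, the functor $\cH \star ({-})$ intertwines the actions on source and target. This is a formal consequence of the construction of convolution on parity sheaves in~\cite{arc:f2} (and its extension to mixed modular derived categories), since $\sH^\bullet_I(\pt)$ acts via natural transformations of the identity functor on the $\Gr$-side that are preserved by convolution from the $\Fl$-side. Granting this compatibility, the bijection above is automatically $\sH^\bullet_I(\pt)$-linear.

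The only genuinely delicate point is the compatibility of the $\sH^\bullet_I(\pt)$-action with convolution, but this should be a structural fact either imported from the references or established once and for all in the companion Appendix~\ref{sect:mixed}; no new geometric input is needed beyond the invertibility of $\cW_\lambda$, which is already provided by Lemma~\ref{lem:cos-conv} and the existence of the monoidal functor $\cW$.
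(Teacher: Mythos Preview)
Your argument that the map is a bijection of graded vector spaces is fine; this is exactly how the paper begins. The gap is in the $\sH^\bullet_I(\pt)$-linearity step. Your assertion that ``for any $\cH \in \Dmix_I(\Fl)$, the functor $\cH \star ({-})$ intertwines the actions on source and target'' is \emph{false} in general, and Appendix~\ref{sect:mixed} says precisely the opposite of what you hope. Proposition~\ref{prop:conv-twist} shows that convolution by $\bc_w$ or $\bs_w$ induces a \emph{$w$-twisted} homomorphism of $\sH^\bullet_B(\pt)$-modules, not an honest one: the two $\sH^\bullet_B(\pt)$-actions on $\sH^\bullet_B(\scB_w)$ differ by the Weyl-group element $w$. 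So for example convolution by $\bc_s$ with $s$ a simple reflection is not $\sH^\bullet_I(\pt)$-linear.

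The actual content of the lemma is therefore specific to Wakimoto sheaves. The paper's proof reduces to the case $\lambda \in \bXp$, where $\cW_\lambda = \bc_\lambda$, and then observes that the $W_\aff$-action on $T$ (and hence on $\sH^\bullet_I(\pt)$) factors through the finite Weyl group $W$; since $\lambda \in \bX \subset W_\aff$ is a pure translation, it acts trivially, and the $\lambda$-twisted homomorphism of Proposition~\ref{prop:conv-twist} is an untwisted one. The antidominant case follows because $\bs_{-\lambda} \star ({-})$ is inverse to $\bc_\lambda \star ({-})$, and the general case by~\eqref{eqn:waki-defn}. Your appeal to ``$\sH^\bullet_I(\pt)$ acts via natural transformations of the identity functor \dots\ preserved by convolution'' overlooks that an autoequivalence only preserves the center up to an automorphism of the center, and here that automorphism is nontrivial for generic $w \in W_\aff$.
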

\begin{proof}[Proof Sketch]
This map is, of course, at least an isomorphism of graded vector spaces, since $\cW_\lambda \star ({-})$ is an equivalence of categories.  If we prove the statement for dominant weights, then it follows for antidominant weights (since $\bs_{-\lambda} \star ({-})$ is the inverse functor to $\bc_\lambda \star ({-})$), and then for all weights by~\eqref{eqn:waki-defn}.

For dominant weights, the proof is very close that of Proposition~\ref{prop:conv-twist}.  We will review the main points.  Recall that $W_\aff$ acts naturally on the maximal torus $T$, and that this action factors through $W_\aff \twoheadrightarrow W$ (see~\cite[\S13.2.2]{kumar}). This induces an action of $W_\aff$ on $\sH^\bullet_T(\pt) \cong \sH^\bullet_I(\pt)$.  Now, the equivariant cohomology $\sH^\bullet_I(\Fl_w)$ is equipped with two actions of $\sH^\bullet_I(\pt)$.  These actions do not coincide in general; rather, they differ by the action of $w$ of $\sH^\bullet_I(\pt)$.  As a consequence, the natural map
\[
\gHom(\cF,\cG) \to \gHom(\bc_w \star \cF, \bc_w \star \cG)
\]
is a $w$-twisted homomorphism of $\sH^\bullet_I(\pt)$-modules.  When $w$ is a dominant weight, it acts trivially on $T$ and on $\sH^\bullet_I(\pt)$, so the map above is indeed a homomorphism of $\sH^\bullet_I(\pt)$-modules, as desired.
\end{proof}

\subsection{Convolution with monodromic objects}

For $\Qlb$-sheaves, it is explained in~\cite[\S8.9]{abg} that by considering lifts of the $\bc_w$ and $\bs_w$ to the ``thick affine flag variety'' $\widetilde{\Fl}$, one can define a functor
\begin{equation}\label{eqn:abg-convolve}
\cW_\lambda \star ({-}): \DmixI(\Gr,\Qlb) \to \DmixI(\Gr,\Qlb).
\end{equation}
That is, one can drop the $I$-equivariance condition for objects on $\Gr$.  Unfortunately, we cannot imitate this in the mixed modular setting, because there is currently no suitable theory of ``mixed modular sheaves'' on $\widetilde{\Fl}$.

We will not attempt to define convolution in the generality of~\eqref{eqn:abg-convolve}.  Instead, we will see in the next few statements that for certain special classes of morphisms and objects in $\DmixI(\Gr)$, we can recover a ``shadow'' of the undefined functor~\eqref{eqn:abg-convolve}, by lifting to $\Dmix_I(\Gr)$.

\begin{prop}\label{prop:conv-waki-hom}
Let $\cF, \cG \in \Dmix_I(\Gr)$ be objects such that $\gHom^i(\cF,\cG)$ is a free $\sH^\bullet_I(\pt)$-module for all $i \in \Z$.  Then there is a unique isomorphism $\omega_\lambda$ making the following diagram commute:
\[
\xymatrix{
\gHom_{\Dmix_I(\Gr)}(\cF, \cG) \ar[r]^-{\sim}_-{\cW_\lambda\star -}  \ar[d] &
  \gHom_{\Dmix_I(\Gr)}(\cW_\lambda \star \cF, \cW_\lambda \star \cG) \ar[d] \\
\gHom_{\DmixI(\Gr)}(\For(\cF), \For(\cG)) \ar[r]^-{\sim}_-{\omega_\lambda} &
  \gHom_{\DmixI(\Gr)}(\For(\cW_\lambda \star \cF), \For(\cW_\lambda \star \cG))
}
\]
\end{prop}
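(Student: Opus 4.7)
The plan is to reduce the problem to an elementary base-change calculation using the hypothesis of freeness over $\sH^\bullet_I(\pt)$ together with Lemma~\ref{lem:conv-waki-twist}. The key input is the following general comparison between equivariant and constructible Hom-groups: if $\cF',\cG' \in \Dmix_I(\Gr)$ are such that $\gHom^i_{\Dmix_I(\Gr)}(\cF',\cG')$ is a free $\sH^\bullet_I(\pt)$-module for every $i$, then the forgetful functor induces a natural isomorphism of graded vector spaces
\[
\bk \otimes_{\sH^\bullet_I(\pt)} \gHom_{\Dmix_I(\Gr)}(\cF',\cG') \overset{\sim}{\longrightarrow} \gHom_{\DmixI(\Gr)}(\For(\cF'),\For(\cG')).
\]
This is the mixed modular incarnation of a standard feature of equivariant derived categories, and I would expect it to follow from (or to be recorded in) the appendix on mixed modular derived categories.

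First I would apply this comparison to the pair $(\cF,\cG)$, using the hypothesis directly. Next, I would check that the comparison also applies to the pair $(\cW_\lambda \star \cF, \cW_\lambda \star \cG)$. For this, Lemma~\ref{lem:conv-waki-twist} is decisive: it supplies an isomorphism of $\sH^\bullet_I(\pt)$-modules
\[
\gHom^i_{\Dmix_I(\Gr)}(\cF,\cG) \overset{\sim}{\longrightarrow} \gHom^i_{\Dmix_I(\Gr)}(\cW_\lambda \star \cF, \cW_\lambda \star \cG),
\]
so freeness is inherited. Under the comparison isomorphism, the two vertical maps in the diagram are identified with the quotient maps by the augmentation ideal $\sH^{>0}_I(\pt)$, hence are both surjective.

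The proposition is then a formal consequence: apply $\bk \otimes_{\sH^\bullet_I(\pt)} (-)$ to the (top) $\sH^\bullet_I(\pt)$-linear isomorphism of Lemma~\ref{lem:conv-waki-twist}, and transport the result through the two comparison isomorphisms to obtain the required isomorphism $\omega_\lambda$ on the bottom row. Commutativity of the square is built in by construction, and uniqueness of $\omega_\lambda$ follows from surjectivity of the left vertical map.

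The main obstacle, as I see it, is not the algebraic manipulation but rather the foundational comparison between $\gHom_{\Dmix_I}$ and $\gHom_{\DmixI}$ under a freeness hypothesis. In the classical (non-mixed) setting this is a well-known consequence of the spectral sequence relating equivariant and ordinary Ext-groups, but in the mixed modular framework one must verify that the standard arguments carry over; this is presumably handled in the appendix with Riche. Once that comparison is in hand, the rest of the argument is purely formal.
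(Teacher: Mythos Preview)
Your proposal is correct and follows essentially the same approach as the paper's own proof. The paper makes the ``comparison'' step explicit by invoking Proposition~\ref{prop:eqvt-hom-dg} from the appendix and reading off the isomorphism $\gHom(\cF,\cG)\otimes_{\sH^\bullet_I(\pt)}\bk \cong \gHom(\For(\cF),\For(\cG))$ from the degeneration of the associated Tor spectral sequence under the freeness hypothesis; after that, the use of Lemma~\ref{lem:conv-waki-twist} and the definition of $\omega_\lambda$ as $(\cW_\lambda\star{-})\otimes_{\sH^\bullet_I(\pt)}\bk$ are exactly as you outline.
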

This proposition applies, for instance, when $\cF$ is $*$-pure and $\cG$ is $!$-pure.  In particular, when $\mu \in \bXp$ and $\cG$ is $!$-pure, this proposition gives us a map
\[
\omega_\lambda: \gHom(\ocW_{-\mu}, \For(\cG)) \simto \gHom(\ocW_{\lambda - \mu}, \For(\cW_\lambda \star \cG)).
\]
This is the most common circumstance in which Proposition~\ref{prop:conv-waki-hom} will be invoked.

\begin{proof}
Proposition~\ref{prop:eqvt-hom-dg} gives rise to a spectral sequence
\[
\Tor_{-p}^{\sH^\bullet_I(\pt)}(\gHom^q(\cF,\cG), \bk) \Longrightarrow \gHom^{p+q}(\For(\cF), \For(\cG)).
\]
But the assumption that all the $\gHom^q(\cF,\cG)$ are free means that the $\Tor$-groups vanish except when $p = 0$.  Setting $q = 0$, we obtain an isomorphism
\begin{equation}\label{eqn:conv-waki-hom1}
\gHom(\cF,\cG) \otimes_{\sH^\bullet_I(\pt)} \bk \cong \gHom(\For(\cF),\For(\cG)).
\end{equation}
In particular, the map $\gHom(\cF,\cG) \to \gHom(\For(\cF),\For(\cG))$ is surjective.  It follows immediately that if $\omega_\lambda$ exists, it is unique.

Next, Lemma~\ref{lem:conv-waki-twist} implies that all $\gHom^i(\cW_\lambda \star \cF, \cW_\lambda \star \cG)$ are also free $\sH^\bullet_I(\pt)$-modules, so the considerations above apply here as well.  In particular, we have
\begin{equation}\label{eqn:conv-waki-hom2}
\gHom(\cW_\lambda \star \cF,\cW_\lambda \star\cG) \otimes_{\sH^\bullet_I(\pt)} \bk \cong \gHom(\For(\cW_\lambda \star \cF),\For(\lambda \star \cG)).
\end{equation}
Via~\eqref{eqn:conv-waki-hom1} and~\eqref{eqn:conv-waki-hom2}, we define $\omega_\lambda$ to be the map $(\cW_\lambda \star ({-})) \otimes_{\sH^\bullet_I(\pt)} \bk$.
\end{proof}

Next, we show that the maps $\omega_\lambda$ enjoy a kind of compatibility with composition.

\begin{lem}\label{lem:conv-waki-assoc}
Let $V \in \Rep(\Gv)$ be a representation with a good filtration.  Let $\cF \in \DmixI(\Gr)$ be an object such that both $\cF$ and $\cF \star \Sat(V)$ are $!$-pure of weight $0$.  Let $\sigma \in \bX$, and let $\lambda, \mu \in \bXp$.  Given $f: \ocW_{-\lambda} \to \Sat(V)\{n\}$ and $g: \ocW_{-\mu} \to \cF$, consider the composition
\[
\ocW_{-\lambda-\mu} \xrightarrow{\omega_{-\mu}(f)} \ocW_{-\mu} \star \Sat(V)\{n\} \xrightarrow{g \star \id} \cF \star \Sat(V)\{n\}.
\]
The following diagram commutes:
\[
\xymatrix{
\ocW_{\sigma - \lambda - \mu} \ar[rr]^-{\omega_{\sigma-\mu}(f)}
\ar[drr]_{\omega_\sigma\left((g \star \id)  \circ \omega_{-\mu}(f)\right)\qquad\quad} &&
\ocW_{\sigma - \mu} \star \Sat(V)\{n\} \ar[d]^{\omega_\sigma(g) \star \id} \\
&& \cW_\sigma \star \cF \star \Sat(V)\{n\} }
\]
\end{lem}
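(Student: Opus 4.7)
The plan is to lift all morphisms in question from $\DmixI(\Gr)$ to the equivariant category $\Dmix_I(\Gr)$, where $\omega_\bullet$ is literally induced by the bona fide convolution functor $\cW_\bullet \star ({-})$. Once one is working in $\Dmix_I$, the asserted compatibility will be a formal consequence of the bifunctoriality of $\star$ and the monoidality of the Wakimoto functor $\cW: \Rep(\Tv) \to \Dmix_I(\Fl)$.

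I expect the main technical obstacle to be verifying that the freeness hypothesis of Proposition~\ref{prop:conv-waki-hom} is satisfied for each pair of objects appearing in the diagram. Each Wakimoto-type object $\ocW_{-\nu}$ with $\nu \in \bXp$ is $*$-pure of weight $0$ by Lemma~\ref{lem:waki-support}. The object $\Sat(V)\{n\}$ is $!$-pure: since $V$ has a good filtration, $\Sat(V)$ admits a filtration with subquotients $\cIcos(\mu) = \Sat(\cow(\mu))$, each $!$-pure by Theorem~\ref{thm:mixed-mv}. Combining these facts with the hypotheses on $\cF$ and $\cF \star \Sat(V)$, the spectral sequence of Proposition~\ref{prop:eqvt-hom-dg} then shows that the relevant graded $\gHom^i$-groups are free over $\sH^\bullet_I(\pt)$; Lemma~\ref{lem:conv-waki-twist} ensures that this freeness persists after convolving with any $\cW_\tau$, so Proposition~\ref{prop:conv-waki-hom} will apply to every morphism needed below.

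Next, I would use the surjectivity of the natural map from~\eqref{eqn:conv-waki-hom1} to choose $\Dmix_I(\Gr)$-lifts $\tilde f$ and $\tilde g$ of $f$ and $g$. The monoidality of $\cW$ supplies canonical isomorphisms $\cW_\sigma \star \cW_{-\mu} \cong \cW_{\sigma-\mu}$, which after convolution with $\sky$ specialize to $\cW_\tau \star \ocW_\nu \cong \ocW_{\tau+\nu}$. Bifunctoriality of $\star$ then yields the equality
\[
\cW_\sigma \star \bigl((\tilde g \star \id) \circ (\cW_{-\mu} \star \tilde f)\bigr)
\;=\;
\bigl((\cW_\sigma \star \tilde g) \star \id\bigr) \circ \bigl(\cW_{\sigma-\mu} \star \tilde f\bigr)
\]
as morphisms in $\Dmix_I(\Gr)$. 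Applying $\For$ and reading off the defining property of $\omega_\bullet$ from Proposition~\ref{prop:conv-waki-hom}, the left-hand side will descend to $\omega_\sigma\bigl((g \star \id) \circ \omega_{-\mu}(f)\bigr)$ and the right-hand side to $(\omega_\sigma(g) \star \id) \circ \omega_{\sigma-\mu}(f)$, producing the desired commutativity.
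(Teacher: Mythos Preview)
Your proposal is correct and follows essentially the same route as the paper: lift $f$ and $g$ to $\tilde f,\tilde g$ in $\Dmix_I(\Gr)$ via the surjectivity established in Proposition~\ref{prop:conv-waki-hom}, use bifunctoriality of $\star$ together with $\cW_\sigma \star \cW_{-\mu} \cong \cW_{\sigma-\mu}$ to obtain the displayed equality in $\Dmix_I(\Gr)$, then apply $\For$ and read off the result from the defining diagram of $\omega_\bullet$. One small quibble: the freeness of the relevant $\gHom^i$-groups over $\sH^\bullet_I(\pt)$ is not a consequence of the spectral sequence of Proposition~\ref{prop:eqvt-hom-dg} (that spectral sequence runs the other way), but rather follows directly from the $*$-purity/$!$-purity you identified, as in the remark following Proposition~\ref{prop:conv-waki-hom}; this does not affect the argument.
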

\begin{proof}
As we observed in the proof of Proposition~\ref{prop:conv-waki-hom}, the maps
\begin{gather*}
\Hom_{\Dmix_I(\Gr)}(\ocW_{-\lambda}, \Sat(V)\{n\}) \to \Hom_{\DmixI(\Gr)}(\ocW_{-\lambda}, \Sat(V)\{n\}), \\
\Hom_{\Dmix_I(\Gr)}(\ocW_{-\mu}, \cF) \to \Hom_{\DmixI(\Gr)}(\ocW_{-\mu}, \cF)
\end{gather*}
are surjective. Choose maps $\tilde f: \ocW_{-\lambda} \to \Sat(V)\{n\}$ and $\tilde g: \ocW_{-\mu} \to \cF$ in $\Dmix_I(\Gr)$ that lift $f$ and $g$, respectively.  The commutative diagram in Proposition~\ref{prop:conv-waki-hom} says that
\[
\omega_{\sigma - \mu}(f) = \For(\cW_{\sigma - \mu} \star \tilde f)
\qquad\text{and}\qquad
\omega_{\sigma}(g) = \For(\cW_\sigma \star \tilde g).
\]
The following calculation completes the proof:
\begin{multline*}
(\omega_{\sigma}(g) \star \id) \circ \omega_{\sigma - \mu}(f)
= \For ((\cW_\sigma \star \tilde g \star \id) \circ (\cW_{\sigma - \mu} \star \tilde f)) \\
= \For (\cW_\sigma \star ((\tilde g \star \id) \circ (\cW_{-\mu} \star \tilde f))) 
= \omega_\sigma (\For(\tilde g \star \id) \circ \For(\cW_{-\mu} \star \tilde f)) \\
= \omega_\sigma((g \star \id) \circ \omega_{-\mu}(f)). \qedhere
\end{multline*}
\end{proof}

\newcommand{\fakestar}{\mathbin{\text{\normalfont``}\mathord{\star}\text{\normalfont''}}}

At the moment, the closest we can get to~\eqref{eqn:abg-convolve} is the following statement.

\begin{prop}\label{prop:conv-waki-parity}
For any $\lambda \in \bX$, there is a functor
\[
\cW_\lambda \fakestar ({-}): \Parity_{(I)}(\Gr) \to \DmixI(\Gr)
\]
such that the following diagram commutes:
\[
\xymatrix{
\Parity_I(\Gr) \ar[rr]^{\cW_\lambda \star ({-})} \ar[d]_{\For} &&
  \Dmix_I(\Gr) \ar[d]^{\For} \\
\Parity_{(I)}(\Gr) \ar[rr]^{\cW_\lambda \fakestar ({-})} && \DmixI(\Gr)}
\]
\end{prop}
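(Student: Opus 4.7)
The plan is to factor the desired functor through the equivariant category, where ordinary convolution with $\cW_\lambda$ is already defined. Concretely, for each $\cF \in \Parity_{(I)}(\Gr)$ I would choose an equivariant lift $\tilde\cF \in \Parity_I(\Gr)$ (so that $\For(\tilde\cF) \cong \cF$), and set $\cW_\lambda \fakestar \cF := \For(\cW_\lambda \star \tilde\cF)$. First I would verify the lifting property: by the classification of parity complexes, together with the facts that each $I$-orbit $\schub \lambda$ is a single $I$-orbit and the pro-group $I$ is connected, every indecomposable $\cE(w) \in \Parity_{(I)}(\Gr)$ admits a (unique up to isomorphism) equivariant lift $\cE_I(w) \in \Parity_I(\Gr)$; the general case follows by taking direct sums. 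For any $\cF$ already lying in the essential image of $\For$, I would arrange matters by taking $\tilde\cF = \cF$ itself, so that the square in the statement becomes strictly commutative on objects.

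To extend $\cW_\lambda \fakestar ({-})$ to morphisms, I would appeal to Proposition~\ref{prop:conv-waki-hom} applied to the chosen lifts. Its freeness hypothesis is met here: between equivariant parity complexes on $\Gr$, $\gHom^i(\tilde\cF,\tilde\cG)$ is a free $\sH^\bullet_I(\pt)$-module for every~$i$ (the nonvanishing graded pieces are free, and the remaining pieces vanish and are trivially free). The proposition then supplies an isomorphism
\[
\omega_\lambda : \gHom(\cF,\cG) \simto \gHom\bigl(\cW_\lambda \fakestar \cF,\, \cW_\lambda \fakestar \cG\bigr),
\]
and I would define $\cW_\lambda \fakestar \phi := \omega_\lambda(\phi)$.

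Functoriality of $\cW_\lambda \fakestar ({-})$ and commutativity of the defining square are then automatic. Unwinding the construction of $\omega_\lambda$ in the proof of Proposition~\ref{prop:conv-waki-hom}---it is induced by tensoring the map $\cW_\lambda \star ({-})$ with $\bk$ over $\sH^\bullet_I(\pt)$---one reads off that $\omega_\lambda(\phi) = \For(\cW_\lambda \star \tilde\phi)$ for any equivariant lift $\tilde\phi : \tilde\cF \to \tilde\cG$ of $\phi$. Since the composition of two lifts is a lift of the composition, $\omega_\lambda$ respects composition and identities; and with the choice $\tilde{(\For(\cH))} = \cH$ for equivariant $\cH$ the square commutes strictly. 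The main obstacles are the two preliminary facts invoked along the way---existence of $I$-equivariant lifts of parity sheaves on $\Gr$ and freeness of their equivariant Hom-spaces over $\sH^\bullet_I(\pt)$---both of which are standard in the mixed modular framework of~\cite{arc:f2,arc:f3} and in the theory of parity complexes.
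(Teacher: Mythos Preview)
Your proposal is correct and follows essentially the same approach as the paper: choose equivariant lifts via essential surjectivity of $\For$, define the functor on objects by $\For(\cW_\lambda \star \tilde\cF)$, and use the isomorphism $\omega_\lambda$ from Proposition~\ref{prop:conv-waki-hom} on morphisms. The paper's version is terser---it simply invokes essential surjectivity and tracks the comparison isomorphisms $u,v$ explicitly---whereas you spell out why lifts exist and why the freeness hypothesis of Proposition~\ref{prop:conv-waki-hom} holds for parity complexes (namely, $\gHom^i$ vanishes for $i\neq 0$ and is free for $i=0$ by Corollary~\ref{cor:eqvt-hom-free}).
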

\begin{proof}
It is known that $\For: \Parity_I(\Gr) \to \Parity_{(I)}(\Gr)$ is essentially surjective.  Given $\cF \in \Parity_{(I)}(\Gr)$, choose an object $\tilde \cF \in \Parity_I(\Gr)$ together with an isomorphism $u: \For(\tilde\cF) \simto \cF$.  We define $\cW_\lambda \fakestar \cF$ to be $\For(\cW_\lambda \star \tilde \cF)$.  Suppose now that $\cG$ is another object, for which we have chosen $v: \For(\tilde\cG) \simto \cG$.  Given a morphism $f: \cF \to \cG$, we define $\cW_\lambda \fakestar f$ to be the map $\omega_\lambda(v^{-1} \circ f \circ u)$. It is easy to see that different choices would lead to a canonically isomorphic functor.  The fact that the diagram in the proposition commutes is obvious by construction.
\end{proof} 

\subsection{Subcategories generated by Wakimoto sheaves}

We end this section with a few results about subcategories of $\DmixI(\Gr)$ that can be generated by various collections of Wakimoto and spherical sheaves.  These facts will be used in Section~\ref{sect:main}.

\begin{lem}\label{lem:waki-support-gen}
Let $Z \subset \Gr$ be a closed union of $I$-orbits.  Then $\DmixI(Z)$ is generated as a triangulated category by $\{ \ocW_\mu\{n\} \mid \schub\mu \subset Z,\ n \in \Z \}$.
\end{lem}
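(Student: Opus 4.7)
The plan is to proceed by induction on the number of $I$-orbits contained in $Z$. Since every object of $\DmixI(Z)$ has support in a finite closed union of $I$-orbits, it is enough to handle the case when $Z$ itself is such a finite union.

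For the base case, $Z = \schub{\mu_0}$ consists of a single closed orbit. By Lemma~\ref{lem:waki-support}, $\ocW_{\mu_0}$ is supported on $\overline{\schub{\mu_0}} = \schub{\mu_0}$, where it restricts to $\ubk\{\dim \schub{\mu_0} - \delta_{\mu_0}\}$; since $\schub{\mu_0}$ is an affine space, $\DmixI(\schub{\mu_0})$ is generated by Tate twists of the constant sheaf, so the claim is immediate.

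For the inductive step, I would pick an $I$-orbit $\schub\mu \subset Z$ that is open in $Z$, set $Z' = Z \smallsetminus \schub\mu$, and write $j : \schub\mu \hookrightarrow Z$ and $i : Z' \hookrightarrow Z$ for the complementary immersions. The key point is first to verify that $\uicos(\mu)$ itself lies in the triangulated subcategory generated by $\{\ocW_\nu\{n\} \mid \schub\nu \subset Z,\ n \in \Z\}$. For this I would apply the recollement distinguished triangle
$$i_* i^! \ocW_\mu \to \ocW_\mu \to j_* j^* \ocW_\mu \to.$$
By Lemma~\ref{lem:waki-support} the third term is canonically $\uicos(\mu)\{-\delta_\mu\}$, while $i_* i^! \ocW_\mu$ lies in $\DmixI(Z')$ and is therefore generated by $\{\ocW_\nu\{n\} \mid \schub\nu \subset Z'\}$ by the induction hypothesis. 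Hence $\uicos(\mu)\{-\delta_\mu\}$, and with it $\uicos(\mu)$, lies in the Wakimoto-generated subcategory.

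Once $\uicos(\mu)$ is available, the general case follows by applying the same recollement triangle
$$i_* i^! \cF \to \cF \to j_* j^* \cF \to$$
to an arbitrary $\cF \in \DmixI(Z)$. The third term is an iterated extension of Tate twists of $\uicos(\mu)$ (using again that $\schub\mu$ is an affine space, so $\DmixI(\schub\mu)$ is generated by Tate twists of $\ubk$), while the first lies in $\DmixI(Z')$; both are in the Wakimoto-generated subcategory by the preceding paragraph and by induction, respectively, so $\cF$ is too. The main obstacle I anticipate is purely a bookkeeping one---checking that the recollement formalism behaves in $\DmixI$ exactly as in the classical setting---but for orbit stratifications this is the standard content of~\cite{arc:f2,arc:f3}, so no genuine difficulty should arise.
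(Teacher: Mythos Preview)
Your proof is correct and is precisely the argument the paper has in mind: the paper's own proof is the single sentence ``This is an immediate consequence of Lemma~\ref{lem:waki-support},'' and your induction on the number of orbits, using the recollement triangle together with the identification $\ocW_\mu|_{\schub\mu} \cong \ubk\{\dim\schub\mu - \delta_\mu\}$ from that lemma, is exactly the standard unpacking of that sentence.
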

\begin{proof}
This is an immediate consequence of Lemma~\ref{lem:waki-support}.
\end{proof}

\begin{lem}\label{lem:waki-po-hom}
Let $\lambda, \mu \in \bX$.
\begin{enumerate}
\item If $\lambda \not\preceq \mu$, then $\gHom^\bullet(\ocW_\mu, \ocW_\lambda) = 0$.\label{it:waki-po-hom-van}
\item We have $\gEnd(\ocW_\lambda) \cong \bk$, and $\gHom^i(\ocW_\lambda,\ocW_\lambda) = 0$ for $i \ne 0$.\label{it:waki-po-end}
\end{enumerate}
\end{lem}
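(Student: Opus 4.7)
The plan is to reduce both parts to $\gHom$-computations at the base point $\{e\Go\}$ via translation by Wakimoto sheaves. By the monoidal structure of the functor $\cW$ of~\S\ref{ss:wakimoto} combined with Lemma~\ref{lem:conv-flgr}(1), one has $\cW_\nu \star \ocW_\eta \cong \varpi_*(\cW_\nu \star \cW_\eta) \cong \ocW_{\nu+\eta}$ for all $\nu,\eta \in \bX$. Lemma~\ref{lem:conv-waki-twist} therefore gives an isomorphism
\[
\gHom^\bullet(\ocW_\mu, \ocW_\lambda) \simto \gHom^\bullet(\ocW_{\mu+\nu}, \ocW_{\lambda+\nu})
\]
of $\sH^\bullet_I(\pt)$-modules in the equivariant category, and Proposition~\ref{prop:conv-waki-hom} (applied after verifying the relevant freeness over $\sH^\bullet_I(\pt)$) transports this to the non-equivariant setting.

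For part~(2), I would take $\nu = -\lambda$, reducing to $\gEnd^\bullet(\ocW_0)$. But $\ocW_0 = \varpi_*\bc_e \cong \sky$, the skyscraper at the $0$-dimensional closed stratum $\schub_0 = \{e\Go\}$, so in $\DmixI(\Gr)$ we have $\gEnd(\sky) \cong \bk$ concentrated in degree~$0$, with $\gHom^i(\sky,\sky) = 0$ for $i \ne 0$.

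For part~(1), the same translation with $\nu = -\lambda$ reduces us to showing $\gHom^\bullet(\ocW_\tau, \sky) = 0$, where $\tau := \mu - \lambda$ is by hypothesis not a sum of positive roots. Writing $\tau = \alpha - \beta$ with $\alpha, \beta \in \bXp$, the formula~\eqref{eqn:waki-defn}, Lemma~\ref{lem:conv-flgr}(1), and Lemma~\ref{lem:waki-support} together yield $\ocW_\tau \cong \bc_\alpha \star \uistd(-\beta)\{-\delta_{-\beta}\}$. Since $\bc_\alpha$ is invertible with inverse $\bs_{-\alpha}$ by Lemma~\ref{lem:cos-conv}, and $\bs_{-\alpha} \star \sky = \ocW_{-\alpha} \cong \uistd(-\alpha)\{-\delta_{-\alpha}\}$, adjunction reduces the computation, up to a grading shift, to $\gHom^\bullet\bigl(\uistd(-\beta), \uistd(-\alpha)\bigr)$.

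This latter $\gHom$-group, via the adjunction $i_{-\beta,!} \dashv i_{-\beta}^!$, vanishes whenever $\schub_{-\beta} \not\subseteq \overline{\schub_{-\alpha}}$. The main subtlety, which I expect to be the key obstacle, is the geometric fact that for any $\eta \in \bXp$ the antidominant Iwahori orbit $\schub_{-\eta}$ is dense inside the irreducible spherical orbit $\Gr_\eta$ (since $\dim\schub_{-\eta} = \langle 2\rho, \eta\rangle = \dim\Gr_\eta$), so $\overline{\schub_{-\eta}} = \overline{\Gr_\eta}$. Consequently $\schub_{-\beta} \subseteq \overline{\schub_{-\alpha}}$ amounts to $\Gr_\beta \subseteq \overline{\Gr_\alpha}$, i.e., to $\beta \preceq \alpha$, i.e., to $\tau = \alpha - \beta$ being a sum of positive roots---contradicting the hypothesis and giving the required vanishing.
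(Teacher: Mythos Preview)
Your overall strategy is sound and part~(2) matches the paper's argument exactly. For part~(1), however, there is a genuine slip in the geometric step. You assert that for $\eta \in \bXp$ the orbit $\schub_{-\eta}$ is dense in $\Gr_\eta$; but the dominant representative of the $W$-orbit of $-\eta$ is $-w_0\eta$, not $\eta$, so in fact $\schub_{-\eta} \subset \Gr_{-w_0\eta}$. With this correction your argument still goes through: $\schub_{-\beta} \subset \overline{\schub_{-\alpha}}$ forces $\Gr_{-w_0\beta} \subset \overline{\Gr_{-w_0\alpha}}$, hence $-w_0\beta \preceq -w_0\alpha$, and since $-w_0$ permutes the simple roots this is equivalent to $\beta \preceq \alpha$, giving $\tau \succeq 0$ as required. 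A second, minor point: invoking Proposition~\ref{prop:conv-waki-hom} to descend to $\DmixI(\Gr)$ presupposes the freeness you have not yet established. Cleaner is to carry out the entire computation in $\Dmix_I(\Gr)$ and then use Proposition~\ref{prop:eqvt-hom-dg} to deduce non-equivariant vanishing (a zero module is trivially free).

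The paper takes a shorter route for part~(1): instead of translating by $-\lambda$ and then decomposing $\tau = \alpha - \beta$, it translates by a single $\nu \in \bXp$ large enough that both $\lambda+\nu$ and $\mu+\nu$ are dominant. Then by~\eqref{eqn:waki-support} both Wakimoto sheaves become costandard objects $\uicos$, and a single adjunction reduces the question to whether $\schub(\lambda+\nu) \subset \overline{\schub(\mu+\nu)}$; since both indices are dominant this is directly $\lambda \preceq \mu$, with no $-w_0$ twist to untangle. Your two-step reduction to $\gHom^\bullet(\uistd(-\beta),\uistd(-\alpha))$ works once the orbit identification is fixed, but the paper's single translation to the dominant chamber is more economical.
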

\begin{proof}
For part~\eqref{it:waki-po-hom-van}, the statement involves $\DmixI(\Gr)$, but by Proposition~\ref{prop:eqvt-hom-dg}, it suffices to prove the corresponding vanishing in $\Dmix_I(\Gr)$.  For the remainder of the proof, we work in the latter category.  For any $\nu \in \bX$, applying $\cW_\nu \star ({-})$ gives us an isomorphism $\gHom^\bullet(\ocW_\mu, \ocW_\lambda) \cong \gHom^\bullet(\ocW_{\mu+\nu}, \ocW_{\lambda+\nu})$.  Now choose $\nu$ to be dominant and large enough so that $\mu+\nu$ and $\lambda + \nu$ are both dominant.  By~\eqref{eqn:waki-support} and adjunction, we have
\[
\gHom^\bullet(\ocW_\mu,\ocW_\lambda) \cong \gHom^\bullet(i_{\lambda+\nu}^*\uicos(\mu+\nu),\ubk\{\dim \schub(\lambda+\nu)\}.
\]
This is nonzero if and only if $i_{\lambda+\nu}^*\uicos(\mu+\nu)\neq 0$.  The latter implies that $\schub (\lambda+\nu) \subset \overline{\schub (\mu+\nu)}$.  Since $\mu + \nu$ and $\lambda + \nu$ are both dominant, this holds only when $\lambda \preceq \mu$.

Next, Proposition~\ref{prop:conv-waki-hom} implies that it is enough to prove part~\eqref{it:waki-po-end} in the case where $\lambda = 0$, and in this case, the result is clear.
\end{proof}


Let $\lambda \in \bXp$, and recall that $\ocW_{w_0\lambda} \cong \uistd(w_0\lambda)\{-\delta_{w_0\lambda}\}$.  Suppose now that $\cF$ is an object of $\DmixGo(\Gr)$ such that $(i^\sph_\lambda)^!\cF \cong \ubk\{\dim \Gr_\lambda\}$.  (This applies, for instance, to $\cIstd(\lambda)$ and $\cIcos(\lambda)$.)  Then
\[
i_{w_0\lambda}^!\cF \cong \ubk\{\dim \Gr_\lambda - 2(\dim \Gr_\lambda - \dim \schub(w_0\lambda))\} = \ubk\{\dim \schub(w_0\lambda) - \delta_{w_0\lambda}\}.
\]
By adjunction, we obtain a map $\ocW_{w_0\lambda} \to \cF$.  In particular, we have canonical maps
\begin{equation}\label{eqn:waki-adjunct}
\ocW_{w_0\lambda} \to \cIstd(\lambda)
\qquad\text{and}\qquad
\ocW_{w_0\lambda} \to \cIcos(\lambda).
\end{equation}

\begin{lem}\label{lem:proto-waki-filt}
Let $\lambda \in \bXp$.  Extend the natural adjunction maps $\ocW_{w_0\lambda} \to \cIstd(\lambda)$ and $\ocW_{w_0\lambda} \to \cIcos(\lambda)$ to distinguished triangles
\[
\ocW_{w_0\lambda} \to \cIstd(\lambda) \to \cK \to, \qquad
\ocW_{w_0\lambda} \to \cIcos(\lambda) \to \cK' \to.
\]
Then both $\cK$ and $\cK'$ lie in the full triangulated subcategory of $\DmixI(\Gr)$ generated by the set of objects
\[
\{ \ocW_\mu\{n\} \mid \text{$n \in \Z$, $\mu \succ w_0\lambda$ and $\mu \le \lambda$} \}.
\]
\end{lem}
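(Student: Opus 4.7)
The plan is to analyze the cone $\cK$ via recollement at the stratum $\schub{w_0\lambda}$ combined with a Wakimoto generation argument; the case of $\cK'$ is entirely analogous, with $\cIstd(\lambda)$ replaced by $\cIcos(\lambda)$ throughout (both sheaves satisfy $(i^\sph_\lambda)^!\cF \cong \ubk\{\dim\Gr_\lambda\}$, which is the only property used).

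First, I would observe that under the hypothesis $\mu\leq\lambda$, the condition $\mu\succ w_0\lambda$ is equivalent to $\mu\neq w_0\lambda$. Indeed, if $\mu\leq\lambda$ and $\mu^+$ is the dominant representative of $W\mu$, then $\mu^+\preceq\lambda$, so applying $w_0$ gives $w_0\lambda\preceq w_0\mu^+$; combined with the inequality $w_0\mu^+\preceq\mu$ among weights of $W\mu^+$, this yields $w_0\lambda\preceq\mu$, with equality iff $\mu=w_0\lambda$. So the target subcategory $\sC$ is equivalently generated by $\{\ocW_\mu\{n\}:\mu\leq\lambda,\ \mu\neq w_0\lambda\}$.

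Next, I would apply $i_{w_0\lambda}^!$ to the defining triangle $\ocW_{w_0\lambda}\to\cIstd(\lambda)\to\cK\to$. Using $\ocW_{w_0\lambda}=i_{w_0\lambda!}\ubk\{\dim\schub{w_0\lambda}-\delta_{w_0\lambda}\}$ (from the second clause of \eqref{eqn:waki-support}) together with $i^!i_!=\mathrm{id}$ for locally closed embeddings, we have $i_{w_0\lambda}^!\ocW_{w_0\lambda}\cong\ubk\{\dim\schub{w_0\lambda}-\delta_{w_0\lambda}\}$, matching the computation $i_{w_0\lambda}^!\cIstd(\lambda)\cong\ubk\{\dim\schub{w_0\lambda}-\delta_{w_0\lambda}\}$ performed in the discussion preceding \eqref{eqn:waki-adjunct}. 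Moreover, the adjunction map of \eqref{eqn:waki-adjunct} was chosen precisely to induce the identity between these $!$-costalks. Hence $i_{w_0\lambda}^!\cK=0$. Using the recollement triangle $i_{w_0\lambda*}i_{w_0\lambda}^!\cK\to\cK\to j_*j^*\cK\to$ for the open inclusion $j:V:=\Gr\setminus\schub{w_0\lambda}\hookrightarrow\Gr$, this vanishing gives $\cK\simeq j_*j^*\cK$; and since $\cK$ is supported on $\overline{\Gr_\lambda}$, the restriction $j^*\cK$ is supported on $\overline{\Gr_\lambda}\setminus\schub{w_0\lambda}$, a union of $I$-orbits $\schub\mu$ with $\mu\leq\lambda$ and $\mu\neq w_0\lambda$.

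The remaining step, which I expect to be the main obstacle, is to deduce that $\cK=j_*j^*\cK$ lies in $\sC$. My plan is: first use a locally closed analogue of Lemma~\ref{lem:waki-support-gen} to place $j^*\cK$ in the subcategory of $\DmixI(V)$ generated by $\{j^*\ocW_\mu\{n\}:\mu\leq\lambda,\ \mu\neq w_0\lambda\}$; then push forward via $j_*$ and verify that each $j_*j^*\ocW_\mu$ belongs to $\sC$. For $\mu$ with $\schub{w_0\lambda}\not\subset\overline{\schub\mu}$, this is immediate, as $j_*j^*\ocW_\mu\cong\ocW_\mu\in\sC$. For $\mu$ with $\schub{w_0\lambda}\subset\overline{\schub\mu}$, the recollement triangle $i_{w_0\lambda*}i_{w_0\lambda}^!\ocW_\mu\to\ocW_\mu\to j_*j^*\ocW_\mu\to$ reduces the question to checking that $i_{w_0\lambda*}i_{w_0\lambda}^!\ocW_\mu\in\sC$. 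This is the delicate part: since Lemma~\ref{lem:waki-support-gen} applied on $\overline{\schub{w_0\lambda}}$ generates from $\ocW_\nu\{n\}$ with $\nu\leq w_0\lambda$, including $\ocW_{w_0\lambda}$ itself, one must control the $\ocW_{w_0\lambda}$-multiplicity in a Wakimoto decomposition of $i_{w_0\lambda*}i_{w_0\lambda}^!\ocW_\mu$ and show it can be eliminated; I expect this to follow from the $\gHom$-vanishing between $\ocW_\mu$ and $\ocW_{w_0\lambda}$ for $\mu\succ w_0\lambda$ provided by Lemma~\ref{lem:waki-po-hom}.
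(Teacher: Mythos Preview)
Your overall strategy agrees with the paper's: both show that $i_{w_0\lambda}^!\cK = 0$, and both then reduce to identifying the target subcategory $\sC$ with
\[
D'' := \{\cF \in \DmixI(\overline{\Gr_\lambda}) : i_{w_0\lambda}^!\cF = 0\}.
\]
Your observation that $w_0\lambda$ is the unique $\preceq$-minimum among weights ${}\le \lambda$ is exactly what the paper uses as well.

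Two comments. First, the step you flag as ``delicate'' is in fact immediate. By adjunction together with $\ocW_{w_0\lambda} \cong \uistd(w_0\lambda)\{-\delta_{w_0\lambda}\}$, the vanishing $i_{w_0\lambda}^!\ocW_\mu = 0$ is equivalent to $\gHom^\bullet(\ocW_{w_0\lambda},\ocW_\mu) = 0$, and for $\mu \succ w_0\lambda$ this is precisely Lemma~\ref{lem:waki-po-hom}\eqref{it:waki-po-hom-van}. Hence $i_{w_0\lambda*}i_{w_0\lambda}^!\ocW_\mu = 0$ outright, and there is no $\ocW_{w_0\lambda}$-multiplicity to eliminate. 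This is exactly how the paper establishes $\sC \subset D''$.

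Second, and more seriously, your recollement triangle $i_{w_0\lambda*}i_{w_0\lambda}^!\cK \to \cK \to j_*j^*\cK \to{}$ is not valid as written: the orbit $\schub{w_0\lambda}$ is not closed in $\Gr$, nor even in $\overline{\Gr_\lambda}$ (its closure generally meets smaller spherical strata), so $V = \Gr \setminus \schub{w_0\lambda}$ is not open and the closed--open recollement formalism does not apply. The paper sidesteps this by proving $D'' \subset \sC$ via a direct induction on the number of $I$-orbits in $(\supp\cF) \cap \Gr_\lambda$. If that intersection is empty, $\cF$ is supported on the closed set $\overline{\Gr_\lambda}\setminus\Gr_\lambda$ and Lemma~\ref{lem:waki-support-gen} applies. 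Otherwise one chooses $\schub\mu \subset \Gr_\lambda$ open in $\supp\cF$ (necessarily $\mu \ne w_0\lambda$, since $i_\mu^!\cF = i_\mu^*\cF \ne 0$ there) and uses the triangle $\cF' \to \cF \to i_{\mu*}i_\mu^*\cF$, together with the triangle from Lemma~\ref{lem:waki-support} expressing $\uicos(\mu)$ in terms of $\ocW_\mu$ and an object of smaller support, to conclude by induction.
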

\begin{proof}
Let $D' \subset \DmixI(\Gr)$ be the category generated by the objects above.  On the other hand, let
\[
D'' = \{ \cF \in \DmixI(\Gr) \mid \text{the support of $\cF$ is contained in $\overline{\Gr_\lambda}$, and $i_{w_0\lambda}^!\cF = 0$} \}.
\]
We will show that $D' = D''$.

We first claim that if $\mu \le \lambda$ and $\mu \succ w_0\lambda$, then $i_{w_0\lambda}^! \ocW_\mu = 0$.  Indeed, by adjunction and~\eqref{eqn:waki-support}, this claim is equivalent to the assertion that $\gHom^\bullet(\ocW_{w_0\lambda}, \ocW_\mu) = 0$.  The latter holds by Lemma~\ref{lem:waki-po-hom}. We have shown that $D' \subset D''$.

Next, note that among the weights${}\le \lambda$, the weight $w_0\lambda$ is the unique minimal weight with respect to $\preceq$.  Thus, if $\schub \mu \subset \overline{\Gr_\lambda} \smallsetminus \Gr_\lambda$, then $\ocW_\mu \in D'$.  More generally, for any $\cF \in \DmixI(\Gr)$ supported on $\overline{\Gr_\lambda} \smallsetminus \Gr_\lambda$, Lemma~\ref{lem:waki-support-gen} implies that $\cF \in D'$.  

We will now show that every object $\cF \in D''$ lies in $D'$ by induction on the number of $I$-orbits in $(\supp \cF) \cap \Gr_\lambda$.  If that intersection is empty, the previous paragraph tells us that $\cF \in D'$.  Otherwise, choose a $\mu \in W \cdot \lambda$ such that $\schub \mu$ is open in the support of $\cF$.  Then there is a distinguished triangle
\[
\cF' \to \cF \to i_{\mu*}i_\mu^*\cF \to.
\]
Note that $\mu \ne w_0\lambda$, because $i_\mu^*\cF \cong i_\mu^!\cF$ is nonzero.  Since $i_{w_0\lambda}^!\cF \cong i_{w_0\lambda}^!i_{\mu*}i_\mu^*\cF = 0$, we find that $i_{w_0\lambda}^!\cF' = 0$ as well.  Thus, $\cF'$ lies in $D''$, and its support meets fewer $I$-orbits in $\Gr_\lambda$ than that of $\cF$, so $\cF' \in D'$.  On the other hand, $i_{\mu*}i_\mu^*\cF$ is a direct sum of various $\uicos(\mu)\{n\}[k]$.  By Lemma~\ref{lem:waki-support}, there is a distinguished triangle
\[
\cG \to \ocW_\mu \to \uicos(\mu)\{-\delta_\mu\} \to.
\]
The same reasoning as above shows that $\cG$ lies in $D''$ and hence, by induction, in $D'$.  Therefore, $i_{\mu*}i_\mu^*\cF$ lies in $D'$, and hence $\cF$ lies in $D'$ as well.  We have now shown that $D' = D''$.

Finally, because $\ocW_{w_0\lambda} \to \cIstd(\lambda)$ and $\ocW_{w_0\lambda} \to \cIcos(\lambda)$ were defined by adjunction, the maps $i_{w_0\lambda}^!\ocW_{w_0\lambda} \to i_{w_0\lambda}^!\cIstd(\lambda)$ and $i_{w_0\lambda}^!\ocW_{w_0\lambda} \to i_{w_0\lambda}^!\cIcos(\lambda)$ are isomorphisms.  It follows that $i_{w_0\lambda}^!\cK = i_{w_0\lambda}^!\cK' = 0$.  In other words, $\cK$ and $\cK'$ lie in $D''$, and hence in $D'$, as desired.
\end{proof}

\begin{lem}\label{lem:sph-without-antidom}
Let $\lambda \in \bXp$. The category $\DmixI(\overline{\Gr_\lambda})$ is generated as a triangulated category by the set of objects
\[
\{ \ocW_\mu\{n\} \mid \text{$n \in \Z$, $\mu \succ w_0\lambda$ and $\mu \leq \lambda$} \} \cup \{ \cIcos(\lambda)\{n\} \mid n \in \Z \}.
\]
\end{lem}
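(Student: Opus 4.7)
The plan is to combine Lemma~\ref{lem:waki-support-gen}, which already gives a complete list of Wakimoto generators of $\DmixI(\overline{\Gr_\lambda})$, with Lemma~\ref{lem:proto-waki-filt}, which was set up precisely to express the one ``missing'' Wakimoto in terms of $\cIcos(\lambda)$ and Wakimotos indexed by weights strictly above $w_0\lambda$.

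First I would apply Lemma~\ref{lem:waki-support-gen} to $Z = \overline{\Gr_\lambda}$. The $I$-orbits in $Z$ are the $\schub\mu$ with $\mu \le \lambda$, so $\DmixI(\overline{\Gr_\lambda})$ is generated by $\{\ocW_\mu\{n\} \mid \mu \le \lambda,\ n \in \Z\}$. Comparing with the generating set in the statement, the only Wakimoto shifts not manifestly in that set are the $\ocW_{w_0\lambda}\{n\}$. Thus the lemma reduces to showing that $\ocW_{w_0\lambda}$ lies in the triangulated subcategory $D$ generated by the listed objects.

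At this stage I need to verify a small combinatorial claim: among weights $\mu \le \lambda$, the weight $w_0\lambda$ is the unique one failing $\mu \succ w_0\lambda$. Any $\mu \le \lambda$ lies in the $W$-orbit of some dominant $\nu \le \lambda$, and a standard argument shows $\mu \succeq w_0\nu$; applying $w_0$ to the relation $\nu \preceq \lambda$ gives $w_0\nu \succeq w_0\lambda$, and equality throughout forces $\nu = \lambda$ and $\mu = w_0\lambda$. I expect this order-theoretic check to be the only mildly nontrivial point.

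Finally, Lemma~\ref{lem:proto-waki-filt} provides a distinguished triangle
\[
\ocW_{w_0\lambda} \to \cIcos(\lambda) \to \cK' \to
\]
in which $\cK'$ lies in the subcategory generated by the $\ocW_\nu\{n\}$ with $\nu \succ w_0\lambda$ and $\nu \le \lambda$. Rotating exhibits $\ocW_{w_0\lambda}$ as an extension of $\cK'[-1]$ by $\cIcos(\lambda)$; both pieces lie in $D$, so $\ocW_{w_0\lambda} \in D$, and applying $\{n\}$ gives all $\ocW_{w_0\lambda}\{n\} \in D$. Combined with the observation in the previous paragraph, this completes the proof.
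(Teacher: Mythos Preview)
Your proposal is correct and follows essentially the same approach as the paper: use Lemma~\ref{lem:waki-support-gen} to reduce to showing $\ocW_{w_0\lambda}\in D$, and then use the distinguished triangle from Lemma~\ref{lem:proto-waki-filt} for this. The combinatorial fact that every $\mu \le \lambda$ satisfies $\mu \succeq w_0\lambda$ (with equality only at $w_0\lambda$) is also used in the paper, where it is stated in the proof of Lemma~\ref{lem:proto-waki-filt} and invoked with ``Of course'' here; your sketch of its verification is fine.
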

\begin{proof}
Let $D \subset \DmixI(\overline{\Gr_\lambda})$ be the triangulated subcategory generated by the objects indicated above.  The second distinguished triangle in Lemma~\ref{lem:proto-waki-filt} shows that $D$ contains $\ocW_{w_0\lambda}$.  Of course, every weight $\mu \le \lambda$ satisfies $\mu \succeq w_0\lambda$, so we see that $D$ contains all $\ocW_\mu$ with $\mu \le \lambda$.  The lemma follows by Lemma~\ref{lem:waki-support-gen}.
\end{proof}

\begin{prop}\label{prop:dmixigr-gen-x}
The category $\DmixI(\Gr)$ is generated as a triangulated category by the set
\[
\{ \ocW_\lambda\{n\} \mid n\in \Z,\ \lambda \not\preceq 0 \}
\cup
\{ \ocW_\lambda \star \cIcos(\mu)\{n\} \mid n\in \Z,\ \lambda,\mu \in \bXp \}.
\]
\end{prop}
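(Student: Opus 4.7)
Let $D \subset \DmixI(\Gr)$ denote the triangulated subcategory generated by the proposed set. By Lemma~\ref{lem:waki-support-gen} applied to arbitrary closed unions of $I$-orbits, it suffices to show that $\ocW_\nu \in D$ for every $\nu \in \bX$, and this is immediate when $\nu \not\preceq 0$. For $\nu \preceq 0$, I would proceed by induction on the height $h(\nu)$ of $-\nu$. The base case $\nu = 0$ is covered by $\ocW_0 \cong \bo_\Gr \cong \cIcos(0) = \ocW_0 \star \cIcos(0)$, which lies in $D$ with $\lambda = \mu = 0$.

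For the inductive step, assume $\nu \preceq 0$ is nonzero and that $\ocW_{\nu'} \in D$ for all $\nu' \preceq 0$ with $h(\nu') < h(\nu)$. Choose $\mu \in \bXp$ deep enough in the dominant chamber so that $\sigma := \nu - w_0\mu = \nu + (-w_0)\mu$ lies in $\bXp$; this is possible because $-w_0$ preserves $\bXp$ and translation by a sufficiently dominant weight brings any element of $\bX$ into $\bXp$. Note that $\mu \neq 0$ because $\nu$ itself is not dominant. I would then take the distinguished triangle
\[
\ocW_{w_0\mu} \to \cIcos(\mu) \to \cK' \to
\]
of Lemma~\ref{lem:proto-waki-filt} and convolve on the left with $\cW_\sigma$, working in the equivariant category $\Dmix_I(\Gr)$ (where convolution with $\Dmix_I(\Fl)$ is defined) and then pushing forward via $\For$. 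By the monoidal property of $\cW$, the first term becomes $\cW_\sigma \star \cW_{w_0\mu} \star \bo_\Gr \cong \cW_\nu \star \bo_\Gr = \ocW_\nu$; by Lemma~\ref{lem:conv-fl-sph}, the middle term is $\ocW_\sigma \star \cIcos(\mu)$, which is already a generator of $D$; and the third term $\cW_\sigma \star \cK'$ lies in the subcategory generated by $\cW_\sigma \star \ocW_{\nu'}\{n\} \cong \ocW_{\sigma+\nu'}\{n\}$ for $\nu'$ with $w_0\mu \prec \nu' \le \mu$.

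For each such index, the strict inequality $\nu' \succ w_0\mu$ gives $\sigma + \nu' \succ \nu$; thus either $\sigma + \nu' \not\preceq 0$, placing $\ocW_{\sigma+\nu'}$ in the first part of the generating set, or $\sigma + \nu' \preceq 0$ with $h(\sigma + \nu') < h(\nu)$, so $\ocW_{\sigma + \nu'} \in D$ by the inductive hypothesis. Consequently $\cW_\sigma \star \cK' \in D$, and the distinguished triangle forces $\ocW_\nu \in D$, completing the induction. The main obstacle I anticipate is the technical bookkeeping needed to realize $\cW_\sigma \star ({-})$ as a triangulated operation compatible with the (a priori non-equivariant) triangle of Lemma~\ref{lem:proto-waki-filt} and with the claimed identifications of the three vertices; this should reduce to the associativity and $\varpi_*$-compatibility of convolution recorded in Lemmas~\ref{lem:conv-flgr}--\ref{lem:conv-fl-sph}, together with the fact that the adjunction maps defining the triangle admit natural equivariant lifts.
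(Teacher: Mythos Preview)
Your proposal is correct and follows essentially the same route as the paper's proof: both write the target antidominant weight as $\sigma + w_0\mu$ with $\sigma,\mu \in \bXp$, invoke the triangle $\ocW_{w_0\mu} \to \cIcos(\mu) \to \cK'$ from Lemma~\ref{lem:proto-waki-filt} (the paper phrases this via Lemma~\ref{lem:sph-without-antidom}, but uses only the part coming from that triangle), convolve with $\cW_\sigma$, and conclude by downward induction on $\preceq$ (your height function on $-\nu$ is just a numerical realization of this). Your explicit acknowledgment of the equivariant-lifting issue for the convolution step is a point the paper leaves implicit.
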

\begin{proof}
Let $D \subset \DmixI(\Gr)$ be the triangulated category generated by the set of objects indicated above.  We will show that all $\ocW_\lambda\{n\}$ belong to $D$.  Of course, we need only consider the case where $\lambda \preceq 0$.  We proceed by downward induction with respect to $\preceq$: given $\lambda \preceq 0$, let us assume that for all $\mu \succ \lambda$, $\ocW_\mu$ is already known to lie in $D$.  (Note that only finitely many such $\mu$ also satisfy $\mu \preceq 0$, so it does make sense to argue by induction here.)  Write $\lambda = \sigma + w_0\nu$, where $\sigma$ and $\nu$ are both dominant.  Lemma~\ref{lem:sph-without-antidom} tells us that $\ocW_{w_0\nu}$ lies in the triangulated category generated by
\[
\{ \ocW_\mu\{n\} \mid n \in \Z,\ \mu \succ w_0\nu \} \cup \{ \cIcos(\nu)\{n\} \mid n \in \Z \}.
\]
It follows that $\ocW_\lambda \cong \cW_\sigma \star \ocW_{w_0\nu}$ lies in the triangulated subcategory generated by 
\begin{equation}\label{eqn:dmixigr-gen-x}
\{ \cW_\sigma \star \ocW_\mu\{n\} \mid n \in \Z,\ \mu \succ w_0\nu \} \cup \{ \cW_\sigma \star \cIcos(\nu)\{n\} \mid n \in \Z \}.
\end{equation}
The objects $\cW_\sigma \star \ocW_\mu \cong \ocW_{\sigma+\mu}$ lie in $D$ by assumption, since $\sigma+\mu \succ \lambda$.  Thus, all objects in~\eqref{eqn:dmixigr-gen-x} lie in $D$, so $\ocW_\lambda$ lies in $D$ as well, as desired.
\end{proof}

We end with a result relating the adjunction map $\epsilon: \ocW_{w_0\lambda} \to \cIstd(\lambda)$ of~\eqref{eqn:waki-adjunct} to convolution of spherical sheaves.

\begin{lem}\label{lem:weyl-mult}
For $\lambda, \mu \in \bXp$, there is a unique map of $\Gv$-representations
\begin{equation}\label{eqn:spult-defn}
\spult_{\lambda,\mu}: \wey(\lambda + \mu) \to \wey(\lambda) \otimes \wey(\mu)
\end{equation}
such that the following diagram commutes:
\[
\xymatrix{
\ocW_{w_0(\lambda+\mu)} \ar[d]_{\wr} \ar[rrr]^{\epsilon} 
  &&& \cIstd(\lambda+\mu) \ar[d]^{\Sat(\spult_{\lambda,\mu})} \\
\cW_{w_0\lambda} \star \ocW_{w_0\mu} \ar[r]^{\id \star \epsilon} &
  \cW_{w_0\lambda} \star \cIstd(\mu) \ar[r]^{\sim} & 
  \ocW_{w_0\lambda} \star \cIstd(\mu) \ar[r]^{\epsilon \star \id} &
  \cIstd(\lambda) \star \cIstd(\mu)}
\]
Each map $\spult_{\lambda,\mu}$ is nonzero.  Moreover, for $\lambda,\mu,\nu \in \bXp$, the two morphisms $\wey(\lambda+\mu+\nu) \to \wey(\lambda) \otimes \wey(\mu) \otimes \wey(\nu)$ coincide.
\end{lem}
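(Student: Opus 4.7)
The key representation-theoretic fact is that $\Hom_\Gv(\wey(\lambda+\mu), V)$ is naturally identified with the space of highest-weight vectors of weight $\lambda+\mu$ in $V$. For $V = \wey(\lambda)\otimes\wey(\mu)$, this space is one-dimensional, spanned by $v_\lambda \otimes v_\mu$ where $v_\lambda, v_\mu$ are the highest-weight generators. Thus there is a unique-up-to-scalar nonzero map $\spult^\circ_{\lambda,\mu}\colon \wey(\lambda+\mu) \to \wey(\lambda)\otimes\wey(\mu)$, characterized by $v_{\lambda+\mu}\mapsto v_\lambda\otimes v_\mu$, and by geometric Satake $\dim\Hom(\cIstd(\lambda+\mu),\cIstd(\lambda)\star\cIstd(\mu))=1$. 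Uniqueness of the sought $\spult_{\lambda,\mu}$, given its existence, is then immediate.

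For existence, denote the bottom composition of the diagram by $\psi\colon \ocW_{w_0(\lambda+\mu)}\to\cIstd(\lambda)\star\cIstd(\mu)$. I plan to verify three claims: \emph{(a)} $\psi\neq 0$; \emph{(b)} $\Hom(\ocW_{w_0(\lambda+\mu)},\cIstd(\lambda)\star\cIstd(\mu))$ is at most one-dimensional; and \emph{(c)} $\Sat(\spult^\circ_{\lambda,\mu})\circ\epsilon\neq 0$. Given these, the target Hom space is one-dimensional and $\psi$ is proportional to $\Sat(\spult^\circ_{\lambda,\mu})\circ\epsilon$, so rescaling $\spult^\circ_{\lambda,\mu}$ produces the unique $\spult_{\lambda,\mu}$ making the diagram commute. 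Claim \emph{(b)} I would establish via the long exact sequence from the triangle $\ocW_{w_0(\lambda+\mu)}\xrightarrow{\epsilon}\cIstd(\lambda+\mu)\to\cK\to$ of Lemma~\ref{lem:proto-waki-filt}: it suffices that the Hom from $\cK$ into $\cIstd(\lambda)\star\cIstd(\mu)$ vanishes in two appropriate cohomological degrees, which reduces via the generators of $\cK$ to vanishings $\gHom^\bullet(\ocW_\sigma,\cIstd(\lambda)\star\cIstd(\mu))=0$ for $\sigma$ with $w_0(\lambda+\mu)\prec\sigma\leq\lambda+\mu$. These should follow by applying Wakimoto-twist invariance (Lemma~\ref{lem:conv-waki-twist}, Proposition~\ref{prop:conv-waki-hom}) to reduce to Homs on the spherical side via $P_\sph$, where the $*$-purity of $\cIstd$-objects (Theorem~\ref{thm:mixed-mv}) combined with Weyl filtration arguments yields the vanishings.

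The main obstacle is \emph{(a)}. I would attack it by restricting to the $!$-costalk at the $I$-orbit $\schub{w_0(\lambda+\mu)}$, which sits inside the top $\Go$-orbit $\Gr_{\lambda+\mu}$ of $\supp(\cIstd(\lambda)\star\cIstd(\mu))$. On $\Gr_{\lambda+\mu}$ the sheaf $\Sat(\wey(\lambda)\otimes\wey(\mu))$ restricts to a shifted trivial local system of rank $\dim(\wey(\lambda)\otimes\wey(\mu))_{\lambda+\mu}=1$, and Lemma~\ref{lem:waki-support} describes the restriction of $\ocW_{w_0(\lambda+\mu)}$. Tracking $\psi$ through the four steps of its definition---the Wakimoto iso $\ocW_{w_0(\lambda+\mu)}\cong\cW_{w_0\lambda}\star\ocW_{w_0\mu}$, $\id\star\epsilon$, the iso $\cW_{w_0\lambda}\star\cIstd(\mu)\cong\ocW_{w_0\lambda}\star\cIstd(\mu)$ from Lemma~\ref{lem:conv-fl-sph}, and $\epsilon\star\id$---each step should induce a nonzero map on this costalk; the fourth is the subtlest, requiring control over how convolution with $\cIstd(\mu)$ interacts with the adjunction map $\epsilon$ after translation. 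Claim \emph{(c)} is handled similarly but is easier: $\epsilon$ is the identity on the relevant costalk by construction, and $\Sat(\spult^\circ_{\lambda,\mu})$ restricts nontrivially to the rank-one top weight space on $\Gr_{\lambda+\mu}$.

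For associativity, both composites $(\spult_{\lambda,\mu}\otimes\id)\circ\spult_{\lambda+\mu,\nu}$ and $(\id\otimes\spult_{\mu,\nu})\circ\spult_{\lambda,\mu+\nu}$ lie in the one-dimensional space $\Hom_\Gv(\wey(\lambda+\mu+\nu),\wey(\lambda)\otimes\wey(\mu)\otimes\wey(\nu))$, and both send $v_{\lambda+\mu+\nu}$ to $v_\lambda\otimes v_\mu\otimes v_\nu$ by construction, hence coincide. Equivalently, both composites arise from a three-fold version of the defining diagram involving $\ocW_{w_0(\lambda+\mu+\nu)}\cong\cW_{w_0\lambda}\star\cW_{w_0\mu}\star\ocW_{w_0\nu}$, which is canonically defined thanks to the associativity of the Wakimoto isomorphism asserted in Lemma~\ref{lem:cos-conv}.
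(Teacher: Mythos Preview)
Your overall strategy is sound and would yield a proof, but it differs from the paper's in two respects worth noting.

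\textbf{On (b).} You over-engineer this. The paper simply asserts that $\Hom(\ocW_{w_0(\lambda+\mu)}, \cIstd(\lambda)\star\cIstd(\mu))$ is one-dimensional, and indeed this is immediate: since $\ocW_{w_0(\lambda+\mu)} \cong \uistd(w_0(\lambda+\mu))\{-\delta_{w_0(\lambda+\mu)}\}$, adjunction reduces the question to the $!$-costalk of $\Sat(\wey(\lambda)\otimes\wey(\mu))$ at $\schub{w_0(\lambda+\mu)}$. That orbit lies in the open $\Go$-orbit $\Gr_{\lambda+\mu}$, on which $\Sat(\wey(\lambda)\otimes\wey(\mu))$ restricts to a shifted rank-one local system (because $\wey(\lambda+\mu)$ occurs with multiplicity one in the Weyl filtration of $\wey(\lambda)\otimes\wey(\mu)$). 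Your proposed route via the long exact sequence would require the vanishings $\gHom^\bullet(\ocW_\sigma,\cIstd(\lambda)\star\cIstd(\mu))=0$ for $\sigma\succ w_0(\lambda+\mu)$, and these are \emph{not} true in general (take $\sigma$ dominant, for instance), so that path does not work as stated.

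\textbf{On (a).} The paper's argument for $\psi\neq 0$ is rather different from your costalk-tracking plan. From the triangles $\ocW_{w_0\lambda}\to\cIstd(\lambda)\to\cK_\lambda\to$ and $\ocW_{w_0\mu}\to\cIstd(\mu)\to\cK_\mu\to$ of Lemma~\ref{lem:proto-waki-filt}, the paper builds an octahedral diagram whose output is a triangle
\[
\ocW_{w_0(\lambda+\mu)} \xrightarrow{\ \psi\ } \cIstd(\lambda)\star\cIstd(\mu) \to \cG \to,
\]
where $\cG$ is an extension of $\cW_{w_0\lambda}\star\cK_\mu$ by $\cK_\lambda\star\cIstd(\mu)$. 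Lemma~\ref{lem:proto-waki-filt} places both of these, hence $\cG$, in the subcategory $D_{\succ w_0(\lambda+\mu)}$ generated by $\{\ocW_\sigma\{n\}\mid\sigma\succ w_0(\lambda+\mu)\}$. By Lemma~\ref{lem:waki-po-hom}, $\ocW_{w_0(\lambda+\mu)}$ does not lie in $D_{\succ w_0(\lambda+\mu)}$, so it cannot be a direct summand of $\cG[-1]$, and therefore $\psi\neq 0$. This categorical argument sidesteps any direct costalk computation. Your approach is essentially dual to it: showing $i_{w_0(\lambda+\mu)}^!\psi$ is nonzero amounts to showing $i_{w_0(\lambda+\mu)}^!\cG=0$, which (via adjunction and Lemma~\ref{lem:waki-po-hom}) is the same as $\cG\in D_{\succ w_0(\lambda+\mu)}$. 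So your ``subtlest fourth step'' would in practice require the same ingredient the paper uses, just packaged differently. The octahedral route is cleaner because it treats all four steps at once.

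Your (c) and your handling of associativity match the paper's brief remarks.
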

\begin{proof}
It is easy to see that $\Hom(\wey(\lambda+\mu), \wey(\lambda) \otimes \wey(\mu))$ and $\Hom(\ocW_{w_0(\lambda+\mu)}, \cIstd(\lambda) \star \cIstd(\mu))$ are both $1$-dimensional, so the existence and uniqueness of $\spult_{\lambda,\mu}$ are clear.  The associativity property can be deduced from the analogous property for Wakimoto sheaves.  It remains only to show that $\spult_{\lambda,\mu}$ is nonzero.

To rephrase this problem, form distinguished triangles
$
\ocW_{w_0\lambda} \to \cIstd(\lambda) \to \cK_\lambda \to
$
and
$
\ocW_{w_0\mu} \to \cIstd(\mu) \to \cK_\mu \to
$
as in Lemma~\ref{lem:proto-waki-filt}.  From these, we can build the octahedral diagram shown in Figure~\ref{fig:weyl-mult}.  In that figure, $\cG$ is a new object; it occurs in a distinguished triangle
\begin{equation}\label{eqn:weyl-mult}
\ocW_{w_0(\lambda+\mu)} \to \cIstd(\lambda) \star \cIstd(\mu) \to \cG \to.
\end{equation}
We want to show that the first morphism in this triangle is not zero. 

For any weight $\nu$, let $D_{\succ\nu}$ be the full triangulated subcategory of $\DmixI(\Gr)$ generated by $\{ \ocW_\sigma\{n\} \mid \sigma \succ \nu,\ n \in \Z \}$.  We define $D_{\succeq\nu}$ similarly.  Lemma~\ref{lem:proto-waki-filt} tells us that $\cK_\mu \in D_{\succ w_0\mu}$, and likewise for $\cK_\lambda$.  Since $\cIstd(\mu) \in D_{\succeq w_0\mu}$, we see that both $\cW_{w_0\lambda} \star \cK_\mu$ and $\cK_\lambda \star \cIstd(\mu)$ lie in $D_{\succ w_0(\lambda+\mu)}$.  Therefore, $\cG$ lies in $D_{\succ w_0(\lambda+\mu)}$.

Lemma~\ref{lem:waki-po-hom} implies that $\ocW_{w_0(\lambda+\mu)}$ does not lie in $D_{\succ w_0(\lambda+\mu)}$, so it cannot be a direct summand of $\cG[-1]$. We deduce that the first morphism in~\eqref{eqn:weyl-mult} is nonzero, as desired.
\end{proof}

\begin{figure}
\[
\xymatrix@=3pt{
&&&&&& \\ \\ \\
&&&& \cW_{w_0\lambda} \star \cK_\mu \ar[rrddd] \ar[rruuu] \\
\\
&&&&&&&& \\
&& \cW_{w_0\lambda} \star \cIstd(\mu) \ar[rruuu] \ar[rrd] &&&&
   \cG \ar[rrddd] \ar[rru] \\
&&&& \cIstd(\lambda) \star \cIstd(\mu) \ar[rru] \ar[rrrrdd] \\
\\
\ocW_{w_0(\lambda+\mu)} \ar[rruuu] \ar[rrrruu] &&&&&&&&
   \cK_\lambda \star \cIstd(\mu) \ar[rrddd] \ar[rrd] \\
&&&&&&&&&& \\
\\
&&&&&&&&&&}
\]
\caption{Octahedral diagram for Lemma~\ref{lem:weyl-mult}}\label{fig:weyl-mult}
\end{figure}

\section{Multihomogeneous coordinate rings and Ext-algebras}
\label{sect:extalg}

\subsection{The multihomogeneous coordinate ring of the flag variety}

Consider the duals of the maps introduced in Lemma~\ref{lem:weyl-mult}:
\begin{equation}\label{eqn:spults-defn}
\spult_{\lambda,\mu}^*: \cow(\lambda) \otimes \cow(\mu) \to \cow(\lambda+\mu).
\end{equation}
That lemma implies that these maps satisfy a certain associativity property, so we can use them make $\bigoplus_{\lambda \in \bXp} \cow(\lambda)$ into a ring.  We introduce the notation
\[
\bGamma[\cBv] := \bigoplus_{\lambda \in \bXp} \cow(\lambda),
\]
and we regard it as a $\Gv$-equivariant $\bX$-graded ring.  Let $\bGamma[\cBv]\lmod$ denote the category of finitely generated $\Gv$-equivariant $\bX$-graded $\bGamma[\cBv]$-modules.  A module $M = \bigoplus_{\lambda \in \bX} M_\lambda$ in $\Gamma[\cBv]\lmod$ is said to be \emph{thin} if there is some $\lambda \in \bX$ such that $M_\mu = 0$ for all $\mu \in \lambda + \bXp$. 

This notation reflects the fact that this ring can be thought of as a multihomogeneous coordinate ring for $\cBv$.  To make this precise, consider the line bundle $\cO_{\cBv}(\lambda)$ on $\cBv$.  We have a canonical identification $\Gamma(\cBv,\cO_\cBv(\lambda)) \cong \cow(\lambda)$.  By adjunction and the projection formula, one sees that there is a canonical bijection $\Hom(\cO_\cBv(\lambda) \otimes \cO_\cBv(\mu), \cO_\cBv(\lambda+\mu)) \cong \Hom(\cow(\lambda) \otimes \cow(\mu), \cow(\lambda+\mu))$.  Let
\begin{equation}\label{eqn:stult-defn}
\stult_{\lambda,\mu}: \cO_\cBv(\lambda) \otimes \cO_\cBv(\mu) \simto \cO_\cBv(\lambda+\mu)
\end{equation}
be the map corresponding to $\spult_{\lambda,\mu}^*$ under this bijection.  Again, these maps enjoy an associativity property like that in Lemma~\ref{lem:weyl-mult}.

Let us assume temporarily that $\Gv$ is semisimple and simply connected, and let $\varpi_1, \ldots, \varpi_r$ be the fundamental weights of $\Gv$.  From~\eqref{eqn:stult-defn}, we obtain for each $\lambda \in \bXp$ a canonical isomorphism
\[
\cO_\cBv(\lambda) \cong \cO_\cBv(\varpi_1)^{\otimes a_1} \otimes \cdots \otimes \cO_\cBv(\varpi_r)^{\otimes a_r}
\qquad\text{if $\lambda = a_1\varpi_1 + \cdots + a_r\varpi_r$,}
\]
and hence a canonical isomorphism of rings
\[
\bGamma[\cBv] \cong \bigoplus_{(a_1, \ldots, a_r) \in \mathbb{N}^r}
\Gamma(\cBv, \cO_\cBv(\varpi_1)^{\otimes a_1} \otimes \cdots \otimes \cO_\cBv(\varpi_r)^{\otimes a_r}).
\]
The right-hand side agrees with the multihomogeneous coordinate ring of $\cBv$ as discussed in, say,~\cite[\S10]{gls} or~\cite[p.~123]{lg}.  A straightforward generalization of the $\Proj$-construction (see, e.g., the discussion following~\cite[Proposition~4.8]{mumford}) recovers the variety $\cBv$ from this ring, and provides an exact functor
\[
\scF_0: \bGamma[\cBv]\lmod \to \Coh(\cBv),
\]
where $\Coh(\cBv)$ is the category of $\Gv$-equivariant coherent sheaves on $\cBv$.  (In a slight abuse of notation, we will also write $\scF_0$ for the operation that takes possibly infinitely-generated $\bGamma[\cBv]$-modules to quasicoherent sheaves on $\cBv$.)  Moreover, this functor induces an equivalence of categories
\[
\bGamma[\cBv]\lmod/(\text{Serre subcategory of thin modules}) \simto \Coh(\cBv).
\]

In fact, the functor $\scF_0$ and the above equivalence are available for arbitrary reductive $\Gv$ satisfying~\eqref{eqn:reasonable}.  The flag variety of $\Gv$ can be identified with that of its derived subgroup $(\Gv)'$, and routine reduction arguments let us build $\scF_0$ for $\Gv$ in terms of that for $(\Gv)'$.

We now describe another way to construct the ring $\bGamma[\cBv]$ in terms of the geometric Satake equivalence.  Consider the $\bX$-graded $\Gv$-representation
\begin{equation}\label{eqn:homWR0}
\bigoplus_{\lambda \in \bXp} \Hom_{\DmixI(\Gr)}(\cIstd(-w_0\lambda), \cR).
\end{equation}
We make this into a ring as follows: given $g \in \Hom(\cIstd(-w_0\lambda), \cR)$ and $f \in \Hom(\cIstd(-w_0\mu),\cR)$, let $gf \in \Hom(\cIstd(-w_0(\lambda+\mu)),\cR)$ be the composition
\[
\cIstd(-w_0(\lambda+\mu)) \xrightarrow{\Sat(\spult_{-w_0\lambda,-w_0\mu})} \cIstd(-w_0\lambda) \star \cIstd(-w_0\mu) \xrightarrow{g \star f} \cR \star \cR \xrightarrow{\Sat(\mult)} \cR.
\]
Here, we have used the fact that $f$ is a morphism in $\Perv_{(\Go)}(\Gr) = \Perv_\Go(\Gr)$, so that it makes sense to form the convolution product $g \star f$.  For later reference, we rewrite this product in a slightly different form:
\begin{multline}\label{eqn:homWR0-ring}
\cIstd(-w_0(\lambda+\mu)) \xrightarrow{\Sat(\spult_{-w_0\lambda,-w_0\mu})} 
\cIstd(-w_0\lambda) \star \cIstd(-w_0\mu) \xrightarrow{\id \star f} \cIstd(-w_0\lambda) \star \cR \\
\xrightarrow{g \star \id} \cR \star \cR \xrightarrow{\Sat(\mult)} \cR.
\end{multline}

\begin{prop}\label{prop:coh-cbv}
There is an isomorphism of $\Gv$-equivariant $\bX$-graded rings
\[
\bGamma[\cBv] \cong \bigoplus_{\lambda \in \bXp} \Hom(\cIstd(-w_0\lambda), \cR).
\]
\end{prop}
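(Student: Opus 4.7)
The plan is to construct the isomorphism pointwise on each graded piece using the geometric Satake equivalence and a matrix-coefficient identification, and then verify that the ring structures coincide on the nose by unwinding the product formula~\eqref{eqn:homWR0-ring}.

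First, for each $\lambda \in \bXp$, since $\cIstd(-w_0\lambda) = \Sat(\wey(-w_0\lambda))$ and $\cR$ is the ind-Satake image of the regular representation $\bk[\Gv]$, the Satake equivalence yields
\[
\Hom(\cIstd(-w_0\lambda), \cR) \cong \Hom_{\Rep\Gv}(\wey(-w_0\lambda), \bk[\Gv]),
\]
with residual right $\Gv$-action coming from the right regular action on $\bk[\Gv]$. By Lemma~\ref{lem:gv-invt} applied to $V = \wey(-w_0\lambda)^*$ (equivalently, the classical matrix-coefficient isomorphism $\phi \mapsto \bigl(v \mapsto (x \mapsto \phi(xv))\bigr)$), this Hom-space is canonically isomorphic to $\wey(-w_0\lambda)^*$ as a right $\Gv$-representation. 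The standard duality $\wey(\nu)^* \cong \cow(-w_0\nu)$ between Weyl and dual Weyl modules then identifies this with $\cow(\lambda)$.

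Second, I would unwind the product~\eqref{eqn:homWR0-ring} under these identifications. Given $g \leftrightarrow \phi \in \cow(\lambda)$ and $f \leftrightarrow \psi \in \cow(\mu)$, Satake sends the convolution $g \star f$ to $g \otimes f : \wey(-w_0\lambda) \otimes \wey(-w_0\mu) \to \bk[\Gv] \otimes \bk[\Gv]$, and $\Sat(\mult)$ to ordinary multiplication on $\bk[\Gv]$. Writing $\spult_{-w_0\lambda,-w_0\mu}(u) = \sum_i v_i \otimes w_i$, a direct matrix-coefficient computation gives
\[
(gf)(u)(x) = \sum_i \phi(xv_i)\psi(xw_i) = (\phi\otimes\psi)\bigl(x\cdot\spult_{-w_0\lambda,-w_0\mu}(u)\bigr),
\]
so the element of $\wey(-w_0(\lambda+\mu))^* = \cow(\lambda+\mu)$ associated to $gf$ is the image of $\phi \otimes \psi$ under the transpose of $\spult_{-w_0\lambda,-w_0\mu}$. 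But this is precisely the definition of the product $\spult^*_{\lambda,\mu}$ in $\bGamma[\cBv]$, after one takes into account the identification $\cow(\nu) = \wey(-w_0\nu)^*$ used in~\eqref{eqn:spults-defn}. Hence the two multiplications agree on the nose, and assembling these identifications over all $\lambda$ produces the desired ring isomorphism, which is $\Gv$-equivariant because every ingredient is.

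The main potential obstacle is bookkeeping: one must be careful to keep track of which copy of $\Gv$ acts via which (left or right) regular representation on $\bk[\Gv]$ and on $\cR$, and verify that the right $\Gv$-action on $\Hom_\Gv(\wey(-w_0\lambda),\bk[\Gv])$ coming from the right regular action on $\bk[\Gv]$ really does match the natural $\Gv$-action on $\cow(\lambda)$ under the matrix-coefficient identification. Once the conventions are fixed --- as they already were in~\S\ref{ss:regular-perv} --- the computation above becomes essentially mechanical.
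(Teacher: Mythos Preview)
Your proposal is correct and takes essentially the same approach as the paper: apply Satake, use the duality $\wey(-w_0\lambda)^* \cong \cow(\lambda)$ together with Lemma~\ref{lem:gv-invt} (equivalently, the matrix-coefficient identification) to obtain the isomorphism on each graded piece, and then verify that the two products agree. The paper's own proof is terser---it records the chain of isomorphisms and asserts that the ring-structure compatibility is ``easily checked''---whereas you spell that verification out explicitly.
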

\begin{proof}
The maps below give an isomorphism of $\bX$-graded $\Gv$-representations.  It is easily checked that they also constitute a ring isomorphism, as desired.
\begin{multline*}
\Hom(\cIstd(-w_0\lambda), \cR) \xrightarrow[\sim]{\Sat} \Hom(\wey(-w_0\lambda), \bk[\Gv]) \cong
\Hom(\bk, \cow(\lambda) \otimes \bk[\Gv]) \\
\cong (\cow(\lambda) \otimes \bk[\Gv])^{\Gv \times 1} \xrightarrow[\sim]{\text{Lemma~\ref{lem:gv-invt}}} \cow(\lambda). \qedhere
\end{multline*}
\end{proof}

\subsection{The multihomogeneous coordinate ring of the Springer resolution}

We will now upgrade these considerations from $\cBv$ to $\tcN$.  The isomorphisms in~\eqref{eqn:stult-defn} determine a corresponding collection of isomorphisms
\[
\tilde\stult_{\lambda,\mu}: \cO_\tcN(\lambda) \otimes \cO_\tcN(\mu) \to \cO_\tcN(\lambda+\mu).
\]
These, in turn, give rise to a collection of maps
\begin{equation}\label{eqn:tspults-defn}
\tilde\spult_{\lambda,\mu}^*: \Gamma(\tcN, \cO_\tcN(\lambda)) \otimes \Gamma(\tcN, \cO_\tcN(\mu)) \to \Gamma(\tcN, \cO_\tcN(\lambda+\mu))
\end{equation}
that we then use to make the following space into a ring:
\[
\bGamma[\tcN] := \bigoplus_{\lambda \in \bXp} \Gamma(\tcN, \cO_\tcN(\lambda)).
\]
This ring carries a $(\Z \times \bX)$-grading.  Its degree-$(\Z \times \{0\})$ subring (i.e., the subring spanned by homogeneous elements whose degrees lie in $\Z \times \{0\} \subset \Z \times \bX$) is the $\Z$-graded ring $\Gamma(\tcN, \cO_\tcN) \cong \bk[\cN]$.  On the other hand, Lemma~\ref{lem:pcpcos-deg0} gives us an injective homomorphism
\begin{equation}\label{eqn:coh-cbv-tcn}
\bGamma[\cBv] \hookrightarrow \bGamma[\tcN]
\end{equation}
that identifies the former with the degree-$(\{0\} \times \bX)$ subring of the latter.  (To be precise, Lemma~\ref{lem:pcpcos-deg0} just gives us an injective map of $\Gv$-representations.  Because both~\eqref{eqn:stult-defn} and~\eqref{eqn:tspults-defn} are induced by~\eqref{eqn:spults-defn}, this map is actually a ring homomorphism.)

Regard $\bGamma[\tcN]$ as a $\bGamma[\cBv]$-algebra via~\eqref{eqn:coh-cbv-tcn}.  Applying $\scF_0$, we obtain a $\Z$-graded sheaf of algebras $\scS$ on $\cBv$.  This sheaf of algebras can be identified with $p_*\cO_\tcN$, where $p: \tcN \to \cBv$ is the projection map.  In other words, we have $\tcN = \bSpec \scS$, where $\bSpec$ is the relative version of the $\Spec$ construction.

Let $\bGamma[\tcN]\lmod$ denote the category of finitely generated $\Gv$-equivariant $(\Z \times \bX)$-graded $\bGamma[\tcN]$-modules.  Given a module $M \in \bGamma[\tcN]\lmod$, we can regard it as a $\bGamma[\cBv]$-module via~\eqref{eqn:coh-cbv-tcn}, and then form the sheaf $\scF_0(M)$.  This is a quasicoherent sheaf on $\cBv$ that is also a $\Z$-graded sheaf of $\scS$-modules.  The $\bSpec$ construction then associates to $\scF_0(M)$ a $(\Gv \times \Gm)$-equivariant coherent sheaf on $\tcN$.  In this way, we obtain a functor
\[
\scF: \bGamma[\tcN]\lmod \to \Coh(\tcN).
\]
As above, a module $M = \bigoplus_{\lambda \in \bX} M_\lambda$ is called \emph{thin} if there is some $\lambda \in \bX$ such that $M_\mu = 0$ for all $\mu \in \lambda + \bXp$.  (The $\Z$-grading is irrelevant to this condition.)  The functor $\scF$ induces an equivalence of categories
\[
\bGamma[\tcN]\lmod/(\text{Serre subcategory of thin modules}) \simto \Coh(\tcN).
\]

\subsection{An Ext-algebra of Wakimoto sheaves}

Building on the construction of Section~\ref{ss:regular-perv}, we now make 
\[
\bigoplus_{\lambda \in \bXp} \gHom(\ocW_{-\lambda}, \cR)
\]
into a $\Gv$-equivariant $(\mathbb{Z} \times \bX)$-graded $\gHom(\sky,\cR)$-algebra, as follows: given $f \in \Hom(\ocW_{-\lambda}, \cR\{n\})$ and $g \in \Hom(\ocW_{-\mu}, \cR\{m\})$, we define $gf \in \Hom(\ocW_{-\lambda-\mu}, \cR\{n+m\})$ to be the composition
\begin{multline}\label{eqn:homWR-ring}
\ocW_{-\lambda-\mu} \xrightarrow{\omega_{-\mu}(f)} \cW_{-\mu} \star \cR\{n\} \simto \ocW_{-\mu} \star \cR\{n\} \\
{} \xrightarrow{g \star \id} \cR\{m\} \star \cR\{n\} \xrightarrow{\Sat(\mult)\{m+n\}} \cR\{m+n\}.
\end{multline}
A short calculation with Lemma~\ref{lem:conv-waki-assoc} shows that this operation is associative, so we do indeed get a ring.
The main result of this section is the following.

\begin{thm}\label{thm:coh-tcn}
There is an isomorphism of $\Gv$-equivariant $\mathbb{Z} \times \bX$-graded rings
\[
\bGamma[\tcN] \cong \bigoplus_{\lambda \in \bXp} \gHom(\ocW_{-\lambda}, \cR).
\]
\end{thm}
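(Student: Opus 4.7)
The plan is to first build the isomorphism as $\Gv$-equivariant $(\mathbb{Z} \times \bX)$-graded vector spaces, and then verify multiplicativity by checking it on two ``axial'' subrings that together generate each side.

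\textbf{Step 1 (Vector space identification).} For each $\lambda \in \bXp$, I would combine the identification $\ocW_{-\lambda} \cong \uistd(-\lambda)\{-\delta_{-\lambda}\}$ of Lemma~\ref{lem:waki-support} with the isomorphism $\gHom(\uistd(-\lambda), \cR) \cong \pcpcos(\lambda)$ of Proposition~\ref{prop:std-regular}. Using the convention $\gHom(\cF\{n\},\cG) = \gHom(\cF,\cG)\la -n\ra$, this gives $\gHom(\ocW_{-\lambda},\cR) \cong \pcpcos(\lambda)\la\delta_{-\lambda}\ra$. The identity $\delta_{-\lambda} = \delta_{w_0\lambda}$ for $\lambda\in\bXp$, combined with the definition $\pcpcos(\lambda) = \pi_*\cO_\tcN(\lambda)\la-\delta_{w_0\lambda}\ra$ and~\eqref{eqn:dom-line-sec}, then yields $\gHom(\ocW_{-\lambda},\cR) \cong \Gamma(\tcN,\cO_\tcN(\lambda))$. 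Summing over $\lambda \in \bXp$ produces a $\Gv$-equivariant $(\mathbb{Z}\times\bX)$-graded vector space isomorphism.

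\textbf{Step 2 (Distinguished subrings and generation).} Inside the left-hand side, the degree-$(\mathbb{Z}\times\{0\})$ subring is the $\bk[\cN]$ factor, and the degree-$(\{0\}\times\bX)$ subring is $\bGamma[\cBv]$ via Lemma~\ref{lem:pcpcos-deg0}. Lemma~\ref{lem:cow-pcpcos-surj} shows that for every $\lambda \in \bXp$ the natural multiplication map $\bk[\cN]\otimes\cow(\lambda)\twoheadrightarrow \Gamma(\tcN,\cO_\tcN(\lambda))$ is surjective, so these two subrings generate $\bGamma[\tcN]$. On the right-hand side, the analogous subrings are $\gHom(\sky,\cR)$, identified with $\bk[\cN]$ by Theorem~\ref{thm:coh-cn}, and $\bigoplus_\lambda \Hom(\cIstd(-w_0\lambda),\cR)$, identified with $\bGamma[\cBv]$ by Proposition~\ref{prop:coh-cbv}, the latter sitting in $\bigoplus_\lambda \Hom(\ocW_{-\lambda},\cR)$ via the adjunction maps~\eqref{eqn:waki-adjunct}. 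That this inclusion is a bijection onto the degree-$(\{0\}\times\bX)$ piece follows from Corollary~\ref{cor:costalk-compare}, since $\cR$ is $!$-pure by Theorem~\ref{thm:mixed-mv}. Transport of generation across the isomorphism of Step 1 then shows the right-hand side is also generated by these two subrings.

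\textbf{Step 3 (Multiplicativity on generators).} It suffices to verify the three cases. For products within $\bk[\cN]$, this is Theorem~\ref{thm:coh-cn} (applied to $\cF = \sky$). For mixed products of an element of $\gHom(\sky,\cR)$ with an element of $\gHom(\ocW_{-\lambda},\cR)$, the key point is that Proposition~\ref{prop:std-regular} already identifies $\gHom(\ocW_{-\lambda},\cR)$ with $\Gamma(\tcN,\cO_\tcN(\lambda))$ as coherent $\cON$-modules (up to shift); a short calculation unwinding~\eqref{eqn:homWR-ring} in the special case $\lambda = 0$, using Lemma~\ref{lem:conv-waki-assoc} and the fact that $\omega_0$ is the identity, shows the $\bk[\cN]$-action defined by~\eqref{eqn:homWR-ring} coincides with that coming from Proposition~\ref{prop:std-regular}. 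For products within $\bGamma[\cBv]$, one must show that for $f: \cIstd(-w_0\lambda) \to \cR$ and $g: \cIstd(-w_0\mu) \to \cR$, viewed in $\gHom(\ocW_{-\lambda},\cR)$ and $\gHom(\ocW_{-\mu},\cR)$ via the adjunctions, the product defined by~\eqref{eqn:homWR-ring} agrees with the one defined by~\eqref{eqn:homWR0-ring}. This is achieved by a diagram chase that inserts the adjunction maps into~\eqref{eqn:homWR-ring} and applies Lemma~\ref{lem:weyl-mult}: that lemma is precisely the statement that the composition $\ocW_{-\lambda-\mu} \to \cIstd(-w_0(\lambda+\mu)) \xrightarrow{\Sat(\spult)} \cIstd(-w_0\lambda) \star \cIstd(-w_0\mu)$ equals the Wakimoto composition $\ocW_{-\lambda-\mu} = \cW_{-\mu} \star \ocW_{-\lambda} \to \ocW_{-\mu} \star \cIstd(-w_0\lambda) \to \cIstd(-w_0\mu) \star \cIstd(-w_0\lambda)$ (up to the symmetry in $\lambda, \mu$), which is exactly what one needs to reconcile~\eqref{eqn:homWR-ring} with~\eqref{eqn:homWR0-ring}.

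\textbf{Main obstacle.} The most delicate piece is this last verification inside the $\bGamma[\cBv]$ subring: the product on the right-hand side is defined through the Wakimoto convolution map $\omega_{-\mu}(f)$, which lives in the non-equivariant category $\DmixI(\Gr)$ and is only defined up to the lifting procedure of Proposition~\ref{prop:conv-waki-hom}, whereas the product on $\bGamma[\cBv]$ is defined via the Satake structure map $\Sat(\spult_{-w_0\lambda,-w_0\mu})$. Bridging these two distinct formal definitions requires carefully choosing lifts to $\Dmix_I(\Gr)$ in the sense of Proposition~\ref{prop:conv-waki-parity}, applying the equivariance/naturality statement of Lemma~\ref{lem:conv-waki-assoc}, and then invoking Lemma~\ref{lem:weyl-mult} to identify the resulting composition with the $\spult$-based one. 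All other compatibilities are relatively formal once the vector space isomorphism of Step 1 is in hand.
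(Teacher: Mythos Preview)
Your proposal is correct and follows essentially the same route as the paper: establish the graded isomorphism via Lemma~\ref{lem:waki-support} and Proposition~\ref{prop:std-regular}, use Lemmas~\ref{lem:pcpcos-deg0}--\ref{lem:cow-pcpcos-surj} to reduce multiplicativity to the degree-$(\{0\}\times\bX)$ subring, and then verify that case by the diagram chase combining the adjunction maps~\eqref{eqn:waki-adjunct} with Lemma~\ref{lem:weyl-mult}. The paper streamlines your Step~3 slightly by observing that Proposition~\ref{prop:std-regular} already yields the isomorphism in $\CohN$ (i.e., as $\bk[\cN]$-modules), so your ``mixed product'' case is absorbed into Step~1 and only the $\bGamma[\cBv]$-subring requires the separate diagram chase you describe.
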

\begin{proof}
Using~\eqref{eqn:waki-support}, Proposition~\ref{prop:std-regular}, and~\eqref{eqn:dom-line-sec}, we have the following chain of isomorphisms in $\CohN$:
\[
\gHom(\ocW_{-\lambda}, \cR) \cong \gHom(\uistd(-\lambda)\{-\delta_\lambda\}, \cR) \cong \pcpcos(\lambda)\la \delta_\lambda\ra \cong \Gamma(\tcN, \cO_\tcN(\lambda)).
\]
Thus, our two rings are at least isomorphic as $(\Z \times \bX)$-graded $\Gv$-equivariant $\bk[\cN]$-modules.  Recall from Lemmas~\ref{lem:pcpcos-deg0} and~\ref{lem:cow-pcpcos-surj} that $\Gamma(\tcN, \cO_\tcN(\lambda))$ is generated as a $\bk[\cN]$-module by its degree-$0$ graded component.  Therefore, the same holds for $\gHom(\ocW_{-\lambda}, \cR)$. To prove that the two rings in the statement of the theorem are isomorphic, then, it suffices to show that their degree-$(\{0\} \times \bX)$ subrings are isomorphic.

We first study the right-hand side.  Recall that we have an adjunction map $\epsilon: \ocW_{-\lambda} \cong \uistd(-\lambda)\{-\delta_{-\lambda}\} \to \cIstd(-w_0\lambda)$. This gives rise to a map
\begin{equation}\label{eqn:coh-tcn-waki0-single}
\Hom(\cIstd(-w_0\lambda), \cR) \to \Hom(\ocW_{-\lambda}, \cR).
\end{equation}
We claim that this map is an isomorphism. Note first that the truncation map $\cJstd(-w_0\lambda) \to \pH^0(\cJstd(-w_0\lambda)) = \cIstd(-w_0\lambda)$ induces an isomorphism
\[
\Hom(\cIstd(-w_0\lambda), \cR) \simto \Hom(\cJstd(-w_0\lambda), \cR),
\]
since $\cR$ is perverse. The claim then follows from Corollary~\ref{cor:costalk-compare}.

From the preceding paragraph, we obtain an injective map of $\Gv$-modules
\begin{equation}\label{eqn:coh-tcn-waki0}
\bigoplus_{\lambda \in \bXp} \Hom(\cIstd(-w_0\lambda), \cR)
\hookrightarrow
\bigoplus_{\lambda \in \bXp} \gHom(\ocW_{-\lambda}, \cR)
\end{equation}
that identifies the former with the degree-$(\{0\} \times \bX)$ subspace of the latter.  

We will show that this is also a ring homomorphism. Let $g \in \Hom(\cIstd(-w_0\lambda),\cR)$ and $f \in \Hom(\cIstd(-w_0\mu), \cR)$.  Let $\tilde g \in \Hom(\ocW_{-\lambda}, \cR)$ and $\tilde f \in \Hom(\ocW_{-\mu}, \cR)$ be the maps corresponding to $g$ and $f$ via~\eqref{eqn:coh-tcn-waki0-single}.  (Thus, $\tilde g = g \circ \epsilon$ and $\tilde f = f \circ \epsilon$.) 

Recall that $f$ can be regarded as a morphism in $\Perv_\Go(\Gr) \subset \Dmix_\Go(\Gr)$.  (Indeed, this observation is essential to the definition of the ring structure in~\eqref{eqn:homWR0}.)  We can then forget from the $\Go$-equivariant derived category to the $I$-equivariant derived category.  Of course, the adjunction map $\epsilon: \ocW_{-\mu} \to \cIstd(-w_0\mu)$ can also naturally be lifted to $\Dmix_I(\Gr)$, so we may regard $\tilde f$ as a morphism in $\Dmix_I(\Gr)$.  In particular, it makes sense to form the convolution product $\id \star \tilde f: \cW_{-\lambda} \star \ocW_{-\mu} \to \cW_{-\lambda} \star \cR$.  

That observation is needed for a portion of the large diagram in Figure~\ref{fig:coh-tcn}, which compares the products on either side of~\eqref{eqn:coh-tcn-waki0}.  The large square labeled ($*$) is the commutative diagram from Lemma~\ref{lem:weyl-mult}.  Each of the remaining small squares obviously commutes.

Thus, the whole of Figure~\ref{fig:coh-tcn} commutes, and hence~\eqref{eqn:coh-tcn-waki0} is a ring homomorphism.  From~\eqref{eqn:coh-tcn-waki0}, \eqref{eqn:coh-cbv-tcn}, and Proposition~\ref{prop:coh-cbv}, we obtain an isomorphism of the degree-$(\{0\} \times \bX)$ subrings of $\bGamma[\tcN]$ and $\bigoplus_{\lambda \in \bXp} \gHom(\ocW_{-\lambda},\cR)$, as desired.
\end{proof}

\begin{figure}
\[
\xymatrix{
\ocW_{-\lambda-\mu} \ar[ddd]_{\epsilon} \ar[rr]_-{\omega_{-\lambda}(\id)} \ar@{}[dddrr]|{\txt{($*$)}}
    \ar@/^4ex/[rrr]^-{\omega_{-\lambda}(\tilde f)} & &
  \cW_{-\lambda} \star \ocW_{-\mu} \ar[d]_{\id \star \epsilon} \ar[r]_-{\id \star \tilde f} &
  \cW_{-\lambda} \star \cR \ar@{=}[d]\\
& & \cW_{-\lambda} \star \cIstd(-w_0\mu) \ar[d] \ar[r]^-{\id \star f} &
  \cW_{-\lambda} \star \cR \ar[d] \\
& & \ocW_{-\lambda} \star \cIstd(-w_0\mu) \ar[d]_{\epsilon \star \id} \ar[r]^-{\id \star f} &
  \ocW_{-\lambda} \star \cR \ar[d]_{\epsilon \star \id} \ar@/^9ex/[dd]^{\tilde g \star \id}  \\
\cIstd(-w_0(\lambda+\mu)) \ar[rr]_{\hspace{-.5cm}\Sat(\spult_{-w_0\lambda,-w_0\mu})} &\hspace{.6cm}&
  \cIstd(-w_0\lambda) \star \cIstd(-w_0\mu) \ar[r]^-{\id \star f} \ar@/_3ex/[dr]_-{g \star f}&
  \cIstd(-w_0\lambda) \star \cR \ar[d]_{g\star\id} \\
 & & &\cR\star\cR }
\]
\caption{Comparing ring structures in Theorem~\ref{thm:coh-tcn}}\label{fig:coh-tcn}
\end{figure}

\section{The main result}
\label{sect:main}

\newcommand{\naive}{{\mathrm{naive}}}

We are now ready to prove the following theorem, which is the main result of the paper.  Its proof will occupy the entire section.

\begin{thm}\label{thm:main}
There is an equivalence of triangulated categories
\[
P: \DmixI(\Gr) \simto \Db\CohGm(\tcN)
\]
satisfying $P(\cF\{ 1\}) \cong P(\cF)\la 1\ra$ and $P(\ocW_\lambda) \cong \cO_\tcN(\lambda)$.  Moreover, this equivalence is compatible with the geometric Satake equivalence:  for $\cF \in \DmixI(\Gr)$ and $V \in \Rep(\Gv)$, there is a natural isomorphism $P(\cF \star \Sat(V)) \cong P(\cF) \otimes V$.
\end{thm}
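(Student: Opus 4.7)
The plan is to construct $P$ as the composition of a ``naive'' functor into graded $\bGamma[\tcN]$-modules followed by the sheafification $\scF$, and then to verify fully faithfulness on a generating set provided by Proposition~\ref{prop:dmixigr-gen-x}. Concretely, I would define
\[
P_\naive(\cF) \;=\; \bigoplus_{\lambda \in \bXp} \gHom(\ocW_{-\lambda}, \cF \star \cR),
\]
equipped with a $\Gv$-equivariant $(\Z\times\bX)$-graded $\bGamma[\tcN]$-module structure that extends the ring structure of Theorem~\ref{thm:coh-tcn}: the action of $f \in \Hom(\ocW_{-\mu},\cR\{m\})$ on $g \in \Hom(\ocW_{-\lambda},\cF \star \cR\{n\})$ is the composite
\[
\ocW_{-\lambda-\mu} \xrightarrow{\omega_{-\mu}(g)} \ocW_{-\mu}\star \cF\star \cR\{n\} \xrightarrow{\id \star f\{n\}} \cF \star \cR\star\cR\{n+m\} \xrightarrow{\id\star \Sat(\mult)} \cF\star \cR\{n+m\},
\]
using the action maps from~\S\ref{ss:regular-perv} and the compatibility of $\omega_{-\mu}$ with composition (Lemma~\ref{lem:conv-waki-assoc}). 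Because $\DmixI(\Gr) \cong \Kb \Parity_{(I)}(\Gr)$ and the functor $\cW_\lambda \fakestar (-)$ of Proposition~\ref{prop:conv-waki-parity} is well-defined on parity sheaves, $P_\naive$ lifts to a triangulated functor on the homotopy category of complexes, and composition with $\scF$ (after an obvious localization to kill thin tails) gives the desired functor $P$ into $\Db\Coh(\tcN) = \Db\CohGm(\tcN)$. The Tate-twist compatibility $P(\cF\{1\}) \cong P(\cF)\la 1\ra$ is built into the grading conventions.

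Next I would compute $P$ on the generating set of Proposition~\ref{prop:dmixigr-gen-x}. For a Wakimoto sheaf $\ocW_\mu$, using the monoidal structure $\cW_\mu \star \ocW_{-\lambda} \cong \ocW_{\mu-\lambda}$ from Lemma~\ref{lem:conv-waki-twist} and Theorem~\ref{thm:coh-tcn},
\[
P_\naive(\ocW_\mu) \;\cong\; \bigoplus_{\lambda\in\bXp} \gHom(\ocW_{\mu-\lambda}, \cR),
\]
whose sheafification is $\cO_\tcN(\mu)$; this identifies the line-bundle image. For convolutions $\ocW_\mu \star \cIcos(\nu)$ with $\mu\in\bXp$, $\nu\in\bXp$, one analogously reduces the graded pieces to $\gHom(\ocW_{\mu-\lambda}, \cIcos(\nu)\star \cR) \cong \gHom(\ocW_{\mu-\lambda}, \cR)\otimes \cow(\nu)$ via Lemma~\ref{lem:gvgv-swap}, whose sheafification is $\cO_\tcN(\mu)\otimes \cow(\nu)$; this simultaneously gives compatibility with $P_\sph$ and yields the Satake compatibility $P(\cF \star \Sat(V))\cong P(\cF)\otimes V$, first for $V=\cow(\nu)$, then for objects with a dual Weyl filtration, then for all $V\in\Rep(\Gv)$ by finite resolutions (as in the proof of Proposition~\ref{prop:sph-main}).

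Fully faithfulness is then verified on this generating set. Using invertibility of Wakimotos, any $\Hom$-group between two generators $\ocW_\mu\star \cIcos(\nu)$ and $\ocW_{\mu'}\star \cIcos(\nu')$ can be rewritten as $\gHom(\ocW_{\mu-\mu'}, \cIcos(\nu')\star \cIcos(\nu)^{\vee}\star\cdots)$, and the combination of Theorem~\ref{thm:coh-tcn}, Proposition~\ref{prop:std-regular}, Lemma~\ref{lem:costalk-compare} and the identification $\pi_*\cO_\tcN(\lambda) = \Gamma(\tcN,\cO_\tcN(\lambda))$ allows one to match it with the corresponding $\Hom$-group on $\Db\Coh(\tcN)$ between line bundles tensored with representations. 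Once fully faithfulness is known on generators, it propagates to all of $\DmixI(\Gr)$ by a standard five-lemma argument along distinguished triangles, using Proposition~\ref{prop:dmixigr-gen-x}. Essential surjectivity follows because the image contains every $\cO_\tcN(\lambda)\otimes V$, and such objects generate $\Db\CohGm(\tcN)$ (by resolving an arbitrary equivariant coherent sheaf using the relatively ample line bundles $\cO_\tcN(\lambda)$).

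The main obstacle, as usual for ABG-type theorems, will be verifying fully faithfulness between arbitrary generators: although each individual $\gHom$ calculation reduces, via invertibility of the $\cW_\mu$ and the freeness statements of Lemma~\ref{lem:costalk-compare} and Proposition~\ref{prop:aj-compute}, to the previously-computed ring structure on $\bGamma[\tcN]$, making this reduction coherent requires keeping track simultaneously of the $\bGamma[\tcN]$-action, the $\bk[\cN]$-action, and the $\Gv$-equivariance, and then checking naturality of the comparison morphism along the distinguished triangles used for devissage. The bookkeeping is essentially an elaboration of the commutativity verification in Figure~\ref{fig:coh-tcn}, but carried out for two arbitrary generators rather than for $\cF=\sky$.
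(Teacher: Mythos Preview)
Your construction of $P$ via $P_\naive$ and $\Kb\Parity_{(I)}(\Gr)$, and your proof of Satake compatibility, match the paper. The gap is in the fully-faithfulness argument.

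You assert that ``using invertibility of Wakimotos, any $\Hom$-group between two generators can be rewritten'' by moving a Wakimoto factor across. But the $\ocW_\lambda$ are \emph{not} invertible in $\DmixI(\Gr)$, and $\cW_\lambda \star (-)$ is only defined on the \emph{equivariant} category $\Dmix_I(\Gr)$. The whole point of \S\ref{ss:wakimoto} is that in the nonequivariant setting one has only partial substitutes: the functor $\cW_\lambda \fakestar (-)$ of Proposition~\ref{prop:conv-waki-parity} is defined only on $\Parity_{(I)}(\Gr)$, and the isomorphisms $\omega_\lambda$ of Proposition~\ref{prop:conv-waki-hom} exist only under a freeness hypothesis. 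So the direct reduction you propose for $\gHom(\ocW_\mu \star \cIcos(\nu),\, \ocW_{\mu'} \star \cIcos(\nu'))$ is not available, and there is no object ``$\cIcos(\nu)^\vee$'' to invoke either.

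The paper proceeds differently. First, it proves that $H^i(P(\cF)) \cong P_\naive(\cF[i])$ for all $\cF$ (this is nontrivial, since $P$ is defined on $\Kb\Parity$ while $P_\naive$ is defined pointwise; your sketch elides this comparison). This yields $P(\cW_\lambda \fakestar \cF) \cong \cO_\tcN(\lambda) \otimes P(\cF)$ for $\cF$ parity (Proposition~\ref{prop:waki-line-bdl}). Fully faithfulness is then checked only for maps \emph{out of $\sky$}: one shows directly (Lemma~\ref{lem:fully-faith-special}) that
\[
\gHom^i(\sky,\ \ocW_\lambda \star \cIcos(\mu)) \ \simto\ \uHom^i(\cO_\tcN,\ \cO_\tcN(\lambda)\otimes \cow(\mu)),
\]
by first proving injectivity via the unit of $\cR$ and then matching graded dimensions through the chain Lemma~\ref{lem:costalk-compare}${}\to{}$Lemma~\ref{lem:pcstd-compare}${}\to{}$\eqref{eqn:springer-dualizing}. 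Combined with Lemma~\ref{lem:waki-po-hom}/Lemma~\ref{lem:line-po-hom} for the case $\lambda \not\preceq 0$, Proposition~\ref{prop:dmixigr-gen-x} then gives that $\gHom^i(\sky,-) \to \uHom^i(P(\sky),P(-))$ is an isomorphism for all targets. Finally one bootstraps from $\sky$ to $\ocW_\lambda$ via the square
\[
\xymatrix{
\gHom^i(\sky,\cF) \ar[r]^-{\sim} \ar[d]_{\omega_\lambda}^{\wr} & \uHom^i(P(\sky),P(\cF)) \ar[d]^{\wr} \\
\gHom^i(\ocW_\lambda,\cW_\lambda \fakestar \cF) \ar[r] & \uHom^i(P(\ocW_\lambda),P(\cW_\lambda \fakestar \cF))
}
\]
which is available only for $\cF \in \Parity_{(I)}(\Gr)$; since the objects $\cW_\lambda \fakestar \cF$ generate $\DmixI(\Gr)$, this suffices. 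Your proposal replaces this two-step reduction (source $\sky$ first, then twist by $\cW_\lambda$ on parity sheaves) with a single reduction that the available machinery does not support.
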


We begin by constructing the functor $P$.  As a first step, given $\cF \in \DmixI(\Gr)$, form the $(\Z \times \bX)$-graded vector space
\[
Q_\naive(\cF) := \bigoplus_{\lambda \in \bXp} \gHom(\ocW_{-\lambda}, \cF \star \cR).
\]
We make this into a right module over $\bigoplus_{\lambda \in \bXp} \gHom(\ocW_{-\lambda}, \cR)$ by a formula similar to~\eqref{eqn:homWR-ring}: given $f \in \Hom(\ocW_{-\lambda}, \cR\{n\})$ and $m \in \Hom(\ocW_{-\mu}, \cF \star \cR\{m\})$, we define $mf \in \Hom(\ocW_{-\lambda-\mu}, \cF \star \cR\{n+m\})$ to be the composition
\begin{multline*}
\ocW_{-\lambda-\mu} \xrightarrow{\omega_{-\mu}(f)} \cW_{-\mu} \star \cR\{n\} \simto \ocW_{-\mu} \star \cR\{n\} \\
{} \xrightarrow{m \star \id} \cF \star \cR\{m\} \star \cR\{n\} \xrightarrow{\id \star \Sat(\mult)\{m+n\}} \cF \star \cR\{m+n\}.
\end{multline*}
Using the isomorphism of Theorem~\ref{thm:coh-tcn}, we henceforth regard $Q_\naive$ as a functor
$Q_\naive: \DmixI(\Gr) \to \bGamma[\tcN]\lmod$.
We also let $P_\naive := \scF \circ Q_\naive: \DmixI(\Gr) \to \Coh(\tcN)$, and then we put
\[
P^0 = P_\naive|_{\Parity_{(I)}(\Gr)}: \Parity_{(I)}(\Gr) \to \Coh(\tcN).
\]
Finally, we define $P$ to be the composition
\[
\DmixI(\Gr) = \Kb\Parity_{(I)}(\Gr) \xrightarrow{\Kb(P^0)} \Kb\Coh(\tcN) \to \Db\Coh(\tcN).
\]
We begin with the last assertion in the theorem.

\begin{prop}\label{prop:monoidal}
For $\cF \in \DmixI(\Gr)$ and $V \in \Rep(\Gv)$, there is a natural isomorphism $P(\cF \star \Sat(V)) \cong P(\cF) \otimes V$.
\end{prop}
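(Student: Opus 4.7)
The plan is to work backwards through the construction of $P$. Since $P$ is the triangulated extension of $P^0 = P_\naive|_{\Parity_{(I)}(\Gr)}$ along $\DmixI(\Gr) = \Kb\Parity_{(I)}(\Gr)$, and since $\scF$ commutes with tensoring by a $\Gv$-representation (being essentially a relative $\bSpec$ construction), it will suffice to produce, for $\cF \in \Parity_{(I)}(\Gr)$ and $V$ a tilting $\Gv$-module (so that $\cF \star \Sat(V)$ remains in $\Parity_{(I)}(\Gr)$), a natural isomorphism of $\bGamma[\tcN]$-modules
\[
\beta_{\cF,V} \colon Q_\naive(\cF \star \Sat(V)) \xrightarrow{\sim} Q_\naive(\cF) \otimes V.
\]

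The key input will be the geometric Satake avatar of Lemma~\ref{lem:gvgv-swap}: applying $\Sat$ to the bimodule isomorphism $V \otimes \bk[\Gv] \cong (\bk \boxtimes V) \otimes \bk[\Gv]$ produces a natural isomorphism
\[
\alpha_V \colon \Sat(V) \star \cR \xrightarrow{\sim} V \otimes \cR
\]
of ind-objects of $\Perv_\Go(\Gr)$ carrying the external right $\Gv$-action, where the action on the right-hand side is diagonal (standard on $V$, descended from $\bk[\Gv]$ on $\cR$). Naturality in $V$ and compatibility with $\Sat(\mult)\colon \cR \star \cR \to \cR$ are direct consequences of the corresponding properties of the swap in Lemma~\ref{lem:gvgv-swap}. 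Convolving on the right with $\cF$ then yields $\cF \star \Sat(V) \star \cR \cong V \otimes (\cF \star \cR)$, from which $\beta_{\cF,V}$ is defined by applying $\gHom(\ocW_{-\lambda}, -)$ and summing over $\lambda \in \bXp$.

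To finish I would verify that $\beta_{\cF,V}$ intertwines the right $\bGamma[\tcN]$-module structures, translate the result through $\scF$ to obtain $P_\naive(\cF \star \Sat(V)) \cong P_\naive(\cF) \otimes V$, and then extend. Passage from tilting $V$ to arbitrary $V \in \Rep(\Gv)$ is handled by choosing a finite resolution by modules with a Weyl filtration (any such module is built from tilting modules by further resolution), and appealing to exactness of the construction in $V$; passage from $\cF \in \Parity_{(I)}(\Gr)$ to arbitrary $\cF \in \Kb\Parity_{(I)}(\Gr)$ is automatic by applying $\Kb$ termwise to the natural transformation.

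The main obstacle will be the module-compatibility check: one must trace through the formulas defining the $\bGamma[\tcN]$-action on $Q_\naive(\cF)$, built from $\omega_{-\mu}$ (as in Proposition~\ref{prop:conv-waki-hom}), the monoidal structure morphisms $\Sat(\spult_{-w_0\lambda,-w_0\mu})$, and the multiplication $\Sat(\mult)$, and verify that they fit compatibly with $\alpha_V$. This is a diagrammatic argument of the same flavor as Figure~\ref{fig:coh-tcn}, requiring careful bookkeeping of the external right $\Gv$-equivariance but no new geometric input beyond the existence and multiplicativity properties of $\alpha_V$.
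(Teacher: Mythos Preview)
Your proposal is correct and follows essentially the same route as the paper's proof: reduce to $\cF$ parity and $V$ tilting, invoke the $\Sat$-image of Lemma~\ref{lem:gvgv-swap} to identify $\cF \star \Sat(V) \star \cR \cong (\cF \star \cR) \otimes V$, and then read off the isomorphism at the level of $Q_\naive$ (and hence $P^0$). The paper is considerably more terse than you are: it does not single out the $\bGamma[\tcN]$-module compatibility as a separate obstacle, treating it as immediate from the fact that $V$ factors out $\bk$-linearly from every $\gHom$-group and every structure map in sight, and it handles the passage to general $V$ and general $\cF$ in a single phrase (``from the definitions of the convolution product and the functor $P$'') rather than via an explicit resolution argument.
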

\begin{proof}
Observe first that by applying $\Sat$ to the isomorphism of Lemma~\ref{lem:gvgv-swap}, we obtain a natural isomorphism of $\Gv$-equivariant ind-perverse sheaves
\begin{equation}\label{eqn:sat-gvgv-swap}
\Sat(V) \star \cR \cong \cR \otimes V.
\end{equation}
(Here, $\cR \otimes V$ is isomorphic as an ind-perverse sheaf---but not as a $\Gv$-equivariant ind-perverse sheaf---to $\bigoplus^{\dim V} \cR$.)  From the definitions of the convolution product and the functor $P$, one sees that it is enough to prove the following statement: \emph{For $\cF \in \Parity_{(I)}(\Gr)$ and $V$ a tilting $\Gv$-module, there is a natural isomorphism $P^0(\cF \star \Sat(V)) \cong P^0(\cF) \otimes V$.}  For the latter claim, using~\eqref{eqn:sat-gvgv-swap}, we find that
\begin{multline*}
P^0(\cF \star \Sat(V)) = \scF\left(\bigoplus \gHom(\ocW_{-\lambda}, \cF \star \Sat(V) \star \cR)\right) \\
= \scF\left(\bigoplus \gHom(\ocW_{-\lambda}, \cF \star \cR) \otimes V\right) \cong
P^0(\cF) \otimes V,
\end{multline*}
as desired.
\end{proof}

The next several statements are somewhat technical lemmas aimed at making it possible to compute some values of $P$.

\begin{lem}\label{lem:par-cow-adverse}
Let $\cF \in \Parity_{(I)}(\Gr)$, and let $V \in \Rep(\Gv)$ have a good filtration.  Then $\cF \star \Sat(V)$ is $!$-pure of weight~$0$.
\end{lem}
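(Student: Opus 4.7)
The plan is to proceed by two layers of reduction followed by an inductive argument on dominant weights.

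First I would reduce to the case $\cF = \cE(\lambda)$ for some $\lambda \in \bX$. By Krull--Schmidt for $\Parity_{(I)}(\Gr)$, every object decomposes as a finite direct sum of shifts of $\cE(\lambda)$'s, and $!$-purity of weight $0$ is clearly closed under finite direct sums and Tate twists $\{n\}$.

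Next I would reduce from $V$ with a good filtration to the case $V = \cow(\mu)$, i.e.\ $\Sat(V) = \cIcos(\mu)$. Any good filtration gives a finite sequence of short exact sequences $0 \to V' \to V \to V'' \to 0$ with $V'', V'$ again having good filtration (and strictly shorter ones). Applying $\Sat$ and convolving with $\cF$ produces distinguished triangles
\[
\cF \star \Sat(V') \to \cF \star \Sat(V) \to \cF \star \Sat(V'') \to.
\]
The key observation is that being $!$-pure of weight $0$ is preserved by extensions of this type. On any single $I$-orbit $\schub\nu$, which is an affine space, $\DmixI(\schub\nu) \cong D^b(\bk\lgmod)$, and in this category
\[
\Hom\bigl(\ubk\{n\},\ubk\{m\}[1]\bigr) = 0
\]
for all $n,m$ (a direct check from $\{1\} = \la -1\ra[1]$). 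Hence applying $i_\nu^!$ to the triangle and noting that the connecting morphism $i_\nu^!(\cF \star \Sat(V'')) \to i_\nu^!(\cF \star \Sat(V'))[1]$ must vanish stratum by stratum, the restriction splits as a direct sum of Tate twists of $\ubk$, which is what we need.

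The heart of the proof is therefore the base case $V = \cow(\mu)$, which I would handle by induction on $\mu \in \bXp$ with respect to $\le$. When $\mu = 0$, we have $\cIcos(0) = \sky$, and $\cE(\lambda) \star \sky \cong \cE(\lambda)$, which is parity and a fortiori $!$-pure of weight $0$. For the inductive step, I would use the dual-Weyl part of the tilting filtration, giving a short exact sequence
\[
0 \to \cow(\mu) \to \til(\mu) \to Q \to 0
\]
in $\Rep(\Gv)$, where $Q$ has a good filtration by $\cow(\nu)$ with $\nu < \mu$. Convolving with $\cE(\lambda)$ yields a distinguished triangle
\[
\cE(\lambda) \star \cIcos(\mu) \to \cE(\lambda) \star \cT(\mu) \to \cE(\lambda) \star \Sat(Q) \to.
\]
The middle term equals $\cE(\lambda) \star \cE(\mu)$ since $\cT(\mu) = \cE(\mu)$ by the JMW assumption, so it is a convolution of parity sheaves and hence parity. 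The right-hand term is $!$-pure of weight $0$ by the inductive hypothesis on $\mu$ combined with the extension-closure argument applied to the good filtration of $Q$.

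The main obstacle is deducing that the left-hand term, which sits as the fiber, is $!$-pure. I would handle this by rotating the triangle and extracting the map on costalks: $i_\nu^!(\cE(\lambda) \star \cT(\mu)) \to i_\nu^!(\cE(\lambda) \star \Sat(Q))$ is a map between objects concentrated in cohomological degree $0$ of $D^b(\bk\lgmod)$, and one shows that it is in fact surjective (equivalently, that $\cE(\lambda) \star (-)$ transports the injection $\cIcos(\mu)\hookrightarrow \cT(\mu)$ to something that remains injective on $i_\nu^!$). The surjectivity can be extracted from the fact that the short exact sequence is part of the costandard filtration of the tilting $\til(\mu)$, so on the spherical side the map $\cT(\mu) \to \cS$ is already a surjection in $\Perv_\Go(\Gr)$, and convolution with the $\Go$-equivariant perverse sheaf side together with the already-established $!$-purity of the outer terms forces $H^1(i_\nu^!(\cE(\lambda)\star\cIcos(\mu))) = 0$. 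Once this vanishing holds, $i_\nu^!(\cE(\lambda) \star \cIcos(\mu))$ is concentrated in degree $0$ as a sum of $\ubk\{n\}$'s, which is the desired $!$-purity.
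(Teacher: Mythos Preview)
Your overall strategy---reducing to an indecomposable parity sheaf, then to $V=\cow(\mu)$, and running an induction on $\mu$ via the short exact sequence $0\to\cow(\mu)\to\til(\mu)\to Q\to 0$---is a reasonable and genuinely different route from the paper's, but the final paragraph contains a real gap.

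The issue is the asserted surjectivity of $i_\nu^!(\cE(\lambda)\star\cT(\mu))\to i_\nu^!(\cE(\lambda)\star\Sat(Q))$, equivalently the vanishing of $H^1\bigl(i_\nu^!(\cE(\lambda)\star\cIcos(\mu))\bigr)$. Your justification (``the map $\cT(\mu)\to\Sat(Q)$ is a surjection in $\Perv_\Go(\Gr)$, and convolution with the $\Go$-equivariant perverse sheaf side together with the $!$-purity of the outer terms forces $H^1=0$'') does not work: convolution with $\cE(\lambda)$ is not $t$-exact for the perverse $t$-structure, so surjectivity in $\Perv_\Go(\Gr)$ is not preserved, and knowing that $i_\nu^!B$ and $i_\nu^!C$ sit in degree~$0$ only pins $i_\nu^!A$ to degrees $[0,1]$ without controlling $H^1$. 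What actually closes the gap is a weight argument: $\cIcos(\mu)=\Sat(\cow(\mu))$ has weights${}\ge 0$ by~\cite[Lemma~3.5]{arc:f3}, and since $\cE(\lambda)$ is pure of weight~$0$, the convolution $\cE(\lambda)\star\cIcos(\mu)$ has weights${}\ge 0$; hence $(\cE(\lambda)\star\cIcos(\mu))[1]$ has weights${}\ge 1$, while $\uistd(\nu)\{n\}$ has weights${}\le 0$, and~\eqref{eqn:wt-hom-van} gives $\Hom(\uistd(\nu)\{n\},\cE(\lambda)\star\cIcos(\mu)[1])=0$. This is exactly the $H^1$-vanishing you need, and with it your induction goes through.

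For comparison, the paper does something quite different: it uses the equivalence (Lemma~\ref{lem:adv-std-filt}) between $!$-purity and ``adverse with costandard filtration,'' proves adverseness by lifting $\cF$ to a parity sheaf on the affine flag variety and invoking the adverse-exactness of convolution there (Proposition~\ref{prop:adv-conv}), and then uses precisely the weight argument above to get the costandard filtration. Your approach, once repaired, avoids the passage through $\Fl$ and the adverse $t$-structure, trading that machinery for inductive bookkeeping; but note that the weight input from~\cite[Lemma~3.5]{arc:f3} is indispensable in both arguments, and you should make it explicit rather than gesture at surjectivity being preserved under convolution.
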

\begin{proof}
Every indecomposable parity sheaf on $\Gr$ occurs as a direct summand of the direct image along $\varpi: \Fl \to \Gr$ of some parity sheaf on $\Fl$.  Thus, without loss of generality, we may assume that $\cF = \varpi_*\tilde \cF$ for some $\tilde \cF \in \Parity_{(I)}(\Fl)$.  By Lemma~\ref{lem:conv-fl-sph}, we have $\varpi_*\tilde\cF \star \Sat(V) \cong \tilde\cF \star \Sat(V)$.  Via Lemma~\ref{lem:adv-std-filt}, it is enough to show that $\cF \star \Sat(V)$ is an adverse sheaf with a costandard filtration.  

Let us first show that $\varpi^*(\cF \star \Sat(V)) \cong \tilde\cF \star \varpi^*\Sat(V)$ is adverse.  (Here, we have used Lemma~\ref{lem:conv-flgr}.) By Lemmas~\ref{lem:adv-std-filt} and~\ref{lem:adv-smooth-filt}, $\varpi^*\Sat(V)$ is an adverse sheaf with a costandard filtration.  On the other hand, the parity sheaf $\tilde \cF$ is a tilting object in $\Adv_{(I)}(\Fl)$; in particular, it has a standard filtration.  Proposition~\ref{prop:adv-conv} then implies that $\tilde \cF \star \varpi^*\Sat(V)$ is adverse.

Since $\varpi^*$ is adverse-exact and kills no nonzero adverse sheaf (see Lemma~\ref{lem:adv-smooth}), it follows that $\cF \star \Sat(V)$ is adverse.  To show that it has a costandard filtration, we must check that 
\[
\Ext^1(\uistd(\mu)\{n\}, \cF \star \Sat(V)) = 0 
\]
for all $\mu \in \bX$ and all $n \in \Z$.  Since $\Sat(V)$ has weights${}\ge 0$ (see~\cite[Lemma~3.5]{arc:f3}) and $\cF$ is parity, the object $(\cF \star \Sat(V))[1]$ has weights${}\ge 1$.  On the other hand, $\uistd(\mu)\{n\}$ has weights${}\le 0$, so the $\Ext^1$-group above vanishes by~\eqref{eqn:wt-hom-van}.
\end{proof}

\begin{lem}\label{lem:Qnaive-parity}
Let $\lambda, \mu \in \bX$ be such that $\lambda+\mu \in \bXp$. Let $\cF \in \Parity_{(I)}(\Gr)$, and let $V \in \Rep(\Gv)$ have a good filtration.  Then
\[
\gHom^i(\ocW_{-\mu}, \cW_\lambda \star \cF \star \Sat(V)) = 0 \qquad\text{for all $i \ne 0$.}
\]
\end{lem}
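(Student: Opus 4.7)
The plan is to reduce to a purity computation by exploiting the invertibility of Wakimoto sheaves, and then to descend from the equivariant setting via Proposition~\ref{prop:eqvt-hom-dg}.

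Set $\nu = \lambda + \mu$, which lies in $\bXp$ by hypothesis. Since $\cW$ is monoidal, $\cW_{-\mu} \cong \cW_\lambda \star \cW_{-\nu}$, hence $\ocW_{-\mu} \cong \cW_\lambda \star \ocW_{-\nu}$ in $\Dmix_I(\Gr)$. Choose a lift $\tilde\cF \in \Parity_I(\Gr)$ of $\cF$, so that $\cW_\lambda \star \tilde\cF \star \Sat(V)$ is an equivariant lift of $\cW_\lambda \star \cF \star \Sat(V)$. The invertibility of $\cW_{-\lambda}$, applied as in Lemma~\ref{lem:conv-waki-twist}, yields an isomorphism of graded $\sH^\bullet_I(\pt)$-modules
\[
\gHom^i_{\Dmix_I(\Gr)}\bigl(\ocW_{-\mu},\, \cW_\lambda \star \tilde\cF \star \Sat(V)\bigr) \cong \gHom^i_{\Dmix_I(\Gr)}\bigl(\ocW_{-\nu},\, \tilde\cF \star \Sat(V)\bigr).
\]

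The right-hand side is amenable to direct computation. Since $\nu \in \bXp$, \eqref{eqn:waki-support} identifies $\ocW_{-\nu}$ with $\uistd(-\nu)\{-\delta_{-\nu}\}$; adjunction for $i_{-\nu!}$ rewrites the Hom-group in terms of the equivariant costalk $i_{-\nu}^!(\tilde\cF \star \Sat(V))$ on the Schubert cell $\schub{-\nu}$. By Lemma~\ref{lem:par-cow-adverse}, $\cF \star \Sat(V) = \For(\tilde\cF \star \Sat(V))$ is $!$-pure of weight~$0$. Lifting this purity equivariantly---equivariant parity is detected by its forgetful image---the equivariant costalk becomes a direct sum of equivariant Tate twists of the constant sheaf on the affine cell $\schub{-\nu}$. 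A standard Hom-computation then shows that this equivariant $\gHom^i$ is a free $\sH^\bullet_I(\pt)$-module, concentrated in cohomological degree zero.

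To conclude, I invoke Proposition~\ref{prop:eqvt-hom-dg}, which provides a spectral sequence
\[
\Tor^{\sH^\bullet_I(\pt)}_{-p}\!\bigl(\gHom^q_{\Dmix_I(\Gr)}(\ocW_{-\mu},\, \cW_\lambda \star \tilde\cF \star \Sat(V)),\, \bk\bigr) \;\Longrightarrow\; \gHom^{p+q}_{\DmixI(\Gr)}\bigl(\ocW_{-\mu},\, \cW_\lambda \star \cF \star \Sat(V)\bigr).
\]
Freeness and degree-zero concentration of the equivariant $\gHom$ force every contribution outside bidegree $(0,0)$ to vanish, giving the required vanishing in nonzero cohomological degrees. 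The main technical point is the equivariant lift of purity and the accompanying Hom-computation on $\schub{-\nu}$; both are routine, appealing to the general principle that a complex in $\Dmix_I(\Gr)$ is parity---hence pure of the given weight---exactly when its image in $\DmixI(\Gr)$ has the same property.
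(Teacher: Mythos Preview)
Your argument is correct and closely parallels the paper's, though organized differently. The paper invokes Proposition~\ref{prop:conv-waki-hom} directly to obtain the nonequivariant isomorphism $\gHom^i(\ocW_{-\mu}, \cW_\lambda \star \cF \star \Sat(V)) \cong \gHom^i(\ocW_{-\lambda-\mu}, \cF \star \Sat(V))$, and then appeals to the adverse t-structure: $\ocW_{-\lambda-\mu}$ is a standard object, $\cF \star \Sat(V)$ has a costandard filtration by Lemma~\ref{lem:adv-std-filt}, and the required vanishing is the usual quasihereditary $\Ext$-vanishing combined with the derived equivalence in Proposition~\ref{prop:adverse-t-struc}. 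You instead unpack Proposition~\ref{prop:conv-waki-hom}: you carry out the Wakimoto transport equivariantly via Lemma~\ref{lem:conv-waki-twist}, compute the equivariant $\gHom$ by adjunction to a single cell, and then descend by the spectral sequence of Proposition~\ref{prop:eqvt-hom-dg}. This trades the abstract t-structure argument for a direct stratum computation; the underlying content is the same.

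One remark on the step you flag as routine: the lift of $!$-purity from $\DmixI$ to $\Dmix_I$ is correct, but the justification ought to be the minimal-complex argument on the stratum $\schub(-\nu)$ (over the graded local ring $\sH^\bullet_I(\pt)$ the forgetful functor annihilates all differentials of a minimal complex, so purity of the image forces concentration in degree~$0$), rather than a general slogan about parity. The paper's own appeal to Proposition~\ref{prop:conv-waki-hom}, via the remark following its statement, rests on the same fact implicitly.
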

\begin{proof}
Lemma~\ref{lem:par-cow-adverse} tells us $\cF \star \Sat(V)$ is $!$-pure of weight $0$, so we can invoke Proposition~\ref{prop:conv-waki-hom} to obtain an isomorphism
\[
\gHom^i(\ocW_{-\mu}, \cW_\lambda \star \cF \star \Sat(V)) \cong
\gHom^i(\ocW_{-\lambda-\mu}, \cF \star \Sat(V)).
\]
By Lemma~\ref{lem:par-cow-adverse} and Lemma~\ref{lem:adv-std-filt}, $\cF \star \Sat(V)$ is an adverse sheaf with a costandard filtration.  Since $\ocW_{-\lambda-\sigma} \cong \uistd(-\lambda-\sigma)\{-\delta_{-\lambda-\sigma}\}$ is a standard adverse sheaf, we have $\Ext^i(\ocW_{-\lambda-\sigma}, \cF \star \Sat(V)\{n\}) = 0$ for all $n \in \Z$ and all $i \ne 0$. By the equivalence at the end of Proposition~\ref{prop:adverse-t-struc}, these $\Ext$-groups can be identified with $\Hom$-groups in $\DmixI(\Gr)$, and the lemma follows.
\end{proof}

The next statement involves the functor introduced in Proposition~\ref{prop:conv-waki-parity}.
  
\begin{lem}\label{lem:Pnaive-parity}
Let $\lambda \in \bX$, and let $\cF \in \Parity_{(I)}(\Gr)$.  For all $i \ne 0$, we have $P_\naive(\cW_\lambda \fakestar \cF[i]) = 0$.  
\end{lem}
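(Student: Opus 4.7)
The plan is to show that the $\bGamma[\tcN]$-module $Q_\naive((\cW_\lambda \fakestar \cF)[i])$ is thin, since the functor $\scF$ vanishes on thin modules. Its $\mu$-graded component, for $\mu \in \bXp$, is
\[
\gHom(\ocW_{-\mu}, (\cW_\lambda \fakestar \cF)[i] \star \cR) \;\cong\; \gHom^i(\ocW_{-\mu}, \cW_\lambda \fakestar \cF \star \cR),
\]
so it suffices to show this vanishes whenever $\mu \in \bXp$ satisfies $\lambda + \mu \in \bXp$; this is precisely the thin condition, with the weight ``$\lambda$'' in the definition of thin played by $-\lambda$.

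The crucial input is Lemma~\ref{lem:Qnaive-parity}, which gives exactly such a vanishing with $\cR$ replaced by $\Sat(V)$ for any $V \in \Rep(\Gv)$ with a good filtration. To pass from $\Sat(V)$ to $\cR$, I would present $\bk[\Gv]$ as a filtered colimit of its finite-dimensional $\Gv$-submodules $V_\alpha$; each $V_\alpha$ sits inside the injective $\Gv$-module $\bk[\Gv]$ and hence admits a good filtration. Under $\Sat$ this yields $\cR \cong \varinjlim_\alpha \Sat(V_\alpha)$, and correspondingly
\[
\cW_\lambda \fakestar \cF \star \cR \;\cong\; \varinjlim_\alpha\, \cW_\lambda \fakestar \cF \star \Sat(V_\alpha).
\]

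The final step is to commute $\gHom^i(\ocW_{-\mu},-)$ past this filtered colimit. Since $\ocW_{-\mu}$ has support of finite type and $\cW_\lambda \fakestar \cF$ likewise has support of finite type, the directed system $\{\cW_\lambda \fakestar \cF \star \Sat(V_\alpha)\}$ is eventually constant when restricted to any fixed finite-type neighborhood of $\supp(\ocW_{-\mu})$. This yields the identification
\[
\gHom^i(\ocW_{-\mu}, \cW_\lambda \fakestar \cF \star \cR) \;\cong\; \varinjlim_\alpha \gHom^i(\ocW_{-\mu}, \cW_\lambda \fakestar \cF \star \Sat(V_\alpha)),
\]
and by Lemma~\ref{lem:Qnaive-parity} each term on the right vanishes for $i \ne 0$.

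I expect the main obstacle to be precisely this colimit-commutation step. Since $\DmixI(\Gr) = \Kb\Parity_{(I)}(\Gr)$ does not literally admit arbitrary filtered colimits, ``ind-perverse sheaf'' and the associated direct limits must be handled with care; in practice one should restrict, once and for all, to $\DmixI$ of a sufficiently large finite-type closed subvariety containing the supports of both $\ocW_{-\mu}$ and $\cW_\lambda \fakestar \cF$, where the directed system genuinely stabilizes and all compactness considerations become formal.
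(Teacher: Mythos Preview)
Your proposal is correct and follows essentially the same approach as the paper: show $Q_\naive$ is thin by applying Lemma~\ref{lem:Qnaive-parity} after writing $\bk[\Gv]$ as a filtered colimit of finite-dimensional subrepresentations with good filtrations. The paper phrases the thinness slightly differently---it first fixes a dominant $\nu$ with $\lambda+\nu$ dominant and works on $\nu+\bXp$, whereas you take the ``thin'' weight to be $-\lambda$ directly---but these are equivalent, and your added discussion of the colimit-commutation step (which the paper states in a single clause) is a reasonable elaboration of a point the paper leaves implicit.
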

\begin{proof}
In view of Proposition~\ref{prop:conv-waki-parity}, we may as well instead take $\cF \in \Parity_I(\Gr)$, and work with $\For(\cW_\lambda \star \cF)$. Choose a weight $\nu \in \bXp$ such that $\lambda + \nu \in \bXp$.  Since $\bk[\Gv]$ is an inductive limit of finite-dimensional $\Gv$-representations with good filtrations, Lemma~\ref{lem:Qnaive-parity} implies that $\gHom(\ocW_{-\sigma}, \cW_\lambda \star \cF[i] \star \cR) = 0$ for all $i \ne 0$ and all $\sigma \in \nu+\bXp$.  This means that $Q_\naive(\cW_\lambda \star \cF[i])$ is thin, so $P_\naive(\cW_\lambda \star \cF[i]) = 0$.
\end{proof}

\begin{lem}
For any $\cF \in \DmixI(\Gr)$ and any $i \in \Z$, there is a natural isomorphism $H^i(P(\cF)) \cong P_\naive(\cF[i])$.
\end{lem}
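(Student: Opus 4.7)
The plan is threefold: first show that $P_\naive$ is a cohomological functor on $\DmixI(\Gr)$; second, observe that it vanishes on nontrivial shifts of parity sheaves; third, deduce the general statement by a stupid-filtration spectral sequence that collapses thanks to this vanishing. For the first step, $P_\naive = \scF \circ Q_\naive$ factors through convolution with $\cR$ (a triangulated operation), a direct sum over $\lambda \in \bXp$ of graded functors $\gHom(\ocW_{-\lambda},-)$ (each a graded cohomological functor), with the resulting $\bGamma[\tcN]$-module structure, followed by the exact Serre-quotient construction $\scF$; hence distinguished triangles are sent to long exact sequences in $\Coh(\tcN)$.

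Next, specializing $\lambda = 0$ in Lemma~\ref{lem:Pnaive-parity}, and using that $\cW_0$ is the convolution unit so that $\cW_0 \fakestar \cE = \cE$, yields $P_\naive(\cE[i]) = 0$ for every $\cE \in \Parity_{(I)}(\Gr)$ and every $i \ne 0$; for $i = 0$ one has $P_\naive(\cE) = P^0(\cE) = H^0(P(\cE))$ by construction. Now represent an arbitrary $\cF$ by a bounded parity complex $\cF^\bullet$ and apply $P_\naive$ to the stupid filtration $\sigma^{\ge n}\cF^\bullet$ to obtain a strongly convergent spectral sequence
\[
E_1^{p,q} = P_\naive(\cF^p[q]) \Longrightarrow P_\naive(\cF^\bullet[p+q]).
\]
The vanishing just established collapses $E_1$ onto the single row $q = 0$, with $E_1^{p,0} = P^0(\cF^p)$ and with $d_1$-differential equal to $P^0$ of the differential of $\cF^\bullet$; hence $E_2^{p,0} = H^p(\Kb(P^0)(\cF^\bullet)) = H^p(P(\cF))$, the spectral sequence degenerates at $E_2$, and only the entry $(n,0)$ contributes to the abutment in total degree $n$. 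This yields the desired isomorphism $P_\naive(\cF[n]) \cong H^n(P(\cF))$. Naturality, and independence of the chosen parity representative, follow from the fact that both $P_\naive(\cF[n])$ and $H^n(P(\cF))$ are intrinsically defined as functors of $\cF \in \DmixI(\Gr)$.

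The main obstacle is really just the verification that $P_\naive$ is genuinely cohomological (the delicate ingredient being that the $\bGamma[\tcN]$-module structure and the sheafification $\scF$ interact well with long exact sequences); once that is in place, the spectral-sequence collapse and the identification of the $E_2$ page with $H^\bullet(P(\cF))$ are essentially formal consequences of Lemma~\ref{lem:Pnaive-parity} and the definition of $P$.
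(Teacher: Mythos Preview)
Your proof is correct and takes a genuinely different route from the paper's, though both rest on the same two ingredients: the vanishing $P_\naive(\cE[i])=0$ for parity $\cE$ and $i\ne 0$ (Lemma~\ref{lem:Pnaive-parity} at $\lambda=0$), and the fact that $P_\naive$ is a cohomological functor. The paper proceeds by hand: for an object of weights${}\ge 0$ it uses the tautological map $\cE^0 \to \cF$ from the top term of a representing complex to build an explicit natural transformation $H^0(P(\cF))\to P_\naive(\cF)$, does the dual construction for weights${}\le 0$, checks compatibility across a weight decomposition, and then proves these transformations are isomorphisms by induction on the weight amplitude, using long exact sequences and the vanishing lemma at each stage. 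You replace all of that case analysis with the single-row collapse of the stupid-filtration spectral sequence.

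Your argument is shorter and more conceptual; in fact you are more explicit than the paper on one point it leaves implicit. The paper's Steps~4 and~5 write down long exact sequences for $P_\naive$ without comment, whereas you actually explain why $Q_\naive$---hence $P_\naive = \scF\circ Q_\naive$---sends triangles to long exact sequences (convolution with $\cR$ is a filtered colimit of triangulated functors, each $\gHom(\ocW_{-\lambda},-)$ is cohomological, the module structure is natural in $\cF$, and $\scF$ is exact). The paper's approach, in return, makes the isomorphism more tangible: it is visibly induced by $P^0(\cE^0)\to P_\naive(\cF)$, which could be convenient for later computations. Your naturality claim is correct but slightly underspecified; the clean justification is that the stupid-filtration spectral sequence is natural in chain maps, and since it is concentrated on a single row from $E_1$ on, the edge isomorphism $E_2^{n,0}\simto P_\naive(\cF[n])$ is canonical and depends only on the homotopy class of the representing complex.
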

\begin{proof}
It certainly suffices to prove this for $i = 0$.  Let $\DmixI(\Gr)_{\ge 0}$ and $\DmixI(\Gr)_{\le 0}$ denote the full subcategories of objects with weights${}\ge 0$ and${}\le 0$, respectively.  We proceed in several steps.

{\it Step 1. For $\cF \in \DmixI(\Gr)_{\ge 0}$, there is a natural transformation $H^0(P(\cF)) \to P_\naive(\cF)$.}  Since $\cF$ has weights${}\ge 0$, it can be written as a chain complex $\cE^\bullet$ in $\Kb\Parity_{(I)}(\Gr)$ with $\cE^i = 0$ for $i > 0$.  Consider the obvious map $\cE^0 \to \cF$ in $\DmixI(\Gr)$.  The composition $\cE^{-1} \to \cE^0 \to \cF$ vanishes.  Applying $P_\naive$, we obtain a sequence of maps
\[
P^0(\cE^{-1}) \to P^0(\cE^0) \to P_\naive(\cF)
\]
whose composition vanishes.  Therefore, the map $P^0(\cE^0) \to P_\naive(\cF)$ determines a map $P(\cF) \to P_\naive(\cF)$ in $\Db\Coh(\tcN)$.  Taking cohomology, we obtained the desired natural transformation $H^0(P(\cF)) \to P_\naive(\cF)$.

{\it Step 2. For $\cF \in \DmixI(\Gr)_{\le 0}$, there is a natural transformation $P_\naive(\cF) \to H^0(P(\cF))$.} Similar to Step~1.

{\it Step 3. Let $\cF_1 \in \DmixI(\Gr)_{\ge 0}$ and $\cF_2 \in \DmixI(\Gr)_{\le 0}$.  For any morphism $f: \cF_1 \to \cF_2$, the following diagram commutes:}
\begin{equation}\label{eqn:Pnaive-transf}
\vcenter{\xymatrix@R=10pt{
H^0(P(\cF_1)) \ar[rrr]^{H^0(P(f))} \ar[dr] &&& H^0(P(\cF_2))  \\
& P_\naive(\cF_1) \ar[r]^{P_\naive(f)} & P_\naive(\cF_2) \ar[ur]}}
\end{equation}
Write the objects as complexes $\cF_1 = \cE_1^\bullet$ and $\cF_2 = \cE_2^\bullet$ with $\cE_1^i = 0$ for $i > 0$ and $\cE_2^i = 0$ for $i < 0$.  The morphism $f: \cF_1 \to \cF_2$ corresponds to some map of chain complexes $f^\bullet: \cE_1^\bullet \to \cE_2^\bullet$ where, of course, only $f^0$ can be nonzero.  Because the diagram below commutes, we see from the construction of the natural transformations in Steps~1 and~2 that~\eqref{eqn:Pnaive-transf} commutes.
\[
\xymatrix@R=10pt{
\cE_1^{-1} \ar[r] &
  \cE_1^0 \ar[dr] \ar[rrr]^{f^0} &&& \cE_2^0 \ar[r] & \cE_2^1 \\
&& \cF_1 \ar[r]^{f} & \cF_2 \ar[ur]}
\]

{\it Step 4. The natural transformations of Steps~1 and~2 are isomorphisms.}  We will prove this for Step~1; the other case is similar.  Suppose $\cF$ has weights${}\ge 0$ and${}\le n$.  We proceed by induction on $n$.  When $n = 0$, $\cF$ is pure of weight of $0$, and it is clear from the definitions that $P(\cF) \to P_\naive(\cF)$ is an isomorphism.  Otherwise, write $\cF$ as a complex $\cE^\bullet$ with $\cE^i = 0$ for $i > 0$, and let $\cF'$ be the cone of $\cE^0 \to \cF$.  Then $\cF'$ has weights${}\ge 1$ and${}\le n$.  The distinguished triangle $\cE^0 \to \cF \to \cF' \to$ gives rise to a commutative diagram
\[
\xymatrix{
H^0(P(\cF'[-1])) \ar[r] \ar[d]_{\wr} &
  H^0(P(\cE^0)) \ar[r] \ar[d]_{\wr} &
  H^0(P(\cF)) \ar[r] \ar[d] &
  H^0(P(\cF')) \ar[d] \\
P_\naive(\cF'[-1]) \ar[r] &
  P_\naive(\cE^0) \ar[r] &
  P_\naive(\cF) \ar[r] &
  P_\naive(\cF')}
\]
The first two vertical arrows are isomorphisms by induction.  In the last column, we clearly have $H^0(P(\cF')) = 0$, while Lemma~\ref{lem:Pnaive-parity} implies that $P_\naive(\cF') = 0$.  We conclude that the third vertical arrow is an isomorphism, as desired.

{\it Step 5. The general case.}  Given $\cF \in \DmixI(\Gr)$, say $\cF = \cE^\bullet \in \Kb\Parity_{(I)}(\Gr)$, let $\cF_1$ be the complex obtained by omitting the $\cE^i$ with $i > 0$, and let $\cF_2[-1]$ be the complex obtained by omitting the $\cE^i$ with $i \le 0$.  Thus, there is a distinguished triangle $\cF_2[-1] \to \cF \to \cF_1 \to$.  Note that $\cF_1$ has weights${}\ge 0$ and $\cF_2$ has weights${}\le 0$.  Consider the following diagram, in which the rows are long exact sequences:
\[
\xymatrix{
H^{-1}(P(\cF_2)) \ar[r] &
  H^0(P(\cF)) \ar[r]  &
  H^0(P(\cF_1)) \ar[r] \ar[d]_{\wr} &
  H^0(P(\cF_2)) \\
P_\naive(\cF_2[-1]) \ar[r] &
  P_\naive(\cF) \ar[r] &
  P_\naive(\cF_1) \ar[r] \ar[r] &
  P_\naive(\cF_2) \ar[u]_{\wr} }
\]
We clearly have $H^{-1}(P(\cF_2)) = 0$, while Lemma~\ref{lem:Pnaive-parity} implies that $P_\naive(\cF_2[-1]) = 0$.  It follows that there is a unique isomorphism $H^0(P(\cF)) \simto P_\naive(\cF)$ that would make the diagram commute.  It is a routine exercise in homological algebra that this morphism is independent of the choice of $\cF_1$ and $\cF_2$ and natural in $\cF$.
\end{proof}

\begin{cor}\label{cor:Pnaive-coh}
For $\cF \in \DmixI(\Gr)$, we have that $P(\cF) \in \Coh(\tcN)$ if and only if $P_\naive(\cF[i]) = 0$ for all $i \ne 0$.  When these conditions hold, there is a natural isomorphism $P(\cF) \cong P_\naive(\cF)$.
\end{cor}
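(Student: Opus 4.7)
The plan is to derive both assertions as an immediate formal consequence of the lemma just established, which provides, for every $\cF \in \DmixI(\Gr)$ and every $i \in \Z$, a natural isomorphism
\[
H^i(P(\cF)) \cong P_\naive(\cF[i]),
\]
where $H^i$ denotes cohomology with respect to the standard t-structure on $\Db\Coh(\tcN)$ whose heart is $\Coh(\tcN)$. Since an object of $\Db\Coh(\tcN)$ lies in the heart $\Coh(\tcN)$ if and only if all its cohomology sheaves vanish outside degree zero, the equivalence in the corollary follows: $P(\cF) \in \Coh(\tcN)$ iff $H^i(P(\cF)) = 0$ for all $i \ne 0$ iff $P_\naive(\cF[i]) = 0$ for all $i \ne 0$.

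For the final claim, when these equivalent conditions hold, the canonical truncation map $P(\cF) \to H^0(P(\cF))$ is an isomorphism in $\Db\Coh(\tcN)$, and the $i = 0$ case of the preceding lemma gives a natural isomorphism $H^0(P(\cF)) \cong P_\naive(\cF)$. Composing yields the desired natural isomorphism $P(\cF) \cong P_\naive(\cF)$. There is no substantive obstacle here; all the real work has already been done in constructing the natural transformation between $H^\bullet(P({-}))$ and $P_\naive({-}[\bullet])$ and showing it is an isomorphism, so the corollary is purely a matter of unpacking that lemma.
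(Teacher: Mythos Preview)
Your proposal is correct and matches the paper's approach exactly: the paper states this as an immediate corollary of the preceding lemma (with no further proof given), and your argument is precisely the intended unpacking of that lemma.
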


\begin{prop}\label{prop:waki-line-bdl}
Let $\lambda \in \bX$.  For $\cF \in \Parity_{(I)}(\Gr)$, there is a natural isomorphism $P(\cW_\lambda \fakestar \cF) \cong \cO_\tcN(\lambda) \otimes P(\cF)$.
In particular, we have $P(\ocW_\lambda) \cong \cO_\tcN(\lambda)$.
\end{prop}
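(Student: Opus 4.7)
The plan is to reduce the statement to a computation on the $\bGamma[\tcN]$-module side, where $\cO_\tcN(\lambda) \otimes ({-})$ is realized as a shift of the $\bX$-grading.

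First, by Lemma~\ref{lem:Pnaive-parity} we have $P_\naive(\cW_\lambda \fakestar \cF[i]) = 0$ for all $i \ne 0$, so Corollary~\ref{cor:Pnaive-coh} gives $P(\cW_\lambda \fakestar \cF) \cong P_\naive(\cW_\lambda \fakestar \cF)$. On the other hand, since $\cF \in \Parity_{(I)}(\Gr)$ is already concentrated in a single degree of $\Kb\Parity_{(I)}(\Gr)$, $P(\cF) = P^0(\cF) = P_\naive(\cF)$. Thus it suffices to produce a natural isomorphism $P_\naive(\cW_\lambda \fakestar \cF) \cong \cO_\tcN(\lambda) \otimes P_\naive(\cF)$, compatibly with their $\bGamma[\tcN]$-module realizations via $\scF$.

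The key computation is the following. For any $\mu \in \bXp$ with $\mu + \lambda \in \bXp$, I would lift $\cF$ to some $\tilde\cF \in \Parity_I(\Gr)$ and use Lemma~\ref{lem:conv-flgr} (combined with Proposition~\ref{prop:conv-waki-parity}) to produce a canonical identification $(\cW_\lambda \fakestar \cF) \star \cR \cong \cW_\lambda \fakestar (\cF \star \cR)$. Since $\cW_\lambda \star \cW_{-\mu - \lambda} \cong \cW_{-\mu}$ as objects of $\Dmix_I(\Fl)$, we have $\cW_\lambda \fakestar \ocW_{-\mu-\lambda} \cong \ocW_{-\mu}$. Now $\ocW_{-\mu-\lambda}$ is $*$-pure, and $\cF \star \cR$ is $!$-pure of weight $0$ (by Lemma~\ref{lem:par-cow-adverse} applied to each finite-dimensional subrepresentation of $\bk[\Gv]$), so Proposition~\ref{prop:conv-waki-hom} applies and yields a natural isomorphism
\[
\omega_\lambda : \gHom(\ocW_{-\mu-\lambda}, \cF \star \cR) \simto \gHom(\ocW_{-\mu}, (\cW_\lambda \fakestar \cF) \star \cR).
\]
Summing over $\mu \in \bXp$ with $\mu + \lambda \in \bXp$, we obtain a natural isomorphism between the ``large dominant'' parts of $Q_\naive(\cF)$ (shifted by $\lambda$ in the $\bX$-grading) and $Q_\naive(\cW_\lambda \fakestar \cF)$. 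The remaining weights form a thin module on either side, so this isomorphism descends to an isomorphism $\scF(Q_\naive(\cW_\lambda \fakestar \cF)) \cong \scF(Q_\naive(\cF)(\lambda))$, where $({-})(\lambda)$ denotes the shift by $\lambda$ in the $\bX$-grading.

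To conclude, I would check that (i) the isomorphism $\omega_\lambda$ is compatible with the right $\bGamma[\tcN]$-module structures, so that the above is an isomorphism of $\bGamma[\tcN]$-modules modulo thin submodules, and (ii) for any $\bGamma[\tcN]$-module $M$, shifting $M$ by $\lambda \in \bX$ translates under $\scF$ to tensoring by $\cO_\tcN(\lambda)$ (this follows from the fact that $\bGamma[\tcN]$ shifted by $\lambda$ is $\Gamma(\tcN, \cO_\tcN(\lambda + {-}))$, whose image under $\scF$ is $\cO_\tcN(\lambda)$). The particular case $\cF = \sky$ follows immediately: by Theorem~\ref{thm:coh-tcn}, $Q_\naive(\sky) = \bigoplus_\mu \gHom(\ocW_{-\mu}, \cR) \cong \bGamma[\tcN]$, so $P(\sky) \cong \cO_\tcN$, and hence $P(\ocW_\lambda) = P(\cW_\lambda \fakestar \sky) \cong \cO_\tcN(\lambda) \otimes \cO_\tcN \cong \cO_\tcN(\lambda)$.

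The main obstacle is step (i): verifying that $\omega_\lambda$ intertwines the module actions on either side. The issue is that the action of $f \in \Hom(\ocW_{-\nu}, \cR\{n\}) \subset \bGamma[\tcN]$ on the two sides involves different orderings of Wakimoto-convolution, multiplication, and the relevant adjunction maps. The cleanest approach is to expand both actions as compositions and invoke Lemma~\ref{lem:conv-waki-assoc} (with suitable choices of $V$, $\sigma$, $\lambda$, $\mu$) to slide $\cW_\lambda$-convolution past the action maps. Naturality in $\cF$ is then automatic from the naturality of $\omega_\lambda$ and the construction of $\scF$.
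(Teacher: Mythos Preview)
Your proposal is correct and follows essentially the same route as the paper: reduce to $P_\naive$ via Lemma~\ref{lem:Pnaive-parity} and Corollary~\ref{cor:Pnaive-coh}, lift to the $I$-equivariant setting, use the isomorphism $\omega_\lambda$ of Proposition~\ref{prop:conv-waki-hom} to identify the ``sufficiently dominant'' parts of $Q_\naive(\cF)\lla\lambda\rra$ and $Q_\naive(\cW_\lambda\star\cF)$, and then apply $\scF$ (noting that the discrepancy is thin and that an $\bX$-grading shift becomes a tensor with $\cO_\tcN(\lambda)$). Your write-up is in fact slightly more careful than the paper's in one respect: you explicitly flag that the $\omega_\lambda$ must intertwine the $\bGamma[\tcN]$-module structures and point to Lemma~\ref{lem:conv-waki-assoc} for this, whereas the paper's proof leaves that verification implicit.
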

\begin{proof}
By Lemma~\ref{lem:Pnaive-parity} and Corollary~\ref{cor:Pnaive-coh}, we have $P(\cW_\lambda \fakestar \cF) \cong P_\naive(\cW_\lambda \fakestar \cF)$ and $P(\cF) \cong P_\naive(\cF)$.  We will prove that there is a natural isomorphism $P_\naive(\cW_\lambda \fakestar \cF) \cong \cO_\tcN(\lambda) \otimes P_\naive(\cF)$. As in the proof of Lemma~\ref{lem:Pnaive-parity}, we will replace $\cF$ by an object of $\Parity_I(\Gr)$, and work with $\cW_\lambda \star \cF$ throughout.

For a module $M \in \bGamma[\tcN]\lmod$ and a weight $\chi \in \bX$, let $M\lla\chi\rra$ denote the module obtained by shifting the $\bX$-grading by $\chi$.  That is,
\[
(M\lla \chi\rra)_\sigma = M_{\chi+\sigma}.
\]
From the definitions, we have a natural isomorphism
\begin{equation}\label{eqn:bgamma-shift}
\scF(M\lla \chi\rra) \cong \cO_\tcN(\chi) \otimes \scF(M).
\end{equation}
Now, choose a dominant weight $\nu$ such that $\lambda + \nu$ is dominant. There is an obvious surjective map of $\bGamma[\tcN]$-modules
\[
Q_\naive(\cW_\lambda \star \cF[i]) \twoheadrightarrow \bigoplus_{\sigma \in \nu+\bXp} \gHom(\ocW_{-\sigma}, \cW_\lambda[i] \star \cF \star \cR).
\]
On the other hand, Proposition~\ref{prop:conv-waki-hom} gives us an isomorphism $\gHom(\ocW_{-\lambda -\sigma}, \cF \star \cR[i]) \to \gHom(\ocW_{-\sigma}, \cW_\lambda \star \cF[i] \star \cR)$ for $\sigma \in \nu+\bXp$.  Using this, we form a surjective map
\begin{multline*}
Q_\naive(\cF[i])\lla \lambda\rra =
\bigoplus_{\sigma \in -\lambda+\bXp} \gHom(\ocW_{-\sigma-\lambda}, \cF[i] \star \cR) \\
\twoheadrightarrow \bigoplus_{\sigma \in \nu+\bXp} \gHom(\ocW_{-\sigma}, \cW_\lambda[i] \star \cF \star \cR).
\end{multline*}
Both of these maps have thin kernels, and hence become isomorphisms after applying $\scF$.  Using~\eqref{eqn:bgamma-shift}, we conclude that $P_\naive(\cW_{\lambda} \star \cF[i]) \cong \cO_\tcN(\lambda) \otimes P_\naive(\cF)$, as desired.
\end{proof}

\begin{lem}\label{lem:fully-faith-special}
For any $\lambda, \mu\in\bXp$ and $i \in \Z$, the functor $P$ induces an isomorphism
\begin{equation}\label{eqn:fully-faith-special}
\gHom^i (\sky, \ocW_{\lambda}\star\cIcos(\mu)) \simto \uHom^i(P(\sky), P(\ocW_{\lambda}\star\cIcos(\mu))).
\end{equation}
Moreover, both sides vanish for $i \ne 0$.
\end{lem}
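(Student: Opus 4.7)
The plan is to compute both sides of~\eqref{eqn:fully-faith-special} explicitly and then verify that $P$ realizes the natural identification between them. First, by Propositions~\ref{prop:waki-line-bdl} and~\ref{prop:monoidal} together with $\cIcos(\mu) = \Sat(\cow(\mu))$, we have $P(\sky) \cong \cO_\tcN$ and $P(\ocW_\lambda \star \cIcos(\mu)) \cong \cO_\tcN(\lambda) \otimes \cow(\mu)$, so the right-hand side of~\eqref{eqn:fully-faith-special} becomes $\uExt^i(\cO_\tcN, \cO_\tcN(\lambda) \otimes \cow(\mu))$. This vanishes for $i < 0$ trivially (both objects are in the heart) and for $i > 0$ by Kempf-type vanishing \cite[Theorem~2]{klt} as invoked in~\eqref{eqn:dom-line-sec}, combined with the good-filtration property of $\cow(\mu)$ and of $\Gamma(\tcN, \cO_\tcN(\lambda))$ as $\Gv$-representations.

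Vanishing of the left-hand side for $i \neq 0$ is a direct instance of Lemma~\ref{lem:Qnaive-parity}: taking its $\alpha = \lambda \in \bXp$, $\beta = 0$, $\cF = \sky \in \Parity_{(I)}(\Gr)$, and $V = \cow(\mu)$ (which has a good filtration of length one), the hypothesis $\alpha + \beta \in \bXp$ is satisfied and the conclusion reads $\gHom^i(\sky, \ocW_\lambda \star \cIcos(\mu)) = 0$ for all $i \neq 0$.

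For $i = 0$, the plan is to identify both sides with the $\Gv$-invariant part of the degree-zero component of the $\bGamma[\tcN]$-module $Q_\naive(\ocW_\lambda \star \cIcos(\mu))$. Theorem~\ref{thm:coh-tcn} gives $Q_\naive(\sky) \cong \bGamma[\tcN]$ (since $\sky \star \cR \cong \cR$). Using the swap $\cIcos(\mu) \star \cR \cong \cR \otimes \cow(\mu)$ from~\eqref{eqn:sat-gvgv-swap} together with Proposition~\ref{prop:conv-waki-hom} applied to $\ocW_{-\mu'-\lambda}$ and $\cR$ (the required freeness over $\sH^\bullet_I(\pt)$ follows from $\ast$-purity of standard sheaves and $!$-purity of $\cR$, the latter by Theorem~\ref{thm:mixed-mv}), I would compute
\[
\gHom\bigl(\ocW_{-\mu'}, \ocW_\lambda \star \cIcos(\mu) \star \cR\bigr) \cong \gHom\bigl(\ocW_{-\mu'-\lambda}, \cR\bigr) \otimes \cow(\mu)
\]
for $\mu' \in \bXp$, exhibiting $Q_\naive(\ocW_\lambda \star \cIcos(\mu)) \cong \bGamma[\tcN]\lla\lambda\rra \otimes \cow(\mu)$ as a $\bGamma[\tcN]$-module; compatibility of the module structure is verified by a diagram chase in the style of Figure~\ref{fig:coh-tcn}. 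Passage through $\scF$ then converts this into $\cO_\tcN(\lambda) \otimes \cow(\mu)$, reproducing the RHS identification.

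The main obstacle will be to show that the map induced by $P$ on $\Hom$-groups agrees with the canonical identification $\Hom_{\DmixI(\Gr)}(\sky, \ocW_\lambda \star \cIcos(\mu)) \simto \bigl(Q_\naive(\ocW_\lambda \star \cIcos(\mu))_0\bigr)^\Gv$ given by $f \mapsto (\id \star \Sat(\eta)) \circ f$, where $\eta: \bk \to \bk[\Gv]$ is the unit. That this map is an isomorphism onto the $\Gv$-invariants is the geometric incarnation of Lemma~\ref{lem:gv-invt} (cf.\ Proposition~\ref{prop:gv-invt-ring}): the right $\Gv$-action on $\cR$ turns $\cG \star \cR$ into a ``co-free'' $\Gv$-object whose invariants recover $\cG \star \sky = \cG$. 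Once this identification is established, the equivalence between $\bGamma[\tcN]\lmod$ modulo thin modules and $\CohGm(\tcN)$ induced by $\scF$ matches it with $\Hom_{\CohGm(\tcN)}(\cO_\tcN, \cO_\tcN(\lambda) \otimes \cow(\mu))$, yielding the desired isomorphism.
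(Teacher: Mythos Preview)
Your treatment of the vanishing for $i\neq 0$ is correct and essentially matches the paper: the left-hand side is handled by Lemma~\ref{lem:Qnaive-parity} exactly as you indicate, and for the right-hand side the paper pushes forward along $\pi$ and invokes the standard/costandard $\Ext$-vanishing in $\PCohN$, which is equivalent to your appeal to~\cite{klt}.

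For $i=0$, however, there is a genuine gap.  You assert that the unit map
\[
\Hom(\sky,\cG)\longrightarrow \Hom(\sky,\cG\star\cR)^{\Gv},\qquad \cG=\ocW_\lambda\star\cIcos(\mu),
\]
is an isomorphism, calling this ``the geometric incarnation of Lemma~\ref{lem:gv-invt}.''  But Lemma~\ref{lem:gv-invt} (and Proposition~\ref{prop:gv-invt-ring}) concerns $\Gv$-modules: it yields $V\cong(V\otimes\bk[\Gv])^{\Gv\times 1}$, and under Satake this translates into a statement about \emph{spherical} objects.  Your $\cG$ is not spherical---the Wakimoto factor $\ocW_\lambda$ lives only in $\DmixI(\Gr)$---so there is no evident way to put $\Hom(\sky,\cG\star\cR)$ into the form $V\otimes\bk[\Gv]$ to which Lemma~\ref{lem:gv-invt} applies.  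The unit--counit argument gives split injectivity, but surjectivity onto the invariants is precisely what remains to be proved; asserting that $(\cG\star\cR)^{\Gv}\cong\cG$ would require knowing that $\cG\star({-})$ commutes with this limit, which is not obvious.

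The paper confronts the same obstacle and resolves it differently: it establishes injectivity of the map induced by $P_\naive$ via the unit--counit trick (exactly your $f\mapsto f\star\eta$), and then proves surjectivity by an explicit dimension count, chaining together Proposition~\ref{prop:conv-waki-hom}, Lemma~\ref{lem:costalk-compare}, Corollary~\ref{cor:psph-std}, Lemma~\ref{lem:pcstd-compare}, and~\eqref{eqn:springer-dualizing} to identify $\gHom(\sky,\ocW_\lambda\star\cIcos(\mu))$ with $\uHom(\cO_\tcN,\cO_\tcN(\lambda)\otimes\cow(\mu))$ as graded vector spaces.  Your computation $Q_\naive(\cG)\cong\bGamma[\tcN]\lla\lambda\rra\otimes\cow(\mu)$ is correct and useful, but it does not by itself give the missing surjectivity; to complete your argument you would still need a dimension comparison of the kind the paper carries out.
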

\begin{proof}
Suppose first that $i \ne 0$.  Lemma~\ref{lem:Qnaive-parity} tells us that the left-hand side vanishes. For the right-hand side, by Propositions~\ref{prop:monoidal} and~\ref{prop:waki-line-bdl}, we have $P(\sky) = \cO_\tcN$ and $P(\ocW_{\lambda}\star\cIcos(\mu)) = \cO_\tcN(\lambda)\otimes \cow(\mu)$.  Using~\eqref{eqn:rational-resoln} and adjunction, we have
\[
\uHom^i(\cO_\tcN, \cO_\tcN \otimes \cow(\mu))
\cong \uHom^i(\cON, \pi_*(\cO_\tcN \otimes \cow(\mu))) \cong \uHom^i(\cON, \cON \otimes \cow(\mu)).
\]
This vanishes when $i \ne 0$ because $\cON$ is a standard object of $\PCohN$, while $\cON \otimes \cow(\mu)$ has a costandard filtration.  


For later reference, we record the details of the adjunction isomorphism used above: it is the composition of the following sequence of maps, where the last one is induced by the unit $\eta: \cON \to \pi_*\pi^*\cON$:
\begin{multline}\label{eqn:fully-faith-calc}
\uHom(\cO_\tcN, \cO_\tcN(\lambda) \otimes \cow(\mu)) \xrightarrow{\pi_*}
\Hom(\pi_*\cO_\tcN, \pi_*(\cO_\tcN(\lambda) \otimes \cow(\mu))) \\
\cong \Hom(\pi_*\pi^*\cO_\cN, \pi_*(\cO_\tcN(\lambda) \otimes \cow(\mu)))
\xrightarrow{\eta} \Hom(\cO_\cN, \pi_*(\cO_\tcN(\lambda) \otimes \cow(\mu))).
\end{multline}
But since $\eta: \cO_\cN \to \pi_*\pi^*\cO_N$ is itself an isomorphism (see~\eqref{eqn:rational-resoln}), the map induced by $\pi_*$ must also be an isomorphism.

We now study~\eqref{eqn:fully-faith-special} for $i = 0$.  Corollary~\ref{cor:Pnaive-coh} tells us that we may replace it by
\begin{equation}\label{eqn:fully-faith-naive}
\gHom(\sky, \ocW_{\lambda}\star\cIcos(\mu)) \to \uHom(P_\naive(\sky), P_\naive(\ocW_{\lambda}\star\cIcos(\mu))).
\end{equation}
We begin by showing that this map is injective.  Recall that for any $M \in \bGamma[\tcN]\lmod$, there is a natural map
\[
M \to \bigoplus_{\sigma \in \bXp} \Gamma(\tcN, \cO_\tcN(\sigma) \otimes \scF(M)).
\]
Write $M = \bigoplus_{\sigma \in \bX} M_\sigma$, and let $U: M \mapsto M_0$ be the functor that picks out the degree-$(\Z \times \{0\})$ subspace of $M$.  The map above gives rise to a natural map
\begin{equation}\label{eqn:Qnaive-0}
U(M) \to \Gamma(\tcN,\scF(M)).
\end{equation}
Of course, this is not an isomorphism in general, but it may be for specific classes of objects.  In particular, for $M = Q_\naive(\sky)$ or $Q_\naive(\ocW_\lambda \star \cIcos(\mu))$, it is easy to check that~\eqref{eqn:Qnaive-0} is an isomorphism.  Replacing $\Gamma(\tcN, \scF(Q_\naive({-})))$ by $\pi_*P_\naive({-})$, we construct the following commutative diagram:
{\small\[
\xymatrix@C=10pt{
\gHom(\sky, \ocW_\lambda \star \cIcos(\mu)) \ar[d] \\
\uHom(Q_\naive(\sky), Q_\naive(\ocW_\lambda \star \cIcos(\mu))) \ar[r]\ar[d]&
\uHom(U(Q_\naive(\sky)), U(Q_\naive(\ocW_\lambda \star \cIcos(\mu)))) \ar[d]_{\wr}^{\eqref{eqn:Qnaive-0}}\\
\uHom(P_\naive(\sky), P_\naive(\ocW_\lambda \star \cIcos(\mu))) \ar[r]^-{\pi_*}_-{\sim} &
\uHom(\pi_*P_\naive(\sky), \pi_*P_\naive(\ocW_\lambda \star \cIcos(\mu)))}
\]}%
As noted above, the rightmost vertical arrow is an isomorphism.  We saw in~\eqref{eqn:fully-faith-calc} that the bottommost horizontal arrow (which is induced by $\pi_*$) is an isomorphism.  So to prove that~\eqref{eqn:fully-faith-naive} is injective, it suffices to prove that
\begin{equation}\label{eqn:fully-faith-Unaive}
\gHom(\sky, \ocW_{\lambda}\star\cIcos(\mu)) \to \uHom(U(Q_\naive(\sky)), U(Q_\naive(\ocW_{\lambda}\star\cIcos(\mu))))
\end{equation}
is injective.  Let $f: \sky \to \ocW_\lambda \star \cIcos(\mu)\{n\}$ be a nonzero map.  Unwinding the definitions, one finds that
\[
U(Q_\naive(f)): \Hom(\sky, \cR) \to \Hom(\sky, \ocW_\lambda \star \cIcos(\mu)\{n\} \star \cR)
\]
is just given by $U(Q_\naive(f))(g) = f \star g$.  Let $\eta: \sky \to \cR$ be the unit morphism.  Then $f \star \eta$ is nonzero, because $(f \star \epsilon) \circ (f \star \eta) = f$, where $\epsilon: \cR \to \sky$ is the counit coming from the Hopf algebra structure on $\bk[\Gv]$.  Thus, $U(Q_\naive(f))$ is nonzero, and so~\eqref{eqn:fully-faith-Unaive} and~\eqref{eqn:fully-faith-naive} are both injective.

To finish the proof, we must show that~\eqref{eqn:fully-faith-naive} is actually an isomorphism.  It suffices to check that both sides have the same dimension in each degree of the grading.  This is achieved by the following calculation.
\begin{align*}
&\gHom(\sky, \ocW_\lambda \star \cIcos(\mu)) \\
& \cong \gHom(\uistd(-\lambda)\{-\delta_{-\lambda}\}, \cIcos(\mu)) 
  && \text{by~Prop.~\ref{prop:conv-waki-hom}} \\
& \cong \bk \otimes_{\sH^\bullet(\Gr_{-w_0\lambda})} \gHom(\cJstd(-w_0\lambda),\cIcos(\mu)) 
  && \text{by Lemma~\ref{lem:costalk-compare}} \\
& \cong
\bk \otimes_{\sH^\bullet(\Gr_{-w_0\lambda})} \uHom(\pcstd(-w_0\lambda)\la\delta_{w_0\lambda}\ra, \cO_\cN\otimes \cow(\mu)) 
  && \text{by Prop.~\ref{prop:sph-main} \&\ Cor.~\ref{cor:psph-std}}\\
& \cong
\uHom(\pcpstd(-w_0\lambda)\la -\delta_{w_0\lambda}\ra, \cO_\cN \otimes \cow(\mu)) 
  && \text{by Lemma~\ref{lem:pcstd-compare}} \\
& \cong
\uHom(\cO_\tcN(-\lambda), \cO_\tcN \otimes \cow(\mu))
  && \text{by~\eqref{eqn:springer-dualizing}} \\
& \cong
\uHom(\cO_\tcN, \cO_\tcN(\lambda) \otimes \cow(\mu))  \\
& \cong
\uHom(P_\naive(\sky), P_\naive(\ocW_\lambda \star \cIcos(\mu))). &&\qedhere
\end{align*}
\end{proof}

\begin{lem}[cf.~{\cite[Lemma~5]{bez:ctm}}]\label{lem:line-po-hom}
Let $\lambda, \mu \in \bX$.
\begin{enumerate}
\item If $\lambda \not\preceq \mu$, then $\uHom^\bullet(\cO_\tcN(\mu), \cO_\tcN(\lambda)) = 0$.
\item We have $\uEnd(\cO_\tcN(\lambda)) \cong \bk$, and $\uHom^i(\cO_\tcN(\lambda), \cO_\tcN(\lambda)) = 0$ for $i \ne 0$.
\end{enumerate}
\end{lem}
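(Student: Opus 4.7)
The strategy will be to translate the problem to rational $\Bv$-cohomology via the projection $\tcN \to \cBv$ and then apply a weight-space argument. Since the line bundle $\cO_\tcN(-\mu)$ is invertible in $\Coh^{\Gv \times \Gm}(\tcN)$, tensoring with it gives an autoequivalence, so both parts reduce to the case $\mu = 0$. Setting $\nu = \lambda - \mu$, I must show: (1') $\uHom^\bullet(\cO_\tcN, \cO_\tcN(\nu)) = 0$ when $\nu \not\preceq 0$; and (2') $\uEnd(\cO_\tcN) \cong \bk$ with $\uHom^i(\cO_\tcN, \cO_\tcN) = 0$ for $i \ne 0$. Part (2') follows quickly from \eqref{eqn:rational-resoln}: $R\Gamma(\tcN, \cO_\tcN) = R\Gamma(\cN, \cO_\cN) = \bk[\cN]$, which has a good $\Gv$-filtration (it is isomorphic to $\mathrm{ind}_\Bv^\Gv \mathrm{Sym}(\fuv^*)$, induction with no higher derived terms), so $H^i(\Gv, \bk[\cN])$ vanishes for $i > 0$ and equals $\bk[\cN]^\Gv = \bk$ in degree $0$ (as $\cN$ has a dense open $\Gv$-orbit).

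For part (1'), I will use the projection $p \colon \tcN \to \cBv$, which realizes $\tcN$ as the total space of the vector bundle $\Gv \times^\Bv \fuv \to \cBv$. By the projection formula, $p_* \cO_\tcN(\nu) \cong \Gv \times^\Bv(\bk_\nu \otimes \mathrm{Sym}(\fuv^*))$, so via the equivalence between $\Gv$-equivariant quasi-coherent sheaves on $\cBv$ and rational $\Bv$-modules, and the vanishing of higher cohomology along the affine fibers of $p$:
\[
R\Gamma(\tcN, \cO_\tcN(\nu)) \cong R\mathrm{ind}_\Bv^\Gv (\bk_\nu \otimes \mathrm{Sym}(\fuv^*)).
\]
Taking derived $\Gv$-invariants and applying derived Frobenius reciprocity then yields an identification $\uHom^\bullet(\cO_\tcN, \cO_\tcN(\nu)) \cong H^\bullet(\Bv, \bk_\nu \otimes \mathrm{Sym}(\fuv^*))$.

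The final step is a weight-space argument. Since $\Bv$ is the ``negative'' Borel of $\Gv$, $\fuv^*$ has $\Tv$-weights equal to the positive roots, so every $\Tv$-weight of $N := \bk_\nu \otimes \mathrm{Sym}(\fuv^*)$ lies in $\nu + \Z_{\geq 0}\{\text{positive roots}\} = \{\mu : \mu \succeq \nu\}$. Using the Hochschild--Serre spectral sequence for $1 \to \Uv \to \Bv \to \Tv \to 1$ (which collapses because the rational cohomology of the torus $\Tv$ vanishes in positive degrees), one has $H^\bullet(\Bv, N) = H^\bullet(\Uv, N)^\Tv$. The $\Tv$-weights of $H^\bullet(\Uv, N)$ also lie in $\{\mu : \mu \succeq \nu\}$, because the relevant resolutions contribute only weights in the positive-root cone. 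For $0$ to appear as a weight, one would need $0 \succeq \nu$, i.e., $\nu \preceq 0$; contrapositively, if $\nu \not\preceq 0$ then $0$ is absent from the weight set and the $\Tv$-invariants vanish, giving the desired $H^\bullet(\Bv, N) = 0$. The main obstacle is justifying the weight-containment for $H^\bullet(\Uv, N)$ in positive characteristic, where rational unipotent cohomology differs from Lie algebra cohomology; this still goes through because any Frobenius-kernel contributions scale weights by $p^k$, which preserves the positive-root cone.
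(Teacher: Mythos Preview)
Your argument is correct and takes a genuinely different route from the paper's. The paper reduces to $\lambda = 0$ and pushes forward along $\pi\colon \tcN \to \cN$: using the isomorphism $\pi^!\cO_\cN \cong \cO_\tcN$ of~\eqref{eqn:springer-dualizing}, it rewrites $\uHom^\bullet(\cO_\tcN(\mu),\cO_\tcN)$ as $\uHom^\bullet_{\Db\CohN}(\pi_*\cO_\tcN(\mu),\cO_\cN)$ and then invokes the perverse-coherent machinery of~\cite{a} (specifically \cite[Proposition~5.6 and Lemma~5.5(2)]{a}). By contrast, you reduce to $\mu=0$, project to $\cBv$, and translate into rational $\Bv$-cohomology via Frobenius reciprocity; the vanishing then comes from an elementary $\Tv$-weight argument. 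Your route is more self-contained, avoids the Gorenstein/crepancy input~\eqref{eqn:springer-dualizing}, and in fact your proof of part~(1) works without any restriction on $\chr\bk$. The paper's route, on the other hand, fits naturally with the ambient perverse-coherent framework already in use.

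One point deserves a cleaner justification. Your remark that ``Frobenius-kernel contributions scale weights by $p^k$'' is vague and not quite the right mechanism. The weight-containment for $H^\bullet(\Uv,N)$ in arbitrary characteristic follows directly from the Hochschild (cobar) complex $C^n(\Uv,N) = N \otimes \bk[\Uv]^{\otimes n}$, which is $\Tv$-equivariant for the conjugation action on $\bk[\Uv]$ (see \cite[I.4.16]{jantzen}). Since $\Uv$ is $\Tv$-equivariantly an affine space on the negative root subgroups, the coordinate ring $\bk[\Uv]$ has all $\Tv$-weights in $\Z_{\ge 0}\{\text{positive roots}\}$ regardless of characteristic; hence every term of the complex, and therefore its cohomology, has weights $\succeq \nu$. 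For part~(2), the good filtration of $\bk[\cN]$ you invoke is indeed standard under the paper's hypotheses (see e.g.\ \cite{klt}), but should be cited rather than asserted.
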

\begin{proof}
By applying the equivalence of categories $\cO_\tcN(-\lambda) \otimes({-})$, we may assume without loss of generality that $\lambda = 0$.  Using~\eqref{eqn:springer-dualizing}, we find that
\[
\uHom^\bullet(\cO_\tcN(\mu), \cO_{\tcN}) \cong \uHom^\bullet(\cO_\tcN(\mu), \pi^!\cO_\cN) \cong \uHom^\bullet_{\Db\CohN}(\pi_*\cO_\tcN(\mu), \cO_\cN).
\]
By~\cite[Proposition~5.6]{a}, the latter vanishes unless $\mu \succeq 0$.  In the special case where $\mu = 0$, we use~\eqref{eqn:rational-resoln} and~\cite[Lemma~5.5(2)]{a} to see that $\uHom(\pi_*\cO_\tcN, \cO_\cN) \cong \bk$, and that $\uHom^i(\pi_*\cO_\tcN, \cO_\cN) = 0$ for $i \ne 0$.
\end{proof}

\begin{proof}[Conclusion of the proof of Theorem~\ref{thm:main}]
We begin by showing that for all $\cF \in \DmixI(\Gr)$, the map
\begin{equation}\label{eqn:sky-fully-faith}
\gHom^i(\sky, \cF) \to \uHom^i(P(\sky), P(\cF))
\end{equation}
is an isomorphism. By Proposition~\ref{prop:dmixigr-gen-x}, it suffices to consider the cases where $\cF = \ocW_\lambda$ with $\lambda \not\preceq 0$, or else $\cF = \ocW_\lambda \star \cIcos(\mu)$ with $\lambda, \mu \in \bXp$.  In the former case, both sides of~\eqref{eqn:sky-fully-faith} vanish, by Lemmas~\ref{lem:waki-po-hom} and~\ref{lem:line-po-hom}.  The latter case is covered by Lemma~\ref{lem:fully-faith-special}. Thus,~\eqref{eqn:sky-fully-faith} is an isomorphism in all cases.

Next, let $\cF \in \Parity_{(I)}(\Gr)$, and let $\lambda \in \bXp$. Consider the following diagram of natural maps:
\begin{equation}\label{eqn:waki-fully-faith}
\vcenter{\xymatrix{
\gHom^i(\sky, \cF) \ar[r]^-{\sim} \ar[dd]^{\wr}_{\omega_\lambda} &
\uHom^i(P(\sky), P(\cF)) \ar[d]^{\wr} \\ 
& \uHom^i(\cO_\tcN(\lambda) \otimes P(\sky), \cO_\tcN(\lambda) \otimes P(\cF))
\\
\gHom^i(\ocW_\lambda, \cW_\lambda \fakestar \cF) \ar[r] &
\uHom^i(P(\ocW_\lambda), P(\cW_\lambda \fakestar \cF)) \ar[u]_{\wr} }}
\end{equation}
All the vertical maps are isomorphisms, and the top horizontal map is an isomorphism by~\eqref{eqn:sky-fully-faith}.  

When $i = 0$, the natural isomorphism of Proposition~\ref{prop:waki-line-bdl} tells us that this diagram commutes, and so the bottom horizontal map is an isomorphism as well.  When $i \ne 0$, that naturality is not a priori available---but both $\Hom$-groups in the top row vanish, and so every $\Hom$-group in the diagram vanishes.  

Thus, the bottom arrow in~\eqref{eqn:waki-fully-faith} is an isomorphism in all cases.  Note that the equivariant derived category $\Dmix_I(\Gr)$ is generated by objects of the form $\cW_\lambda \star \cF$ with $\cF \in \Parity_I(\Gr)$, because $\cW_\lambda \star ({-})$ is an autoequivalence of that category.  Since the image of $\For: \Dmix_I(\Gr) \to \DmixI(\Gr)$ generates $\DmixI(\Gr)$, we deduce that objects of the form $\cW_\lambda \fakestar \cF$ with $\cF \in \Parity_{(I)}(\Gr)$ generate $\DmixI(\Gr)$.  Therefore, the bottom isomorphism in~\eqref{eqn:waki-fully-faith} implies that
\[
\gHom^i(\ocW_\lambda,\cG) \to \uHom^i(P(\ocW_\lambda), P(\cG))
\]
is an isomorphism for all $\cG \in \DmixI(\Gr)$.  Finally, the $\ocW_\lambda$ also generate $\DmixI(\Gr)$, so $P$ is fully faithful.  The line bundles $\cO_\tcN(\lambda)$ generate $\Db\Coh(\tcN)$ as a triangulated category, so $P$ is also essentially surjective, and hence an equivalence.
\end{proof}

\section{The exotic t-structure}
\label{sect:exotic}

The \emph{exotic t-structure} on $\Db\Coh(\tcN)$ was defined in~\cite[\S2.3]{bez:ctm}, at least for $\bk = \C$.  We will briefly review the steps of the construction, and check that they go through in positive characteristic as well.

\subsection{Exceptional sets and mutation}
\label{ss:excep}

This subsection contains a very cursory review of the definitions and facts we will need from~\cite[\S2.1]{bez:ctm}.  For details, the reader should consult~\cite{bez:ctm} and the references indicated therein, especially~\cite{bk:rfsfr,bgs}.

Let $D$ be a $\bk$-linear triangulated category equipped with an autoequivalence $\la 1\ra: D \to D$.  Let $\Omega$ be a partially ordered set, with partial order $\preceq$.  A collection of objects $\{X_\gamma \mid \gamma \in \Omega \}$ is called a \emph{full graded $\preceq$-exceptional set} if $D$ is generated by the set $\{ X_\gamma\la n\ra \mid \gamma \in \Omega,\ n \in \Z \}$, and if the following three additional conditions hold:
\[
\text{$\uHom^\bullet(X_\gamma, X_\xi) = 0$ if $\xi \not\preceq \gamma$,}
\qquad
\text{$\uHom^i(X_\gamma, X_\gamma) = 0$ if $i \ne 0$,}
\qquad
\text{$\uEnd(X_\gamma) \cong \bk$.}
\]
Now, suppose $\unlhd$ is another partial order on $\Omega$, and that $\{ Y_\gamma \mid \gamma \in \Omega \}$ is a full graded $\unlhd$-exceptional set.  We say that $\{ Y_\gamma \}$ is a \emph{$\unlhd$-mutation} of $\{ X_\gamma \}$ if the following two conditions hold:
\begin{enumerate}
\item For each $\gamma$, the triangulated category generated by $\{ X_\xi\la n\ra \mid \xi \unlhd \gamma,\ n \in \Z\}$ coincides with that generated $\{ Y_\xi\la n\ra \mid \xi \unlhd \gamma,\ n \in \Z\}$.
\item For each $\gamma$, there is a distinguished triangle $X_\gamma \to Y_\gamma \to U_\gamma \to$ such that $U_\gamma$ lies in the triangulated subcategory generated by $\{ X_\xi\la n\ra \mid \xi \lhd \gamma,\ n \in \Z\}$.
\end{enumerate}
Suppose $(\Omega,\unlhd)$ is isomorphic as a partially ordered set to a subset of $\N$. Then, according to~\cite[Lemma~1]{bez:ctm}, there exists a unique $\unlhd$-mutation of any full graded $\preceq$-exceptional set.

On the other hand, if $(\Omega,\unlhd)$ is isomorphic to a subset of $\N$, then by~\cite[Proposition~2]{bez:ctm}, any full graded $\unlhd$-exceptional set $\{ Y_\gamma \}$ determines a t-structure on $D$.  Specifically, the categories 
\begin{equation}\label{eqn:mut-t-struc}
\begin{aligned}
D^{\le 0} &= \{ A \in D \mid \text{$\uHom(A,Y_\gamma[i]) = 0$ for all $i < 0$} \}, \\
D^{\ge 0} &= 
\begin{array}{c}
\text{the smallest strictly full subcategory of $D$ that is stable under}\\
\text{extensions and contains $Y_\gamma\la n\ra[i]$ for all $\gamma \in \Omega$, $n \in \Z$ and $i \le 0$}
\end{array}
\end{aligned}
\end{equation}
constitute a t-structure on $D$.

The heart $\sA = D^{\le 0} \cap D^{\ge 0}$ is clearly stable under $\la 1\ra$.  According to~\cite[Proposition~2]{bez:ctm}, every object in $\sA$ has finite length, and the isomorphism classes of simple objects, up to $\la 1\ra$, are in bijection with $\Omega$.

In fact, $\sA$ is very close to being a graded quasihereditary category: it satisfies the axioms (1)--(5) of~\cite[\S3.2]{bgs}, but axiom~(6) may fail.  The costandard objects are of the form ${}^tH^0(Y_\gamma\la n\ra)$, where ${}^tH$ denotes cohomology with respect to our t-structure.  The standard objects are of the form ${}^tH^0({}^\backprime Y_\gamma\la n\ra)$, where $\{ {}^\backprime Y_\gamma \}$ is the \emph{dual} exceptional set in the sense of~\cite[\S2.1.2]{bez:ctm}.

\subsection{Exotic sheaves}

By Lemma~\ref{lem:line-po-hom}, the collection $\{ \cO_\tcN(\lambda) \mid \lambda \in \bX \}$ is a full graded $\preceq$-exceptional set.  Now consider the partial order $\leq$ on $\bX$.  Certainly, $\leq$ can be refined to a total order $\leq'$ such that $(\bX,\leq')$ is isomorphic to $\N$, and then we can form the $\leq'$-mutation of $\{ \cO_\tcN(\lambda) \}$.  It will be convenient to name the objects of the new exceptional set with a built-in shift: let $\{ \excos(\lambda)\la - \delta_\lambda \ra \}$ be the $\leq'$-mutation of $\{ \cO_\tcN(\lambda) \}$.  Thus, for each $\lambda$, there is a distinguished triangle
\[
\cO_\tcN(\lambda) \to \excos(\lambda)\la -\delta_\lambda\ra \to \cG \to
\]
in $\Db\Coh(\tcN)$, where $\cG$ lies in the subcategory generated by $\{ \cO_\tcN(\mu)\la n\ra \mid \mu <' \lambda \}$.

As in~\eqref{eqn:mut-t-struc}, the objects $\{ \excos(\lambda) \}$ determine a t-structure on $\Db\Coh(\tcN)$.  We call this the \emph{exotic t-structure}, and we denote its heart by $\ExCoh(\tcN)$. This definition appears to depend on the choice of refinement $\leq'$ of $\leq$, but we will see below that it is actually independent of that choice.

\subsection{Adverse sheaves and exotic sheaves}

We can also apply the notions of~\S\ref{ss:excep} to $\DmixI(\Gr)$ with the autoequivalence $\{1\}: \DmixI(\Gr) \to \DmixI(\Gr)$.

\begin{lem}\label{lem:waki-mut}
\begin{enumerate}
\item The set $\{ \ocW_\lambda \mid \lambda \in \bX \}$ is a full graded $\preceq$-exceptional set.
\item If $\leq'$ is any total order on $\bX$ that refines $\leq$ and such that $(\bX, \leq')$ is isomorphic to $\N$, then the $\leq'$-mutation of $\{ \ocW_\lambda \}$ is isomorphic to $\{ \uicos(\lambda)\{-\delta_\lambda\} \}$.
\end{enumerate}
\end{lem}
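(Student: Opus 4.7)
The plan is to deduce part~(1) directly from the $\gHom$-computations already in hand, and to prove part~(2) by writing down an obvious candidate for the mutation and checking the definition, then invoking the uniqueness of mutations from \cite[Lemma~1]{bez:ctm}.

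For part~(1), all three axioms of a full graded $\preceq$-exceptional set are essentially given by Lemma~\ref{lem:waki-po-hom} together with Lemma~\ref{lem:waki-support-gen}. Specifically, Lemma~\ref{lem:waki-po-hom}(1) (after relabeling) gives $\gHom^\bullet(\ocW_\gamma,\ocW_\xi)=0$ whenever $\xi\not\preceq\gamma$; Lemma~\ref{lem:waki-po-hom}(2) handles $\gEnd(\ocW_\lambda)\cong\bk$ and the higher vanishing; and fullness follows because each object of $\DmixI(\Gr)$ is supported on some closed union of $I$-orbits, where Lemma~\ref{lem:waki-support-gen} applies.

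For part~(2), my candidate is $Y_\lambda := \uicos(\lambda)\{-\delta_\lambda\}$. First I would verify that $\{Y_\lambda\}$ is a full graded $\leq'$-exceptional set. The standard computation $\gHom^\bullet(\uicos(\lambda),\uicos(\mu))=0$ unless $\mu\leq\lambda$ follows by adjunction from $i_\mu^* i_{\lambda*}\ubk_{\schub\lambda}=0$ whenever $\schub\mu\not\subset\overline{\schub\lambda}$; since $\leq'$ refines $\leq$, the implication $\mu\not\leq'\lambda \Rightarrow \mu\not\leq\lambda$ yields the required vanishing. The endomorphism condition is clear because $\schub\lambda$ is an affine space. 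Fullness holds because the costandards (with shifts) generate $\Perv^\mix_{(I)}(\Gr)$, and hence $\DmixI(\Gr)\cong\Db\Perv^\mix_{(I)}(\Gr)$.

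Next, the mutation triangle will be produced from the canonical adjunction morphism. By Lemma~\ref{lem:waki-support}, $i_\lambda^*\ocW_\lambda \cong \ubk_{\schub\lambda}\{\dim\schub\lambda - \delta_\lambda\}$, so the unit $\ocW_\lambda \to i_{\lambda*}i_\lambda^*\ocW_\lambda$ is precisely a morphism into $\uicos(\lambda)\{-\delta_\lambda\}=Y_\lambda$. Completing to a distinguished triangle $\ocW_\lambda \to Y_\lambda \to U_\lambda \to$, the cone $U_\lambda$ is supported on $\overline{\schub\lambda}\smallsetminus\schub\lambda=\bigcup_{\mu<\lambda}\schub\mu$, so Lemma~\ref{lem:waki-support-gen} places it in the triangulated subcategory generated by $\{\ocW_\mu\{n\} \mid \mu<\lambda\}$, and hence by those with $\mu<'\lambda$. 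This is condition~(2) of the mutation definition; condition~(1) then follows by a short downward induction along $\leq'$, and uniqueness of the mutation identifies $\{Y_\lambda\}$ as the $\leq'$-mutation of $\{\ocW_\lambda\}$. I do not foresee serious obstacles here: the only mild bookkeeping is the interaction between $\leq$ and $\leq'$, which is governed throughout by the one-way implication $\mu<\lambda \Rightarrow \mu<'\lambda$.
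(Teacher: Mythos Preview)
Your proposal is correct and matches the paper's proof essentially line for line: part~(1) is Lemma~\ref{lem:waki-po-hom} plus generation from Lemma~\ref{lem:waki-support-gen}, and part~(2) proceeds by checking that $\{\uicos(\lambda)\{-\delta_\lambda\}\}$ is $\leq$-exceptional via adjunction, producing the triangle $\ocW_\lambda \to \uicos(\lambda)\{-\delta_\lambda\} \to U_\lambda \to$ from Lemma~\ref{lem:waki-support}, and using Lemma~\ref{lem:waki-support-gen} to control the cone. The only cosmetic difference is that you close by invoking uniqueness from \cite[Lemma~1]{bez:ctm}, whereas the paper simply notes that conditions~(1) and~(2) of the mutation definition are verified directly; since you have in fact checked both conditions, the appeal to uniqueness is harmless but redundant.
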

\begin{proof}
The assertion that $\{ \ocW_\lambda \}$ is a $\preceq$-exceptional set is just a restatement of Lemma~\ref{lem:waki-po-hom}.  A routine adjunction argument shows that $\{ \uicos(\lambda)\{-\delta_\lambda\} \}$ is a $\leq$-exceptional set, so it is also $\leq'$-exceptional for any choice of $\leq'$.

Lemmas~\ref{lem:waki-support} and~\ref{lem:waki-support-gen} imply that for each $\lambda$, there is a distinguished triangle
\[
\ocW_\lambda \to \uicos(\lambda)\{-\delta_\lambda\} \to \cK_\lambda \to
\]
where $\cK_\lambda$ lies in the subcategory generated by $\{ \ocW_\mu\{n\} \mid \mu < \lambda \}$. Finally, those same lemmas also tell us that $\{ \ocW_\mu\{n\} \mid \mu \leq' \lambda \}$ and $\{ \uicos(\mu)\{n\} \mid \mu \leq' \lambda \}$ generate the same subcategory of $\DmixI(\Gr)$.
\end{proof}

\begin{lem}\label{lem:adv-mut}
Let $\leq'$ be a total order on $\bX$ that refines $\leq$ and such that $(\bX, {\leq'})$ is isomorphic to $\N$.  Then the t-structure on $\DmixI(\Gr)$ determined by the $\leq'$-exceptional set $\{ \uicos(\lambda)\{-\delta_\lambda\} \}$ is the adverse t-structure.
\end{lem}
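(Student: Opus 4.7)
\emph{Plan.} Write $({}^a D^{\le 0}, {}^a D^{\ge 0})$ and $({}^m D^{\le 0}, {}^m D^{\ge 0})$ for the adverse and mutation t-structures on $\DmixI(\Gr)$ respectively, and let $\mathcal{M}$ denote the heart of the mutation t-structure. I would prove the t-structures coincide by showing $\mathcal{M} = \Adv_{(I)}(\Gr)$ as full subcategories of $\DmixI(\Gr)$.

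For the inclusion $\Adv_{(I)}(\Gr) \subseteq \mathcal{M}$, note first that each $\uicos(\lambda)\{-\delta_\lambda\}$ is by definition a costandard adverse sheaf, hence lies in the adverse heart. For any $A \in \Adv_{(I)}(\Gr)$ the vanishing $\Hom(A, \uicos(\mu)\{-\delta_\mu\}\{n\}[i]) = 0$ for $i < 0$ is automatic, and by the characterization~\eqref{eqn:mut-t-struc} this places $A$ in ${}^m D^{\le 0}$. The dual inclusion $\Adv_{(I)}(\Gr) \subseteq {}^m D^{\ge 0}$ would follow by identifying the dual exceptional set of $\{\uicos(\lambda)\{-\delta_\lambda\}\}$ with $\{\uistd(\lambda)\{-\delta_\lambda\}\}$. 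By adjunction, this identification reduces to the Hom computation
\[
\gHom(\uistd(\lambda)\{-\delta_\lambda\}, \uicos(\mu)\{-\delta_\mu\}\{n\}[i]) \;\cong\; \gHom(i_\mu^* i_{\lambda!}\ubk_{\schub\lambda}, \ubk_{\schub\mu}\{n'\}[i]),
\]
which vanishes unless $\lambda = \mu$ (since the $I$-orbits $\schub\lambda$ and $\schub\mu$ are disjoint and $i_{\lambda!}$ extends by zero), and for $\lambda = \mu$ reduces to the mixed self-cohomology of the affine cell $\schub\lambda$, equal to $\bk$ in bidegree $(0,0)$ and zero elsewhere.

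For the reverse inclusion $\mathcal{M} \subseteq \Adv_{(I)}(\Gr)$, I would invoke the analogous Hom-characterization of the adverse t-structure: an object $X$ lies in $\Adv_{(I)}(\Gr)$ if and only if
\[
\Hom(X, \uicos(\mu)\{-\delta_\mu\}\{n\}[i]) = 0 \text{ for } i < 0 \quad\text{and}\quad \Hom(\uistd(\lambda)\{-\delta_\lambda\}\{n\}[i], X) = 0 \text{ for } i > 0.
\]
Both vanishings hold for $X \in \mathcal{M}$: the first is a direct application of~\eqref{eqn:mut-t-struc}, and the second follows because the dual exceptional set $\{\uistd(\lambda)\{-\delta_\lambda\}\}$ lies in $\mathcal{M}$, so $\Hom(\uistd(\lambda)\{-\delta_\lambda\}\{n\}, X[-i])$ vanishes in negative homological degrees of the heart $\mathcal{M}$.

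The main obstacle is establishing the Hom-characterization of the adverse t-structure used in the final step, i.e., the quasihereditary-style description of $\Adv_{(I)}(\Gr)$ in which $\uistd(\lambda)\{-\delta_\lambda\}$ and $\uicos(\lambda)\{-\delta_\lambda\}$ play the role of standard and costandard objects. This structural input belongs to the general theory of adverse sheaves developed in Appendix~\ref{sect:mixed}; once it is in hand, the equality of t-structures is essentially formal.
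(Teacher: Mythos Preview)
Your argument is correct in outline, but it is considerably more circuitous than the paper's, and the ``main obstacle'' you flag is in fact no obstacle at all. The paper does not pass through the hearts; it simply compares the two t-structures termwise using their explicit descriptions. For $D^{\ge 0}$: the description in~\eqref{eqn:mut-t-struc}, specialised to $Y_\lambda = \uicos(\lambda)\{-\delta_\lambda\}$ with autoequivalence $\{1\}$, says $D^{\ge 0}$ is the extension-closure of the $\uicos(\lambda)\{n\}[i]$ for $i \le 0$, which is literally the description of $\ad\DmixI(\Gr)^{\ge 0}$ in~\eqref{eqn:adv-defn2}. For $D^{\le 0}$: the $\Hom$-condition in~\eqref{eqn:mut-t-struc} becomes, by adjunction $\Hom(A,i_{\lambda*}\ubk\{n'\}[i]) \cong \Hom(i_\lambda^*A,\ubk\{n'\}[i])$, the requirement that $i_\lambda^*A$ admit no map to $\ubk\{n'\}[i]$ with $i<0$; since each stratum is an affine space and $\Dmix_\scS(\schub\lambda)$ is semisimple, this is exactly the condition that $i_\lambda^*A$ have weights${}\ge 0$, i.e.~\eqref{eqn:adv-defn}. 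So the two t-structures coincide on the nose.

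Note that your Hom-characterisation of the adverse heart is nothing more than~\eqref{eqn:adv-defn} rewritten via the adjunctions $(i_{\lambda!},i_\lambda^!)$ and $(i_\lambda^*,i_{\lambda*})$, so it is available immediately and does not require any further structural input from the appendix. Your detour through the dual exceptional set and the double inclusion of hearts does work (once you add that a bounded t-structure is determined by its heart, and cite the dual $\Hom$-description of $D^{\ge 0}$ in terms of the ${}^\backprime Y_\gamma$), but it proves strictly more than is needed and obscures how short the argument really is.
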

In particular, this lemma tells us that the t-structure obtained by mutation of the exceptional set $\{\ocW_\lambda\}$ is independent of the choice of $\le'$.
\begin{proof}
It is obvious that the category $D^{\ge 0}$ as described in~\eqref{eqn:mut-t-struc} coincides with $\ad\DmixI(\Gr)^{\ge 0}$ as described in~\eqref{eqn:adv-defn2}.  An easy adjunction argument shows that $D^{\le 0}$ in~\eqref{eqn:mut-t-struc} agrees with $\ad\DmixI(\Gr)^{\le 0}$ as in~\eqref{eqn:adv-defn}.
\end{proof}

The following statement is the main result of this section.

\begin{thm}\label{thm:adv-exotic}
The equivalence $P$ of Theorem~\ref{thm:main} induces an equivalence of  abelian categories
\[
P: \Adv_{(I)}(\Gr) \simto \ExCoh(\tcN).
\]
\end{thm}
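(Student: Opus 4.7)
The plan is to deduce this theorem by transporting the full exceptional set $\{\ocW_\lambda\}$ across the equivalence $P$ of Theorem~\ref{thm:main} and invoking the characterization of both t-structures via mutation.

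First, I would observe that $P$ is a triangulated equivalence satisfying $P(\cF\{1\}) \cong P(\cF)\la 1\ra$, and that by Proposition~\ref{prop:waki-line-bdl} it sends the exceptional set $\{\ocW_\lambda \mid \lambda \in \bX\}$ in $\DmixI(\Gr)$ to the exceptional set $\{\cO_\tcN(\lambda) \mid \lambda \in \bX\}$ in $\Db\Coh(\tcN)$. Lemma~\ref{lem:waki-po-hom} and Lemma~\ref{lem:line-po-hom} tell us that each of these is a full graded $\preceq$-exceptional set (with respect to the same partial order on the common index set $\bX$), so $P$ is in fact an equivalence of exceptional sets in the sense of~\S\ref{ss:excep}.

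Next, I would fix any total order $\leq'$ on $\bX$ refining $\leq$ such that $(\bX,\leq')$ is isomorphic to $\N$, and form the $\leq'$-mutations on both sides. Since mutations are characterized by the uniqueness assertion of~\cite[Lemma~1]{bez:ctm}, and that characterization is intrinsic to the triangulated category together with the exceptional set and its partial orders, any triangulated equivalence that identifies the two exceptional sets must carry one mutation to the other. On the $\DmixI(\Gr)$ side, Lemma~\ref{lem:waki-mut} identifies the $\leq'$-mutation of $\{\ocW_\lambda\}$ with $\{\uicos(\lambda)\{-\delta_\lambda\}\}$; on the $\Db\Coh(\tcN)$ side, the $\leq'$-mutation of $\{\cO_\tcN(\lambda)\}$ is by definition $\{\excos(\lambda)\la -\delta_\lambda\ra\}$. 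Because $P(\cF\{m\}) \cong P(\cF)\la m\ra$, this forces a natural isomorphism $P(\uicos(\lambda)) \cong \excos(\lambda)$ for every $\lambda \in \bX$.

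Finally, both the adverse t-structure on $\DmixI(\Gr)$ (by Lemma~\ref{lem:adv-mut}) and the exotic t-structure on $\Db\Coh(\tcN)$ (by definition, see~\eqref{eqn:mut-t-struc}) are determined from their respective mutated exceptional sets by one and the same formula. Since $P$ carries $\{\uicos(\lambda)\{-\delta_\lambda\}\}$ to $\{\excos(\lambda)\la -\delta_\lambda\ra\}$ compatibly with the shift operations, it sends $\ad\DmixI(\Gr)^{\le 0}$ to the connective part of the exotic t-structure and $\ad\DmixI(\Gr)^{\ge 0}$ to its coconnective part. Restricting to hearts yields the claimed equivalence $P \colon \Adv_{(I)}(\Gr) \simto \ExCoh(\tcN)$.

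There is no significant obstacle here beyond verifying that mutation commutes with triangulated equivalences preserving the grading shift, which is immediate from the characterization of mutation in~\cite[Lemma~1]{bez:ctm}. As a free bonus, transporting the known independence of the adverse t-structure from the choice of $\leq'$ (noted after Lemma~\ref{lem:adv-mut}) across $P$ confirms that $\ExCoh(\tcN)$ is likewise well defined.
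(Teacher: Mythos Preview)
Your proposal is correct and follows essentially the same approach as the paper's proof: transport the exceptional set $\{\ocW_\lambda\}$ to $\{\cO_\tcN(\lambda)\}$ via Proposition~\ref{prop:waki-line-bdl}, invoke uniqueness of $\le'$-mutation to deduce $P(\uicos(\lambda)) \cong \excos(\lambda)$, and conclude via Lemma~\ref{lem:adv-mut} that $P$ matches the two t-structures. You have simply spelled out in more detail the steps that the paper compresses into three sentences.
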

\begin{proof}
Recall from Proposition~\ref{prop:waki-line-bdl} that $P$ takes the exceptional set $\{ \ocW_\lambda \}$ to the exceptional set $\{ \cO_\tcN(\lambda) \}$.  It must therefore take the $\le'$-mutation of the former to the $\le'$-mutation of the latter: $P(\uicos(\lambda)) \cong \excos(\lambda)$.  Lastly, $P$ must also take the t-structure determined by $\{ \uicos(\lambda) \}$ to that determined by $\{ \excos(\lambda) \}$.  In view of Lemma~\ref{lem:adv-mut}, we are done.
\end{proof}

Let $\{ \exstd(\lambda) \mid \lambda \in \bX \}$ the the dual exceptional set to $\{ \excos(\lambda) \}$.  The reasoning above shows that we must also have
$P(\uistd(\lambda)) \cong \exstd(\lambda)$.

Since Theorem~\ref{thm:adv-exotic} gives an equivalence of quasihereditary categories, it certainly restricts to an equivalence between their respective subcategories of tilting objects.  We obtain the following statement, which appeared earlier as Theorem~\ref{thm:intro-exotic}.

\begin{prop}
The equivalence $P$ of Theorem~\ref{thm:main} induces an equivalence of additive categories
\[
P: \Parity_{(I)}(\Gr) \simto \Tilt(\ExCoh(\tcN)).
\]
\end{prop}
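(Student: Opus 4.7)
The plan is to deduce this proposition as an immediate formal consequence of Theorem~\ref{thm:adv-exotic}. That theorem already establishes an equivalence of abelian categories $P : \Adv_{(I)}(\Gr) \simto \ExCoh(\tcN)$ sending $\uistd(\lambda) \mapsto \exstd(\lambda)$ and $\uicos(\lambda) \mapsto \excos(\lambda)$. Because the notion of tilting object in any quasihereditary-type category is characterized purely by the existence of both a standard filtration and a costandard filtration, such an equivalence automatically restricts to an equivalence of additive subcategories of tilting objects,
\[
P : \Tilt(\Adv_{(I)}(\Gr)) \simto \Tilt(\ExCoh(\tcN)).
\]

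What remains is the identification $\Tilt(\Adv_{(I)}(\Gr)) = \Parity_{(I)}(\Gr)$. I would invoke this as a general fact about the adverse t-structure, established in Appendix~\ref{sect:mixed}. Concretely, two things need to be checked. First, each indecomposable parity sheaf $\cE(\lambda)$ is an adverse sheaf admitting both a standard and a costandard adverse filtration: this follows from a stratum-by-stratum analysis, using that $\cE(\lambda)|_{\schub\mu}$ and $i_\mu^!\cE(\lambda)$ are direct sums of shifted constant sheaves of the correct parity, matching (up to Tate twists) the $\uistd(\mu)$ and $\uicos(\mu)$ that generate $\Adv_{(I)}(\Gr)$. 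Second, every indecomposable tilting object of $\Adv_{(I)}(\Gr)$ is of the form $\cE(\lambda)\{n\}$: since indecomposable tilting objects with a given highest weight are unique up to isomorphism, and since $\cE(\lambda)$ is indecomposable, lies in $\Adv_{(I)}(\Gr)$, and has highest weight $\lambda$, the two classifications coincide.

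The main obstacle is this last identification, as it bridges two a priori different notions (tilting with respect to a t-structure versus parity with respect to a stratification). However, the technology developed in \S\ref{sect:wakimoto} — especially the way standard and costandard adverse objects were interlaced with the parity sheaves $\cE(\lambda)$ via Wakimoto sheaves — suggests that this is precisely the kind of result designed to be handled by the appendix's general machinery. Once $\Tilt(\Adv_{(I)}(\Gr)) = \Parity_{(I)}(\Gr)$ is in hand, the proposition follows with no further work.
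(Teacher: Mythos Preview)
Your proposal is correct and matches the paper's approach exactly: the paper simply notes that the equivalence of quasihereditary categories in Theorem~\ref{thm:adv-exotic} restricts to an equivalence of tilting subcategories, and the identification $\Tilt(\Adv_{(I)}(\Gr)) = \Parity_{(I)}(\Gr)$ is indeed provided by Proposition~\ref{prop:adverse-t-struc} in the appendix. Your sketch of how that identification is proved (parity sheaves are adverse tilting objects, and the classification of indecomposables matches) is also precisely the argument given there.
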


We also obtain a slew of nontrivial facts about $\ExCoh(\tcN)$ just by transferring facts about $\Adv_{(I)}(\Gr)$ from~\S\ref{ss:adverse} across this equivalence.  Some of these are recorded in the following proposition.    

\begin{prop}
\begin{enumerate}
\item The objects $\excos(\lambda)$ and $\exstd(\lambda)$ and the category $\ExCoh(\tcN)$ are all independent of the choice of $\leq'$.
\item $\ExCoh(\tcN)$ is a graded quasihereditary category, 
\item The $\excos(\lambda)$ (resp.~$\exstd(\lambda)$) lie in $\ExCoh(\tcN)$ and are the costandard (resp.~standard) objects therein.
\item There is a derived equivalence
\[
\Db\ExCoh(\tcN) \simto \Db\Coh(\tcN).
\]
\end{enumerate}
\end{prop}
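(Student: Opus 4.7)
The plan is to transport every assertion across the equivalence of abelian categories $P \colon \Adv_{(I)}(\Gr) \simto \ExCoh(\tcN)$ furnished by Theorem~\ref{thm:adv-exotic}, together with the identifications $P(\uistd(\lambda)) \cong \exstd(\lambda)$ and $P(\uicos(\lambda)) \cong \excos(\lambda)$ recorded immediately after that theorem.

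For part~(1), the adverse t-structure on $\DmixI(\Gr)$ and the objects $\uistd(\lambda)$, $\uicos(\lambda)$ are defined intrinsically, without reference to any refinement $\leq'$ of $\leq$: Lemma~\ref{lem:adv-mut} shows that the mutation t-structure produced from $\leq'$ coincides with the adverse one for every admissible choice. Applying the equivalence $P$ therefore transfers this independence to $\ExCoh(\tcN)$ and to the objects $\excos(\lambda)$ and $\exstd(\lambda)$. Parts~(2) and~(3) will follow by the same transport principle, once I establish that $\Adv_{(I)}(\Gr)$ is graded quasihereditary with standards $\uistd(\lambda)\{-\delta_\lambda\}$ and costandards $\uicos(\lambda)\{-\delta_\lambda\}$. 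The necessary $\Ext$-vanishing between standards and costandards, the existence of standard/costandard filtrations on projectives, and the composition-series axioms are routine adjunction and support calculations with $i_\lambda^*$ and $i_\lambda^!$, already implicit in the adverse-sheaf machinery recalled in Proposition~\ref{prop:adverse-t-struc} and Lemma~\ref{lem:waki-support-gen}; the appendix joint with Riche should provide these in systematic form. Transporting across $P$ yields that $\ExCoh(\tcN)$ is graded quasihereditary with standards $\exstd(\lambda)\la -\delta_\lambda\ra$ and costandards $\excos(\lambda)\la -\delta_\lambda\ra$, which is precisely parts~(2) and~(3).

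For part~(4), I plan to invoke the standard realization principle: whenever $\sA$ is a graded quasihereditary heart of a bounded t-structure on a triangulated category $D$ and $D$ is generated (as a triangulated category) by the costandards, the natural functor $\Db\sA \to D$ is an equivalence. Applied to the adverse t-structure on $\DmixI(\Gr)$, which is generated by the $\uicos(\lambda)\{n\}$ thanks to Lemmas~\ref{lem:waki-support} and~\ref{lem:waki-support-gen}, this yields $\Db\Adv_{(I)}(\Gr) \simto \DmixI(\Gr)$. Composing with the main equivalence of Theorem~\ref{thm:main} and with the equivalence of abelian categories above produces the chain
\[
\Db\ExCoh(\tcN) \;\simto\; \Db\Adv_{(I)}(\Gr) \;\simto\; \DmixI(\Gr) \;\simto\; \Db\CohGm(\tcN),
\]
providing the desired derived equivalence. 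The main obstacle I anticipate is the careful verification, within the mixed modular framework, of both the graded quasihereditary axioms for $\Adv_{(I)}(\Gr)$ and the associated realization theorem; once those inputs are secured in the appendix, everything else is bookkeeping through $P$.
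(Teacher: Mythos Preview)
Your proposal is correct and matches the paper's approach exactly: the paper states these facts without explicit proof, noting only that they are obtained ``by transferring facts about $\Adv_{(I)}(\Gr)$ from~\S\ref{ss:adverse} across this equivalence,'' and Proposition~\ref{prop:adverse-t-struc} in that appendix supplies precisely the quasihereditary structure and the derived equivalence $\Db\Adv_\scS(X)\simto\Dmix_\scS(X)$ that you anticipate needing. Your chain of equivalences for part~(4) is the intended argument.
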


\begin{prop}
\begin{enumerate}
\item Every line bundle on $\tcN$ (and, more generally, every vector bundle) belongs to $\ExCoh(\tcN)$.\label{it:bundle-exotic}
\item For all $V \in \Rep(\Gv)$, the perverse sheaf $\Sat(V)$ is also an adverse sheaf.  As an object of $\Adv_{(I)}(\Gr)$, $\Sat(V)$ admits a filtration whose subquotients are Wakimoto sheaves.\label{it:sph-adv}
\end{enumerate}
\end{prop}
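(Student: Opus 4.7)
The plan is to leverage the equivalence of abelian categories $P\colon \Adv_{(I)}(\Gr) \simto \ExCoh(\tcN)$ from Theorem~\ref{thm:adv-exotic}, together with the identification $P(\ocW_\lambda) \cong \cO_\tcN(\lambda)$ recorded in Proposition~\ref{prop:waki-line-bdl}. A second key ingredient is the identification $P(\Sat(V)) \cong \cO_\tcN \otimes V$ for $V \in \Rep(\Gv)$; this follows at once from $\sky \star \Sat(V) \cong \Sat(V)$, the Satake compatibility in Theorem~\ref{thm:main}, and the observation that $\sky = \ocW_0$, so that $P(\sky) \cong \cO_\tcN$.

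For part~\eqref{it:bundle-exotic}, every $(\Gv \times \Gm)$-equivariant line bundle on $\tcN$ is of the form $\cO_\tcN(\lambda)\la n\ra$ for some $\lambda \in \bX$ and $n \in \Z$. Since $\ocW_\lambda$ is adverse (as recalled in \S\ref{ss:wakimoto}, via Lemma~\ref{lem:adv-smooth}), the identification $P(\ocW_\lambda) \cong \cO_\tcN(\lambda)$ gives $\cO_\tcN(\lambda) \in \ExCoh(\tcN)$, and Tate twists preserve the heart. To handle an arbitrary equivariant vector bundle $\cF$, I would construct a filtration of $\cF$ in $\Coh(\tcN)$ whose subquotients are line bundles of the above form, and then invoke closure of $\ExCoh(\tcN)$ under extensions. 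In the basic case $\cF = \cO_\tcN \otimes V$ with $V \in \Rep(\Gv)$, such a filtration is produced by selecting a $\Bv$-stable filtration $0 = V_0 \subset \cdots \subset V_n = V|_{\Bv}$ with $V_i/V_{i-1} \cong \bk_{\lambda_i}$ (which exists because $\Bv$ is solvable) and tensoring with $\cO_\tcN$; under the equivalence $\Coh(\tcN) \cong \Coh^{\Bv \times \Gm}(\fuv)$, the associated graded of the resulting filtration is $\bigoplus_i \cO_\tcN(\lambda_i)$. A general equivariant vector bundle reduces to this case: the contracting $\Gm$-action on $\fuv$ forces any $(\Bv \times \Gm)$-equivariant vector bundle on $\fuv$ to be of the form $\cO_\fuv \otimes V_0$, where $V_0 \in \Rep(\Bv \times \Gm)$ is the fiber at $0$, and filtering $V_0$ by $\Bv$-characters yields the desired line-bundle filtration. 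Induction on filtration length then places $\cF$ in $\ExCoh(\tcN)$.

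For part~\eqref{it:sph-adv}, combining $P(\Sat(V)) \cong \cO_\tcN \otimes V$ with part~\eqref{it:bundle-exotic} shows that $\Sat(V) \in \Adv_{(I)}(\Gr)$, which settles the adverseness assertion. The line-bundle filtration of $\cO_\tcN \otimes V$ constructed above lives a priori in $\Coh(\tcN)$, but each of its terms is an iterated extension in $\Db\Coh(\tcN)$ of objects in $\ExCoh(\tcN)$, hence itself lies in $\ExCoh(\tcN)$; consequently the filtration upgrades to a filtration in the abelian category $\ExCoh(\tcN)$. Applying the exact functor $P^{-1}$ then delivers a filtration of $\Sat(V)$ in $\Adv_{(I)}(\Gr)$ whose successive subquotients are $P^{-1}(\cO_\tcN(\lambda_i)) \cong \ocW_{\lambda_i}$.

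The one genuinely subtle point is this upgrade from a filtration in $\Coh(\tcN)$ to a filtration in $\ExCoh(\tcN)$; it rests on the general t-structure principle that a distinguished triangle $A \to B \to C \to$ in $\Db\Coh(\tcN)$ with $A, C \in \ExCoh(\tcN)$ automatically has $B \in \ExCoh(\tcN)$ and induces a short exact sequence there. A reader who wishes to sidestep the classification of arbitrary equivariant vector bundles on $\tcN$ used in part~\eqref{it:bundle-exotic} may observe that only the subclass $\{ \cO_\tcN \otimes V : V \in \Rep(\Gv)\}$ is actually needed for part~\eqref{it:sph-adv}, and that for this subclass the filtration argument is unconditional.
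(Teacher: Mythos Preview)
Your proposal is correct and follows the same route as the paper's proof: deduce that line bundles are exotic from the adverseness of Wakimoto sheaves via the equivalence $P$, extend to vector bundles by a line-bundle filtration, and then pull back the filtration of $\cO_\tcN \otimes V$ through $P^{-1}$ to obtain the Wakimoto filtration of $\Sat(V)$. The paper's proof is extremely terse (one sentence per part), leaving both the vector-bundle filtration and the transfer of the filtration to $\ExCoh(\tcN)$ implicit; your version simply spells out these steps, including the classification of equivariant bundles on $\fuv$ via the contracting $\Gm$-action.
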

Part~\eqref{it:sph-adv} of this proposition should be compared to~\cite[Theorem~4]{ab}.
\begin{proof}
Part~\eqref{it:bundle-exotic} follows from the fact that Wakimoto sheaves on $\Gr$ are adverse (see~\S\ref{ss:wakimoto}).  In particular, part~\eqref{it:bundle-exotic} tells us that trivial vector bundles of the form $\cO_\tcN \otimes V$, where $V \in \Rep(\Gv)$, lie in $\ExCoh(\tcN)$.  Since $P^{-1}(\cO_\tcN \otimes V) \cong \Sat(V)$, part~\eqref{it:sph-adv} follows.
\end{proof}

We finish with a fact that may be useful for computations.  It should be compared with the corresponding fact (see~\eqref{eqn:aj}) for $\PCohN$.

\begin{prop}
The costandard objects in $\ExCoh(\tcN)$ also belong to $\Coh(\tcN)$.
\end{prop}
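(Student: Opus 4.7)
By Theorem~\ref{thm:adv-exotic} and the discussion following it, $\excos(\lambda) \cong P(\uicos(\lambda))$ in $\Db\Coh(\tcN)$, so the task is to show that this object belongs to the heart $\Coh(\tcN)$.  The plan is to establish a Serre-type cohomological criterion for membership in $\Coh(\tcN)$ and then translate it, via the equivalence $P$, into a standard-to-costandard $\Ext$-vanishing in $\Adv_{(I)}(\Gr)$.

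The required criterion is this: for $\cF \in \Db\Coh(\tcN)$, one has $\cF \in \Coh(\tcN)$ if and only if $\uHom^i(\cO_\tcN(-\sigma), \cF) = 0$ for all $i \neq 0$ and all $\sigma \in \bXp$ sufficiently regular dominant.  The direction actually needed is ``if''.  Given that vanishing, the hypercohomology spectral sequence
\[
E_2^{p,q} = \uH^p(\tcN, H^q(\cF) \otimes \cO_\tcN(\sigma)) \Longrightarrow \uHom^{p+q}(\cO_\tcN(-\sigma), \cF)
\]
degenerates as soon as $\sigma$ is dominant enough, because $\pi \colon \tcN \to \cN$ is proper with $\cN$ affine, the line bundle $\cO_\tcN(\sigma)$ is $\pi$-ample for $\sigma$ regular dominant, and $\cF$ has only finitely many nonzero standard cohomology sheaves, so Serre-type vanishing can be applied simultaneously to all of them.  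Degeneration forces $\Gamma(\tcN, H^n(\cF) \otimes \cO_\tcN(\sigma)) = 0$ for all $n \neq 0$; choosing $\sigma$ large enough that each of these twisted coherent sheaves is globally generated yields $H^n(\cF) = 0$ for $n \neq 0$, as desired.

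Applying this criterion to $\cF = \excos(\lambda)$ and using the identification $P(\ocW_{-\sigma}) \cong \cO_\tcN(-\sigma)$ from Proposition~\ref{prop:waki-line-bdl}, together with the full faithfulness of $P$, the claim reduces to
\[
\gHom^i(\ocW_{-\sigma}, \uicos(\lambda)) = 0 \qquad \text{for $i \neq 0$ and $\sigma \in \bXp$ sufficiently dominant.}
\]
For every $\sigma \in \bXp$, Lemma~\ref{lem:waki-support} identifies $\ocW_{-\sigma}$ with $\uistd(-\sigma)\{-\delta_\sigma\}$, a standard object in $\Adv_{(I)}(\Gr)$, while Lemma~\ref{lem:adv-mut} identifies $\uicos(\lambda)\{-\delta_\lambda\}$ as a costandard object.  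By Proposition~\ref{prop:adverse-t-struc}, the $\gHom$-groups under consideration coincide with $\Ext$-groups computed in the abelian category $\Adv_{(I)}(\Gr)$, and they vanish in all nonzero degrees by the standard-to-costandard $\Ext$-vanishing intrinsic to any (properly stratified or graded quasihereditary) highest-weight category---in fact for \emph{every} $\sigma \in \bXp$, which is more than enough.  The principal obstacle to executing this plan is the Serre-type criterion of the second paragraph: while classical in spirit, its verification in the $(\Gv \times \Gm)$-equivariant graded coherent setting demands care in tracking the interaction of the grading shift $\la 1\ra$ with $\pi$-ampleness of $\cO_\tcN(\sigma)$ and the finite generation properties used in the global generation step.
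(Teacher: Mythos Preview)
Your argument is correct and takes a genuinely different route from the paper's.  You reduce membership in $\Coh(\tcN)$ to a Serre-type ampleness criterion (via the hypercohomology spectral sequence and $\pi$-ampleness of $\cO_\tcN(\sigma)$ for $\sigma$ regular dominant), and then translate that criterion through $P$ into the standard-to-costandard $\Ext$-vanishing in the quasihereditary category $\Adv_{(I)}(\Gr)$.  The paper instead argues directly in two short steps: first, since $\uicos(\lambda)$ has weights${}\ge 0$, it is represented by a complex of parity sheaves in nonpositive degrees, so $H^i(\excos(\lambda)) = 0$ for $i > 0$; second, letting $k$ be minimal with $H^k(\excos(\lambda)) \ne 0$, any nonzero map from a vector bundle $\cF$ to $H^k(\excos(\lambda))$ gives a nonzero morphism $\cF[-k] \to \excos(\lambda)$, and since vector bundles were just shown to lie in $\ExCoh(\tcN)$, this forces $k \ge 0$.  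The paper's proof is shorter and avoids Serre vanishing entirely, leaning only on weights and the immediately preceding result that vector bundles are exotic.  Your proof is more structural---it exhibits the coherence of $\excos(\lambda)$ as precisely the transport of standard--costandard $\Ext$-vanishing across $P$---and would adapt more readily to other situations with an ample family.  The equivariant caveat you raise is real but harmless: $\cN$ is affine, $\pi$ is projective, $\cO_\tcN(\sigma)$ is $\pi$-ample for $\sigma$ regular dominant (being pulled back from an ample bundle on $\cBv$), so relative Serre vanishing applies, and vanishing of the graded $\uHom^i$ is equivalent to vanishing of ordinary hypercohomology since the underlying functor forgets only a grading.
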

\begin{proof}
Recall that $\uicos(\lambda)$ has weights${}\ge 0$, and thus can be written as a complex of parity sheaves $\cE^\bullet \in \Kb\Parity_{(I)}(\Gr)$ with nonzero terms only in nonpositive degrees.  From the definition of $P$, we see immediately that $H^i(P(\uicos(\lambda))) \cong H^i(\excos(\lambda))$ vanishes when $i > 0$.  Now, let $k$ be the smallest integer such that $H^k(\excos(\lambda)) \ne 0$.  It is easy to see that every nonzero coherent sheaf on $\tcN$ admits a nonzero map from (and indeed, is a quotient of) some vector bundle.  Let $\cF$ be a vector bundle such there is a nonzero map $\cF \to H^k(\excos(\lambda))$.  This gives rise to a nonzero map $\cF[-k] \to \excos(\lambda)$.  Since $\cF$ and $\excos(\lambda)$ both lie in $\ExCoh(\tcN)$, we must have $k \ge 0$.  But we already knew that $k \le 0$, so $k = 0$, and $\excos(\lambda)$ is a coherent sheaf.
\end{proof}

\appendix
\section{Complements on mixed modular derived categories\\ (joint with Simon Riche\protect\footnote{S.R. was partially supported by ANR Grant No.
ANR-13-BS01-0001-01.\\
\strut\hspace{\parindent}{\it Address}: Universit{\'e} Blaise Pascal - Clermont-Ferrand II, Laboratoire de Math{\'e}matiques, CNRS, UMR 6620, Campus universitaire des C{\'e}zeaux, F-63177 Aubi{\`e}re Cedex, France.\\
\strut\hspace{\parindent}{\it E-mail address}: \texttt{simon.riche@math.univ-bpclermont.fr}})}
\label{sect:mixed}

\newcommand{\dmix}{\Delta^\mix}
\newcommand{\nmix}{\nabla^\mix}

\subsection{Overview}
\label{ss:ghom}

Let $X$ be a variety or an ind-variety equipped with a stratification $\scS$ by affine spaces, and let $\bk$ be a field or a complete discrete valuation ring.  Assume that $X$ and $\scS$ satisfy assumptions~\textrm{\bf{(A1)}} and~\textrm{\bf{(A2)}} of~\cite{arc:f2} with respect to $\bk$. Let $\Parity_\scS(X)$ be the additive category of parity complexes on $X$ with coefficients in $\bk$.  For each $s \in \scS$, let $i_s: X_s \hookrightarrow X$ be the inclusion of the corresponding stratum, and let $\cE_s$ denote the unique indecomposable parity sheaf supported on $\overline{X_s}$ and whose restriction to $X_s$ is $\ubk\{\dim X_s\}$.  Following~\cite{arc:f2}, we define the category
\[
\Dmix_\scS(X) := \Kb\Parity_\scS(X).
\]
Below is a summary of the main features of this category from~\cite{arc:f2,arc:f3}. Later subsections give a handful of new results that were not needed in those sources.

\subsubsection*{\it Shift and Tate twist.} In addition to the usual shift functor $[1]: \Dmix_\scS(X) \to \Dmix_\scS(X)$, there is another automorphism $\{1\}: \Dmix_\scS(X) \to \Dmix_\scS(X)$, induced by an automorphism of $\Parity_\scS(X)$. We also set $\la 1\ra := \{-1\}[1]$.  This last automorphism is called the \emph{Tate twist}.

\subsubsection*{\it Sheaf functors.} If $h: Y \hookrightarrow X$ is the inclusion of a locally closed union of strata, then there are functors $h_*$, $h_!$, $h^*$, and $h^!$ between $\Dmix_\scS(X)$ and $\Dmix_\scS(Y)$ that enjoy all the usual adjunction properties.  

\subsubsection*{\it Mixed perverse sheaves.} There is a \emph{perverse t-structure} whose heart is denoted $\Perv^\mix_\scS(X)$.  This category is stable under $\la 1\ra$, and it contains the objects $\dmix_s := i_{s!} \ubk\{ \dim X_s\}$ and $\nmix_s := i_{s*} \ubk\{ \dim X_s\}$. When $\bk$ is a field, $\Perv^\mix_\scS(X)$ is a graded quasihereditary category.

\subsubsection*{\it Weights.} There are notions of \emph{weights} and \emph{purity} that share some formal properties with the corresponding notions in~\cite{deligne, bbd}.  The functor $\{1\}$ preserves weights, while $[1]$ and $\la 1\ra$ increase weights by $1$.  The definitions imply that
\begin{equation}\label{eqn:wt-hom-van}
\Hom(\cF,\cG) = 0\qquad\text{if $\cF$ has weights${}< n$ and $\cG$ has weights${}\ge n$.}
\end{equation}
An object $\cF \in \Dmix_\scS(X)$ is said to be \emph{$*$-pure} (resp.~\emph{$!$-pure}) \emph{of weight $n$} if $i_s^*\cF$ (resp.~$i_s^!\cF$) is pure of weight $n$ for all $s \in \scS$.  The notion of $*$-purity corresponds roughly to pointwise purity in the sense of~\cite{bbd}.  By~\cite[Lemma~3.5]{arc:f3}, an object that is $*$- and $!$-pure of weight $n$ is pure of weight $n$.

\subsubsection*{Hom functors.} One can associate to any $\cF, \cG \in \Dmix_\scS(X)$ a certain object in the derived category of $\bk$-modules denoted $\RHom(\cF,\cG)$.  This construction is functorial in both variables, and it satisfies $H^i(\RHom(\cF,\cG)) \cong \Hom^i(\cF,\cG)$.

The following variation on this construction will be useful:
for $\cF, \cG \in \Dmix_\scS(X)$, let $\gHom(\cF,\cG)$ to be the graded vector space given by
\[
\gHom(\cF,\cG)_n := \Hom(\cF,\cG\{n\}).
\]
One can then define a derived version $\gRHom(\cF,\cG)$ as in~\cite[\S2.7]{arc:f2}, satisfying $H^i(\gRHom(\cF,\cG)) \cong \gHom^i(\cF,\cG)$.

\subsection{The adverse t-structure}
\label{ss:adverse}

In this subsection, we assume for simplicity that $\bk$ is a field.
Consider the following full subcategories of $\Dmix_\scS(X)$:
\begin{equation}\label{eqn:adv-defn}
\begin{aligned}
\ad\Dmix_\scS(X)^{\le 0} &= \{ \cF \mid \text{for all $s \in \scS$, $i_s^*\cF$ has weights${}\ge 0$} \}, \\
\ad\Dmix_\scS(X)^{\ge 0} &= \{ \cF \mid \text{for all $s \in \scS$, $i_s^!\cF$ has weights${}\le 0$} \}.
\end{aligned}
\end{equation}
It is easy to check that these categories admit the following alternative descriptions:
\begin{equation}\label{eqn:adv-defn2}
\begin{aligned}
\ad\Dmix_\scS(X)^{\le 0} &= \begin{array}{@{}c@{}}
\text{the smallest strictly full subcategory that is stable under}\\
\text{extensions and contains $\dmix_s\{n\}[k]$ for all $n \in \Z$ and $k \ge 0$}
\end{array} \\
\ad\Dmix_\scS(X)^{\ge 0} &= \begin{array}{@{}c@{}}
\text{the smallest strictly full subcategory that is stable under}\\
\text{extensions and contains $\nmix_s\{n\}[k]$ for all $n \in \Z$ and $k \le 0$}
\end{array}
\end{aligned}
\end{equation}
We put
\[
\Adv_\scS(X) := \ad\Dmix_\scS(X)^{\le 0} \cap \ad\Dmix_\scS(X)^{\ge 0},
\]
and we call objects of $\Adv_\scS(X)$ \emph{adverse sheaves}.

\begin{prop}\label{prop:adverse-t-struc}
The pair $(\ad\Dmix_\scS(X)^{\le 0}, \ad\Dmix_\scS(X)^{\ge 0})$ constitutes a bounded t-structure on $\Dmix_\scS(X)$.  Its heart $\Adv_\scS(X)$ is a graded quasiheredi\-tary category in which the standard (resp.~costandard) objects are those of the form
\[
\dmix_s\{n\},
\qquad\text{resp.}\qquad
\nmix_s\{n\},
\]
and the category $\Tilt(\Adv_\scS(X))$ of tilting objects in $\Adv_\scS(X)$ is identified with $\Parity_\scS(X)$.  Lastly, there is an equivalence of  categories $\Db\Adv_\scS(X) \simto \Dmix_\scS(X)$.
\end{prop}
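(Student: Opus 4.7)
The plan is to verify the t-structure axioms directly using the generator description~\eqref{eqn:adv-defn2}, to identify the heart's structure, and finally to invoke a tilting-theoretic realization result.

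\textbf{Step 1 (Standards and costandards lie in the heart).} First I would check that $\dmix_s\{n\}$ and $\nmix_s\{n\}$ belong to $\Adv_\scS(X)$, which amounts to showing that $\dmix_s$ is both $*$- and $!$-pure of weight $0$ (and similarly for $\nmix_s$). The $!$-purity of $\dmix_s$ is easy: for $t\ne s$, writing $i_s$ as an open immersion into $\overline{X_s}$ followed by a closed immersion, one gets $i_t^!\dmix_s=0$ (since $X_t$ lies in the closed complement of $X_s$ inside $\overline{X_s}$), while $i_s^!\dmix_s=\ubk\{\dim X_s\}$ is pure of weight~$0$. The $*$-purity of $\dmix_s$ is the assertion that $i_t^*\dmix_s$ is pure of weight~$0$ for each $t$, which follows because $\ubk_{X_s}\{\dim X_s\}$ is parity (hence pure) and $i_{s!}$ of a parity sheaf restricts to parity sheaves on each stratum under the standing affineness assumptions. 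The argument for $\nmix_s$ is symmetric.

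\textbf{Step 2 (Hom vanishing).} Next I would prove that $\Hom(A,B[-1])=0$ whenever $A\in\ad\Dmix_\scS(X)^{\le 0}$ and $B\in\ad\Dmix_\scS(X)^{\ge 0}$. By the characterization~\eqref{eqn:adv-defn2} and d\'evissage, it suffices to check this for $A=\dmix_s\{n\}[k]$ with $k\ge 0$ and $B=\nmix_t\{m\}[l]$ with $l\le 0$. If $s\ne t$ the $\Hom$ vanishes by adjunction together with $i_t^*\dmix_s=0$ or $i_s^!\nmix_t=0$. If $s=t$ it reduces to $\Hom(\ubk,\ubk\{m-n\}[l-k-1])$ on $X_s$, which vanishes since $l-k-1\le -1$ and $X_s$ is an affine space.

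\textbf{Step 3 (Existence of truncation triangles).} This is the main obstacle, and I would carry it out by induction on the number of strata using recollement. Pick a closed stratum $X_s$ with open complement $U=X\smallsetminus X_s$ and the associated recollement between $\Dmix_\scS(U)$, $\Dmix_\scS(X)$ and $\Dmix_\scS(X_s)$. On the point-like stratum $X_s$ the category $\Dmix_\scS(X_s)$ carries the obvious weight t-structure (whose heart is just $\bk$-vector spaces with Tate shifts), which one checks restricts from the would-be adverse t-structure. By induction $U$ carries such a t-structure. A standard gluing argument, using the $\Hom$-vanishing of Step~2 to verify the compatibility hypotheses of the BBD gluing theorem (adapted to the mixed modular setting), then yields the truncation functors on $X$ and shows boundedness.

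\textbf{Step 4 (Quasihereditary structure and tilting).} Once the t-structure exists, the abstract properties of its heart follow formally: the $\dmix_s\{n\}$ (resp.\ $\nmix_s\{n\}$) satisfy the Hom-orthogonality and generation required to be the standard (resp.\ costandard) objects of a graded quasihereditary structure, indexed by $\scS\times\Z$ with the partial order inherited from the closure order on strata. A tilting object in $\Adv_\scS(X)$, having both standard and costandard filtrations, is automatically $*$- and $!$-pure of weight~$0$, hence pure of weight~$0$ by~\cite[Lemma~3.5]{arc:f3}; conversely every indecomposable parity complex is pure of weight~$0$ and extends along each stratum as a successive extension of $\dmix_s$'s and $\nmix_s$'s. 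This identifies $\Tilt(\Adv_\scS(X))$ with $\Parity_\scS(X)$.

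\textbf{Step 5 (Realization).} Finally, the derived equivalence $\Db\Adv_\scS(X)\simto\Dmix_\scS(X)$ follows from the chain of identifications
\[
\Db\Adv_\scS(X)\;\simeq\;\Kb\Tilt(\Adv_\scS(X))\;\simeq\;\Kb\Parity_\scS(X)\;=\;\Dmix_\scS(X),
\]
where the first equivalence is the standard fact for graded quasihereditary categories with enough tilting objects (see e.g.~\cite[Proposition~1.5]{bbm}), the second is Step~4, and the last is the definition of $\Dmix_\scS(X)$.
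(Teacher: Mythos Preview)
Your overall architecture (recollement gluing for the t-structure, then quasihereditary formalism, then tilting realization) matches the paper's approach, but Step~1 contains a genuine error that you should be aware of.

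You claim that $i_t^!\dmix_s = 0$ for $t \ne s$, arguing that ``$X_t$ lies in the closed complement of $X_s$ inside $\overline{X_s}$.'' This reasoning gives the vanishing of $i_t^*\dmix_s$, not of $i_t^!\dmix_s$: for an open immersion $j$ with closed complement $i$, one has $i^*j_! = 0$ and $i^!j_* = 0$, but $i^!j_!$ is typically \emph{nonzero}. Already for $X = \mathbb{P}^1$ with the open stratum $\mathbb{A}^1$, the costalk $i_0^!\dmix_1$ at the closed point is the shifted cone of $\dmix_1 \to \nmix_1$, which does not vanish. In fact the stronger assertion that $\dmix_s$ is $!$-pure of weight~$0$ is false in general: by Lemma~\ref{lem:adv-std-filt} (proved later, but the implication is the point), $!$-purity would force $\dmix_s$ to have a costandard filtration, which it does not unless $X_s$ is closed. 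You have essentially swapped the two arguments: the ``closed complement'' reasoning is exactly what shows $*$-purity of $\dmix_s$ (and dually $!$-purity of $\nmix_s$), while your claimed justification of $*$-purity via ``$i_{s!}$ of a parity sheaf restricts to parity sheaves'' is also off, since $\dmix_s$ is not a parity sheaf.

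The fix is not to prove purity at all. What you actually need is that $i_t^!\dmix_s$ has weights${}\le 0$, and this follows because $\dmix_s$ itself has weights${}\le 0$ and the functor $i_t^!$ does not increase weights (see \cite[Lemmas~3.3 and~3.4]{arc:f3}). This is precisely how the paper argues: it shows $\Hom^k(\dmix_s\{n\},\dmix_t\{m\}) = 0$ for $k<0$ via adjunction and the weight bound on $i_s^!\dmix_t$, and then invokes semisimplicity of $\Dmix_\scS(X_s)$. With this correction in place, the remainder of your outline (Steps~2--5) is sound and essentially coincides with the paper's proof, though your converse direction in Step~4 (parity $\Rightarrow$ tilting) would benefit from being made precise via the criterion of \cite[Lemma~4]{bez:ctm}, as the paper does, rather than the vague ``extends along each stratum'' phrasing.
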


\begin{rmk}
The definitions above also make sense in the setting of an equivariant mixed derived category $\Dmix_{H,\scS}(X)$ as in~\cite[\S3.5]{arc:f2} or \S\ref{ss:eqvt-der-cat}.  \emph{However, $\ad\Dmix_{H,\scS}(X)^{\le 0}$ and $\ad\Dmix_{H,\scS}(X)^{\ge 0}$ do not constitute a t-structure in general.}  Specifically, truncation distinguished triangles as in~\cite[D\'efinition~1.3.1(iii)]{bbd} can fail to exist.  This can be seen already in the case where $X$ is a single stratum.
\end{rmk}

\begin{rmk}
When $\bk$ is not a field, there is a unique t-structure with $\ad\Dmix_\scS(X)^{\le 0}$ as in~\eqref{eqn:adv-defn} or~\eqref{eqn:adv-defn2}, but the descriptions of $\ad\Dmix_\scS(X)^{\ge 0}$ must be modified in this case (cf.~\cite[Proposition~3.4]{arc:f2}).  The heart of this t-structure behaves in many ways like a quasihereditary category.  In particular, it has properties like those discussed in~\cite[\S3.3]{arc:f2}.
\end{rmk}

\begin{proof}[Proof Sketch]
This statement is very similar to~\cite[Lemma~10.8]{arc:kd}.  

Suppose first that $X$ consists of a single stratum.  Then $\Dmix_\scS(X)$ is a semisimple category.  The description given in~\cite[Lemma~3.1]{arc:f2} can be used to check that $(\ad\Dmix_\scS(X)^{\le 0}, \ad\Dmix_\scS(X)^{\ge 0})$ is indeed a t-structure.  For general $X$, the claim that this is a t-structure follows by the machinery of recollement.

Next, we claim that all the $\dmix_s\{n\}$ lie in the heart of this t-structure.  It suffices to show that $\Hom^k(\dmix_s\{n\}, \dmix_t\{m\}) = 0$ for $k < 0$.  By adjunction, we have
\[
\Hom^k(\dmix_s\{n\}, \dmix_t\{m\}) \cong \Hom(\ubk\{\dim X_s + n\}, i_s^!\dmix_t\{m\}[k]).
\]
By~\cite[Lemmas~3.3 and~3.4]{arc:f3}, $i_s^!\dmix_t\{m\}[k]$ has weights${}\le k$, while $\ubk\{\dim X_s + n\}$ is, of course, pure of weight~$0$.  When $k < 0$, the semisimplicity of $\Dmix_\scS(X_s)$ (together with, say, the description in~\cite[Example~3.2]{arc:f3}) implies that the $\Hom$-group above vanishes.  A similar argument shows that the $\nmix_s\{n\}$ lie in $\Adv_\scS(X)$ as well.

General principles from the theory of quasihereditary categories then imply that $\Adv_\scS(X)$ is quasihereditary, that it has the claimed standard and costandard objects, and that we have $\Db\Adv_\scS(X) \simto \Dmix_\scS(X)$.

Finally, let us check that $\Tilt(\Adv_\scS(X)) = \Parity_\scS(X)$.  By considering weights, we see that $\Hom^k(\dmix_t\{m\}, \cE_s\{n\}) = \Hom^k(\cE_s\{n\}, \nmix_t\{m\}) = 0$ for all $k > 0$.  According to the criterion in~\cite[Lemma~4]{bez:ctm}, each $\cE_s\{n\}$ is an indecomposable tilting object in $\Adv_\scS(X)$.  On the other hand, we have produced ``enough'' tilting objects: by the classification in, say,~\cite[Proposition~A.4]{arc:f2}, every indecomposable tilting object in $\Adv_\scS(X)$ must be isomorphic to some $\cE_s\{n\}$.
\end{proof}

When $X$ is a (finite-dimensional) flag variety, the adverse t-structure is the transport of the perverse t-structure across the ``self-duality'' equivalence of~\cite{arc:f2}.

\begin{lem}\label{lem:adv-std-filt}
Let $\cF \in \Dmix_\scS(X)$.  The following conditions are equivalent:
\begin{enumerate}
\item $\cF$ is $*$-pure of weight $0$.\label{it:adv-std-filt-stalk}
\item $\cF$ lies in $\Adv_\scS(X)$ and has a standard filtration.\label{it:adv-std-filt-filt}
\end{enumerate}
Likewise, the following conditions are equivalent:
\begin{enumerate}
\item $\cF$ is $!$-pure of weight $0$.
\item $\cF$ lies in $\Adv_\scS(X)$ and has a costandard filtration.
\end{enumerate}
\end{lem}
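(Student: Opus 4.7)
My plan is to prove the two equivalences in opposite directions, each by a short argument that avoids any independent computation of the weights of standard or costandard objects.

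For the direction (2)$\Rightarrow$(1), I would start from the hypothesis that $\cF \in \Adv_\scS(X)$ has a standard filtration and apply the quasihereditary $\Ext$-vanishing criterion (together with the derived equivalence $\Db\Adv_\scS(X) \simeq \Dmix_\scS(X)$ from Proposition~\ref{prop:adverse-t-struc}) to deduce $\Hom^j(\cF, \nmix_s\{n\}) = 0$ for all $j > 0$ and all $s, n$; the vanishing for $j < 0$ is automatic since both objects lie in the heart. Adjunction rewrites this as $\Hom^j(i_s^*\cF, \ubk\{\dim X_s + n\}) = 0$ for $j \ne 0$, and writing $i_s^*\cF = \bigoplus_{m,k} V_{m,k} \otimes \ubk\{m\}[k]$ in the semisimple category $\Dmix_\scS(X_s)$ this amounts to $V_{\dim X_s + n,\, j}^* = 0$ for $j \ne 0$. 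Letting $n$ vary forces $V_{*,k} = 0$ whenever $k \ne 0$, i.e., $i_s^*\cF$ is pure of weight~$0$.

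For the direction (1)$\Rightarrow$(2), I would proceed by induction on $|\{s \in \scS : i_s^*\cF \ne 0\}|$. At the inductive step I would pick $s$ with $X_s$ open in $\supp\cF$, let $Z := \supp\cF \smallsetminus X_s$, and consider the open-closed distinguished triangle
\[
k_{s!}k_s^*\cF \to \cF \to k_{Z*}k_Z^*\cF \to ,
\]
where $k_s \colon X_s \hookrightarrow X$ and $k_Z \colon Z \hookrightarrow X$ are the inclusions. The left-hand term is a direct sum of standards $\dmix_s\{m\}$ (because $k_s^*\cF$ is pure of weight~$0$ on $X_s \cong \mathbb{A}^{\dim X_s}$ and hence a direct sum of Tate twists of $\ubk$ by semisimplicity); the object $k_Z^*\cF$ is $*$-pure of weight~$0$ on $Z$ with strictly smaller support, so by induction it is adverse with a standard filtration, and the closed pushforward $k_{Z*}$ preserves this (carrying $\dmix_t$ on $Z$ to $\dmix_t$ on $X$). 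Taking cohomology in the adverse $t$-structure, the fact that both outer terms lie in the heart forces $\cF \in \Adv_\scS(X)$ via the long exact sequence and identifies the triangle with a short exact sequence in $\Adv_\scS(X)$; concatenating standard filtrations of the outer terms then yields one for $\cF$.

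The $!$-pure/costandard statement is handled by symmetric arguments: in (2)$\Rightarrow$(1) one applies adjunction to $\Hom^j(\dmix_s\{n\}, \cF)$ to read off purity of $i_s^!\cF$, and in (1)$\Rightarrow$(2) one runs the same induction using the dual open-closed triangle. The only potential obstacle I anticipate is checking that the inductive step in (1)$\Rightarrow$(2) really produces an adverse sheaf, but this follows automatically from the long exact cohomology sequence, so no further input beyond Proposition~\ref{prop:adverse-t-struc} and the semisimplicity of strata should be required.
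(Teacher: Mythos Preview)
Your proposal is correct and follows essentially the same approach as the paper. The direction (1)$\Rightarrow$(2) is identical: both argue by induction on the number of strata in the support, peel off an open stratum via the open-closed triangle $i_{s!}i_s^*\cF \to \cF \to h_*h^*\cF \to$, observe that the left term is a direct sum of standards by semisimplicity on the stratum, apply induction to the right term, and conclude via the long exact sequence in the adverse heart. For (2)$\Rightarrow$(1) the paper simply observes that each $\dmix_s\{n\}$ is visibly $*$-pure (its stalks are either $0$ or a single $\ubk\{m\}$) and that $*$-purity is closed under extensions by semisimplicity on strata; your route through the quasihereditary $\Ext$-vanishing criterion and adjunction reaches the same conclusion by a slightly longer but equally valid path.
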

\begin{proof}
We will just prove the first equivalence.  It is clear that every standard object satisfies condition~\eqref{it:adv-std-filt-stalk}, so~\eqref{it:adv-std-filt-filt} implies~\eqref{it:adv-std-filt-stalk}.  For the other implication, we proceed by induction on the number of strata in the support of $\cF$.  Let $X_s$ be a stratum that is open in the support of $\cF$.  Let $Z$ be the union of the closures of all strata other than $X_s$ in the support of $\cF$, and let $h: Z \hookrightarrow X$ be the inclusion map.  Then there is a distinguished triangle $i_{s!}i_s^*\cF \to \cF \to h_*h^*\cF \to$.  By induction, $h_*h^*\cF$ is adverse and has a standard filtration.  (Note that the recollement setup implies that $h_*$ is t-exact for the adverse t-structure.)  On the other hand, $i_s^*\cF$ is a direct sum of various $\ubk\{n\}$, so $i_{s!}i_s^*\cF$ is a direct sum of various $\dmix_s\{n\}$.  The result follows.
\end{proof}

Below, we will study the exactness of various functors related to \emph{stratified morphisms} in the sense of~\cite[\S2.6]{arc:f2}.  These statements will sometimes be invoked in the equivariant setting, but since there is no t-structure in that case, some caution is required.  Let us spell out what ``exactness'' means.  Let $Y = \bigcup_{t \in \scT} Y_t$ be another variety equipped with a stratification by affine spaces and satisfying \textrm{\bf{(A1)}} and \textrm{\bf{(A2)}}.  Suppose $H$ and $K$ are connected algebraic groups acting on $X$ and $Y$, respectively, and that these actions preserve the strata.  A functor $F: \Dmix_{H,\scS}(X) \to \Dmix_{K,\scT}(Y)$ is said to be \emph{left adverse-exact}, resp.~\emph{right adverse-exact},~if
\[
F(\ad\Dmix_{H,\scS}(X)^{\ge 0}) \subset \ad\Dmix_{K,\scT}(Y)^{\ge 0},
\qquad\text{resp.}\qquad
F(\ad\Dmix_{H,\scS}(X)^{\le 0}) \subset \ad\Dmix_{K,\scT}(Y)^{\le 0}.
\]
If $F$ is both left and right adverse-exact, we say simply that it is \emph{adverse-exact}.  Of course, in the nonequivariant case, these notions coincide with the usual (left or right) t-exactness for the adverse t-structure.

\begin{lem}\label{lem:adv-smooth}
Suppose $f: X \to Y$ is a proper, smooth stratified morphism.  Then $f_*$, $f^*$, and $f^!$ are all adverse-exact.
\end{lem}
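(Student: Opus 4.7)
The plan is to reduce to computations on single strata via base change. For each stratum $Y_t \subset Y$, form the cartesian square
\[
\begin{array}{ccc}
X_t & \xrightarrow{i_{X_t}} & X \\
{\scriptstyle f_t}\downarrow & & \downarrow{\scriptstyle f} \\
Y_t & \xrightarrow{i_t} & Y,
\end{array}
\]
where $X_t := f^{-1}(Y_t)$ is a locally closed union of strata of $X$. Since $f$ is proper, smooth, and stratified, so is each $f_t$; and for each stratum $X_s \subset X_t$, the further restriction $g := f|_{X_s} : X_s \to Y_t$ is a smooth morphism between affine spaces, in fact an affine bundle. The key inputs I will use are: (i) the base change isomorphisms for the square; (ii) $g^*$ sends $\ubk\{n\}$ to $\ubk\{n\}$ and therefore preserves weights on single strata; (iii) the identity $f^! \cong f^*\{2d\}$ for $f$ smooth of relative dimension $d$ (and the analogue for $g$); and (iv) weight-preservation of proper pushforward, i.e., for $h$ proper, $h_*$ carries objects of weight ${}\ge n$ (resp.~${}\le n$) to objects of the same kind.

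For $f^*$, given $\cG \in \ad\Dmix_\scT(Y)^{\le 0}$, base change yields $i_s^* f^*\cG \cong g^* i_t^*\cG$ for each stratum $X_s \subset X_t$. The hypothesis says $i_t^*\cG$ has weights ${}\ge 0$, and (ii) then gives the same for $i_s^* f^*\cG$, proving left adverse-exactness of $f^*$. For right adverse-exactness, apply (iii) twice: $i_s^! f^*\cG \cong i_s^! f^!\cG\{-2d\} \cong g^! i_t^!\cG\{-2d\} \cong g^* i_t^!\cG\{2d_s - 2d\}$, where $d_s$ is the relative dimension of $g$. Since $\{n\}$ preserves weights, this reduces to checking that $g^* i_t^!\cG$ has weights ${}\le 0$, which again follows from (ii).

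Adverse-exactness of $f^!$ is then immediate from that of $f^*$ via (iii): the autoequivalence $\{2d\}$ preserves both halves of the adverse t-structure, since it preserves weights and commutes with every $i_s^*$ and $i_s^!$.

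For $f_*$, properness gives $f_* = f_!$, and proper base change yields
\[
i_t^* f_*\cF \cong f_{t*} i_{X_t}^*\cF, \qquad i_t^! f_*\cF \cong f_{t*} i_{X_t}^!\cF.
\]
If $\cF \in \ad\Dmix_\scS(X)^{\le 0}$, then $i_{X_t}^*\cF$ has stalks of weight ${}\ge 0$ at every stratum of $X_t$, i.e.\ it is itself of weight ${}\ge 0$; by (iv), $f_{t*} i_{X_t}^*\cF$ is of weight ${}\ge 0$, whence so is $i_t^* f_*\cF$, for every $t$. The right adverse-exactness argument is entirely parallel, using the second base change identity and that proper pushforward preserves ``weights ${}\le 0$''. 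The main obstacle will be pinning down (iii) and (iv) precisely in the mixed modular framework of~\cite{arc:f2,arc:f3}: the Tate-twist normalization underlying $f^! \cong f^*\{2d\}$ for smooth $f$ must be verified, and the modular analogue of Deligne's theorem on weights of proper direct images must be cited in the form needed here. Given those, the rest of the argument is purely formal base change.
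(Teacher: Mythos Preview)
Your treatment of $f^*$ and $f^!$ is correct and closely parallels the paper's. Both reduce to stratum-level computations; the paper checks that $f^*\dmix_t$ and $f^*\nmix_t$ land in the appropriate halves via the generator description~\eqref{eqn:adv-defn2}, while you use the stalk/costalk criterion~\eqref{eqn:adv-defn} together with base change. These are equivalent. (A minor point: your identity $g^! \cong g^*\{2d_s\}$ assumes $g = f|_{X_s}$ is smooth, which is not automatic from smoothness of $f$; but since $X_s$ and $Y_t$ are both affine spaces, the weaker fact $g^!\ubk \cong \ubk\{2d_s\}$ follows from a dualizing-complex computation and is all you actually need.)

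Your argument for $f_*$, however, has a genuine gap. You write that $i_{X_t}^*\cF$ ``has stalks of weight ${}\ge 0$ at every stratum of $X_t$, i.e.\ it is itself of weight ${}\ge 0$.'' That implication is false: having all $*$-stalks of weight ${}\ge 0$ is exactly the condition of lying in $\ad\Dmix(X_t)^{\le 0}$, which is strictly weaker than having global weights ${}\ge 0$. For a non-closed stratum $X_s \subset X_t$, the standard object $\dmix_s$ has every $*$-stalk pure of weight $0$ yet has weights ${}\le 0$ and is not pure. So your item~(iv)---global weight-preservation under proper pushforward---does not apply to $i_{X_t}^*\cF$, and this is a conceptual gap rather than merely a missing citation. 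The paper handles $f_*$ by citing~\cite[Lemma~3.7]{arc:f2} directly. A clean alternative, once you have adverse-exactness of $f^*$ and $f^!$ in hand, is to deduce that of $f_*$ by adjunction: from $f^* \dashv f_*$, right adverse-exactness of $f^*$ yields left adverse-exactness of $f_*$; and since $f$ is proper, $f_* = f_! \dashv f^!$, so left adverse-exactness of $f^!$ yields right adverse-exactness of $f_*$.
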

\begin{proof}
The adverse-exactness of $f_*$ is immediate from~\cite[Lemma~3.7]{arc:f2}.  Next, let $t \in \scT$ and $s \in \scS$, and observe that
\[
i_s^*(f^*\dmix_t\{n\}) \cong
\begin{cases}
0 & \text{if $X_s \not\subset f^{-1}(X_t)$,} \\
\ubk\{\dim X_t+n\} & \text{if $X_s \subset f^{-1}(X_t)$.}
\end{cases}
\]
In particular, $i_s^*(f^*\dmix_t\{n\})$ has weights${}\ge 0$, so $f^*\dmix_t\{n\}$ lies in $\ad\Dmix_\scS(X)^{\le 0}$.  Similar reasoning with $\nmix_t\{n\}$ shows that $f^*$ is adverse-exact.  Since $f^! \cong f^*\{-2d\}$ where $d$ is the relative dimension of $f$, the functor $f^!$ is adverse-exact as well.
\end{proof}

\begin{lem}\label{lem:adv-smooth-filt}
Suppose $f: X \to Y$ is a proper, smooth, surjective stratified morphism.  Then $f^*$ kills no nonzero adverse sheaf.  Moreover, if $\cF \in \Adv_\scT(Y)$ has a standard (resp.~costandard) filtration, then $f^*\cF$ does as well.  The same statements also hold for $f^!$.
\end{lem}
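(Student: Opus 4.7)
The plan is to reduce each claim to a stratum-by-stratum computation, using the functoriality of $*$-pullback. For any stratum $X_s \subset X$, the stratified hypothesis produces a (unique) stratum $Y_t \subset Y$ with $f(X_s) \subset Y_t$, so that $f \circ i_s = i_t \circ (f|_{X_s})$, and hence $i_s^* f^* \cong (f|_{X_s})^* i_t^*$. The restriction $f|_{X_s} \colon X_s \to Y_t$ is a smooth morphism between affine spaces, so $(f|_{X_s})^*$ sends $\ubk\{n\}$ to $\ubk\{n\}$.

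For the filtration assertions, I would invoke Lemma~\ref{lem:adv-std-filt} to replace the conditions ``has a standard (resp.~costandard) filtration'' by ``is $*$-pure (resp.~$!$-pure) of weight $0$''. Because $(f|_{X_s})^*$ obviously sends a direct sum of shifted constant sheaves on $Y_t$ to such a sum on $X_s$, it preserves purity of weight $0$. Combining this with the functoriality identification above shows that $f^*$ preserves both $*$-purity and $!$-purity of weight $0$, and Lemma~\ref{lem:adv-std-filt} in the opposite direction returns the filtration statement for $f^*$.

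For the non-vanishing claim, I would take a nonzero $\cF \in \Adv_\scT(Y)$ and choose a stratum $Y_t$ that is open in the support of $\cF$, so that $i_t^*\cF \neq 0$ in $\Dmix_\scT(Y_t)$. Since $Y_t$ is an affine space, $\Parity_\scT(Y_t)$ is semisimple on the generators $\{\ubk\{n\}\}$, and hence $\Dmix_\scT(Y_t)$ is equivalent to the bounded derived category of $\Z$-graded $\bk$-vector spaces; in particular $i_t^*\cF$ splits as a nonzero direct sum of shifts and twists of $\ubk$. Surjectivity of $f$ combined with the stratified hypothesis furnishes a stratum $X_s \subset f^{-1}(Y_t)$, and under the analogous description of $\Dmix_\scS(X_s)$ the map $(f|_{X_s})^*$ corresponds (up to the identification of both sides) to the identity, so $i_s^* f^*\cF \cong (f|_{X_s})^* i_t^*\cF$ is the corresponding nonzero sum on $X_s$. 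Hence $f^*\cF \neq 0$.

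The $f^!$ case of both assertions follows immediately from the isomorphism $f^! \cong f^*\{-2d\}$ (where $d$ is the relative dimension of $f$), already invoked in the proof of Lemma~\ref{lem:adv-smooth}: the Tate twist is an autoequivalence that preserves weights, and hence preserves all the conditions of Lemma~\ref{lem:adv-std-filt} as well as the property of being nonzero. The only real technical point is checking that the functoriality $i_s^* f^* \cong (f|_{X_s})^* i_t^*$ lifts to the mixed modular derived category, but this is automatic from the termwise construction of $f^*$ on $\Kb\Parity$ in~\cite{arc:f2,arc:f3}, so I do not expect any serious obstacle beyond bookkeeping.
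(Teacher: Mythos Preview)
Your approach is essentially the same as the paper's: both reduce the filtration claims to purity via Lemma~\ref{lem:adv-std-filt}, both check purity stratum-by-stratum using $i_s^* f^* \cong (f|_{X_s})^* i_t^*$, and both dispose of $f^!$ via $f^! \cong f^*\{-2d\}$. For non-vanishing, the paper reduces to simple adverse sheaves first, while you work directly with an open stratum in the support; these are equivalent maneuvers.

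There is one small imprecision. The functoriality you establish, $i_s^* f^* \cong (f|_{X_s})^* i_t^*$, only controls the $*$-restrictions of $f^*\cF$, so it yields $*$-purity and hence the standard-filtration statement. It does \emph{not} directly give $!$-purity of $f^*\cF$: for that you need $i_s^! f^*\cF$ to be pure, and your identity says nothing about $i_s^!$. The paper handles this by deducing the costandard case from the standard one via $f^* \cong f^!\{2d\}$ (equivalently, one checks $i_s^! f^* \cong (f|_{X_s})^! i_t^!\{2d\} \cong (f|_{X_s})^* i_t^!$ using smoothness on both $f$ and $f|_{X_s}$). Either route closes the gap with one line, but as written your sentence ``shows that $f^*$ preserves both $*$-purity and $!$-purity'' overclaims what the single identity delivers.
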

(Note that, in contrast with~\cite[Corollary~3.9]{arc:f2}, $f^*$ and $f^!$ do not, in general, take simple adverse sheaves to simple adverse sheaves.)
\begin{proof}
For the first assertion, it suffices to show that $f^*$ sends any simple adverse sheaf to a nonzero adverse sheaf.  Let $\cF$ be a simple adverse sheaf on $Y$.  Then $\cF$ is supported on the closure of some stratum $\overline{Y_t}$, and $\cF|_{Y_t} \cong \ubk\{n\}$ for some $n$.  The object $f^*\cF$ clearly has nonzero restriction to any stratum $X_s \subset f^{-1}(Y_t)$, so it is nonzero.

If all $i_t^*\cF$ are pure of weight~$0$, it is easy to see that all $i_s^*(f^*\cF)$ are pure of weight~$0$, so by Lemma~\ref{lem:adv-std-filt}, $f^*$ preserves the property of having a standard filtration.  Using the fact that $f^* \cong f^!\{2d\}$, we obtain the corresponding statement for costandard filtrations, or for $f^!$ in place of $f^*$.
\end{proof}

\subsection{Non-affine stratifications}
\label{ss:non-affine}

The general theory developed in~\cite{arc:f2, arc:f3} involves the assumption throughout that we have a stratification by affine spaces, but occasionally we will want to weaken this requirement.  Let $X$ and $\scS$ be as above, but suppose that we also have another, coarser stratification $\scT$ of $X$.  To distinguish between the two stratifications, the stratum corresponding to $t \in \scT$ will be denoted with a superscript: $X^t$.  Let $i^t: X^t \hookrightarrow X$ be the inclusion map.

By assumption, each $X^t$ is a locally closed smooth subvariety that is a (finite) union of strata $X_s$ for $s \in \scS$.  We further assume that all the $X^t$ are connected and simply connected, and we impose a version of condition \textbf{(A1)}: 
\begin{enumerate}
\item[\textbf{(A1)}$_\scT$] For each $t \in \scT$, there is an indecomposable parity complex $\cE^t \in \Db_\scT(X)$ that is supported on $\overline{X^t}$ and satisfies $i^{t*}\cE^t \cong \ubk_{X_t} \{\dim X_t\}$.
\end{enumerate}
Note that each $\scT$-stratum $X^t$ must contain a unique dense $\scS$-stratum $X_s$, so the parity complex $\cE^t$ above must coincide with the parity complex $\cE_s$.  Thus, since \textbf{(A1)} is already assumed to hold, the new condition \textbf{(A1)}$_\scT$ can be rephrased as follows: for any $s \in \scS$ such that $X_s$ is dense in some $\scT$-stratum, the parity sheaf $\cE_s$ is constructible with respect to $\scT$.

In particular, the additive category $\Parity_\scT(X)$ of parity complexes constructible with respect to $\scT$ is a full subcategory of $\Parity_\scS(X)$.  We define the category
\[
\Dmix_\scT(X) := \Kb\Parity_\scT(X),
\]
and identify it with a full subcategory of $\Dmix_\scS(X)$.  Great care must be taken in working with $\Dmix_\scT(X)$, as the results of~\cite{arc:f2, arc:f3} do not automatically apply.  One basic fact we will need is the following.

\begin{prop}\label{prop:recollement-coarse}
Let $X$, $\scS$, and $\scT$ be as above.  Let $j: U \hookrightarrow X$ be an open inclusion of $\scT$-strata, and let $i: Z \hookrightarrow X$ be the complementary closed inclusion.
\begin{enumerate}
\item If $\cF \in \Dmix_\scT(U)$, then $j_!\cF$ and $j_*\cF$ lie in $\Dmix_\scT(X)$.
\item If $\cF \in \Dmix_\scT(X)$, then $i^*\cF$ and $i^!\cF$ lie in $\Dmix_\scT(Z)$.
\end{enumerate}
\end{prop}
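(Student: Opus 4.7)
The plan is to reduce both parts of the proposition to a statement about a single parity sheaf, and then to control the output of the recollement functors by a Krull--Schmidt argument at the level of indecomposable parity summands.

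First, I would observe that $\Dmix_\scT(?) = \Kb\Parity_\scT(?)$ is the full triangulated subcategory of $\Dmix_\scS(?)$ generated by the objects $\cE^t\{n\}$, for $t$ ranging over the $\scT$-strata contained in $?$ and $n \in \Z$, under shifts, Tate twists, and cones. Since $j_!, j_*, i^*, i^!$ are triangulated functors commuting with $\{1\}$, a routine devissage argument reduces each of the four sub-claims to the case in which $\cF$ is a single $\scT$-parity sheaf $\cE^t$.

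Next, I would establish the following key lemma: \emph{if $\cE \in \Parity_\scS(Y)$ is indecomposable and its image in the non-mixed derived category $\Db_\scS(Y)$ happens to be $\scT$-constructible, then already $\cE \in \Parity_\scT(Y)$.} Every such $\cE$ equals some $\cE_s$ with support $\overline{X_s}$; $\scT$-constructibility of $\cE_s$ restricted to the ambient $\scT$-stratum $X^t \supset X_s$ forces $X_s$ to be open-dense in $X^t$, at which point $\cE_s$ coincides with $\cE^t$ by assumption \textbf{(A1)}$_\scT$ and the uniqueness built into \textbf{(A1)}.

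With the lemma in hand, I would return to $\cF = \cE^t$. Each of $j_!\cF$, $j_*\cF$, $i^*\cF$, $i^!\cF$ is represented in $\Kb\Parity_\scS$ by some bounded complex of $\scS$-parity sheaves, which by Krull--Schmidt can be taken to be minimal (i.e.\ with no contractible summands). The image of this minimal complex in $\Db_\scS$ is $\scT$-constructible by standard constructible sheaf theory, since $\cF$ itself is. Consequently, each indecomposable parity summand appearing in each term of the minimal complex is a direct summand of a $\scT$-constructible object of $\Db_\scS$, hence is itself $\scT$-constructible, hence by the lemma lies in $\Parity_\scT$. The minimal complex therefore lies in $\Kb\Parity_\scT = \Dmix_\scT$, completing the proof.

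The main obstacle, I expect, is in justifying the last paragraph's summand-by-summand detection of $\scT$-constructibility: one must articulate carefully the comparison between the bounded homotopy category $\Kb\Parity_\scS$ and the non-mixed constructible derived category $\Db_\scS$, and verify that this comparison interacts well with direct summand decompositions of the terms of minimal complexes. Once set up correctly, this should be essentially formal given the Krull--Schmidt property of $\Parity_\scS$ and the already established behaviour of the recollement functors at the level of parity complexes.
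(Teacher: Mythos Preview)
Your argument has a genuine gap in the final paragraph, and the obstacle you flag there is fatal rather than merely technical. The phrase ``the image of this minimal complex in $\Db_\scS$'' presupposes a realization (degrading) functor $\Dmix_\scS \to \Db_\scS$ that intertwines the mixed recollement functors with their non-mixed counterparts. No such functor is available in this framework; its existence is an open problem in the modular setting. Only $j^*$ and $i_*$ act termwise by the non-mixed operations; the functors $i^{(*)}, i^{(!)}, j_{(!)}, j_{(*)}$ are defined purely by adjunction inside $\Kb\Parity_\scS$, so one cannot simply invoke ``standard constructible sheaf theory'' to control them. Even granting a compatible realization, the inference would still fail: if the total object of a complex $\cG^\bullet \in \Kb\Parity_\scS$ were $\scT$-constructible in $\Db_\scS$, the individual terms $\cG^i$ need not be --- they are not direct summands of the total object, and minimality only rules out contractible direct summands of the complex, not the appearance of a given indecomposable parity sheaf in a given degree.

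The paper avoids any passage to $\Db_\scS$. It argues by induction on the number of $\scT$-strata in $Z$. In the base case $Z = X^t$, one observes that the non-mixed $i^*\cE^u$ is a $\scT$-parity complex on the single stratum (this is essentially where your key lemma, which is correct, enters), writes down the explicit two-term complex $[\cE^u \to i_*i^*\cE^u]$ inside $\Kb\Parity_\scT(X)$, and verifies by a direct $\Hom$-vanishing computation that this complex satisfies the universal property characterizing $j_{(!)}j^*\cE^u$. This simultaneously identifies $j_{(!)}j^*\cE^u$ and $i^{(*)}\cE^u$ as objects of $\Dmix_\scT$. The inductive step is then a routine d\'evissage through intermediate locally closed inclusions.
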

Note that the analogous statements for $\Db_\scT(X)$ follow from \textbf{(A1)}$_\scT$.

\begin{proof}
We will need to make use of sheaf functors in the nonmixed setting, and we will need to distinguish them from their mixed analogues.  Thus, for the body of this proof only, we adopt the convention of~\cite[\S2.4]{arc:f2} that functors in the mixed setting are decorated with parentheses: $i^{(*)}$, $i^{(!)}$, $j_{(*)}$, $j_{(!)}$.  An undecorated symbol such as $i^*$ denotes a functor $\Db_\scS(X) \to \Db_\scS(Z)$ or $\Db_\scT(X) \to \Db_\scT(Z)$. (However, $i_*$ and $j^*$ are always undecorated, as in~\cite[\S2.3]{arc:f2}.) 

We proceed by induction on the number of $\scT$-strata in $Z$.  Suppose first that $Z = X^t$ is a single stratum.  Recall that for any $\scT$-constructible parity sheaf $\cE^u$, the object $i^*\cE^u \in \Db_\scT(Z)$ is a parity complex.  Let $\cE^{u,+} \in \Kb\Parity_\scT(X)$ denote the complex
\[
\cdots \to 0 \to \cE^t \to i_*i^*\cE^t \to 0 \to \cdots,
\]
where the nontrivial terms are in degrees $0$ and $1$, and the morphism is given by adjunction.  Thus, in $\Kb\Parity_\scT(X)$, there is a distinguished triangle
\begin{equation}\label{eqn:recolle-single}
\cE^{t,+} \to \cE^t \to i_*i^*\cE^t \to,
\end{equation}
while in $\Db_\scT(X)$, there is a distinguished triangle
\begin{equation}\label{eqn:recolle-unmixed}
j_!j^*\cE^t \to \cE^t \to i_*i^*\cE^t \to.
\end{equation}
We claim that for any $s \in \scS$ with $X_s \subset X^t$, we have
\begin{equation}\label{eqn:recolle-test}
\Hom_{\Dmix_\scS(X)}(\cE^{t,+}, \cE_s\{m\}[n]) = 0\qquad\text{for all $m,n \in \Z$.}
\end{equation}
To prove this, it suffices to show that the natural map $\Hom(i_*i^*\cE^t, \cE_s\{m\}[n]) \to \Hom(\cE^t, \cE_s\{m\}[n])$ is always an isomorphism.  Both $\Hom$-groups clearly vanish unless $n = 0$.  When $n = 0$, these $\Hom$-groups can be computed inside $\Parity_\scT(X) \subset \Db_\scT(X)$ instead, and then the fact that the map is an isomorphism follows from~\eqref{eqn:recolle-unmixed} and the fact that $\Hom(j_!j^*\cE^t, \cE_s\{m\}) = 0$.

Since objects of the form $\cE_s\{m\}[n]$ generate $\Dmix_\scS(Z)$, we deduce from~\eqref{eqn:recolle-test} that $\Hom(\cE^{t,+}, i_*\cG) = 0$ for all $\cG \in \Dmix_\scS(Z)$.  On the other hand, we clearly have $i^*\cE^t \in \Dmix_\scS(Z)$.  It follows from general principles of recollement that the distinguished triangle~\eqref{eqn:recolle-single} must be canonically isomorphic to
\[
j_{(!)}j^*\cE^t \to \cE^t \to i_* i^{(*)}\cE^t \to.
\]
In particular,~\eqref{eqn:recolle-single} shows us that $j_{(!)}j^*\cE^t$ lies in $\Dmix_\scT(X)$, and that $i^{(*)}\cE^t$ lies in $\Dmix_\scT(Z)$. Now, objects of the form $j^*\cE^t\la n\ra$ generate $\Dmix_\scT(U)$, and those of the form $\cE^t\la n\ra$ generate $\Dmix_\scT(X)$.  We conclude that $j_{(!)}$ takes $\Dmix_\scT(U)$ to $\Dmix_\scT(X)$, and that $i^{(*)}$ takes $\Dmix_\scT(X)$ to $\Dmix_\scT(Z)$.  The results for $j_{(*)}$ and $i^{(!)}$ follow by Verdier duality.

Now suppose that $Z$ contains more than one $\scT$-stratum.  Choose a $\scT$-stratum $X^t$ that is open in $Z$.  Let $V = U \cup X^t$ and $Y = Z \smallsetminus X^t$, and let $k: Y \to X$ be the inclusion map.  Given $\cF \in \Dmix_\scT(X)$,  we can form the distinguished triangle
\[
i^t_{(!)}(i^t)^{(*)}\cF \to i_*i^{(*)}\cF \to k_*k^{(*)}\cF \to.
\]
Since $Y$ contains fewer $\scT$-strata than $Z$, $k^{(*)}\cF$ lies in $\Dmix_\scT(Y)$ by induction.  Next, let $v: V \hookrightarrow X$, $a: X^t \hookrightarrow V$, and $b: X^t \hookrightarrow Z$ be the inclusion maps.  Note that $v$ and $b$ are open inclusions, and $a$ is a closed inclusion of a single $\scT$-stratum.  The cases of the result that we have already established show that $i^t_{(!)} (i^t)^{(*)} \cong i_* b_{(!)} a^{(*)} v^*$ takes $\cF$ to an object of $\Dmix_\scT(X)$.  We conclude that $i_*i^{(*)}\cF$ lies in $\Dmix_\scT(X)$, and hence that $i^{(*)}\cF$ lies in $\Dmix_\scT(Z)$.  Finally, from the distinguished triangle
\[
j_{(!)}j^*\cF \to \cF \to i_*i^{(*)}\cF \to,
\]
we see that $j_{(!)}j^*\cF$ lies in $\Dmix_\scT(X)$ as well. Since objects of the form $j^*\cF$ generate $\Dmix_\scT(U)$, $j_{(!)}$ takes all objects in $\Dmix_\scT(U)$ to $\Dmix_\scT(X)$.  Again, the results for $j_{(*)}$ and $i^{(!)}$ follow by Verdier duality.
\end{proof}

The following two statements are easy consequences of the previous lemma.  The proofs are left to the reader.

\begin{cor}
For an object $\cF \in \Dmix_\scS(X)$, the following are equivalent:
\begin{enumerate}
\item $\cF$ lies in $\Dmix_\scT(X)$.
\item For every $\scT$-stratum $i^t: X^t \hookrightarrow X$, $(i^t)^*\cF$ lies in $\Dmix_\scT(X^t)$.
\item For every $\scT$-stratum $i^t: X^t \hookrightarrow X$, $(i^t)^!\cF$ lies in $\Dmix_\scT(X^t)$.
\end{enumerate}
\end{cor}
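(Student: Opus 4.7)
My plan is to deduce the corollary from Proposition~\ref{prop:recollement-coarse} by a short induction; the content is really just the observation that $\scT$-constructibility can be tested stratum-by-stratum through recollement. The two forward implications $(1)\Rightarrow(2)$ and $(1)\Rightarrow(3)$, and likewise their converses, are proved by the same method, so I will focus on $(1)\Leftrightarrow(2)$; the case of $(3)$ is obtained by replacing $j_!$ with $j_*$ and $i^*$ with $i^!$ throughout.

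For $(1)\Rightarrow(2)$, I factor each inclusion $i^t\colon X^t\hookrightarrow X$ as the composition of the open inclusion $X^t\hookrightarrow\overline{X^t}$ followed by the closed inclusion $\overline{X^t}\hookrightarrow X$. Both are inclusions of unions of $\scT$-strata, so $(i^t)^*$ is a composition of functors of the kind treated in Proposition~\ref{prop:recollement-coarse}, and it therefore preserves $\scT$-constructibility. For $(2)\Rightarrow(1)$, I first observe that any object of $\Dmix_\scS(X)=\Kb\Parity_\scS(X)$ is a bounded complex of parity sheaves, each with support in finitely many strata, so I may restrict to a locally closed union of finitely many $\scT$-strata and induct on their number. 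The base case of a single $\scT$-stratum is tautological. For the inductive step, pick an open $\scT$-stratum $j\colon U=X^t\hookrightarrow X$ with closed complement $i\colon Z\hookrightarrow X$, and examine the distinguished triangle
\[
j_!j^*\cF\to\cF\to i_*i^*\cF\to.
\]
Hypothesis $(2)$ gives $j^*\cF=(i^t)^*\cF\in\Dmix_\scT(U)$, so $j_!j^*\cF\in\Dmix_\scT(X)$ by Proposition~\ref{prop:recollement-coarse}. For each $\scT$-stratum $X^s\subset Z$ with inclusion $k^s\colon X^s\hookrightarrow Z$, one has $(k^s)^*(i^*\cF)=(i^s)^*\cF\in\Dmix_\scT(X^s)$, so $i^*\cF\in\Dmix_\scS(Z)$ satisfies condition $(2)$ on the smaller space $Z$. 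The inductive hypothesis then yields $i^*\cF\in\Dmix_\scT(Z)$, and a second application of Proposition~\ref{prop:recollement-coarse} gives $i_*i^*\cF\in\Dmix_\scT(X)$. The triangle then exhibits $\cF$ as an extension of $\scT$-constructible objects, whence $\cF\in\Dmix_\scT(X)$.

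The only small point that needs to be checked is that $\Dmix_\scT(X)\hookrightarrow\Dmix_\scS(X)$ is a strictly full triangulated subcategory, so that its essential image is closed under extensions; this is built into the definition $\Dmix_\scT(X)=\Kb\Parity_\scT(X)$ together with the full embedding $\Parity_\scT(X)\hookrightarrow\Parity_\scS(X)$. I do not foresee any real obstacle here: all the serious work has already been done in Proposition~\ref{prop:recollement-coarse}, and the corollary is essentially a bookkeeping exercise.
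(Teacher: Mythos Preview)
Your proof is correct and is exactly the argument the paper has in mind: the paper does not actually give a proof, stating only that the corollary is an ``easy consequence'' of Proposition~\ref{prop:recollement-coarse} and leaving the details to the reader. One tiny remark: when you say ``a second application of Proposition~\ref{prop:recollement-coarse} gives $i_*i^*\cF\in\Dmix_\scT(X)$,'' note that $i_*$ for a closed inclusion is not covered by that proposition---but it is trivially $\scT$-preserving at the level of parity sheaves (extension by zero), so this is harmless.
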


\begin{cor}
The perverse t-structure on $\Dmix_\scS(X)$ induces a t-structure on $\Dmix_\scT(X)$.
\end{cor}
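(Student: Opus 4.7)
My plan is to define the candidate t-structure on $\Dmix_\scT(X)$ by intersection:
\[
\p\Dmix_\scT(X)^{\le 0} := \Dmix_\scT(X) \cap \p\Dmix_\scS(X)^{\le 0}, \qquad \p\Dmix_\scT(X)^{\ge 0} := \Dmix_\scT(X) \cap \p\Dmix_\scS(X)^{\ge 0}.
\]
Orthogonality of Hom-groups and compatibility with the shift $[1]$ are inherited from the ambient t-structure, so the only nontrivial axiom to establish is the existence of truncation triangles with outer terms in $\Dmix_\scT(X)$. Equivalently, I will show that the perverse truncation functors $\p\tau^{\le 0}$ and $\p\tau^{\ge 1}$ on $\Dmix_\scS(X)$ preserve the subcategory $\Dmix_\scT(X)$.

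I would argue this by induction on the number $N$ of $\scT$-strata of $X$. For the base case $N = 1$, the variety $X = X^t$ is itself a single $\scT$-stratum, smooth, connected, and simply connected, and by \textbf{(A1)}$_\scT$ the parity complex $\cE^t$ is the shifted constant sheaf $\ubk_X\{\dim X\}$. Smoothness forces this to be the IC extension of (the constant sheaf on) the open $\scS$-stratum, and hence a simple object of $\Perv^\mix_\scS(X)$. Then $\Parity_\scT(X)$ sits inside $\Perv^\mix_\scS(X)$ as the additive subcategory of direct sums of Tate twists of the single simple object $\cE^t$, so it is closed under taking kernels and cokernels of morphisms computed in $\Perv^\mix_\scS(X)$. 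Under the equivalence $\Dmix_\scS(X) \cong \Db\Perv^\mix_\scS(X)$ the perverse cohomology of any complex in $\Kb\Parity_\scT(X)$ is therefore itself an object of $\Parity_\scT(X)$, and the perverse truncations of any object of $\Dmix_\scT(X)$ stay inside $\Dmix_\scT(X)$.

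For the inductive step, I would pick a $\scT$-stratum $X^{t_0}$ open in $X$, and let $j\colon X^{t_0} \hookrightarrow X$ and $i\colon Z := X \smallsetminus X^{t_0} \hookrightarrow X$ be the open and complementary closed inclusions; the closed subset $Z$ inherits the relevant stratifications and has $N-1$ strata for $\scT|_Z$. Proposition~\ref{prop:recollement-coarse} tells us that each of the six recollement functors $j_!, j^*, j_*, i^*, i_*, i^!$ preserves the subcategories $\Dmix_\scT$, so we obtain a recollement of $(\Dmix_\scT(Z), \Dmix_\scT(X), \Dmix_\scT(X^{t_0}))$ compatible with the ambient recollement of the $\scS$-categories. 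By the base case and the induction hypothesis, the perverse t-structure restricts to t-structures on both outer terms, and the standard gluing procedure for t-structures in a recollement (\cite[Th\'eor\`eme~1.4.10]{bbd}) then produces a t-structure on $\Dmix_\scT(X)$. Since that gluing is characterized by conditions on $j^*$ and either $i^*$ or $i^!$, a direct comparison with the identical characterization of $\p\Dmix_\scS(X)^{\le 0}$ and $\p\Dmix_\scS(X)^{\ge 0}$ shows that the glued t-structure is exactly the restriction of the perverse t-structure from $\Dmix_\scS(X)$.

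The main obstacle will be the base case: one has to verify that $\cE^t$ really is a simple object of $\Perv^\mix_\scS(X^t)$ (so that $\Parity_\scT(X^t)$ is stable under kernels and cokernels), and to confirm that the perverse cohomology of a complex whose terms already lie in $\Perv^\mix_\scS$ is computed termwise in the abelian heart. Over a complete discrete valuation ring rather than a field, a little extra care is needed with the word ``simple,'' but the essentially semisimple structure of $\Parity_\scT(X^t)$ as an additive category of direct sums of Tate twists of a single object still guarantees closure under the relevant kernels and cokernels.
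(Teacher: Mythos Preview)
The paper does not actually prove this corollary; it says only that it is an ``easy consequence'' of Proposition~\ref{prop:recollement-coarse} and leaves the details to the reader. Your recollement-and-gluing strategy is exactly the kind of argument the authors have in mind, and your inductive step is fine (note that $j^*$ and $i_*$ preserve $\Dmix_\scT$ for trivial reasons, while Proposition~\ref{prop:recollement-coarse} handles the other four functors; full faithfulness, adjunctions, and the recollement triangles are inherited from $\Dmix_\scS$ because $\Dmix_\scT$ is a full triangulated subcategory).

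There is, however, a genuine error in your base case. You assert that $\Parity_\scT(X^t)$ sits inside $\Perv^\mix_\scS(X^t)$ as direct sums of \emph{Tate twists} $\cE^t\la n\ra$. This conflates the two grading shifts: objects of $\Parity_\scT(X^t)$ are direct sums of $\cE^t\{n\}$, and since $\{1\} = \la -1\ra[1]$, the object $\cE^t\{n\}$ lies in perverse degree $-n$, not in degree~$0$. So $\Parity_\scT(X^t)\not\subset\Perv^\mix_\scS(X^t)$, and your kernel/cokernel argument does not apply as written.

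The fix is straightforward. Your observation that $\cE^t=\ubk_{X^t}\{\dim X^t\}$ is the simple perverse sheaf $\IC$ of the open $\scS$-stratum is correct. Let $\sA\subset\Perv^\mix_\scS(X^t)$ be the Serre subcategory generated by $\{\cE^t\la n\ra : n\in\Z\}$. One checks that $\Dmix_\scT(X^t)$ coincides with the full subcategory of $\Dmix_\scS(X^t)\cong\Db\Perv^\mix_\scS(X^t)$ consisting of objects whose perverse cohomology lies in $\sA$: the inclusion $\subset$ follows because each generator $\cE^t\{n\}=\cE^t\la -n\ra[n]$ has perverse cohomology $\cE^t\la -n\ra\in\sA$; the inclusion $\supset$ follows because $\Dmix_\scT(X^t)$ is triangulated and contains every $\cE^t\la n\ra$, hence every object of $\sA$. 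From this description it is immediate that perverse truncation preserves $\Dmix_\scT(X^t)$. (Over a complete discrete valuation ring the same idea works with the category of objects whose composition factors, in the appropriate sense, are the $\cE^t\la n\ra$.)
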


\subsection{Hom-groups in the equivariant derived category}
\label{ss:eqvt-der-cat}

We now return to the setting of a space stratified by affine spaces. 
Let $H$ be a connected algebraic group or pro-algebraic group acting on $X$, such that the strata of our stratification are $H$-stable. Assume that the $H$-equivariant cohomology of a point $\sH^\bullet_H(\pt)$ vanishes in odd degrees, and is free over $\bk$ in even degrees.

Let $\Db_H(X)$ be the $H$-equivariant derived category of $X$ with coefficients in $\bk$, in the sense of Bernstein--Lunts~\cite{bl}.  We also consider the full subcategory $\Db_{H,\scS}(X) \subset \Db_H(X)$ consisting of complexes that are constructible with respect to the stratification $\scS$.  The latter also has a ``mixed'' version $\Dmix_{H,\scS}(X) := \Kb\Parity_{H,\scS}(X)$, as explained in~\cite[\S3.5]{arc:f2}.  Let $\For: \Db_{H,\scS}(X) \to \Db_\scS(X)$ and $\For: \Dmix_{H,\scS}(X) \to \Dmix_\scS(X)$ denote the forgetful functors.

Our goal in this subsection is to understand $\gRHom$ on $\Dmix_{H,\scS}(X)$ in terms of modules over the equivariant cohomology ring $\sH^\bullet_H(\pt)$.

\begin{lem}\label{lem:free-eqvt-point}
Let $\cF \in \Db_H(\pt)$.  If $\Hom^\bullet(\ubk,\cF)$ is a free $\sH^\bullet_H(\pt)$-module, then there is a natural isomorphism
\[
\Hom^\bullet_{\Db(\pt)}(\ubk, \For(\cF)) \cong \Hom^\bullet_{\Db_H(\pt)}(\ubk, \cF) \mathop{\otimes}_{\sH^\bullet_H(\pt)} \bk.
\]
\end{lem}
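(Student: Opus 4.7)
\emph{Plan.} The strategy is to identify $\Db_H(\pt)$ with a derived category of dg-modules over the graded algebra $A := \sH^\bullet_H(\pt)$, treated as a dg-algebra with zero differential. Under our standing hypothesis that $\sH^\bullet_H(\pt)$ vanishes in odd degrees and is free over $\bk$ in even degrees, formality of the cochains on $BH$ (essentially going back to Borel) gives an equivalence
\[
\Phi : \Db_H(\pt) \simto \Db(A\lgmod)
\]
(or its perfect variant), sending the constant sheaf $\ubk$ to the rank-one free module $A$ and intertwining the forgetful functor $\For : \Db_H(\pt) \to \Db(\pt)$ with the derived tensor product $\bk \Lotimes_A (-)$ (composed with the forgetful functor to complexes of $\bk$-vector spaces).

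Once $\Phi$ is in hand, the computation is formal. The two groups in the statement are identified under $\Phi$ with
\[
\Hom^\bullet_{\Db_H(\pt)}(\ubk, \cF) \cong H^\bullet(\Phi(\cF)) \quad \text{and} \quad \Hom^\bullet_{\Db(\pt)}(\ubk, \For(\cF)) \cong H^\bullet(\bk \Lotimes_A \Phi(\cF)).
\]
The hypothesis becomes: $H^\bullet(\Phi(\cF))$ is a free $A$-module. A minimal-model argument then shows that any dg-$A$-module whose cohomology is a free (hence projective) $A$-module admits a quasi-isomorphism from that cohomology, viewed as a dg-module with zero differential. Replacing $\Phi(\cF)$ by this model $M := H^\bullet(\Phi(\cF))$, and using flatness of $M$ over $A$, the derived tensor product collapses to an ordinary one, giving
\[
H^\bullet(\bk \Lotimes_A \Phi(\cF)) \cong H^\bullet(\bk \otimes_A M) = \bk \otimes_A \Hom^\bullet_{\Db_H(\pt)}(\ubk, \cF),
\]
as desired. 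Naturality in $\cF$ follows from functoriality of $\Phi$ and of $\Lotimes_A$, together with the homotopy-uniqueness of the minimal model.

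The main obstacle is the construction of $\Phi$ with the claimed identification of $\ubk$ and $\For$. In the Bernstein--Lunts framework, where $\Db_H(\pt)$ is defined via simplicial resolutions of $EH \to BH$, translating this equivalence requires some care; however, under the hypotheses on $\sH^\bullet_H(\pt)$ it is standard (and presumably will appear in the surrounding appendix material in support of the spectral sequence stated as Proposition~\ref{prop:eqvt-hom-dg}). Given $\Phi$, the remaining manipulations are a routine exercise in homological algebra over the graded ring $A$. As an alternative perspective, once Proposition~\ref{prop:eqvt-hom-dg} is available, the lemma follows directly from the degeneration of the associated $\Tor$-spectral sequence under the freeness hypothesis.
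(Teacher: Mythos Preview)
Your approach via formality of $BH$ and dg-modules is correct in principle, but it takes a substantially heavier route than the paper. The paper's proof is two lines: it invokes (an adaptation of) \cite[Lemma~6.1]{arid} to deduce that the freeness hypothesis forces $\cF$ itself to be isomorphic in $\Db_H(\pt)$ to a direct sum of shifts $\ubk\{n\}$, after which the isomorphism is immediate by inspection on each summand. No dg-model for $\Db_H(\pt)$ is constructed or invoked anywhere in the appendix---your expectation that the equivalence $\Phi$ ``presumably will appear in the surrounding appendix material'' is mistaken. Your minimal-model step (a dg-$A$-module with free cohomology is formal) is essentially the dg translation of the paper's splitting result, so the two arguments are secretly close; the paper's version simply avoids the overhead of setting up $\Phi$ and checking that $\For$ matches $\bk\Lotimes_A(-)$. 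What your route buys is a cleaner conceptual picture and an immediate upgrade to $\gRHom$; what the paper's route buys is that it needs nothing beyond the definition of $\Db_H(\pt)$ and an elementary inductive argument on the cohomology of $\cF$.

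One genuine issue: your ``alternative perspective'' via Proposition~\ref{prop:eqvt-hom-dg} is circular in the paper's logical order, since that proposition is deduced from Lemma~\ref{lem:parity-eqvt-hom}, whose proof uses the present lemma.
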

\begin{proof}
A straightforward adaptation of~\cite[Lemma~6.1]{arid} shows that if $\Hom^\bullet(\ubk,\cF)$ is a free $\sH^\bullet_H(\pt)$-module, then $\cF$ must be a direct sum of copies of various $\ubk\{n\}$.  Thus, it suffices to prove the lemma in the special case where $\cF = \ubk$, and in this case, it is obvious.
\end{proof}

\begin{lem}\label{lem:parity-eqvt-hom}
Let $\cF, \cG \in \Db_H(X)$.  If $\cF$ is $*$-parity and $\cG$ is $!$-parity, then $\Hom^\bullet(\cF,\cG)$ is a free $\sH^\bullet_H(\pt)$-module, and there is a natural isomorphism
\[
\Hom^\bullet_{\Db_\scS(X)}(\For(\cF),\For(\cG)) \cong \Hom^\bullet_{\Db_H(X)}(\cF,\cG) \mathop{\otimes}_{\sH^\bullet_H(\pt)} \bk.
\]
\end{lem}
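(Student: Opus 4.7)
The strategy is induction on the number of strata meeting the support of $\cF$. The central input is that $\Hom^\bullet$ between a $*$-parity and a $!$-parity complex should be concentrated in degrees of a single parity, which forces the long exact sequences arising from recollement to break into short exact sequences of free modules.

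\emph{Base case.} Assume $\supp(\cF)$ is contained in the closure of a single stratum $X_s$, with locally closed inclusion $i_s \colon X_s \hookrightarrow X$. Since $\cF \simeq i_{s!} i_s^* \cF$, the adjunction $(i_{s!}, i_s^!)$ gives
\[
\Hom^\bullet_{\Db_H(X)}(\cF, \cG) \;\cong\; \Hom^\bullet_{\Db_H(X_s)}(i_s^* \cF, i_s^! \cG),
\]
and by hypothesis $i_s^*\cF$ and $i_s^!\cG$ are parity complexes on the simply connected affine stratum $X_s$. A standard argument shows that every indecomposable equivariant parity complex on $X_s$ is isomorphic to some $\ubk\{n\}$, so the $\Hom$-group is a direct sum of copies of $\sH^\bullet_H(X_s)$, which by our assumption on $H$ is free over $\sH^\bullet_H(\pt)$ and concentrated in even degrees. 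Pushing along $X_s \to \pt$ and invoking Lemma~\ref{lem:free-eqvt-point} identifies this with the non-equivariant $\Hom$ after tensoring with $\bk$.

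\emph{Inductive step.} Let $X_s$ be a stratum that is open in $\supp(\cF)$, let $Z \;\mathrel{:=}\; \supp(\cF) \setminus X_s$ (closed in $X$), and let $h \colon Z \hookrightarrow X$ be the inclusion. The recollement triangle $i_{s!} i_s^* \cF \to \cF \to h_* h^* \cF \to$, combined with the adjunctions $(i_{s!}, i_s^!)$ and $(h_*, h^!)$, yields a long exact sequence
\[
\cdots \to \Hom^n(h^*\cF, h^!\cG) \to \Hom^n(\cF, \cG) \to \Hom^n(i_s^*\cF, i_s^!\cG) \to \Hom^{n+1}(h^*\cF, h^!\cG) \to \cdots.
\]
The outer terms fall under the inductive hypothesis and the base case respectively; both are free $\sH^\bullet_H(\pt)$-modules concentrated in a single parity of degree and both satisfy the $\otimes_{\sH^\bullet_H(\pt)} \bk$ identity with the corresponding non-equivariant $\Hom$-groups. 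Because the connecting map raises degree by one, it must vanish, so the long exact sequence decomposes into short exact sequences of free $\sH^\bullet_H(\pt)$-modules. These are automatically split; hence $\Hom^\bullet(\cF,\cG)$ is free and of the expected parity, and tensoring with $\bk$ preserves exactness. Applying $\For$ to the original triangle produces the analogous short exact sequence in $\Db_\scS(X)$, and a five-lemma comparison via the maps supplied by the inductive hypothesis and base case upgrades the isomorphism to hold for $\cF$.

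\emph{Main obstacle.} The crux is the parity collapse on a single stratum: one needs both that equivariant parity complexes on $X_s$ have the simple ``direct sum of $\ubk\{n\}$'' form, and that $\sH^\bullet_H(X_s)$ is free in even degrees over $\sH^\bullet_H(\pt)$. These rely on the simple connectedness of the strata together with the standing assumption that $\sH^\bullet_H(\pt)$ vanishes in odd degrees and is free; once they are in hand, the parity-vanishing of the connecting maps (and hence the propagation of freeness and of the base-change formula through the induction) is automatic.
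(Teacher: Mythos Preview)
Your argument is correct and follows essentially the same route as the paper's: induction plus recollement, with parity vanishing forcing the long exact sequence to split into short exact sequences of free $\sH^\bullet_H(\pt)$-modules. A few small points of comparison. First, the paper inducts on the number of strata in $X$ (taking $X_s$ open in $X$), whereas you induct on the number of strata meeting $\supp(\cF)$ (taking $X_s$ open in $\supp(\cF)$); both work, and your recollement triangle is valid once one notes $\cF$ is pushed forward from its closed support. Your base-case phrasing ``contained in the closure of a single stratum'' should read ``contained in a single stratum'' to match your induction variable; otherwise $\cF \simeq i_{s!}i_s^*\cF$ can fail. Second, the paper makes the harmless reduction to $\cF$ $*$-even and $\cG$ $!$-even explicit, which is what guarantees the two outer terms live in the \emph{same} parity so the connecting map vanishes; you should state this. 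Finally, for the base-change identity the paper avoids the five-lemma by writing $\Hom^\bullet(\cF,\cG) \cong \Hom^\bullet(\ubk, a_*\cRHom(\cF,\cG))$ and invoking Lemma~\ref{lem:free-eqvt-point} directly; your five-lemma argument is also fine, since the comparison map induced by $\For$ is natural and compatible with recollement.
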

\begin{proof}
We proceed by induction on the number of strata in $X$. If $X$ consists of a single stratum, then $\cF$ and $\cG$ are both parity sheaves, i.e., direct sums of objects of the form $\ubk\{n\}$.  Thus, it suffices to prove the result when $\cF = \cG = \ubk$.  Since $X$ is isomorphic to an affine space, it is clear that $\Hom^\bullet_{\Db_H(X)}(\ubk,\ubk)$ is isomorphic to $\sH^\bullet_H(\pt)$, and that $\Hom^\bullet_{\Db_\scS(X)}(\ubk,\ubk) \cong \Hom^\bullet_{\Db_H(X)}(\ubk,\ubk) \otimes_{\sH^\bullet_H(\pt)} \bk$.

Now suppose that $X$ has more than one stratum.  Let us also assume without loss of generality that $\cF$ is $*$-even and that $\cG$ is $!$-even.  Let $i_s: X_s \hookrightarrow X$ be the inclusion of an open stratum, and let $h: X \smallsetminus X_s \hookrightarrow X$ be the inclusion of the closed complement.  By a standard recollement argument, we have a natural long exact sequence
\begin{multline*}
\cdots \to \Hom^k(h^*\cF, h^!\cG) \to \Hom^k(\cF,\cG) \to \Hom^k(i_s^*\cF,i_s^*\cG)
\\
 \to \Hom^{k+1}(h^*\cF, h^!\cG) \to \cdots.
\end{multline*}
Note that $h^*\cF$ and $i_s^*\cF$ are both $*$-even, and that $h^!\cG$ and $i_s^*\cG$ are both $!$-even.  By~\cite[Corollary~2.8]{jmw}, these $\Hom^k$-groups vanish when $k$ is odd, so this long exact sequence breaks up into a collection of short exact sequences.  Indeed, we obtain a short exact sequence
\[
0 \to \Hom^\bullet(h^*\cF, h^!\cG) \to \Hom^\bullet(\cF,\cG) \to \Hom^\bullet(i_s^*\cF, i_s^*\cG) \to 0
\]
of graded $\sH^\bullet_H(\pt)$-modules.  By induction, the first and last terms above are free $\sH^\bullet_H(\pt)$-modules, and hence the middle term is as well.

Now, let $a: X \to \pt$ be the constant map, and recall that $\Hom^\bullet(\cF,\cG) \cong \Hom^\bullet(\ubk, a_* \cRHom(\cF,\cG))$.  Since the functors $a_*$ and $\cRHom$ commute with $\For$, the last assertion of the lemma follows from Lemma~\ref{lem:free-eqvt-point}.
\end{proof}

\begin{cor}\label{cor:eqvt-hom-free}
For $\cF, \cG \in \Parity_{H,\scS}(X)$, the graded $\bk$-module $\gHom(\cF,\cG)$ naturally has the structure of a free graded $\sH^\bullet_H(\pt)$-module.
\end{cor}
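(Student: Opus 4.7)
The plan is to reduce the claim to Lemma~\ref{lem:parity-eqvt-hom} by identifying the mixed $\gHom$ between parity sheaves with a cohomological $\Hom$ in the equivariant derived category $\Db_H(X)$. First I would unwind definitions: since $\Dmix_{H,\scS}(X) = \Kb\Parity_{H,\scS}(X)$ and both $\cF$ and $\cG\{n\}$ live in $\Parity_{H,\scS}(X)$ viewed as chain complexes concentrated in degree zero, morphisms between them in $\Kb\Parity_{H,\scS}(X)$ reduce to morphisms in $\Parity_{H,\scS}(X)$, which in turn equal morphisms in $\Db_H(X)$ via the fully faithful inclusion $\Parity_{H,\scS}(X) \hookrightarrow \Db_H(X)$.

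Next I would exploit the relationship between the Tate twist $\{1\}$ on $\Parity_{H,\scS}(X)$ and the cohomological shift $[1]$ on parity sheaves in $\Db_H(X)$, as spelled out in~\cite[\S2.1, \S3.5]{arc:f2}. Summing $\gHom(\cF,\cG)_n$ over $n \in \Z$ and using the even/odd decomposition of $\Parity_{H,\scS}(X)$ then produces a canonical isomorphism
\[
\gHom(\cF,\cG) \;\cong\; \Hom^\bullet_{\Db_H(X)}(\cF,\cG)
\]
of graded $\bk$-modules, after an appropriate regrouping of terms by cohomological degree and parity. This isomorphism is compatible with the natural $\sH^\bullet_H(\pt)$-action on both sides, which in either setting is induced by pullback of endomorphisms of the constant sheaf along the structure map $X \to \pt$.

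Finally, since every parity sheaf is simultaneously $*$-parity and $!$-parity, Lemma~\ref{lem:parity-eqvt-hom} applies and shows that $\Hom^\bullet_{\Db_H(X)}(\cF,\cG)$ is a free graded $\sH^\bullet_H(\pt)$-module; this freeness transfers through the identification above, proving the corollary. The main obstacle is the careful bookkeeping in the second step, matching the formal Tate twist on $\Parity_{H,\scS}(X)$ with the honest cohomological shift in $\Db_H(X)$ modulo parity conventions; once that identification is in hand, the result is an immediate consequence of Lemma~\ref{lem:parity-eqvt-hom}.
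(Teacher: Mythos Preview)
Your proposal is correct and follows the same route the paper implicitly takes: the corollary is stated without proof because it is meant to be read off directly from Lemma~\ref{lem:parity-eqvt-hom}. One small simplification: the second step is more straightforward than you suggest, since the automorphism $\{1\}$ of $\Parity_{H,\scS}(X)$ \emph{is} by definition the cohomological shift $[1]$ in $\Db_H(X)$ (see~\cite[\S2.1]{arc:f2}), so the identification $\gHom(\cF,\cG)_n = \Hom_{\Parity_{H,\scS}(X)}(\cF,\cG\{n\}) = \Hom^n_{\Db_H(X)}(\cF,\cG)$ is tautological and no even/odd bookkeeping or regrouping is needed.
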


Since $\gRHom$ is defined as a complex whose terms are $\gHom$-groups of parity sheaves, we can regard it as a functor
\[
\gRHom: \Dmix_{H,\scS}(X)^{\mathrm{op}} \times \Dmix_{H,\scS}(X) \to \Db(\sH^\bullet_H(\pt)\lgmod),
\]
where $\Db(\sH^\bullet_H(\pt)\lgmod)$ is the bounded derived category of the category of graded $\sH^\bullet_H(\pt)$-modules.  The following result is an immediate consequence of Lemma~\ref{lem:parity-eqvt-hom}.

\begin{prop}\label{prop:eqvt-hom-dg}
For any $\cF,\cG \in \Dmix_{H,\scS}(X)$, there is a natural isomorphism
\[
\gRHom_{\Dmix_\scS(X)}(\For(\cF),\For(\cG)) \cong \gRHom_{\Dmix_{H,\scS}(X)}(\cF,\cG) \mathop{\otimes}^{\scriptscriptstyle L}_{\sH^\bullet_H(\pt)} \bk.
\]
\end{prop}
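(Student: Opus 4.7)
The plan is to unwind the definition of $\gRHom$ on the mixed derived categories, reducing everything to statements about the underlying bicomplexes of $\gHom$-groups of parity sheaves.

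First, I would represent $\cF$ and $\cG$ as bounded complexes $\cF^\bullet, \cG^\bullet$ of objects in $\Parity_{H,\scS}(X)$. By the construction of $\gRHom$ on a mixed modular derived category (as recalled in \S\ref{ss:ghom}, following \cite[\S2.7]{arc:f2}), the graded complex $\gRHom_{\Dmix_{H,\scS}(X)}(\cF,\cG)$ is canonically represented by the total complex of the bicomplex $K^{p,q}_{H} := \gHom_{\Parity_{H,\scS}(X)}(\cF^{-p}, \cG^q)$, and similarly $\gRHom_{\Dmix_\scS(X)}(\For\cF, \For\cG)$ is represented by the total complex of $K^{p,q} := \gHom_{\Parity_\scS(X)}(\For\cF^{-p}, \For\cG^q)$. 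The forgetful functor induces natural maps $K^{p,q}_H \to K^{p,q}$ of bigraded $\sH^\bullet_H(\pt)$-modules (where $\sH^\bullet_H(\pt)$ acts on $K^{p,q}$ through $\sH^\bullet_H(\pt) \to \bk$).

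Next, Corollary~\ref{cor:eqvt-hom-free} tells us that each $K^{p,q}_H$ is a free graded $\sH^\bullet_H(\pt)$-module, and Lemma~\ref{lem:parity-eqvt-hom} (in its evident graded refinement, which is what the corollary actually encodes) supplies a natural isomorphism
\[
K^{p,q} \;\cong\; K^{p,q}_H \mathop{\otimes}_{\sH^\bullet_H(\pt)} \bk.
\]
Thus the whole bicomplex $K^{\bullet,\bullet}$ is obtained from $K^{\bullet,\bullet}_H$ by tensoring termwise with $\bk$ over $\sH^\bullet_H(\pt)$.

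Now I would use the fact that $\otimes_{\sH^\bullet_H(\pt)} \bk$ commutes with the formation of total complexes to obtain a natural isomorphism of complexes
\[
\Tot(K^{\bullet,\bullet}) \;\cong\; \Tot(K^{\bullet,\bullet}_H) \mathop{\otimes}_{\sH^\bullet_H(\pt)} \bk.
\]
Since each term of $\Tot(K^{\bullet,\bullet}_H)$ is a finite direct sum of free modules and hence free, this complex is a complex of free (in particular flat) $\sH^\bullet_H(\pt)$-modules. Therefore the ordinary tensor product on the right-hand side computes the derived tensor product, and we obtain
\[
\gRHom_{\Dmix_\scS(X)}(\For\cF,\For\cG) \;\cong\; \gRHom_{\Dmix_{H,\scS}(X)}(\cF,\cG) \mathop{\otimes}^{\scriptscriptstyle L}_{\sH^\bullet_H(\pt)} \bk,
\]
as required. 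Naturality in $\cF$ and $\cG$ follows from the naturality of the isomorphism in Lemma~\ref{lem:parity-eqvt-hom}. There is no real obstacle here; the only mild subtlety is to record that the isomorphism is independent of the choice of representatives $\cF^\bullet, \cG^\bullet$, which follows from the fact that the two constructions are compatible with the termwise action of homotopies between complexes of parity sheaves.
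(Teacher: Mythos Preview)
Your proposal is correct and is precisely the argument the paper has in mind: the paper states the proposition as ``an immediate consequence of Lemma~\ref{lem:parity-eqvt-hom},'' and your write-up simply unpacks that claim by passing to the total complex of the $\gHom$-bicomplex, using Corollary~\ref{cor:eqvt-hom-free} for termwise freeness and Lemma~\ref{lem:parity-eqvt-hom} for the termwise identification with $({-})\otimes_{\sH^\bullet_H(\pt)}\bk$. There is no substantive difference between your approach and the paper's.
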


\begin{cor}\label{cor:eqvt-hom-ff}
On the abelian category of mixed perverse sheaves, the forgetful functor $\For: \Perv^\mix_{H,\scS}(X) \to \Perv^\mix_\scS(X)$ is fully faithful.
\end{cor}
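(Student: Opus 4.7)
The plan is to apply Proposition~\ref{prop:eqvt-hom-dg} and run a spectral-sequence argument, using the perverse t-structure together with the positive grading on $\sH^\bullet_H(\pt)$ to kill every contribution other than the one we want. Proposition~\ref{prop:eqvt-hom-dg} yields a convergent spectral sequence
\[
E_2^{p,q} = \Tor^{\sH^\bullet_H(\pt)}_{-p}\!\bigl(\gHom^q_{\Dmix_{H,\scS}(X)}(\cF,\cG),\; \bk\bigr) \;\Longrightarrow\; \gHom^{p+q}_{\Dmix_\scS(X)}(\For\cF,\For\cG)
\]
of graded vector spaces (for the $\{1\}$-grading). Restricting to $\{1\}$-degree $0$ and total degree $p+q=0$ recovers $\Hom_{\Dmix_\scS(X)}(\For\cF,\For\cG)$ on the abutment, and the forgetful map is realized by the edge morphism attached to $E_2^{0,0}$ in that bidegree; so it suffices to show that (a) only the $(p,q)=(0,0)$ term contributes in $\{1\}$-degree $0$, (b) no differentials around that spot interfere in $\{1\}$-degree $0$, and (c) the resulting $E_\infty$ piece is naturally $\Hom_{\Dmix_{H,\scS}(X)}(\cF,\cG)$.

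I would then establish two degree bounds. Since $\cF$ and $\cG$ are perverse (and $\cG\la -n\ra$ remains so), the identity $\gHom^q_{eq}(\cF,\cG)_n = \Ext^{n+q}(\cF,\cG\la -n\ra)$ and the vanishing of negative Ext's in the heart force $\gHom^q_{eq}(\cF,\cG)$ to be concentrated in $\{1\}$-degrees $\ge -q$. On the other hand, $\sH^\bullet_H(\pt)$ is connected---concentrated in even degrees with $\sH^0_H(\pt)=\bk$---and its minimal generators lie in $\{1\}$-degree at least $2$; a standard minimal-free-resolution argument then shows that for any graded module $M$ concentrated in degrees $\ge d$, $\Tor_i^{\sH^\bullet_H(\pt)}(M,\bk)$ is concentrated in degrees $\ge d+2i$.

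Combining the bounds handles (a)--(c). For (a): when $p>0$, $\Tor_{-p}$ is negative and therefore zero; when $p<0$, $\gHom^{-p}_{eq}$ sits in degrees $\ge p$, so $\Tor_{-p}$ of it sits in degrees $\ge p+2(-p)=-p>0$, hence vanishes in degree $0$. For (b), the outgoing differentials $d_r:E_r^{0,0}\to E_r^{r,-r+1}$ land in a column of negative Tor (hence zero), while the incoming differentials $d_r:E_r^{-r,r-1}\to E_r^{0,0}$ involve $\Tor_r$ of $\gHom^{r-1}_{eq}$, which the bounds put in degrees $\ge (1-r)+2r=1+r>0$, again vanishing in degree $0$. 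For (c), the degree-$0$ component of $\gHom^0_{eq}(\cF,\cG)\otimes_{\sH^\bullet_H(\pt)}\bk$ is just $\gHom^0_{eq}(\cF,\cG)_0=\Hom_{\Dmix_{H,\scS}(X)}(\cF,\cG)$, because $\sH^{>0}_H(\pt)\cdot\gHom^0_{eq}$ lies in strictly positive $\{1\}$-degree. Assembling these gives the claimed natural isomorphism $\Hom_{\Dmix_{H,\scS}(X)}(\cF,\cG)\simto\Hom_{\Dmix_\scS(X)}(\For\cF,\For\cG)$ on the perverse hearts.

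The main obstacle is bookkeeping around grading conventions---specifically, verifying that the generators of $\sH^\bullet_H(\pt)$ really sit in $\{1\}$-degree at least $2$ under the mixed modular conventions of~\cite{arc:f2}, so that the inequality $-p+2(-p)>0$ for $p<0$ (and similarly $1+r>0$) holds strictly. Once that convention is pinned down---using that $\sH^\bullet_H(\pt)$ vanishes in odd cohomological degrees---both the vanishing of off-diagonal $E_2^{p,-p}$ in degree $0$ and the collapse of the differentials touching $E_r^{0,0}$ in degree $0$ follow without further work.
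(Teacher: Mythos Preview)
Your proposal is correct and follows essentially the same route as the paper's proof: both arguments invoke the spectral sequence coming from Proposition~\ref{prop:eqvt-hom-dg}, establish the bound that $\gHom^q(\cF,\cG)$ is concentrated in $\{1\}$-degrees $\ge -q$ from perversity, combine this with the observation that $\Tor_i^{\sH^\bullet_H(\pt)}(M,\bk)$ sits in degrees $\ge d+2i$ whenever $M$ sits in degrees $\ge d$, and then read off the vanishing of all relevant $E_r^{p,q}$-terms and differentials in degree~$0$. The only cosmetic difference is that the paper phrases the $E_2$-page as $H^p(\gHom^q \otimes^L_{\sH^\bullet_H(\pt)} \bk)$ rather than $\Tor_{-p}$, and handles the Tor bound via ``weakly free'' resolutions (to accommodate the case where $\bk$ is a discrete valuation ring rather than a field); your concern about the $\{1\}$-degree convention on $\sH^\bullet_H(\pt)$ is already addressed by the standing hypothesis in \S\ref{ss:eqvt-der-cat} that the equivariant cohomology vanishes in odd degrees.
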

\begin{proof}
Let us call a graded $\sH^\bullet_H(\pt)$-module \emph{weakly free} if it is of the form $P \otimes_\bk \sH^\bullet_H(\pt)$, where $P$ is some graded $\bk$-module.  (If $\bk$ is a field, this is the same as a free $\sH^\bullet_H(\pt)$-module, but in general, we do not require $P$ to be free over $\bk$.)  Weakly free $\sH^\bullet_H(\pt)$-modules are acyclic for the functor $({-}) \otimes_{\sH^\bullet_H(\pt)} \bk$.  

Let $M$ be a graded $\sH^\bullet_H(\pt)$-module, and assume that it is concentrated in graded degrees${}\ge n_0$.  It is easy to see that $M$ admits a weakly free resolution $\cdots F^{-1} \to F^0 \twoheadrightarrow M$ with the property that $F^{-i}$ is concentrated in graded degrees${}\ge n_0+2i$.  As a consequence, we see that
\begin{equation}\label{eqn:graded-tor-bound}
\text{$\Tor^{\sH^\bullet_H(\pt)}_i(M, \bk)$ is concentrated in graded degrees${}\ge n_0+2i$.}
\end{equation}

Now, take $\cF, \cG \in \Perv^\mix_{H,\scS}(X)$.  Note that $\gHom^i(\cF,\cG)$ is concentrated in graded degrees${}\ge -i$: indeed, for $n < -i$, we have
\[
\gHom^i(\cF,\cG)_n = \Hom(\cF,\cG[i]\{n\}) = \Hom(\cF,\cG\la -n\ra[i+n]) = 0.
\]
There is a convergent spectral sequence of graded $\bk$-modules
\begin{equation}\label{eqn:tor-sseq}
H^p(\gHom^q(\cF,\cG) \mathop{\otimes}^{\scriptscriptstyle L}_{\sH^\bullet_H(\pt)} \bk) \Longrightarrow H^{p+q}(\gRHom_{\Dmix_{H,\scS}(X)}(\cF,\cG) \mathop{\otimes}^{\scriptscriptstyle L}_{\sH^\bullet_H(\pt)} \bk).
\end{equation}
(See, for instance,~\cite[Proposition~5.7.6]{weibel}.)  By Proposition~\ref{prop:eqvt-hom-dg}, the right-hand side can be identified with $\gHom^{p+q}(\For(\cF),\For(\cG))$. Picking out the graded components of degree $0$ on both sides of~\eqref{eqn:tor-sseq}, we obtain a convergent spectral sequence of (ungraded) $\bk$-modules
\[
E_2^{pq} = H^p(\gHom^q(\cF,\cG) \mathop{\otimes}^{\scriptscriptstyle L}_{\sH^\bullet_H(\pt)} \bk)_0 \Longrightarrow \Hom^{p+q}(\For(\cF), \For(\cG)).
\]
When $p > 0$, we obviously have $E_2^{pq} = 0$.  On the other hand,~\eqref{eqn:graded-tor-bound} tells us that $H^p(\gHom^q(\cF,\cG) \otimes^{\scriptscriptstyle L}_{\sH^\bullet_H(\pt)} \bk)$ is concentrated in degrees${}\ge -q-2p$, so $E_2^{pq} = 0$ if $-q-2p > 0$.  More generally, we conclude that for all $r \ge 2$, we have
\begin{equation}\label{eqn:Erpq}
E_r^{pq} = 0 \qquad\text{if $p > 0$ or $q < -2p$.}
\end{equation}
We claim that there are natural isomorphisms
\begin{equation}\label{eqn:Einfty}
E_\infty^{p,-p} \cong
\begin{cases}
E_2^{00} & \text{if $p = 0$,} \\
0 & \text{if $p \ne 0$.}
\end{cases}
\end{equation}
For $p \ne 0$, this follows from~\eqref{eqn:Erpq}.  For $p = 0$, we must show that the differentials
\[
d_r: E_r^{-r,r-1} \to E_r^{00}, \qquad
d_r: E_r^{00} \to E_r^{r,-r+1}
\]
vanish for all $r\ge 2$. But this follows from~\eqref{eqn:Erpq} as well. Next,~\eqref{eqn:Einfty} implies that we have a natural isomorphism
\[
(\gHom(\cF,\cG) \otimes_{\sH^\bullet_H(\pt)} \bk)_0 \cong \Hom_{\Dmix_\scS(X)}(\For(\cF),\For(\cG)).
\]
Let $\sH^{>0}_H(\pt) \subset \sH^\bullet_H(\pt)$ be the kernel of the obvious map $\sH^\bullet_H(\pt) \to \bk$.  Then
\[
\gHom(\cF,\cG) \otimes_{\sH^\bullet_H(\pt)} \bk \cong \gHom(\cF,\cG)/\sH^{>0}_H(\pt)\gHom(\cF,\cG).
\]
Since $\gHom(\cF,\cG)$ is concentrated in degrees${}\ge 0$ and $\sH^{>0}_H(\pt)\gHom(\cF,\cG)$ in degrees${}>0$, we see that $(\gHom(\cF,\cG) \otimes_{\sH^\bullet_H(\pt)} \bk)_0 \cong \gHom(\cF,\cG)_0 \cong \Hom(\cF,\cG)$.  We thus obtain the desired isomorphism $\Hom_{\Dmix_{H,\scS}(X)}(\cF,\cG) \cong \Hom_{\Dmix_\scS(X)}(\For(\cF),\For(\cG))$.
\end{proof}

\begin{rmk}
It is possible to carry out a common generalization of~\S\ref{ss:non-affine} and the present section.  Suppose that $\scT$ is a stratification of $X$, not necessarily by affine spaces, and which is refined by $\scS$.  Suppose furthermore that $H$ acts on $X$ and preserves the $\scT$-strata, but not necessarily the $\scS$-strata.  Then one can study the category $\Dmix_{H,\scT}(X) := \Kb\Parity_{H,\scT}(X)$.
\end{rmk}

\subsection{Applications to Kac--Moody groups}
\label{ss:kac-moody}

\newcommand{\scB}{\mathscr{B}}

We conclude with two results about the flag variety of a Kac--Moody group $G$.  We follow the notation of~\cite[\S4.1]{arc:f2}.  Specifically, let $B \subset G$ be the standard Borel subgroup, $W$ the Weyl group, and $\scB = G/B$ the flag variety.  Recall that the equivariant derived category $\Dmix_B(\scB)$ is equipped with a convolution product $\star: \Dmix_B(\scB) \times \Dmix_B(\scB) \to \Dmix_B(\scB)$.

\begin{prop}\label{prop:adv-conv}
Let $w \in W$.
\begin{enumerate}
\item The functors\label{it:adv-right-exact}
\[
({-}) \star \nmix_w,\ \nmix_w \star ({-}): \Dmix_B(\scB) \to \Dmix_B(\scB)
\]
are right adverse-exact.
\item The functors\label{it:adv-left-exact}
\[
({-}) \star \dmix_w,\ \dmix_w \star ({-}): \Dmix_B(\scB) \to \Dmix_B(\scB)
\]
are left adverse-exact.
\end{enumerate}
In particular, for any $w, y \in W$, the objects $\nmix_y \star \dmix_w$ and $\dmix_w \star \nmix_y$, when regarded as objects of the \emph{nonequivariant} derived category $\Dmix_{(B)}(\scB)$, are adverse.
\end{prop}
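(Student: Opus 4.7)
The plan is to prove parts (1) and (2) by induction on $\ell(w)$, reducing to the case of a simple reflection via the Kac--Moody analogue of Lemma~\ref{lem:cos-conv}. Specifically, for any reduced expression $w = s_1 \cdots s_k$, we have $\nmix_w \cong \nmix_{s_1} \star \cdots \star \nmix_{s_k}$ and $\dmix_w \cong \dmix_{s_1} \star \cdots \star \dmix_{s_k}$, so both $\nmix_w \star (-)$ and $(-) \star \nmix_w$ (and likewise for $\dmix_w$) are compositions of the corresponding simple-reflection convolution functors; once the base case is known, the general statement follows. (Alternatively, one can deduce (2) from (1) by Verdier duality, which swaps $\nmix_w \leftrightarrow \dmix_w$ and $\ad^{\le 0} \leftrightarrow \ad^{\ge 0}$ and is compatible with convolution after exchanging the factors.)

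For the base case $w = s$, let $\pi_s \colon \scB \to \scB_s$ denote the projection to the minimal partial flag variety associated to $s$, which is a smooth, proper, stratified $\mathbb{P}^1$-bundle (both $\scB$ and $\scB_s$ are stratified by affine spaces via $B$-orbits, so Lemma~\ref{lem:adv-smooth} applies directly). A base-change argument applied to the convolution diagram---identifying the relevant portion of $G \times^B \scB$ with the fiber product $\scB \times_{\scB_s} \scB$---yields natural descriptions of the four functors $\nmix_s \star (-)$, $(-) \star \nmix_s$, $\dmix_s \star (-)$, $(-) \star \dmix_s$ in terms of the push-pull operations along $\pi_s$ (composed with Tate twists). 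Since Lemma~\ref{lem:adv-smooth} ensures that each of $\pi_s^*$, $\pi_s^!$, $\pi_{s*}$, $\pi_{s!}$ is adverse-exact, and since the Tate twist preserves the adverse t-structure, the claimed adverse-exactness follows (and is in fact strictly stronger than what is stated, since these arguments give full adverse-exactness on both sides).

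The ``in particular'' statement then follows immediately by combining (1) and (2): since $\dmix_w \in \ad^{\le 0}$, part (1) gives $\nmix_y \star \dmix_w \in \ad^{\le 0}$; since $\nmix_y \in \ad^{\ge 0}$, part (2) applied to $(-) \star \dmix_w$ gives $\nmix_y \star \dmix_w \in \ad^{\ge 0}$. Hence $\nmix_y \star \dmix_w$ lies in $\Adv_{(B)}(\scB)$.

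The main obstacle will be making precise and verifying the simple-reflection convolution formulas at the level of the mixed modular derived category $\Dmix_B(\scB) = \Kb\Parity_{B,\scS}(\scB)$, rather than in the ordinary equivariant derived category where the base change is standard. This requires checking that the relevant isomorphisms can be realized at the level of parity complexes, which should follow from the functoriality properties of convolution established in~\cite[\S4]{arc:f2}, together with the observation that $\nmix_s$ is itself a parity complex (being the IC sheaf of the smooth $\mathbb{P}^1 = \overline{\Fl_s}$), while $\dmix_s$ can be handled either by the parallel computation or by invoking the invertibility assertion of Lemma~\ref{lem:cos-conv}.
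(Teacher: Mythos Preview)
Your overall strategy---reduce to simple reflections via associativity of convolution, then invoke Lemma~\ref{lem:adv-smooth}---matches the paper's. But the base case contains a genuine error: you assert that ``$\nmix_s$ is itself a parity complex (being the IC sheaf of the smooth $\mathbb{P}^1$)''. This is false. The costandard object $\nmix_s = j_{s*}\ubk\{1\}$ is the $*$-extension from the \emph{open} cell $\scB_s \cong \mathbb{A}^1$, not the constant (parity) sheaf $\cE_s = \ubk_{\overline{\scB_s}}\{1\}$ on its closure $\overline{\scB_s} \cong \mathbb{P}^1$; these two objects are different. It is convolution with $\cE_s$---not with $\nmix_s$ or $\dmix_s$---that is given by the push--pull $\pi_s^*\pi_{s*}(-)\{1\}$ along the projection $\pi_s$ to the minimal partial flag variety. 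Hence your claim that all four functors $\nmix_s \star (-)$, $(-) \star \nmix_s$, $\dmix_s \star (-)$, $(-) \star \dmix_s$ are compositions of push--pull along $\pi_s$ with Tate twists does not hold, and neither does the claimed two-sided adverse-exactness (which, had it been true, would make the one-sided formulation of the proposition needlessly weak).

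The paper supplies exactly the missing step: one uses the distinguished triangle relating $\nmix_s$ to $\cE_s$. From the recollement triangle on $\overline{\scB_s}$ one obtains $\dmix_e\{-1\} \to \cE_s \to \nmix_s \to$; convolving on the left with $\dmix_y$ produces a triangle in which $\dmix_y \star \nmix_s$ is flanked by $\dmix_y \star \cE_s \cong \pi_s^*\pi_{s*}\dmix_y\{1\}$ and by $\dmix_y\{-1\}[1]$. The former lies in $\ad\Dmix_B(\scB)^{\le 0}$ by Lemma~\ref{lem:adv-smooth}, the latter trivially. Since $\ad\Dmix_B(\scB)^{\le 0}$ is closed under extensions, $\dmix_y \star \nmix_s \in \ad\Dmix_B(\scB)^{\le 0}$, and right adverse-exactness of $(-) \star \nmix_s$ follows from~\eqref{eqn:adv-defn2}. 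This is the distinguished triangle the paper's proof sketch points to (borrowed from~\cite[Proposition~4.6]{arc:f2}). Your handling of the induction step, of part~(2) via duality, and of the ``in particular'' clause is fine once this correction is made.
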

\begin{proof}[Proof Sketch]
The proof is essentially identical to that of~\cite[Proposition~4.6]{arc:f2}.  A brief outline for $({-}) \star \nmix_w$ is as follows.  Thanks to associativity of the convolution product, it suffices to prove the right adverse-exactness of $({-}) \star \nmix_s$ when $s$ is a simple reflection.  The proof of~\cite[Proposition~4.6]{arc:f2} exhibits, for any $y \in W$, a distinguished triangle whose middle term is $\dmix_y \star \nmix_s$, and whose first and last terms obviously lie in $\p\Dmix_B(\scB)^{\le 0}$.  Using Lemma~\ref{lem:adv-smooth}, it is easy to see that the first and last terms of that triangle also lie in $\ad\Dmix_B(\scB)^{\le 0}$.  It follows that $({-}) \star \nmix_s$ is right adverse-exact.
\end{proof}

Recall that there is a natural action of $W$ on the ring $\sH^\bullet_B(\pt)$.  
A linear map $f: M_1 \to M_2$ of $\sH^\bullet_B(\pt)$-modules is called a \emph{$w$-twisted homomorphism} if, for all $p \in \sH^\bullet_B(\pt)$ and $m \in M_1$, we have $f(pm) = (w\cdot p)f(m)$.

\begin{prop}\label{prop:conv-twist}
Let $w \in W$, and let $\cF, \cG \in \Dmix_B(\scB)$.  The natural maps
\[
\gHom(\cF, \cG) \to \gHom(\dmix_w \star \cF, \dmix_w \star \cG), \qquad
\gHom(\cF, \cG) \to \gHom(\nmix_w \star \cF, \nmix_w \star \cG)
\]
are $w$-twisted $\sH^\bullet_B(\pt)$-homomorphisms.
\end{prop}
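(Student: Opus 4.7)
The plan is to reduce to the case of a simple reflection and then perform an explicit equivariant cohomology computation on $\overline{\scB_s} \cong \mathbb{P}^1$. Recall first that the $\sH^\bullet_B(\pt)$-module structure on $\gHom(\cF,\cG)$ arises from the fact that any class $p \in \sH^n_B(\pt) \cong \Hom^n_{\Db_B(\pt)}(\ubk,\ubk)$ pulls back along $a \colon \scB \to \pt$ to a morphism $a^*p \colon \ubk_\scB \to \ubk_\scB\{n\}[n]$, which acts on $\gHom(\cF,\cG)$ by tensoring with $\id_\cG$ and postcomposing. The natural map in the statement is simply $f \mapsto \id_{\dmix_w} \star f$; the content of the proposition is to compare the $\sH^\bullet_B(\pt)$-action before and after applying $\dmix_w \star ({-})$.

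My first step would be to reduce to the case of a simple reflection. By the Kac--Moody analogue of Lemma~\ref{lem:cos-conv}, there are canonical associative isomorphisms $\dmix_{w_1 w_2} \cong \dmix_{w_1} \star \dmix_{w_2}$ (and similarly for $\nmix$) whenever $\ell(w_1 w_2) = \ell(w_1) + \ell(w_2)$. Under these, the natural map for $w_1 w_2$ factors as the natural map for $w_2$ followed by that for $w_1$, and a direct unwinding shows that the composition of a $w_2$-twisted homomorphism followed by a $w_1$-twisted one is $(w_1 w_2)$-twisted. Taking a reduced expression of $w$ thus reduces the problem to $w = s$ a simple reflection. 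Since $\nmix_s$ is inverse to $\dmix_s$ under convolution by Lemma~\ref{lem:cos-conv}, the statement for $\nmix_s$ then follows from that for $\dmix_s$ by inverting.

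For $w = s$, the sheaf $\dmix_s$ is supported on $\overline{\scB_s} \cong \mathbb{P}^1$, and the key input is the structure of $\sH^\bullet_B(\overline{\scB_s})$. Restriction to the two $T$-fixed points $eB/B$ and $sB/B$ gives an injection into $\sH^\bullet_B(\pt)^{\oplus 2}$, where the identification at $sB/B$ uses the fact that its $B$-stabilizer is $B \cap sBs^{-1}$, intertwining the natural $\sH^\bullet_B(\pt)$-module structure with its $s$-twist. Tracing the convolution diagram $\scB \leftarrow G \times \scB \to G \times^B \scB \xrightarrow{m} \scB$ and writing out $\id_{\dmix_s} \star (a^*p \cdot f)$ in terms of this diagram, one identifies the $\sH^\bullet_B(\pt)$-action ``coming in'' from $\cF$ with the action at $sB/B$, and the action ``coming out'' on $\dmix_s \star \cF$ with the action at $eB/B$; the two differ precisely by the $s$-twist, yielding the proposition.

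The main obstacle will be making this last calculation rigorous within the mixed-modular parity framework, where point restrictions and stalk computations are not directly available. A natural strategy is to first verify the identity in the ordinary $B$-equivariant derived category $\Db_B(\scB)$, where the GKM-type description of $\sH^\bullet_B(\overline{\scB_s})$ is classical, and then transfer the conclusion to $\Dmix_B(\scB)$ using Proposition~\ref{prop:eqvt-hom-dg}; alternatively, one can write $\dmix_s$ explicitly as a two-term complex of parity sheaves on $\overline{\scB_s}$ and check the $s$-twist on a set of generators of $\sH^\bullet_B(\pt)$ directly.
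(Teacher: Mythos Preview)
Your approach is correct but genuinely different from the paper's. The paper does \emph{not} reduce to simple reflections. Instead, it passes through the equivalence $\Db_B(\scB) \cong \Db_{B \times B}(G)$ to equip every $\gHom(\cE_1,\cE_2)$ with an $(\sH^\bullet_B(\pt) \otimes \sH^\bullet_B(\pt))$-bimodule structure, and observes that convolution induces a bilinear pairing
\[
\gHom(\cE_1,\cE_2) \otimes_{\sH^\bullet_B(\pt)} \gHom(\cE_3,\cE_4) \to \gHom(\cE_1 \star \cE_3, \cE_2 \star \cE_4),
\]
balanced over the right action on the first factor and the left action on the second. Specializing $\cE_1 = \cE_2 = \dmix_w$ reduces everything to the single well-known fact that on $\sH^\bullet_{B \times B}(B\dot w B) \cong \gRHom(\dmix_w,\dmix_w)$ the right $\sH^\bullet_B(\pt)$-action is the $w$-twist of the left one; this handles arbitrary $w$ in one stroke and stays inside the parity/mixed framework via the derived version of the pairing.

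Your reduction to simple reflections plus a GKM localization on $\overline{\scB_s} \cong \mathbb{P}^1$ reaches the same conclusion and is arguably more elementary, since it avoids invoking the $B \times B$-equivariant viewpoint. The cost is the extra bookkeeping of the reduced-word argument and the transfer step you flag at the end; your workaround~(a) (check for parity sheaves in the ordinary equivariant category, where the $\gHom$-groups literally coincide, then pass to $\Kb\Parity$) is the right way to close that gap. One small notational slip: the pulled-back class $a^*p$ lands in $\Hom(\ubk_\scB,\ubk_\scB\{n\})$, not $\ubk_\scB\{n\}[n]$, since on parity sheaves the autoequivalence $\{1\}$ already corresponds to the cohomological shift in $\Db_B(\scB)$.
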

\begin{proof}
Let $\cE_1,\cE_2 \in \Parity_B(\scB)$. Via the equivalence $\Db_B(\scB) \cong \Db_{B \times B}(G)$, we can equip $\gHom(\cE_1,\cE_2)$ with the structure of a module over $\sH^\bullet_{B \times B}(\pt) \cong \sH^\bullet_B(\pt) \otimes \sH^\bullet_B(\pt)$.  (The general setting of~\S\ref{ss:eqvt-der-cat} gives us an action of only one copy of $\sH^\bullet_B(\pt)$, which we identify with the left-hand copy in $\sH^\bullet_B(\pt) \otimes \sH^\bullet_B(\pt)$.)

Let $\cE_3, \cE_4 \in \Parity_B(\scB)$ be two additional parity sheaves. Convolution induces a homomorphism of $(\sH^\bullet_B(\pt) \otimes \sH^\bullet_B(\pt))$-modules
\begin{equation}\label{eqn:conv-bilinear}
\gHom(\cE_1,\cE_2) \otimes_{\sH^\bullet_B(\pt)} \gHom(\cE_3,\cE_4) \to \gHom(\cE_1 \star \cE_3, \cE_2 \star \cE_4).
\end{equation}
Here, the ring $\sH^\bullet_B(\pt)$ under the tensor product symbol acts as the right-hand copy on $\gHom(\cE_1,\cE_2)$, and as the left-hand copy on $\gHom(\cE_3,\cE_4)$.  Since the latter is always a free $\sH^\bullet_B(\pt)$-module, maps like~\eqref{eqn:conv-bilinear} induce corresponding maps at the derived level.  That is, given $\cF_1, \cF_2, \cF_3, \cF_4 \in \Dmix_B(\scB)$, we obtain a natural morphism
\begin{equation}\label{eqn:conv-bilinear-der}
\gRHom(\cF_1, \cF_2) \mathop{\otimes}^{\scriptscriptstyle L}_{\sH^\bullet_B(\pt)} \gRHom(\cF_3, \cF_4) \to
\gRHom(\cF_1 \star \cF_3, \cF_2 \star \cF_4).
\end{equation}

Let us now study this map in the special case where $\cF_1 = \cF_2 = \dmix_w$.  In this case, by adjunction, we have
\[
\gRHom(\dmix_w,\dmix_w) \cong \sH^\bullet_B(\scB_w) \cong \sH^\bullet_{B \times B}(B \dot w B/B),
\]
where $\dot w$ is a representative in $G$ of $w \in W$.  It is well known that $\sH^\bullet_{B \times B}(B \dot w B/B)$ is a rank-$1$ free module for both the left and right copies of $\sH^\bullet_B(\pt)$, and that the action of the right copy coincides with the $w$-twist of the action of the left copy.

In particular, because this module is free for the right copy of $\sH^\bullet_B(\pt)$, we can apply $H^0$ to~\eqref{eqn:conv-bilinear-der} and obtain a homomorphism of $(\sH^\bullet_B(\pt) \otimes \sH^\bullet_B(\pt))$-modules
\[
\sH^\bullet_B(\scB_w) \otimes_{\sH^\bullet_B(\pt)} \gHom(\cF_3,\cF_4) \to \gHom(\dmix_w \star \cF_3, \dmix_w \star \cF_4).
\]
From this, we deduce that $\dmix_w \star ({-}): \gHom(\cF_3,\cF_4) \to \gHom(\dmix_w \star \cF_3, \dmix_w \star \cF_4)$ is a $w$-twisted homomorphism, as desired.  The proof for $\nmix_w$ is similar.
\end{proof}


\end{document}